\newtheorem{theorem}{Theorem}[section]
\newtheorem{proposition}[theorem]{Proposition}
\newtheorem{corollary}[theorem]{Corollary}
\newtheorem{lemma}[theorem]{Lemma}
\newtheorem{definition}[theorem]{Definition}
\newtheorem{definition and notation}[theorem]{Notation and Definition}
\newtheorem{remark}[theorem]{\bf Remark}
\newtheorem{condition}[theorem]{Condition}
\title{Root Parametrized Differential Equations for the classical groups}
\author{Matthias Seiss}
\email{matthias.seiss@mathematik.uni-kassel.de}
\address{Universit\"at Kassel\\ 
Fachbereich 10 \\ 
Heinrich Plett Str. 40 \\ 34132 Kassel \\ Germany.}
\begin{document}

\begin{abstract}
Let $C \langle t_1, \dots t_l\rangle$ be the differential field generated by $l$ differential indeterminates $\textit{\textbf{t}}=(t_1, \dots ,t_l)$ 
over an algebraically closed field $C$ of characteristic zero. We develop a lower bound criterion for the differential Galois group $G(C)$ 
of a matrix parameter differential equation $\partial(\textit{\textbf{y}})=A(\textit{\textbf{t}})\textit{\textbf{y}}$ over 
$C \langle t_1, \dots t_l\rangle$ and we prove that every connected linear algebraic group is the Galois group of  
a linear parameter differential equation over $C\langle t_1 \rangle$. As a second application we compute explicit and nice linear parameter 
differential equations over $C\langle t_1, \dots, t_l \rangle$ for the 
groups $\mathrm{SL}_{l+1}(C)$, $\mathrm{SP}_{2l}(C)$, $\mathrm{SO}_{2l+1}(C)$, $\mathrm{SO}_{2l}(C)$, i.e.~for the classical groups of type $A_l$, $B_l$, $C_l$, $D_l$, 
and for $\mathrm{G}_2$ (here $l=2$).
\end{abstract}

\maketitle

\section*{Introduction}
Let $C$ be an algebraically closed field of characteristic zero and consider a linear algebraic group $G$ over $C$. In differential Galois theory one of the classical
questions asks 
whether we can realize $G(C)$ as a differential Galois group over some differential field $F$ with constants $C$ and if the answer is positive can we construct an explicit 
linear differential equation with differential Galois group $G(C)$? Answers are only partially known for some groups and fields.\\ 
In some settings bound criteria for the differential Galois group play an essential role.
A well-established upper bound criterion is given in Proposition~\ref{Prop_upper}. It states that if the defining matrix $A$ of a linear differential equation 
$\partial(\textit{\textbf{y}})=A\textit{\textbf{y}}$ is contained in the Lie algebra $\mathfrak{g}(F)$ of $G(F)$, then the differential Galois group is a subgroup 
of $G(C)$. In the literature there is also a well-known lower bound criterion. 
Roughly-speaking Proposition~\ref{thm_lower} says that a suitable differential conjugate of the defining matrix $A$ lies in the Lie algebra of the differential Galois group.
But for a successful application of Proposition~\ref{thm_lower} we need to guarantee that a geometric condition is satisfied. 
This condition is automatically fulfilled if the differential base field is the field of rational functions $C(z)$ with standard derivation $\frac{d}{dz}$.
Using these bounds
C. Mitschi and M. Singer developed in \cite{MitschiSinger} a constructive method for the realization of connected linear algebraic groups over $C(z)$ and the general 
inverse problem over the same differential base field could be solved by J. Hartmann in \cite{Hart}.\\ 
Let $C \langle \textit{\textbf{t}} \rangle $ be the differential field generated by $l$ differential indeterminates $\textit{\textbf{t}}=(t_1, \dots ,t_l)$ over $C$ and 
for a differential indeterminate $y$ denote by $C\langle \textit{\textbf{t}} \rangle \{y\}$
the differential ring generated by $y$ over $C\langle \textit{\textbf{t}} \rangle$. In the following we mean 
by a matrix parameter differential equation a matrix differential equation 
$\partial(\textit{\textbf{y}})=A(\textit{\textbf{t}})\textit{\textbf{y}}$ 
with $A(\textit{\textbf{t}}) \in C \langle \textit{\textbf{t}} \rangle^{n \times n}$ and by a linear parameter differential equation  
an equation of the form
\begin{equation*}
 L(y,\textit{\textbf{t}})= y^{(n)} - \sum_{i=0}^{n-1} a_i(\textit{\textbf{t}})y^{(i)} \in C\langle t_1, \dots, t_l\rangle \{y\}. 
\end{equation*}
It is well-known that two such types of equations can be converted into each other (see for instance \cite{KovacicCyclicVector}).\\ 
Considering the structure of the Lie algebras of the classical groups we found matrix parameter differential equations   
which define naturally very nice and at the same time very general linear parameter differential equations (see Theorem~\ref{main_thm_intro} below).
Motivated by this observation we want to determine the differential Galois group of a matrix parameter differential equation. 
As in \cite{MitschiSinger} we intend to apply bound criteria for the differential Galois group to the defining matrix $A(\textit{\textbf{t}})$.
As an upper bound we can use again Proposition~\ref{Prop_upper}. But 
unfortunately, we cannot use Proposition~\ref{thm_lower} as a lower bound criterion, 
since we have no information
whether the assumption of Proposition~\ref{thm_lower} is satisfied or not.\\
In the first part of this article we develop a lower bound criterion for the differential Galois group of a matrix parameter differential equation.  
For a defining matrix $A(\textit{\textbf{t}}) \in C \langle \textit{\textbf{t}} \rangle^{n \times n}$, let $R_1$ be a localization of 
$C\{ \textit{\textbf{t}} \}$ by a finitely generated multiplicative subset 
of $C\{ \textit{\textbf{t}} \}$ such that $A(\textit{\textbf{t}}) \in R_1^{n \times n}$, where $C \{ \textit{\textbf{t}} \}$ denotes the differential ring generated by the 
indeterminates $\textit{\textbf{t}}$ over $C$. 
We denote by $G(C)$ the differential Galois group of 
$\partial(\textit{\textbf{y}})=A(\textit{\textbf{t}})\textit{\textbf{y}}$ over $C \langle \textit{\textbf{t}} \rangle $. Further let
$\sigma:C\{ \textit{\textbf{t}} \} \rightarrow C[z]$ be a specialization of the differential indeterminates to the differential subring $C[z]$ of $C(z)$ such that 
it extends to a specialization $\sigma : R_1 \rightarrow R_2$ where $R_2$ is a suitable finitely generated localization of $C[z]$.  
One result of this article is the following theorem (see also Theorem~\ref{infornmal_spez_bound} and~\ref{specialization_bound}).
\begin{theorem}(Specialization Bound)\\
The differential Galois group $H(C)$ of the specialized equation $\partial(\textit{\textbf{y}})=A(\sigma(\textit{\textbf{t}}))\textit{\textbf{y}}$ over $C(z)$ 
is a subgroup of $G(C)$.
\end{theorem}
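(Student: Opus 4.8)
The plan is to spread the generic Picard--Vessiot torsor out over $\operatorname{Spec}(R_1)$ and then specialise this family along $\sigma$, reading off the inclusion of groups from the inclusion of the resulting cosets. Write $F=C\langle\mathbf t\rangle=\operatorname{Frac}(R_1)$ and form the full solution ring $W=R_1[Y,\det(Y)^{-1}]$ in the entries of an $n\times n$-matrix $Y$, with derivation extending that of $R_1$ by $\partial(Y)=A(\mathbf t)Y$. Base change gives $U_F=W\otimes_{R_1}F=F[Y,\det(Y)^{-1}]$; I choose a maximal differential ideal $\mathfrak m\subset U_F$, so that $S_\eta=U_F/\mathfrak m$ is a Picard--Vessiot ring for $\partial(\mathbf y)=A(\mathbf t)\mathbf y$ over $F$ with differential Galois group $G(C)$ and $\operatorname{Spec}(S_\eta)$ a $G$-torsor over $F$. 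Setting $\mathfrak m_1=\mathfrak m\cap W$ and $B=W/\mathfrak m_1$, the domain $B$ embeds into the domain $S_\eta$, and clearing denominators shows $B\otimes_{R_1}F=S_\eta$; thus $B$ is an $R_1$-model of the generic Picard--Vessiot ring.

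Next I spread out the torsor structure. Because $A$ does not involve $Y$, the right translation $Y\mapsto Yg$ of $\mathrm{GL}_n(C)$ commutes with $\partial$; equivalently $W$ carries a $\mathrm{GL}_n(C)$-coaction by differential homomorphisms, and $G(C)$ is the stabiliser of $\mathfrak m$. Faithful flatness of $B$ over $R_1$ together with the torsor identity $B\otimes_{R_1}B\cong B\otimes_C C[G]$ are conditions of finite presentation that already hold after base change to the generic point $F$; hence they hold over $R_{1,f}=R_1[f^{-1}]$ for a single nonzero $f\in R_1$, so that $\operatorname{Spec}(B_f)$ is a torsor under $G\times_C R_{1,f}$. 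I therefore enlarge the multiplicative set defining $R_1$ to invert $f$. The hypothesis that $\sigma$ extends to $\sigma\colon R_1\to R_2$ with $R_2$ a localisation of $C[z]$ is exactly what makes this harmless: it keeps $\sigma$ defined and places the specialisation point in $\operatorname{Spec}(R_{1,f})$. I regard this spreading-out, and the verification that the specialisation lands in the good locus, as the main obstacle; it is the parameter-field analogue of the geometric condition that is automatic over $C(z)$.

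Now I specialise. Since $\sigma$ intertwines $\partial$ with $\tfrac{d}{dz}$, the base change $R=B\otimes_{R_1,\sigma}C(z)$ is a differential $C(z)$-algebra, finitely generated over $C(z)$, in which the image $\bar Y$ of $Y$ is an invertible matrix with $\partial(\bar Y)=A(\sigma(\mathbf t))\bar Y$; and by the previous step $\operatorname{Spec}(R)$ is a $G$-torsor over $C(z)$ on which $G(C)$ still acts by differential homomorphisms. Choosing a $\partial$-maximal ideal $\mathfrak n\subset R$, the quotient $S_\sigma=R/\mathfrak n$ is a simple differential ring, finitely generated over $C(z)$ and generated by the entries of $\bar Y$ and $\det(\bar Y)^{-1}$; as $C$ is algebraically closed, $S_\sigma$ is therefore a Picard--Vessiot ring for $\partial(\mathbf y)=A(\sigma(\mathbf t))\mathbf y$ over $C(z)$ with field of constants $C$. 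By uniqueness of Picard--Vessiot extensions its differential Galois group is $H(C)$, and $\operatorname{Spec}(S_\sigma)$ is an $H$-torsor.

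Finally I compare the groups. Inside $U_{C(z)}=C(z)[Y,\det(Y)^{-1}]$ let $\mathfrak a_0\subseteq\mathfrak a$ be the kernels of the surjections $U_{C(z)}\twoheadrightarrow R$ and $U_{C(z)}\twoheadrightarrow S_\sigma$, respectively, so that $\operatorname{Spec}(R)$ and $\operatorname{Spec}(S_\sigma)$ are the closed subschemes of $\mathrm{GL}_{n,C(z)}$ they cut out. The first is a $G$-torsor and the second an $H$-torsor, so after passing to the algebraic closure $\overline{C(z)}$ both trivialise: $\operatorname{Spec}(R)$ becomes a right coset $P\cdot G(\overline{C(z)})$ and $\operatorname{Spec}(S_\sigma)$ a right coset $P'\cdot H(\overline{C(z)})$. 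Since $\mathfrak a_0\subseteq\mathfrak a$ we have $\operatorname{Spec}(S_\sigma)\subseteq\operatorname{Spec}(R)$, i.e.\ a right coset of $H$ contained in a right coset of $G$; this forces $H\le G$ as closed subgroups of $\mathrm{GL}_n$ (up to the conjugation coming from the choice of fundamental matrix). Equivalently, the relations $\mathfrak m_1$ valid for the generic solution matrix specialise to relations valid for $\bar Y$, so the specialised solution ring is a quotient of the $G$-torsor $R$, and more relations can only shrink the group. Hence $H(C)\le G(C)$.
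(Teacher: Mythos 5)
There is a genuine gap, and it sits exactly at the point you yourself flag as ``the main obstacle.'' You fix an arbitrary maximal differential ideal $\mathfrak m\subset F[Y,\det(Y)^{-1}]$, form the $R_1$-model $B=W/(\mathfrak m\cap W)$, and use spreading out to find a single nonzero $f\in R_1$ such that $\operatorname{Spec}(B_f)$ is a $G$-torsor over $R_{1,f}$. You then assert that the hypothesis ``$\sigma$ extends to $R_1\to R_2$'' places the specialisation point in $\operatorname{Spec}(R_{1,f})$. It does not: $R_1$ is the localisation prescribed by the theorem (just enough to contain the entries of $A(\boldsymbol t)$ and to carry the given $\sigma$), whereas $f$ is a new element produced by your choice of $\mathfrak m$, and nothing guarantees $\sigma(f)\neq 0$. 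If $\sigma(f)=0$, the specialisation point lies in the bad locus and the argument collapses: $B\otimes_{R_1,\sigma}C(z)$ need not be faithfully flat over $C(z)$, and indeed the image of $\mathfrak m\cap W$ under $\sigma$ can generate an ideal of $R_2[Y,\det(Y)^{-1}]$ meeting $R_2$ nontrivially, so there is no Picard--Vessiot quotient of the specialised equation sitting inside your family at all. This is precisely the difficulty the paper spends Sections 3--4 on: it constructs, via formal Taylor maps whose series coefficients are fresh indeterminates $\beta_{ik}$, a \emph{specific} relatively maximal differential ideal $I_1=\mathrm{kern}(\tau_1)$ with the property that specialising the $\beta_{ik}$ to the Taylor coefficients of $\boldsymbol r$ at a suitable point $c$ carries $I_1$ into the kernel of the analogous Taylor map for the specialised equation, whence $\sigma(I_1)\cap R_2=(0)$. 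The existence of a model that specialises compatibly with the \emph{given} $\sigma$ is the real content of the theorem, not a routine consequence of generic flatness.

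Your closing step --- a coset of $H$ contained in a coset of $G$ over $\overline{C(z)}$ forces $H\leq G$ --- is a legitimate variant of the paper's comparison of the invariant ideals $Q_1\subseteq Q_2$ in $C[Y,\det(Y)^{-1}]$, and would go through once a well-behaved model is in hand. As written, though, the proof is incomplete at the decisive step.
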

The proof uses differential maps from the coordinate rings  
for $\mathrm{GL}_n$ over $R_1$ and $R_2$ to suitable rings of power series, where the differential structure 
on these coordinate rings is defined by $A(\textit{\textbf{t}})$ and $A(\sigma(\textit{\textbf{t}}))$ accordingly.  
These mappings are obtained from Taylor maps, which we will construct in Chapter~\ref{chap3}. The images of these Taylor maps will then define Picard-Vessiot rings.
In Chapter~\ref{chap4} we will show that we can specialize the coefficients of the corresponding power series 
appropriately such that $\sigma$ extends to a specialization of Picard-Vessiot rings. 
We prove then our specialization bound. We want to note that L. Goldman presented in \cite{Gold} a similar approach using so-called 
{\it{analytic specializations}}.\\
As a first application we show in Chapter~\ref{chap5} that for every connected linear algebraic group there exists a linear parameter differential equation in one parameter.
In the special case of a semisimple linear algebraic group of Lie rank $l$, we prove that there exists a parameter equation in $l$ parameters 
which specializes to all differential equations of a specific type over $C(z)$ and that we cannot remove a parameter without loosing some of these equations.
\\
In the second part of this article we present a more interesting application of our specialization bound. We will prove Theorem~\ref{main_thm_intro} below. 
It gives explicit linear parameter differential equations for the 
groups $\mathrm{SL}_{l+1}(C)$, $\mathrm{SP}_{2l}(C)$, $\mathrm{SO}_{2l+1}(C)$, $\mathrm{SO}_{2l}(C)$, i.e. for the classical groups of type $A_l$, $B_l$, $C_l$, $D_l$, 
and for $\mathrm{G}_2$ (here $l=2$).
\begin{theorem}
\label{main_thm_intro}
The linear parameter differential equation
\begin{enumerate}
\item 
 $\begin{aligned}[t] 
L(y,\textbf{t})= y^{(l+1)} - \sum\nolimits_{i=1}^{l} t_i y^{(i-1)}=0
\end{aligned}$ 
has $\mathrm{SL}_{l+1}(C)$ as its differential Galois group over $C\langle t_1, \dots , t_l \rangle$.
\vspace{2mm}
\item $\begin{aligned}[t] 
L(y,\textbf{t})= y^{(2l)} - \sum\nolimits_{i=1}^{l} (-1)^{i-1} (t_i y^{(l-i)})^{(l-i)}=0
\end{aligned}$  
has $\mathrm{SP}_{2l}(C)$ as differential Galois group over $C\langle t_1, \dots , t_l \rangle$.
\vspace{2mm}
\item $\begin{aligned}[t] 
L(y,\textbf{t})= y^{(2l+1)} - \sum\nolimits_{i=1}^{l} (-1)^{i-1} ((t_i y^{(l+1-i)})^{(l-i)}+(t_i y^{(l-i)})^{(l+1-i)})= 0
\end{aligned}$\\
has $\mathrm{SO}_{2l+1}(C)$ as differential Galois group over $C\langle t_1, \dots , t_l \rangle$.
\vspace{2mm}
\item \label{main_thm_intro_point_so2l}  $\begin{aligned}[t] 
L(y,\textbf{t})=  y^{(2l)} -2 \sum\nolimits_{i=3}^{l} (-1)^{i} ((t_i y^{(l-i)})^{(l+2-i)}+(t_i y^{(l+1-i)})^{(l+1-i)})-
\end{aligned}$ \\
$\begin{aligned}[t]
(t_2 y^{(l-2)} + t_1 y)^{(l)} - ((-1)^l t_1 z_1 + z_2) -\sum\nolimits_{i=0}^{l-2} (t_2^{l-2-i} z_1)^{(i)} =0
\end{aligned}$ 
has $\mathrm{SO}_{2l}(C)$ as differential Galois group over $C\langle t_1, \dots , t_l \rangle$. 
\vspace{2mm}
\item $\begin{aligned}[t] 
L(y,\textbf{t})= y^{(7)} - 2t_2 y' - 2 (t_2 y)' - (t_1 y^{(4)})' - (t_1y')^{(4)} + (t_1(t_1y')')'= 0
\end{aligned}$ \\
has $\mathrm{G}_{2}(C)$ as differential Galois group over $C\langle t_1, t_2 \rangle$.
\end{enumerate} 
The substitutions $z_1$ and $z_2$ in (\ref{main_thm_intro_point_so2l}) 
can be found in Lemma~\ref{Lemma_SO_2l_equation}.
\end{theorem}
We would like to point out that the differential equations for $\mathrm{SP}_{2l}$ are self-adjoint and the ones for $\mathrm{SO}_{2l+1}$ and $\mathrm{G}_2$ are anti-self-adjoint. 
It is also worth mentioning that the method, which leads to the equations of the theorem, can also be applied to the remaining groups of exceptional type, that is to $F_4$, 
$E_6$, $E_7$ and $E_8$.\\
The proof of Theorem~\ref{main_thm_intro} is organized in the following way: 
In Chapter~\ref{chap6} we prove our so-called {\it Transformation Lemma} 
which is an essential tool for the proof of the theorem. It will be used in combination with the 
{\it Specialization Bound} in the subsequent chapters where we prove 
Theorem~\ref{main_thm_intro} for each group separately.\\
In \cite[Theorem 2.10.6]{Katz} N.~Katz presented an explicit linear differential equation over $C(z)$ with differential Galois group $G_2$. It is easy to check that 
his equation can be obtained from our equation for $G_2$ by a suitable specialization of the parameters. 
For further differential equations with group $G_2$ we refer to \cite{Dett_Reiter}.
In \cite{Frenkel/Gross}, E.~Frenkel and B. Gross introduced a uniform 
construction of a rigid irregular differential equation over $C(z)$ which has a given simple linear algebraic group as its differential Galois group.
Our construction of the defining matrices is a generalization of their construction   
and so their explicit equations for $\mathrm{SL}_{l+1}$, $\mathrm{SP}_{2l}$, $\mathrm{SO}_{2l+1}$ and $G_2$ (see \cite[Section 6]{Frenkel/Gross}) can be obtained 
from the corresponding differential equations of Theorem~\ref{main_thm_intro} by a suitable specialization of the parameters. \\
The equations in Theorem~\ref{main_thm_intro} have an easy and nice shape and at the same time define a large family of differential 
equations.
It is therefore natural to ask for the generic properties of these parameter differential equations. 
For instance, it would be interesting to know whether the defining matrix of every $G$-primitive extension $E/F$ lies in an orbit of a specialization of $A(\textit{\textbf{t}})$. 
We refer to 
\cite{Seiss} for a first result in case of $\mathrm{SL}_{l+1}(C)$ and for a more detailed explanation regarding this question. Some examples of generic equations can be found in \cite{Gold}
and \cite{LJAL}. 
 
\section*{PART I\\ The Specialization Bound}
\section{Bounds for the differential Galois group}\label{chap1}
In this section we present bound criteria for the differential Galois group. More precisely, besides  upper and lower bound criteria known from the literature, we introduce our 
specialization bound and sketch the main ideas. A detailed proof then follows in the subsequent sections.\\
Let $F$ denote a differential field with field of constants $C$.
An upper bound criterion for the differential Galois group is given by the following proposition (see \cite[Proposition 1.31]{P/S}).
\begin{proposition}\label{Prop_upper}
 Let $H$ be a connected linear algebraic group over $C$ and let $A \in \mathfrak{h}(F)$ in the Lie algebra of $H(F)$. Then
 the differential Galois group $G(C)$ of the differential equation $\partial(\textbf{y})=A \textbf{y}$ is contained (up to 
 conjugation) in $H(C)$.
\end{proposition}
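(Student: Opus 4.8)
The plan is to work directly with the Picard–Vessiot ring and to realize the Galois group as a stabilizer, following the standard construction for $\partial(\textbf{y}) = A\textbf{y}$. First I would set up the ring $F[\mathrm{GL}_n] = F[X_{ij}, \det(X)^{-1}]$ equipped with the derivation $D_A$ that extends $\partial$ on $F$ by $D_A(X) = AX$. Right translation $\rho_g : X \mapsto Xg$ by a constant matrix $g \in \mathrm{GL}_n(C)$ commutes with $D_A$, and the differential Galois group is identified with the closed subgroup $\{g : \rho_g(P) = P\}$ of $\mathrm{GL}_n(C)$, where $P$ is a maximal $D_A$-stable (differential) ideal and $R = F[\mathrm{GL}_n]/P$ is the associated Picard–Vessiot ring.

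The key translation step is to rephrase the hypothesis $A \in \mathfrak{h}(F)$ ideal-theoretically. Let $\mathfrak{q} \subseteq C[\mathrm{GL}_n]$ be the radical ideal defining $H$ (radical, as $H$ is reduced in characteristic zero), and set $\mathfrak{q}_F = \mathfrak{q}\, F[\mathrm{GL}_n]$. Using the characterization $\mathfrak{h} = \{B \in \mathfrak{gl}_n : \theta_B(\mathfrak{q}) \subseteq \mathfrak{q}\}$, where $\theta_B$ is the right-invariant derivation $\theta_B(X) = BX$, I would write $A = \sum_k a_k C_k$ with $a_k \in F$ and $C_k$ a $C$-basis of $\mathfrak{h}$, so that $D_A = \partial \otimes 1 + \sum_k a_k \theta_{C_k}$. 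Since $\partial \otimes 1$ annihilates the generators of $\mathfrak{q}$ (they have constant coefficients) and each $\theta_{C_k}$ preserves $\mathfrak{q}$, the ideal $\mathfrak{q}_F$ is $D_A$-stable. Geometrically this says that the vector field $X \mapsto AX$ is tangent to $H_F$, equivalently that left translation by $\exp(tA) \in H$ preserves the right coset $H$.

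Next I would invoke Zorn's lemma to choose the maximal differential ideal $P$ so that $P \supseteq \mathfrak{q}_F$ (a proper differential ideal is always contained in a maximal one). With this choice $R = F[\mathrm{GL}_n]/P$ is a quotient of $\mathcal{O}(H_F) = F[\mathrm{GL}_n]/\mathfrak{q}_F$, and the general theory guarantees that $R$ is still a Picard–Vessiot ring with field of constants $C$; hence the associated torsor $Z = \operatorname{Spec}(R)$ is contained in $H_F$. The final step is the torsor argument: $Z$ is stable under the right $G$-action and $Z \subseteq H_F$, so choosing any point $z \in Z(\overline{F})$ gives, for every $g \in G \subseteq \mathrm{GL}_n(C)$, both $z \in H$ and $zg \in Z \subseteq H$, whence $g = z^{-1}(zg) \in H(\overline{F}) \cap \mathrm{GL}_n(C) = H(C)$. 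This yields $G(C) \subseteq H(C)$; the qualifier ``up to conjugation'' records that the embedding $G \hookrightarrow \mathrm{GL}_n(C)$ depends on the choice of fundamental solution matrix, a different maximal differential ideal producing a conjugate copy.

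I expect the main obstacle to be the careful bookkeeping in the middle step: translating the Lie-algebra membership $A \in \mathfrak{h}(F)$, whose coefficients live in $F$, into the $D_A$-stability of the ideal $\mathfrak{q}_F$ of a group $H$ that is defined over the subfield $C$, and simultaneously justifying that one may select the maximal differential ideal to contain $\mathfrak{q}_F$ without changing the Galois group up to conjugacy. Once these points are settled, the torsor/coset argument is routine.
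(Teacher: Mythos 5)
The paper gives no proof of this proposition---it is imported verbatim from van der Put--Singer, Proposition 1.31---and your argument is a correct reconstruction of the standard proof given there: $A \in \mathfrak{h}(F)$ makes the defining ideal $\mathfrak{q}_F$ of $H_F$ stable under $D_A$, a maximal differential ideal $P \supseteq \mathfrak{q}_F$ yields a Picard--Vessiot ring whose torsor sits inside $H_F$, and the coset computation $g = z^{-1}(zg)$ places the Galois group in $H(\overline{F}) \cap \mathrm{GL}_n(C) = H(C)$. The points you flag as delicate (the characterization of $\mathfrak{h}$ via $\theta_B(\mathfrak{q}) \subseteq \mathfrak{q}$, and the freedom to choose $P$ above $\mathfrak{q}_F$ at the cost of conjugation) are exactly the ones the reference handles, and you handle them correctly.
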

Let $\partial(\textit{\textbf{y}})=A \textit{\textbf{y}}$  be a differential equation with differential Galois group $G$ and denote by 
$L$ a Picard-Vessiot ring for $\partial(\textit{\textbf{y}})=A \textit{\textbf{y}}$. 
By Kolchin's Structure Theorem (see \cite[Theorem 1.28]{P/S}) we know that the 
affine group scheme 
$\mathcal{Z}=\mathrm{Spec}(L)$ over $F$ is a $G$-torsor. In the situation when 
$\mathcal{Z}$ has a $F$-rational point or equivalently when $\mathcal{Z}$ is the 
trivial torsor, the following proposition (for a proof see \cite[Corollary 1.32]{P/S}) is a lower bound criteria for the differential Galois group. 
\begin{proposition}\label{thm_lower}
 Let $L$ be a Picard-Vessiot ring for $\partial(\textit{\textbf{y}}) =A \textit{\textbf{y}}$ over $F$ with connected differential Galois group $G(C)$ and
 let $\mathcal{Z}$ be the associated torsor. Let $A \in \mathfrak{h}(F)$ for a connected 
 linear algebraic group $H(C) \supset G(C)$. If $\mathcal{Z}$ is the trivial torsor, then
 there exists $B \in H(F)$ such that $\partial(B)B^{-1} + B A B^{-1} \in \mathfrak{g}(F)$.
\end{proposition}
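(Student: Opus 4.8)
The plan is to produce the required $B$ by trivializing the torsor at a rational point and then reading off Lie-algebra membership geometrically. First I would exploit the hypothesis $A \in \mathfrak{h}(F)$ to realize a fundamental matrix inside $H$. On the coordinate ring $F[H]$ of $H_F \subseteq (\mathrm{GL}_n)_F$ one has the logarithmic-derivative derivation determined by $\partial(X) = AX$ on the generic matrix $X$; since $A \in \mathfrak{h}(F)$ the defining ideal $I_H$ is a differential ideal, so this derivation is well defined on $F[H]$. Choosing a maximal differential ideal $\mathfrak{m}$ yields a differentially simple ring $F[H]/\mathfrak{m}$ whose image matrix lies in $H$ and solves the equation; after checking that its field of constants is still $C$, it is a Picard--Vessiot ring, and by uniqueness of Picard--Vessiot rings I identify it with $L$. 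Thus I may assume a fundamental solution matrix $U \in H(L)$ with $\partial(U) = AU$.

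Next I would use the triviality of $\mathcal{Z}$. A trivial torsor has an $F$-rational point, i.e.\ an $F$-algebra homomorphism $\phi : L \to F$. I put $B = \phi(U)^{-1}$, which lies in $H(F)$ because $U \in H(L)$, and set $W = BU$. Then $W \in H(L)$, $\phi(W) = I$, and a direct computation gives $\partial(W) = \tilde{A}\, W$ with $\tilde{A} := \partial(B)B^{-1} + B A B^{-1}$, whose entries lie in $F$ and which still lies in $\mathfrak{h}(F)$ (being an $H(F)$-gauge transform of $A$). The whole problem is thereby reduced to proving $\tilde{A} \in \mathfrak{g}(F)$.

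The crux is to identify the based torsor with the subgroup $G \subseteq H$. Sending $X \mapsto W$ defines a surjective differential $F$-algebra homomorphism $\Phi$ from $F[H]$, equipped with $\partial(X) = \tilde{A} X$, onto $L$, so $L \cong F[H]/\mathfrak{p}$ and $\mathrm{Spec}(L)$ is a closed subvariety of $H_F$. The Galois relation $g(W) = W c_g$ with $c_g \in G(C)$ identifies the Galois action with right translation by $G$, so this subvariety is right-$G$-stable, while $\phi \circ \Phi$ is evaluation at the identity $e$, so it contains $e$. Because $G$ is connected, $L$ is a domain and $\mathrm{Spec}(L)$ is an irreducible $G$-torsor of dimension $\dim G$ through $e$; comparing it with the orbit $eG = G_F$ forces $\mathrm{Spec}(L) = G_F$, i.e.\ $\mathfrak{p} = I_G$. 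Hence the derivation $\partial(X) = \tilde{A} X$ descends to $F[G] = F[H]/I_G$, and I would finish by invoking the standard fact that such a derivation preserves $I_G$ precisely when $\tilde{A} \in \mathfrak{g}(F)$, namely that the vector field $X \mapsto \tilde{A} X$ is tangent to $G$ along $G$ exactly when $\tilde{A} \in \mathrm{Lie}(G)(F)$, compatibly with base change from $C$ to $F$.

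I expect the main obstacle to be this last identification: proving that basing the torsor at $\phi$ really returns the subgroup copy of $G$ inside $H$, and then converting the invariance $\partial(I_G) \subseteq I_G$ into the membership $\tilde{A} \in \mathfrak{g}(F)$. A secondary technical point that needs care is verifying that $F[H]/\mathfrak{m}$ has constant field exactly $C$, so that it is genuinely a Picard--Vessiot ring and the reduction carried out in the first step is legitimate.
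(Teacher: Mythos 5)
Your argument is correct and is essentially the standard proof of this statement: the paper itself gives no proof but refers to \cite{P/S}, Corollary 1.32, and your route --- realizing a fundamental matrix $U\in H(L)$ because $A\in\mathfrak{h}(F)$ preserves $I_H$, using an $F$-point of $\mathcal{Z}$ to gauge by $B=\phi(U)^{-1}$ so that the based torsor becomes the right-$G$-stable, irreducible, $\dim G$-dimensional subvariety through $e$, hence $G_F$ itself, and then reading off $\tilde{A}\in\mathfrak{g}(F)$ from the invariance of $I_G$ under the right-invariant vector field $X\mapsto\tilde{A}X$ --- is exactly that argument. The two points you flag (constants of $F[H]/\mathfrak{m}$ equal to $C$, and the tangency criterion $\partial(I_G)\subseteq I_G \Leftrightarrow \tilde{A}\in\mathfrak{g}(F)$) are standard and pose no obstacle, given that $C$ is algebraically closed and $G_F$ is smooth.
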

For a differential ground field of cohomological dimension at most one, the 
associated torsor is automatically the trivial torsor (see \cite[Chapter 2.4]{SerreGaloisCohomology}). 
Thus over such differential fields, e.g.~over 
the rational function field $C(z)$ with standard derivation $\frac{d}{dz}$, Proposition~\ref{thm_lower} can be considered as a lower bound criterion. 
In \cite{MitschiSinger} C. Mitschi and 
M.F. Singer applied Proposition~\ref{Prop_upper} and~\ref{thm_lower} successfully to connected semisimple linear algebraic groups 
for the differential ground field $C(z)$. 
Unfortunately in our situation, i.e.~for the differential base field $C\langle t_1 , \dots , t_l \rangle$, we have no information if the associated torsor is trivial or not.
Therefore we cannot use Proposition~\ref{thm_lower} directly as a lower bound criterion. But we can consider a specialization of a differential equation 
$\partial(\textit{\textbf{y}})=A(\textit{\textbf{t}})\textit{\textbf{y}}$ 
over $C\langle \textit{\textbf{t}} \rangle$ to a differential equation $\partial(\textit{\textbf{y}})=A(\textit{\textbf{r}})\textit{\textbf{y}}$ over $C(z)$ which we obtain from evaluating 
the differential indeterminates $\textit{\textbf{t}}=(t_1,\hdots,t_l)$ at elements $\textit{\textbf{r}}=(r_1,\hdots,r_l) \in C[z]^l$ with the property that 
$A(\textit{\textbf{r}})$ is well-defined. 
\begin{definition and notation}
We denote in the following by $F_1$ the differential field 
$C\langle \boldsymbol{t} \rangle$ and by $F_2$ the standard differential field $C(z)$. 
Further, we denote by $R_1$ a localization of $C\{ \boldsymbol{t} \}$ by 
a finitely generated multiplicative subset of $C\{ \boldsymbol{t} \}$.
If we evaluate the elements of this 
multiplicative subset at some $\boldsymbol{r} \in C[z]^l$, which we choose  
such that the generators do not vanish, we obtain 
a multiplicative subset of $C[z]$ and we denote by $R_2$ the localization of $C[z]$ by this 
subset. This construction then yields a differential $C$-algebra homomorphism 
$\sigma: R_1 \rightarrow R_2$, $\boldsymbol{t}\mapsto \boldsymbol{r}$ which we call a
\textbf{$\boldsymbol{R}_1$-specialization}.\\
For a ring $R$ and a matrix of indeterminates $X=(X_{ij})$ we denote by $R[\mathrm{GL}_n]$ the 
ring $R[X_{ij},\mathrm{det}(X_{ij})^{-1}]$. For an ideal $I$ of a ring $R$ and a ring extension $R \subset S$
we mean by $(I)$ the ideal $I \cdot S$ of $S$.
\end{definition and notation}
We sketch the main ideas of our specialization bound: 
Let $A(\boldsymbol{t}) \in R_1^{n \times n}$ and so we obtain  
$A(\sigma(\boldsymbol{t})) \in R_2^{n \times n}$ for a $R_1$-specialization $\sigma$. 
We equip the rings $R_1[\mathrm{GL}_n]$ and $R_2[\mathrm{GL}_n]$  
with a well-defined differential structure which is given by the matrix equations
$\partial(X) = A(\boldsymbol{t}) X$ and 
$\partial(X) = A(\sigma(\boldsymbol{t})) X$.
It is then possible to extend $\sigma$ to a differential ring homomorphism 
$\sigma: R_1[\mathrm{GL}_n] \rightarrow  R_2[\mathrm{GL}_n]$. 
We choose now a differential ideal $I_1$ of $R_1[\mathrm{GL}_n]$ maximal with
the property that $I_1 \cap R_1 = (0)$. Then, since $\sigma$ is a 
differential homomorphism, $\sigma(I_1)$ is a differential ideal of $R_2[\mathrm{GL}_n]$.
Assuming that $\sigma(I_1)$ is a differential ideal 
with $\sigma(I_1)\cap R_2=(0)$, 
we can choose a differential ideal $I_2$ of $R_2 [\mathrm{GL}_n]$ 
maximal with the property $I_2 \cap R_2=(0)$ 
and which satisfies $\sigma(I_1) \subset I_2$. 
By construction the quotient ring $S_i$ of $R_i[\mathrm{GL}_n]$ by $I_i$ 
does not have any non-trivial differential ideals $Q_i$ with $Q_i \cap R_i=(0)$. 
Further, the ideal $(I_i)$
is a maximal differential ideal of $F_i[\mathrm{GL}_n]$ by the 
maximality of $I_i$ and so 
the differential ring $S_i$ injects into the Picard-Vessiot ring
\begin{equation*}
 L_i  = F_i[\mathrm{GL}_n] / (I_i).
\end{equation*} 
Now, since the differential Galois group $G_i(C)$ of $L_i$ has to stabilize 
the ideal $I_i$ and $\sigma(I_1) \subset I_2$, we expect intuitively that 
the differential Galois group $G_2(C)$ of $L_2$ has to satisfy the same 
and even more conditions than the differential Galois group $G_1(C)$ of $L_1$. 
Therefore $G_2(C)$ should be a subgroup of $G_1(C)$. 
\begin{theorem}\label{infornmal_spez_bound}
 Let $\partial(\textbf{y})=A(\textbf{t})\textbf{y}$ be a matrix parameter differential equation over $F_1$ with differential 
 Galois group $G(C)$ and suppose
 $A(\textbf{t}) \in R_{1}^{n \times n}$. Let $\sigma$ 
 be a surjective $R_1$-specialization. Then the differential Galois group of $\partial(\textbf{y})= A(\sigma(\textbf{t}))\textbf{y}$ 
 over $F_2$ is a subgroup of $G(C)$.
\end{theorem}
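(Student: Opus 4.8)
The plan is to realise both differential Galois groups as stabilizers of their defining differential ideals inside $\mathrm{GL}_n(C)$ and then to transport the stabilizer condition along the specialization $\sigma$. First I would make the two Picard--Vessiot rings and their group actions explicit. On each $F_i[\mathrm{GL}_n]$ the group $\mathrm{GL}_n(C)$ acts by right translation $\rho_g\colon X\mapsto Xg$; since the entries of $g$ are constants and $\partial(X)=AX$, this action commutes with the derivation and hence permutes the differential ideals of $F_i[\mathrm{GL}_n]$. Writing $\mathfrak{m}_i=(I_i)$ for the maximal differential ideal with $L_i=F_i[\mathrm{GL}_n]/\mathfrak{m}_i$, the differential Galois group is $G_i(C)=\{g\in\mathrm{GL}_n(C):\rho_g(\mathfrak{m}_i)=\mathfrak{m}_i\}$. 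Because $F_i=\mathrm{Frac}(R_i)$, contraction gives a bijection between the maximal differential ideals of $F_i[\mathrm{GL}_n]$ and the differential ideals of $R_i[\mathrm{GL}_n]$ maximal with trivial intersection with $R_i$; thus $I_i=\mathfrak{m}_i\cap R_i[\mathrm{GL}_n]$ is itself $G_i$-stable and $G_i(C)=\{g:\rho_g(I_i)=I_i\}$.

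The decisive structural input, to be produced by the Taylor maps of Chapter 3 and their specialization in Chapter 4, is a differential ring homomorphism $\sigma\colon R_1[\mathrm{GL}_n]\to R_2[\mathrm{GL}_n]$ fixing $X$ and the constants $C$, restricting to the given $R_1$-specialization on $R_1$, and satisfying $\sigma(I_1)\subseteq I_2$; equivalently $\sigma$ descends to a surjective differential homomorphism $\bar\sigma\colon S_1\to S_2$ over $\sigma\colon R_1\to R_2$, where $S_i=R_i[\mathrm{GL}_n]/I_i$. The point of fixing $X$ and $C$ is that $\sigma$ then commutes with every right translation, $\sigma\circ\rho_g=\rho_g\circ\sigma$, so the specialization intertwines the two group actions.

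For the inclusion $G_2(C)\subseteq G_1(C)$ I take $g\in G_2(C)$, so $\rho_g(I_2)=I_2$, and by intertwining $\sigma(\rho_g(I_1))=\rho_g(\sigma(I_1))\subseteq\rho_g(I_2)=I_2$. The intuitive conclusion is that $\rho_g(I_1)=I_1$ by maximality, but I prefer to package this through the constant matrix, avoiding a delicate maximality comparison. Form $M_i=(X\otimes 1)^{-1}(1\otimes X)\in\mathrm{GL}_n(S_i\otimes_{R_i}S_i)$; since the coefficient matrix lies in $R_i$, both $X\otimes 1$ and $1\otimes X$ solve the same equation over $S_i\otimes_{R_i}S_i$, so $M_i$ is a constant, and its entries generate the coordinate ring $C[G_i]$ inside the constants of $S_i\otimes_{R_i}S_i$. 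The map $\bar\sigma\otimes\bar\sigma\colon S_1\otimes_{R_1}S_1\to S_2\otimes_{R_2}S_2$ is a well-defined differential homomorphism sending $M_1\mapsto M_2$, and being differential it carries constants to constants; it therefore restricts to a $C$-algebra homomorphism $C[G_1]\to C[G_2]$ with $M_1\mapsto M_2$. Consequently every polynomial relation with constant coefficients holding among the entries of $M_1$ holds among those of $M_2$, i.e. $\mathfrak{I}(G_1)\subseteq\mathfrak{I}(G_2)$, which is exactly $G_2(C)\subseteq G_1(C)$.

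The main obstacle is not this final comparison but the construction legitimizing it. The specialization $\sigma\colon R_1\to R_2$ is highly non-injective, since it sends the differential indeterminates to fixed polynomials and kills infinitely many differential-polynomial relations; a priori it neither extends to the localized Picard--Vessiot rings $L_1\to L_2$ nor forces $\sigma(I_1)\cap R_2=(0)$, and without the latter no differential ideal $I_2\supseteq\sigma(I_1)$ with $I_2\cap R_2=(0)$ can exist. Controlling this is where the \emph{Taylor maps} enter: expanding the fundamental solutions as power series $Y_1$ over $R_1$ and $Y_2$ over $C$ about a regular point, one obtains $I_1,I_2$ as contractions of their kernels, and the key lemma of Chapter 4 is that the coefficients of $Y_1$ specialize under $\sigma$ to those of $Y_2$, so that $\bar\sigma(Y_1)=Y_2$ and the whole diagram of differential rings commutes. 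Establishing this compatibility, together with the identification of the constants of $S_i\otimes_{R_i}S_i$ with $C[G_i]$, i.e.\ the torsor structure of $S_i$ over the base ring $R_i$ rather than a field, is the technical heart of the argument and the step I expect to be hardest.
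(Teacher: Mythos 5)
Your proposal is correct and follows the same overall architecture as the paper: all the real weight rests on producing relatively maximal differential ideals $I_1$, $I_2$ with $\sigma(I_1)\subseteq I_2$ via Taylor maps (the content of Sections 3--4, culminating in Corollary~\ref{Cor_spec_ideals}), which you rightly identify as the hard part and defer, and the remaining comparison of Galois groups is then formal. Where you genuinely diverge is in how that last comparison is packaged. The paper (proof of Theorem~\ref{specialization_bound}) introduces $Y$ with $X=Z\cdot Y$, uses the bijections of Lemma~\ref{lemma_bijection1} and Lemma~\ref{lemma_bijection2} to pass from differential ideals of $R_k[\mathrm{GL}_n]$ satisfying Condition 2.1 to $\mathrm{Gal}(S_k/R_k)$-invariant ideals of $S_k[Y,\mathrm{det}(Y)^{-1}]$ and then to ideals $Q_k\subset C[Y,\mathrm{det}(Y)^{-1}]$, and concludes $Q_1\subseteq Q_2$, hence $G_1\supseteq G_2$. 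You argue on the quotient side instead, with the constant matrix $M_i=(X\otimes 1)^{-1}(1\otimes X)$ in $S_i\otimes_{R_i}S_i$ and the induced $C$-algebra map $C[G_1]\to C[G_2]$; since $Q_k$ is exactly the ideal of $C$-relations satisfied by $M_k$ (note $S_k[Y,\mathrm{det}(Y)^{-1}]/\tilde I_k\cong S_k\otimes_{R_k}S_k$ with $Y\mapsto M_k$), this is the dual formulation of the same step, and your $\bar\sigma\otimes\bar\sigma$ is the paper's extension of $\sigma$ fixing $Y$ read modulo the ideals. The one point you must still justify, and which you correctly flag, is the torsor description over the base \emph{ring}: that the relations of $M_i$ over $C$ cut out precisely $G_i$. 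This reduces to the field case $L_i\otimes_{F_i}L_i$ once one knows $S_i\otimes_{R_i}S_i$ is torsion-free, which is the paper's Lemma~\ref{lemma_about_torsion}; the pair of bijection lemmas you bypass is exactly the ideal-theoretic substitute for this reduction. Net effect: your version makes the final step slightly cleaner at the cost of invoking the torsor theorem over $R_i$ directly, but it proves the same statement by the same mechanism.
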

For a formalization and completion of the proof of Theorem~\ref{infornmal_spez_bound} 
we will consider in Section~\ref{chap3} Taylor maps from the
differential rings $R_i[\mathrm{GL}_n]$ to suitable rings of power series.
The purpose of these Taylor maps is to show 
that there exists a differential ideal 
$I_1$ maximal with the property $I_1 \cap R_1=(0)$ such that 
its specialization satisfies $\sigma(I_1) \cap R_2=(0)$. 
\section{Differential rings}\label{chap2}  
Let $R$ be a differential ring with field of constants $C$. 
Suppose that $R$ is an integral domain and denote $\mathrm{Quot}(R)$ by $F$.  
For $\partial(\textit{\textbf{y}})=A\textit{\textbf{y}}$ 
with $A \in R^{n \times n}$ let a differential structure on 
$R[\mathrm{GL}_n]$ be defined by $\partial(X)=AX$.
We consider in the following differential ideals $I$ in $R[\mathrm{GL}_n]$ which satisfy the following condition:
\begin{condition} $\,$ \label{condition}
\begin{enumerate}[label=(\alph*),ref=\thetheorem(\alph*)]
 \item \label{condition_a} We have $I \cap R=(0)$.
 \item \label{condition_b} If for $q \in I$ there exists $r \in R$ such that 
 $r^{-1} q \in R[\mathrm{GL}_n]$, then $r^{-1} q  \in I$.
\end{enumerate}
\end{condition}

\begin{definition}$\,$ 
\begin{enumerate}
\item A differential ideal $I$ of $R[\mathrm{GL}_n]$ which satisfies 
Condition~\ref{condition_a} and is maximal with this 
property is called a {\bf relatively} maximal differential ideal.
\item Let $S$ be a differential ring and $R$ a differential subring of $S$. Then $S$ is called a {\bf $R$-simple} differential ring if 
$S$ contains no non-trivial differential ideals $I$ with the property $I\cap R = (0)$.
\end{enumerate}
\end{definition}
\begin{remark}
 If $I\subset R[\mathrm{GL}_n]$ is a relatively maximal differential ideal, 
 then $I$ satisfies automatically Condition~\ref{condition_b}, 
 since for any differential ideal $I\subset R[\mathrm{GL}_n] $ which satisfies 
 Condition~\ref{condition_a} the expanded ideal
 $I_{exp}:=(I) \cap R[\mathrm{GL}_n]$, where 
 $(I) \subset \mathrm{Quot}(R)[\mathrm{GL}_n]$, is a differential ideal and 
 satisfies Condition~\ref{condition_a} and \ref{condition_b}.
\end{remark}
A Picard-Vessiot ring is usually defined over a differential field. We give a definition of a Picard-Vessiot ring over the differential ring $R$. 
\begin{definition}
A differential ring $S$ over $R$ is called a Picard-Vessiot ring for the differential equation $\partial(\textit{\textbf{y}})=A\textit{\textbf{y}}$ 
with $A \in R^{n \times n}$ if
\begin{enumerate}
 \item $S$ is a $R$-simple differential ring.
 \item $S$ contains a fundamental solution matrix $Z \in \mathrm{GL}_n(S)$ and $S$ is generated as a ring by the entries of $Z$ and the inverse of the 
 determinant.
 \item the field of constants of $S$ is $C$.
\end{enumerate}
\end{definition}
If $S$ is a Picard-Vessiot ring over $R$, then the differential ring $L:=S \otimes_R F$ is a simple differential ring,  
since if $L$ would have a non-trivial differential ideal, then its non-trivial intersection with $S \otimes_R R$ 
would yield a differential ideal of $S$. 
It has therefore $C$ as its field of
constants and since it contains a fundamental solution matrix $Z$ and it is generated by the entries of $Z$ over $F$, 
it follows that $L$ is a Picard-Vessiot ring for $\partial(\textit{\textbf{y}})=A\textit{\textbf{y}}$ over $F$. 
Conversely, if $L$ is a Picard-Vessiot ring 
for $\partial(\textit{\textbf{y}})=A\textit{\textbf{y}}$ 
with $A \in R^{n \times n}$ over $F$, then the $R$-algebra generated by the entries of a fundamental solution matrix and the inverse of the determinant is a 
Picard-Vessiot ring $S$ over $R$ for $\partial(\textit{\textbf{y}})=A\textit{\textbf{y}}$. 
In fact, $S$ is $R$-simple, since if $S$ would have a non-trivial differential ideal $I$ with 
$I \cap R=(0)$, then the ideal $(I)$ would be a non-trivial differential ideal of $L$ and so $(I)=L$. 
But then $1 \in (I)$ and so there exists nonzero $r \in R$ with $r \in I$ which contradicts to $I \cap R=(0)$.
\begin{definition}
 The differential Galois group $\mathrm{Gal}_{\partial}(S/R)$ of a differential equation $\partial(\textit{\textbf{y}})=A\textit{\textbf{y}}$ 
with $A \in R^{n \times n}$ is defined as the group of differential $R$-algebra automorphisms of a Picard-Vessiot ring $S$ over $R$ for the equation. 
\end{definition}
Let $S$ be a Picard-Vessiot ring over $R$ and let $L = S \otimes_R F$ be the Picard-Vessiot ring over $F$ obtained from $S$ for 
$\partial(\textit{\textbf{y}})=A\textit{\textbf{y}}$. Then there is an obvious bijection 
\begin{equation*}
 \mathrm{Gal}(S/R) \leftrightarrow \mathrm{Gal}(L/F).
\end{equation*}
Lemma \ref{lemma_bijection1} and Lemma \ref{lemma_bijection2} below
are well known in the case of Picard-Vessiot rings over differential fields.
\begin{lemma}\label{lemma_bijection1}
 Let $R$ be a differential ring with the properties that $R$ is an integral domain, 
 the constants of $R$ and $\mathrm{Quot}(R)$ are $C$. 
 We extend the derivation $\partial$ of $R$ to a derivation on 
 $R[Y,\mathrm{det}(Y)^{-1}]$ by $\partial(Y)=0$ where $Y:=(Y_{ij})$ are indeterminates. 
 Then the map $ \delta: I \mapsto (I) $
 from the set of ideals in the subring $C[Y, \mathrm{det}(Y)^{-1}]$ to the set of 
 differential ideals in $R[Y,\mathrm{det}(Y)^{-1}]$ which satisfy 
 Condition~\ref{condition} is a bijection with inverse map 
 \begin{equation*}
   \delta^{-1}: J \mapsto J \cap C[Y, \mathrm{det}(Y)^{-1}].
 \end{equation*}
\end{lemma}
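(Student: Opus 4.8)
The key structural fact I would use first is that the hypothesis $\partial(Y)=0$ makes $B:=C[Y,\mathrm{det}(Y)^{-1}]$ precisely the ring of constants of $T:=R[Y,\mathrm{det}(Y)^{-1}]$: writing $f\in T$ as a finite sum $\sum_{\alpha} r_\alpha Y^{\alpha}$ (allowing negative powers of $\mathrm{det}(Y)$) gives $\partial(f)=\sum_\alpha \partial(r_\alpha)Y^\alpha$, so $\partial(f)=0$ forces each $r_\alpha\in C$. Hence $T=R\otimes_C B$ with derivation $\partial\otimes\mathrm{id}$, and for any fixed $C$-subspace of $B$ (in particular any ideal) I may choose a $C$-basis $\{e_k\}_k$ of $B$ extending a basis of it. Each $f\in T$ then has a unique expansion $f=\sum_k r_k e_k$ with $r_k\in R$ almost all zero; I call the number of nonzero $r_k$ the \emph{length} of $f$.

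With a basis adapted to a proper ideal $I\subset B$, so that $\{e_k\}_{k\in K}$ is a basis of $I$, one gets $(I)=I\cdot T=\bigoplus_{k\in K}Re_k$. From this description $\delta$ is well defined: $(I)$ is differential because it is generated by constants, so $\partial(\sum_j g_j b_j)=\sum_j\partial(g_j)b_j\in(I)$ for $b_j\in I$; Condition 2.1 (a) holds because an element of $(I)\cap R$ lies in $R\cdot 1$ and has zero coordinate on every $e_k$, forcing it to vanish; and Condition 2.1 (b) holds because if $rq'\in(I)$ with $0\ne r\in R$, then $r$ annihilates each coordinate of $q'$ outside $K$, which must therefore be zero since $R$ is a domain, whence $q'\in(I)$. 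The inverse map is clearly well defined since $J\cap B$ is an ideal of $B$, and $\delta^{-1}\circ\delta=\mathrm{id}$ is read off the same decomposition: an element of $B=\bigoplus_k Ce_k$ lying in $\bigoplus_{k\in K}Re_k$ has vanishing coordinates for $k\notin K$ and hence lies in $I$, so $(I)\cap B=I$.

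The remaining identity $\delta\circ\delta^{-1}=\mathrm{id}$, that is $J=(J\cap B)$ for every differential ideal $J$ satisfying Condition 2.1, is where the real work lies and is the step I expect to be the main obstacle. Only $J\subseteq(J\cap B)$ needs proof. I would first show that a nonzero such $J$ contains a nonzero element of $B$: picking $0\ne q=\sum_k r_k e_k\in J$ of minimal length $m$ and a nonzero coefficient $r_{k_0}$, the element $r_{k_0}\partial(q)-\partial(r_{k_0})q$ again lies in $J$, has its $k_0$-coordinate equal to zero, hence has length $<m$ and so vanishes by minimality. This gives $r_{k_0}\partial(r_k)=\partial(r_{k_0})r_k$ for all $k$, i.e. $\partial(r_k/r_{k_0})=0$ in $\mathrm{Quot}(R)$, so $r_k=c_k r_{k_0}$ with $c_k\in C$ because the constants of $\mathrm{Quot}(R)$ equal $C$. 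Thus $q=r_{k_0}b$ with $b:=\sum_k c_k e_k\in B$, and Condition 2.1 (b) applied with $r=r_{k_0}$ yields $b=r_{k_0}^{-1}q\in J$, a nonzero constant in $J\cap B$.

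To pass from ``$J$ contains a nonzero constant'' to full generation I would work in the quotient differential ring $T/(I)$ with $I:=J\cap B$; by the already proved equality $(I)\cap B=I$ this is again of the form $R\otimes_C(B/I)$ with $R$ a domain and with constant subring $B/I$, and the image $\overline{J}$ of $J$ still satisfies Condition 2.1 (b) by lifting representatives through $(I)\subseteq J$. Were $\overline{J}\ne 0$, the previous paragraph applied to $\overline{J}$ would produce $b\in B$ with $b\in J$ but $b\notin I$, contradicting $J\cap B=I$; hence $\overline{J}=0$, i.e. $J\subseteq(I)=(J\cap B)$, which completes the bijection. The only genuine subtleties, and the reason Condition 2.1 (b) and the hypotheses on $R$ cannot be dropped, are exactly the proportionality step (which needs $R$ to be a domain with $\mathrm{Quot}(R)$ having constants $C$) and the pullback $b=r_{k_0}^{-1}q\in J$ (which needs Condition 2.1 (b)).
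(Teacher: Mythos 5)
Your proof is correct, and for the hard direction $J=(J\cap C[Y,\mathrm{det}(Y)^{-1}])$ it is organized genuinely differently from the paper's, although both run on the same computational engine: with $\{e_k\}$ a $C$-basis of $B:=C[Y,\mathrm{det}(Y)^{-1}]$, hence an $R$-basis of $T:=R[Y,\mathrm{det}(Y)^{-1}]$ consisting of constants, the combination $r_{k_0}\partial(q)-\partial(r_{k_0})q$ stays in $J$ and kills the $k_0$-coordinate, and Condition 2.1(b) together with the hypothesis that $\mathrm{Quot}(R)$ has constants $C$ lets one divide out a common factor from $R$. The paper performs a direct induction on the length of an arbitrary $q\in J$, applying this trick twice per step (once to produce a shorter $\tilde q$, once to compare $\tilde r_{i_2}q$ with $r_{i_2}\tilde q$) and concluding $q\in(I)$ outright. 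You instead isolate the reusable statement that any nonzero differential ideal satisfying 2.1(b) contains a nonzero constant (via a minimal-length element), and then globalize by passing to $T/(J\cap B)\cong R\otimes_C(B/(J\cap B))$, checking that the constants of the quotient are $B/(J\cap B)$ and that $\overline{J}$ inherits 2.1(b) by lifting through $(J\cap B)\subseteq J$; a nonzero $\overline{J}$ would then produce a constant in $J$ outside $J\cap B$, a contradiction. Your route avoids the case analysis of the induction step at the cost of verifying that the quotient again satisfies all the standing hypotheses, which you do. Two minor remarks: the monomials $Y^{\alpha}\mathrm{det}(Y)^{-m}$ are not an $R$-basis of $T$, so your opening computation of the constants should be read via the $C$-basis of $B$ you introduce immediately afterwards (as your argument in effect does); and $\delta$ can only satisfy Condition 2.1(a) on proper ideals of $B$, a restriction you make explicit and the paper leaves tacit.
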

\begin{proof}
The map $\delta^{-1}$ is obviously well-defined. To prove that $\delta$ is also well-defined, 
we need to show that the image of an ideal $I$ under $\delta$ satisfies 
Condition~\ref{condition}.
We fix a $C $-basis $\{ e_i \mid i \in N \}$ of $R $ 
and we extend it to a $C $-basis $\{ e_i \mid i \in M \}$
of $\mathrm{Quot}(R) $. 
Then the first basis is also a basis of the free $C[Y, \mathrm{det}(Y)^{-1}]$-module 
$R[Y, \mathrm{det}(Y)^{-1}]$ and the second one of the free $C[Y, \mathrm{det}(Y)^{-1}]$-module 
$\mathrm{Quot}(R)[Y, \mathrm{det}(Y)^{-1}]$.
Let $q \in (I)$ and assume that $r^{-1} q \in R[Y, \mathrm{det}(Y)^{-1}]$ for  $r \in R$. It is clear that 
$q$ has a unique expression $q= \sum_{i \in N} q_i e_i$ with $q_i$ in $I$.
We have  
\begin{equation*}
r^{-1}q = r^{-1} (\sum_{i \in N} q_i e_i )= \sum_{i \in N} q_i  (r^{-1} e_i) =  \sum_{i \in N} q_i  ( \sum_{j \in M }c_{ij} e_j)
= \sum_{i \in M} \tilde{q}_i    e_i
\end{equation*}
with $r^{-1} e_i = \sum_{j \in M }c_{ij} e_j$ for $c_{ij} \in C$ and $\tilde{q}_i \in I$ .
Since $r^{-1} q \in R[Y, \mathrm{det}(Y)^{-1}]$, we conclude that $\tilde{q}_i$ can only be nonzero for indices of $N$
and so $r^{-1}q \in (I)$.\\   
We need to show that the two maps are inverse to each other.\\
(a) We have to show that for an ideal $I$ of $C[Y, \mathrm{det}(Y)^{-1}]$ it follows that 
$I=(I) \cap C[Y, \mathrm{det}(Y)^{-1}]$. It is clear that $I$ is contained in the right side. 
For the other inclusion let $\{ e_i \mid i \in N \}$ be a $C$-basis of $I$. 
This basis is also a basis of the free $R$ module $(I)$. It is easy to see that an element 
$q=\sum_{i \in N} r_i e_i $ of $(I)$ is a constant if and only if all $r_i $ are elements of $ C$.\\
(b) We have to show that an ideal $J$ of  $R[Y, \mathrm{det}(Y)^{-1}]$ which 
satisfies Condition~\ref{condition}
is generated by the ideal $I:= J \cap C[Y, \mathrm{det}(Y)^{-1}]$. To this purpose we fix a $C$-basis 
$\{ e_i \mid i \in N \}$ of $C[Y, \mathrm{det}(Y)^{-1}]$ which is also a basis of the free $R$-module 
$R[Y, \mathrm{det}(Y)^{-1}]$.  Then $q \in J$ writes for $r_i \in R$ uniquely as $q= \sum_{i \in N} r_i e_i$.  
We show by induction on the length $l(q)$, that is the number of nonzero $r_i$ in the expression for $q$, 
that $q \in (I)$. Let $l(q)=1$, 
that is, $q=r_i e_i$. Then Condition~\ref{condition_b} forces 
$q \in (I)$. Let $l(q)>1$. If all $r_i$ in the expression
of $q$ are elements of $C$ or all $r_i$ are of shape $r_i = r c_i$ with $r \in R$
and $c_i \in C$, then Condition~\ref{condition_b} yields $q \in (I)$. 
Without loss of generality let $r_{i_1} $ be not a constant and let $r_{i_1}$ and 
$r_{i_2}$ be $C$-linearly independent, that is, there exists no $c \in C$ 
such that $r_{i_1}=c r_{i_2}$. Note that 
$r_{i_1} \partial(r_{i_2}) - r_{i_2} \partial(r_{i_1}) = 0$ if and only if $r_{i_1}$, $r_{i_2}$ are $C$-linearly dependent.
Then 
 \begin{equation*}
\tilde{q}:=   r_{i_1} \partial(q) - \partial(r_{i_1})q = 
   \sum_{i \in M_q \setminus \{ i_1 \}} \tilde{r}_{i}  e_i \neq 0.
 \end{equation*} 
 The induction assumption implies 
 that $\tilde{q} \in (I)$. The same holds for $\tilde{r}_{i_2} q - r_{i_2} \tilde{q}$.
 We conclude that $\tilde{r}_{i_2} q  \in (I)$. Since $(I)$ satisfies 
 Condition~\ref{condition_b}, it follows that $q \in (I)$.
\end{proof}
Before we prove Lemma \ref{lemma_bijection2} below, we show the following lemma.
\begin{lemma} \label{lemma_about_torsion}
 Let $I_1$ and $I_2$ be two differential ideals of $R[\mathrm{GL}_n]$ which satisfy 
Condition~\ref{condition_a} and \ref{condition_b}. 
\begin{enumerate}[label=(\arabic*), ref=\thetheorem(\arabic*)]
\item \label{lemma_about_torsion_point1} Then the $R$-module 
$R[\mathrm{GL}_n]/I_1 \otimes_{R} R[\mathrm{GL}_n]/I_2$
is torsion-free. 
\item \label{lemma_about_torsion_point2} If $I_1$ is a relatively maximal differential ideal and $S=R[\mathrm{GL}_n]/I_1$,
then the $S$-module $S \otimes_R R[\mathrm{GL}_n]/I_2 \cong S[\mathrm{GL}_n]/(I_2)$
is torsion-free. 
\end{enumerate}
\end{lemma}
\begin{proof} 
(1) Since the ideals $I_1$ and $I_2$ satisfy 
Condition~\ref{condition_b}, the ideal 
\begin{equation*}
J:=(I_1 \otimes_R  R[\mathrm{GL}_n]) + (R[\mathrm{GL}_n] \otimes_R I_2 )
\end{equation*}
of $R[\mathrm{GL}_n] \otimes_{R} R[\mathrm{GL}_n] $ 
also satisfies Condition~\ref{condition_b}. Now, since 
$R[\mathrm{GL}_n] \otimes_{R} R[\mathrm{GL}_n]$ has no $R$-torsion, the $R$-module
$R[\mathrm{GL}_n] \otimes_{R} R[\mathrm{GL}_n] /J$
is torsion-free if and only if $J$ satisfies 
Condition~\ref{condition_b}. 
Then the statement in (1) follows from isomorphism 
of $R$-algebras 
\begin{equation*}
R[\mathrm{GL}_n]/I_1 \otimes_{R} R[\mathrm{GL}_n]/I_2 \cong
R[\mathrm{GL}_n] \otimes_{R} R[\mathrm{GL}_n] /J.
\end{equation*}
(2) It follows from (1) and 
\begin{equation*}
S[\mathrm{GL}_n]/(I_2) \cong S \otimes_R R[\mathrm{GL}_n]/I_2 \cong 
R[\mathrm{GL}_n]/I_1 \otimes_R R[\mathrm{GL}_n]/I_2
\end{equation*}
that $S[\mathrm{GL}_n]/(I_2)$ is a torsion-free $R$-module. 
We consider the $S$-module homomorphism $\phi: S[\mathrm{GL}_n]/(I_2) \rightarrow 
S[\mathrm{GL}_n]/(I_2) \otimes_R F$. Since $S[\mathrm{GL}_n]/(I_2)$ has no 
$R$-torsion, $\phi$ is an injective $R$-module homomorphism and so it is also 
injective as a $S$-module homomorphism. 
The differential ring $(S \otimes_R F)$ is $\partial$-simple and so 
\cite[Theorem 4.3]{Maurischat_simple} yields that the differential 
$(S \otimes_R F)$-module 
$(S \otimes_R F) \otimes_R R[\mathrm{GL}_n]/I_2 $ is projective. 
Thus, it is a torsion-free $(S \otimes_R F)$-module and so 
it is also a torsion-free $S$-module. We conclude with
\begin{equation*}
S[\mathrm{GL}_n]/(I_2) \otimes_R F \cong (S \otimes_R R[\mathrm{GL}_n]/I_2 )
\otimes_R F \cong (S \otimes_R F) \otimes_R R[\mathrm{GL}_n]/I_2 
\end{equation*} 
that $S[\mathrm{GL}_n]/(I_2) \otimes_R F$ is a torsion-free $S$-module and so, since 
$\phi$ is an injective $S$-module homomorphism, $S[\mathrm{GL}_n]/(I_2)$ 
is a torsion-free $S$-module.  
\end{proof}
\begin{lemma}\label{lemma_bijection2}
Let $S$ be a Picard-Vessiot ring over $R$. 
 Then the map 
 \begin{equation*}
  \iota: I \mapsto (I)
 \end{equation*}
 from the set of ideals in $R[\mathrm{GL}_n]$ which satisfy Condition~\ref{condition}  
 to the set of $\mathrm{Gal}(S/R)$ invariant ideals in $S[\mathrm{GL}_n]$ which 
 fulfill Condition~\ref{condition} 
 is a bijection where $(I)$ means the ideal $I \cdot S[\mathrm{GL}_n]$
 of $S[\mathrm{GL}_n]$. Its inverse map is  
 \begin{equation*}
  \iota^{-1}: J \mapsto J \cap R[\mathrm{GL}_n].
 \end{equation*}
\end{lemma}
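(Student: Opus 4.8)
The plan is to trivialize the differential structure over $S$ and then reduce the correspondence to the classical one over the field $F$. Since $S$ is a Picard--Vessiot ring it contains a fundamental solution matrix $Z \in \mathrm{GL}_n(S)$, and setting $Y := Z^{-1}X$ gives $\partial(Y)=0$, so that $S[\mathrm{GL}_n]=S[Y,\det(Y)^{-1}]$ has constant coordinate ring $C[Y,\det(Y)^{-1}]$ and Lemma~\ref{lemma_bijection1} applies with base ring $S$. The group $G=\mathrm{Gal}(S/R)$ acts on $S[\mathrm{GL}_n]$ through $g(X)=X$; from $\partial(g(Z))=A\,g(Z)$ the matrix $Z^{-1}g(Z)=:M_g$ is constant, i.e.\ $M_g\in\mathrm{GL}_n(C)$, so $G$ acts on the $Y$ by $Y\mapsto M_g^{-1}Y$ and fixes $R[\mathrm{GL}_n]=R[X,\det(X)^{-1}]$ pointwise. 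I write $L=S\otimes_R F$ for the Picard--Vessiot ring over $F$ obtained from $S$.

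First I would show that $\iota$ is well defined. For $I$ satisfying Condition 2.1 the ideal $(I)=I\cdot S[\mathrm{GL}_n]$ is differential (the derivation extends from $R[\mathrm{GL}_n]$) and $G$-invariant, being generated by elements of the fixed ring $R[\mathrm{GL}_n]$. Identifying $S\cong R[\mathrm{GL}_n]/P$ for the relatively maximal ideal $P$ defining the Picard--Vessiot ring, Lemma~\ref{lemma_about_torsion}(2) shows that $S[\mathrm{GL}_n]/(I)\cong S\otimes_R R[\mathrm{GL}_n]/I$ is a torsion-free $S$-module, which is precisely Condition 2.1(b) for $(I)$; the same torsion-freeness forces $(I)\cap S=(0)$, since the class of $1$ is nonzero (the quotient is nonzero, as its localization at $R\setminus\{0\}$ is $L\otimes_F(R[\mathrm{GL}_n]/I\otimes_R F)\neq 0$) and a nonzero $s\in(I)\cap S$ would be an $S$-torsion element killing it. Thus $(I)$ satisfies Condition 2.1. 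The relation $\iota^{-1}\circ\iota=\mathrm{id}$, i.e.\ $(I)\cap R[\mathrm{GL}_n]=I$, is the injectivity of $R[\mathrm{GL}_n]/I\to S[\mathrm{GL}_n]/(I)$; as both sides are $R$-torsion-free (by Condition 2.1(b) and Lemma~\ref{lemma_about_torsion}) they embed into their localizations at $R\setminus\{0\}$, where with $M=R[\mathrm{GL}_n]/I$ the map becomes $M\otimes_R F\to L\otimes_F(M\otimes_R F)$, $m\mapsto 1\otimes m$, which is visibly injective.

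That $\iota^{-1}\colon J\mapsto J\cap R[\mathrm{GL}_n]$ lands in the right set is routine: the intersection is differential, $J\cap R\subseteq J\cap S=(0)$ gives Condition 2.1(a), and Condition 2.1(b) is inherited from $J$. The real work is surjectivity, $\iota\circ\iota^{-1}=\mathrm{id}$. I would obtain it by localizing at $R\setminus\{0\}$: using the torsion-freeness from Lemma~\ref{lemma_about_torsion}, extension and contraction identify the admissible ideals of $R[\mathrm{GL}_n]$ with the proper differential ideals of $F[\mathrm{GL}_n]$, and the $G$-invariant admissible ideals of $S[\mathrm{GL}_n]$ with the proper $G$-invariant differential ideals of $L[\mathrm{GL}_n]$ (here one uses that such ideals, being extended from $F$, have $L$-free quotients). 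These two localizations fit into a commutative square together with $\iota$ and with the extension map over $F$; once the latter is a bijection, all four maps are bijections, and the already proved identity $\iota^{-1}\circ\iota=\mathrm{id}$ shows that contraction is the two-sided inverse of $\iota$.

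The main obstacle is the field-level statement invoked above: that $I\mapsto I\cdot L[\mathrm{GL}_n]$ is a bijection from the proper differential ideals of $F[\mathrm{GL}_n]$ onto the proper $G$-invariant differential ideals of $L[\mathrm{GL}_n]$. This is exactly where $G$-invariance is indispensable, and where a direct invariant-theoretic argument breaks down, because for a non-reductive $G$ a $G$-stable ideal need not be generated by its invariants. The correct tool is the torsor structure $L\otimes_F L\cong L\otimes_C C[G]$ of the Picard--Vessiot ring, which trivializes the $G$-action after the faithfully flat base change $F\to L$ and lets $G$-invariant ideals descend to $F[\mathrm{GL}_n]$; this is the classical ring form of the Galois correspondence (cf.\ \cite{P/S}). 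Granting it, the reduction above completes the proof.
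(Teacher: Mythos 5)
Your treatment of the well-definedness of both maps and of the identity $\iota^{-1}\circ\iota=\mathrm{id}$ matches the paper's proof in substance: the paper likewise invokes Lemma~\ref{lemma_about_torsion}(2) to get Condition 2.1(b) for $(I)$, and proves $(I)\cap R[\mathrm{GL}_n]=I$ by passing to $F\otimes_R S$ and a basis/flatness argument. The divergence, and the problem, is in the surjectivity direction $\iota\circ\iota^{-1}=\mathrm{id}$, which is where the actual content of the lemma sits. You reduce it to the claim that $I\mapsto I\cdot L[\mathrm{GL}_n]$ is a bijection from the differential ideals of $F[\mathrm{GL}_n]$ onto the $G$-invariant differential ideals of $L[\mathrm{GL}_n]$, call this ``the classical ring form of the Galois correspondence,'' and grant it. That statement is not an off-the-shelf citation from \cite{P/S}: to prove it one needs faithfully flat descent along $F\to L$, the torsor isomorphism $L\otimes_F L\cong L\otimes_C C[G]$, a translation between the descent datum and invariance under $G(C)$ (using that $G(C)$ is Zariski dense in $G$ in characteristic zero), and some care with the fact that $L$ is a ring rather than a field, so ``proper ideal'' must be replaced by ``ideal meeting $L$ trivially.'' As written, the proof is therefore incomplete exactly at its crux, even though the tool you name would work if executed.

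The paper avoids descent entirely at this step. For a $\mathrm{Gal}(S/R)$-invariant $J$ satisfying Condition 2.1 and $I:=J\cap R[\mathrm{GL}_n]$, it fixes an $R$-basis $\{e_i\}$ of $R[\mathrm{GL}_n]$ (which is also an $S$-basis of $S[\mathrm{GL}_n]$), writes $f=\sum_i s_ie_i\in J$, and inducts on the number of nonzero coordinates: if $s_{i_1},s_{i_2}$ are $R$-linearly independent with $s_{i_1}\notin R$, there is $\gamma\in\mathrm{Gal}(S/R)$ with $\gamma(s_{i_1})s_{i_2}\neq s_{i_1}\gamma(s_{i_2})$ --- the only nontrivial input, equivalent to $\mathrm{Quot}(S)^{\mathrm{Gal}(S/R)}=\mathrm{Quot}(R)$ --- and then $s_{i_1}\gamma(f)-\gamma(s_{i_1})f$ and the further combination $(s_{i_1}\gamma(s_{i_2})-\gamma(s_{i_1})s_{i_2})f-s_{i_2}g$ have smaller length, which together with Condition 2.1(b) for $I\cdot S[\mathrm{GL}_n]$ forces $f\in I\cdot S[\mathrm{GL}_n]$. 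So if you wish to keep your route you must actually prove the field-level descent statement; otherwise the elementary induction on length is the shorter and self-contained path.
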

\begin{proof}
It is easy to verify that the map $\iota^{-1}$ is well-defined.
Since  $S[\mathrm{GL}_n]/I \cdot S[\mathrm{GL}_n]$ has no $S$-torsion by 
Lemma~\ref{lemma_about_torsion_point2}, the ideal $I \cdot S[\mathrm{GL}_n]$ satisfies 
Condition~\ref{condition_b}. 
Hence, the map $\iota$ is also well-defined. \\
(a) For an ideal $I$ of $R[\mathrm{GL}_n]$ which satisfies Condition~\ref{condition} 
we show that $I=I \cdot S[\mathrm{GL}_n] \cap R[\mathrm{GL}_n]$. It is clear that 
$I \subset I\cdot S[\mathrm{GL}_n] \cap R[\mathrm{GL}_n]$. For the other inclusion we consider the 
extension $S_F := F \otimes_R S $ and the ideal $I_F:=I\cdot F[\mathrm{GL}_n]$ where 
$F$ denotes $\mathrm{Quot}(R)$. 
One chooses a $F$-basis of the module $I_F $. This basis is also a basis 
of the free $S_F$-module $I_F \cdot S_F[\mathrm{GL}_n]$. 
We conclude similar as in the proof of Lemma~\ref{lemma_bijection1} part (a) that 
$I_F = I_F \cdot S_F[\mathrm{GL}_n] \cap F[\mathrm{GL}_n]$. 
Now let $f \in I \cdot S[\mathrm{GL}_n] \cap R[\mathrm{GL}_n]$. Trivially it holds  
$f \in I_F \cdot S_F[\mathrm{GL}_n] \cap F[\mathrm{GL}_n]$ and so
$f \in I_F$. Since $f \in R[\mathrm{GL}_n]$ it follows that 
$f \in I_F \cap R[\mathrm{GL}_n]$ which is equal to $I$ since $I$ satisfies 
Condition~\ref{condition}. 
\\
(b) For a $\mathrm{Gal}(S/R)$-invariant ideal $J$ of $S[\mathrm{GL}_n]$ which satisfies
Condition~\ref{condition} it is clear that $I:=J \cap R[\mathrm{GL}_n]$ satisfies 
also Condition~\ref{condition}.
Recall that then the ideal $I \cdot S[\mathrm{GL}_n]$ of $S[\mathrm{GL}_n]$ 
also satisfies Condition~\ref{condition_b} by 
Lemma~\ref{lemma_about_torsion_point2}.
We have to show that $J$ is equal to $I \cdot S[\mathrm{GL}_n]$.
Let $\{ e_i \mid i \in N \}$ be a $R$-basis of $R[\mathrm{GL}_n]$ which is also 
a $S$-module basis of $S[\mathrm{GL}_n]$. Then every $f \in J$ writes uniquely as 
$\sum_{i \in N} s_i e_i$ with $s_i \in S$. We proceed by induction on the length $l(f)$, 
that is, on the number of nonzero $s_i$ in the expression of $f$. 
If $f=s_i e_i$, then $e_i \in J \cap R[\mathrm{GL}_n]$ and so in $I\cdot S[\mathrm{GL}_n]$.
Now let $l(f)>1$. If all $s_i$ in the expression of $f$ lie in $R$ or if $f= s \sum r_i e_i$ 
with $r_i \in R$, then clearly $f$ is in $I \cdot S[\mathrm{GL}_n]$. 
Without loss of generality let $s_{i_1}$ and $s_{i_2}$ be $R$-linearly 
independent and let $s_{i_1}$ be not in $R$. 
Then there is $\gamma \in \mathrm{Gal}(S/R)$ such that 
$\gamma(s_{i_1})s_{i_2} \neq s_{i_1}\gamma (s_{i_2})$. The length of 
$g:=s_{i_1}\gamma(f)-\gamma(s_{i_1})f \neq 0$ is smaller than $l(f)$ 
and so $g$ lies by induction assumption in $I \cdot S[\mathrm{GL}_n]$. 
Further, the length of $(s_{i_1}\gamma(s_{i_2})-\gamma(s_{i_1})s_{i_2})f-s_{i_2}g$ 
is smaller than $l(f)$ and so it is also in $I \cdot S[\mathrm{GL}_n]$. 
We conclude that $(s_{i_1}\gamma(s_{i_2})-\gamma(s_{i_1})s_{i_2})f$ lies in 
$I \cdot S[\mathrm{GL}_n]$. Since $I\cdot S[\mathrm{GL}_n]$ 
satisfies Condition~\ref{condition}, we have that $f$ is an element of $I\cdot S[\mathrm{GL}_n]$. 
\end{proof}

\section{Formal Taylor maps}\label{chap3}

As we have seen in Chapter~\ref{chap1} the proof of the specialization bound is based on the idea 
to specialize a relatively maximal differential ideal of $R_1[\mathrm{GL}_n]$ to a 
differential ideal of $R_2[\mathrm{GL}_n]$ and then to choose a relatively maximal 
differential ideal  above it. 
But to make this idea work, we need to guarantee that there is a relatively maximal 
differential ideal of $R_1[\mathrm{GL}_n]$ such that its specialization is a 
differential ideal of $R_2[\mathrm{GL}_n]$ satisfying 
Condition~\ref{condition_a}.
To ensure that such an ideal exists, we will construct in this section 
Taylor maps from $R_1[\mathrm{GL}_n]$ and $R_2[\mathrm{GL}_n]$ in appropriate 
rings of power series.\\ 
It will turn out that the images of the two maps are Picard-Vessiot rings and so their 
kernels are relatively maximal differential ideals. The construction will be done 
in such a way that we can specialize the coefficients of the power series solutions 
representing $R_1[\mathrm{GL}_n]$ to the coefficients of the ones representing 
$R_2[\mathrm{GL}_n]$ and that it commutes with the specialization of 
$R_1[\mathrm{GL}_n]$ to  $R_2[\mathrm{GL}_n]$.
From this it will follow that 
the specialization of the kernel of the Taylor map for $R_1[\mathrm{GL}_n]$ is 
contained in the one for $R_2[\mathrm{GL}_n]$, that is, it is a relatively maximal 
differential ideal of $R_1[\mathrm{GL}_n]$ with the desired property.\\

{\bf(a)} In a first part we construct a Taylor map for $R_1[\mathrm{GL}_n]$. 
Since $R_1$ is generated by differential indeterminates and since later on we are
going to specialize them, a Taylor map for $R_1$ has to map the indeterminates to
power series which are also differentially algebraically independent over $C$ and 
whose coefficients can be specialized. 
In order to construct a differential embedding of $R_1$ in a ring of power series 
and to have enough freedom for specializations, we need to introduce new 
indeterminates, that is, 
we consider a field extension $C(\boldsymbol{\beta})/C$ generated 
over $C$ by infinitely many algebraically independent elements
$\boldsymbol{\beta}= \{ \beta_{ik} \mid 1 \leq i \leq l, \ k \in \mathbb{N}_0 \}$.
It will turn out that the Taylor map 
\begin{equation*}
R_1 \rightarrow C(\boldsymbol{\beta})[[T]], \ t_i \mapsto 
p_i:= \sum_{k \in \mathbb{N}_0}  1/k! \ \beta_{ik} T^k 
\end{equation*}
is a differential monomorphism (see Corollary~\ref{corollary_diff_transcendental}) 
where $C(\boldsymbol{\beta})[[T]]$ denotes 
the ring of power series in $T$ over $C(\boldsymbol{\beta})$ 
with derivation $\partial=\frac{d}{dT}$. Since the $\beta_{ik}$ are indeterminates, 
the power series $p_i$ can obviously be 
specialized to every power series in $C [[T]]$.\\
The Taylor map on $R_1[\mathrm{GL}_n]$ mentioned above will be the unique extension of 
the Taylor map for $R_1$ 
to a differential homomorphism such that the image of $X$ is congruent to $1_n$ 
modulo $T$. This Taylor map has the nice property that after arbitrary specializations 
of the coefficients the image of $X$ still is invertible, hence a fundamental solution 
matrix for a specialized equation. The hard task, however, is to show that the image of 
$R_1[\mathrm{GL}_n]$ under this Taylor map is indeed a 
Picard-Vessiot ring with constants $C$.\\  
In order to prove this statement, we need to extend in a first step for
technical reasons the constants $C$ of $R_1$ to $C(\boldsymbol{\beta})$, 
that is, we will construct a Taylor map for the differential ring
 \begin{equation*}
 R_{ \boldsymbol{\beta}} :=  R_1 \otimes_C C(\boldsymbol{\beta}).
\end{equation*}
Once we have seen that the kernel of this Taylor map is a relatively maximal differential ideal of 
$R_{ \boldsymbol{\beta}} [\mathrm{GL}_n]$, we show in a second step 
that the kernel of the restriction to $R_{1}[\mathrm{GL}_n]$
is a relatively maximal differential ideal 
(Theorem~\ref{lemma_taylor_emb_transcendental_without beta}).\\
We start with the construction of the Taylor map for $R_{ \boldsymbol{\beta}}[\mathrm{GL}_n]$.
Since the differential ring  $R_{\boldsymbol{\beta}}$
is differentially generated by the differential indeterminates
$\boldsymbol{t}$ over $C(\boldsymbol{\beta})$, we can consider $R_{ \boldsymbol{\beta}}$ as a 
polynomial ring generated by infinitely 
many transcendental elements $\partial^k(t_i)$ over $C(\boldsymbol{\beta})$.
In this polynomial ring the ideal 
\begin{equation*}
 P_1 := \langle \partial^k(t_i)  - \beta_{ik} \mid 1 \leq i \leq l, \ k \in \mathbb{N}_0 \rangle
\end{equation*}
is clearly a maximal ideal and so the image of an element $r$ 
of $R_{ \boldsymbol{\beta}}$ under the quotient map 
$R_{ \boldsymbol{\beta}} \rightarrow  R_{ \boldsymbol{\beta}}/P_1$ 
is the evaluation of $r$ at $\boldsymbol{\beta}$. We denote 
this image by $r(P_1)$.
 \begin{lemma}\label{lemma_taylor_emb_transcendental} 
  For $A \in R_{1}^{n \times n}$ let a derivation on $R_{\boldsymbol{\beta}}[\mathrm{GL}_n]$ 
 be defined by the matrix differential equation  
 $\partial(X)=AX$.
 Let $P_{1}$ be the maximal ideal of $R_{ \boldsymbol{\beta}}$ 
 as above and let $X(P_{1}) \in \mathrm{GL}_n(C(\boldsymbol{\beta}))$ be initial values.   
 Then the kernel of the differential ring homomorphism
 \begin{gather*}
 \tau: R_{\boldsymbol{\beta}}[\mathrm{GL}_n] \rightarrow C(\boldsymbol{\beta})[[T]], \\
  t_i \mapsto \sum_{k \in \mathbb{N}_0} 1/k! \  \partial^{k}(t_i)(P_{1}), \ 
  X_{ij} \mapsto \sum_{k \in \mathbb{N}_0} 1/k! \ \partial^{k}(X_{ij})(P_{1})
 \end{gather*}
 is a relatively maximal differential ideal of $R_{\boldsymbol{\beta}}[\mathrm{GL}_n]$. 
\end{lemma}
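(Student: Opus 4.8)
The plan is to identify $\mathrm{im}(\tau) \cong R_{\boldsymbol{\beta}}[\mathrm{GL}_n]/\ker(\tau)$ as a Picard--Vessiot ring over $R_{\boldsymbol{\beta}}$; then $\ker(\tau)$ is relatively maximal because, as observed after the definition of a Picard--Vessiot ring over a ring, the relatively maximal differential ideals of $R_{\boldsymbol{\beta}}[\mathrm{GL}_n]$ are precisely those whose quotient is $R_{\boldsymbol{\beta}}$-simple. Since $\tau$ is given as a differential homomorphism, $\ker(\tau)$ is a differential ideal, and I would first check Condition 2.1 (a), i.e.\ that $\tau$ restricted to $R_{\boldsymbol{\beta}}$ is injective. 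This amounts to the differential algebraic independence of the power series $p_i = \sum_{k} 1/k!\,\beta_{ik}T^k$ over the enlarged constant field $C(\boldsymbol{\beta})$, which is the content of (the $C(\boldsymbol{\beta})$-version of) Corollary~\ref{corollary_diff_transcendental}; note that independence over $C$ alone does not suffice, so the statement must genuinely be read over $C(\boldsymbol{\beta})$.

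Next I would verify the two easy ingredients of the Picard--Vessiot property. The matrix $Z := \tau(X)$ satisfies $\partial(Z) = \tau(A)Z$, and since $Z \equiv X(P_{1}) \pmod{T}$ with $X(P_{1}) \in \mathrm{GL}_n(C(\boldsymbol{\beta}))$, its constant term is invertible, so $Z \in \mathrm{GL}_n(C(\boldsymbol{\beta})[[T]])$ is a fundamental solution matrix and $\mathrm{im}(\tau) = \tau(R_{\boldsymbol{\beta}})[Z_{ij}, \det(Z)^{-1}]$. The field of constants is then handled for free by the target ring: as $\mathrm{im}(\tau) \subseteq C(\boldsymbol{\beta})[[T]]$ and the field of constants of $C(\boldsymbol{\beta})((T))$ under $\frac{d}{dT}$ is exactly $C(\boldsymbol{\beta})$, while $C(\boldsymbol{\beta}) \subseteq \mathrm{im}(\tau)$, the constants of $\mathrm{im}(\tau)$ and of $L := \mathrm{im}(\tau)\otimes_{R_{\boldsymbol{\beta}}} F_{\boldsymbol{\beta}}$ both equal $C(\boldsymbol{\beta})$, where $F_{\boldsymbol{\beta}} := \mathrm{Quot}(R_{\boldsymbol{\beta}})$. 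In other words the Taylor map introduces no new constants, and this is the clean pay-off of embedding everything into a power series ring.

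It remains to promote ``no new constants'' to $R_{\boldsymbol{\beta}}$-simplicity, and this is where I expect the real work to lie. Arguing as in the discussion following the definition of a Picard--Vessiot ring over a ring, it suffices to show that $L = F_{\boldsymbol{\beta}}[Z, \det(Z)^{-1}]$ is a simple differential ring: a nonzero differential ideal $\mathfrak{a}$ of $\mathrm{im}(\tau)$ with $\mathfrak{a}\cap R_{\boldsymbol{\beta}} = (0)$ would generate the unit ideal in $L$, whence $1 \in \mathfrak{a}\cdot L$ and a nonzero element of $R_{\boldsymbol{\beta}}$ would lie in $\mathfrak{a}$, a contradiction. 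Now $L$ is generated over the differential field $F_{\boldsymbol{\beta}}$ by the fundamental solution matrix $Z$ and has the same field of constants $C(\boldsymbol{\beta})$ as $F_{\boldsymbol{\beta}}$, so the standard characterization of Picard--Vessiot rings yields that $L$ is simple. The subtle point is that $C(\boldsymbol{\beta})$ is \emph{not} algebraically closed, so the usual uniqueness arguments are unavailable and one must deduce simplicity from the absence of new constants directly---for instance by reducing a hypothetical minimal-length element of a differential ideal until a new constant is forced, or by invoking the projectivity of differential modules over simple differential rings from \cite{Maurischat_simple}. Once $L$ is simple, $\mathrm{im}(\tau)$ is $R_{\boldsymbol{\beta}}$-simple, hence a Picard--Vessiot ring over $R_{\boldsymbol{\beta}}$, and therefore $\ker(\tau)$ is a relatively maximal differential ideal of $R_{\boldsymbol{\beta}}[\mathrm{GL}_n]$.
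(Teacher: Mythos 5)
Your proposal is correct and follows essentially the same route as the paper: injectivity on $R_{\boldsymbol{\beta}}$ via the differential transcendence of the $p_i$, invertibility of $\tau(X)$ from its constant term, constants controlled by the embedding into $C(\boldsymbol{\beta})((T))$, and reduction of relative maximality to simplicity of $\mathrm{Quot}(\tau(R_{\boldsymbol{\beta}}))[\tau(X),\det(\tau(X))^{-1}]$. The one step you flag as ``the real work'' --- that a ring generated over a differential field by a fundamental matrix with no new constants is $\partial$-simple even though $C(\boldsymbol{\beta})$ is not algebraically closed --- is exactly what the paper outsources to \cite{Dyck}, Corollary 2.7, so your identification of the crux is accurate.
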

\begin{proof}
The map 
 \begin{equation*}
  \tau: R_{ \boldsymbol{\beta}} \rightarrow C(\boldsymbol{\beta})[[T]], \ r \mapsto \sum_{k \in \mathbb{N}_0} 
  1/k! \ \partial^{k} (r)(P_1)T^k
 \end{equation*}  
is clearly a differential ring homomorphism and by
Corollary~\ref{corollary_diff_transcendental} 
its image is contained in a purely differential transcendental extension of $C(\boldsymbol{\beta})$.
Hence the map $\tau$ is a differential monomorphism.\\
We extend $\tau$ to $R_{ \boldsymbol{\beta}}[\mathrm{GL}_n]$. The recursion 
\begin{equation*}
 A_1 := A \ \mathrm{and} \ A_k: = \partial(A_{k-1})+A_{k-1}A \ \mathrm{for} \ \mathrm{all} \ k \geq 2 
\end{equation*}
provides us with all higher derivatives $\partial^k (X)= A_k X$ and if we choose initial values 
$X(P_1) \in \mathrm{GL}_n(C(\boldsymbol{\beta}))$, we obtain values 
$\partial^{k} (X)(P_1) \in C(\boldsymbol{\beta})^{n \times n}$. 
So the Taylor map $\tau$ extends to a differential ring homomorphism
\begin{equation*}
 \tau: R_{ \boldsymbol{\beta}}[\mathrm{GL}_n] \rightarrow C(\boldsymbol{\beta})[[T]], \ 
 X_{ij} \mapsto  \sum_{k \in \mathbb{N}_0} 1/k! \ \partial^{k} (X_{ij})(P_1)T^k.
\end{equation*}
It is left to show that the kernel of $\tau$ is a relatively maximal differential ideal. 
Since $\tau(R_{ \boldsymbol{\beta}}) \subset C(\boldsymbol{\beta})[[T]]$ is an integral domain, 
we can take its quotient field 
$\mathrm{Quot}(\tau(R_{ \boldsymbol{\beta}})) \subset C(\boldsymbol{\beta})((T))$ 
which is a differential field with field of constants $C(\boldsymbol{\beta})$. We prove that 
$S := \mathrm{Quot}(\tau(R_{ \boldsymbol{\beta}})) [ \tau(X), \mathrm{det} (\tau(X))^{-1} ]$ 
is a $\partial_T$-simple differential ring, that is, $S$ has no proper, non-trivial differential ideals. 
By construction the matrix $\tau(X)$ is a fundamental solution matrix for the differential equation 
$\partial(\textit{\textbf{y}})= \tau (A) \textit{\textbf{y}}$ and $S$ is generated 
by the entries of $\tau(X)$ and the inverse of its determinant. 
Further, since $S$ is a subring of $C(\boldsymbol{\beta})((T))$, it is an integral domain 
and the constants of its field of fractions $\mathrm{Quot}(S) \subset C(\boldsymbol{\beta})((T))$
coincide with the field of constants of $\mathrm{Quot}(\tau(R_{ \boldsymbol{\beta}}))$. 
We apply now \cite[Corollary 2.7]{DyckerhoffInverseProb} and obtain that $S$ is a 
$\partial_T$-simple differential ring. Hence, the differential ring 
$\tau(R_{ \boldsymbol{\beta}}[\mathrm{GL}_n])$ is 
$\tau(R_{ \boldsymbol{\beta}})$-simple and so $\mathrm{kern}(\tau)$ 
is a relatively maximal differential ideal of $R_{ \boldsymbol{\beta}}[\mathrm{GL}_n]$. 
\end{proof}

To complete the proof of Lemma~\ref{lemma_taylor_emb_transcendental} we need to  
show that the power series $p_i$ generated a purely differential transcendental extension of $C(\boldsymbol{\beta})$. 
\begin{lemma}\label{lemmadifftranspowerseries}
For every $1 \leq i \leq l$ the power series $p_i$ is differentially algebraically independent
over the differential ring $C(\boldsymbol{\beta})[T]$ with derivation $\partial=\frac{d}{dT}$.
\end{lemma}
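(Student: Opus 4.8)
The plan is to prove the equivalent statement that $p_i$ is \emph{differentially transcendental} over $K:=C(\boldsymbol{\beta})(T)$, i.e.\ that for every $m$ the elements $p_i, p_i', \dots, p_i^{(m)}$ are algebraically independent over $K$; here $p_i^{(j)}=\sum_{l\in\mathbb{N}_0}\frac{1}{l!}\beta_{i,l+j}T^l$, so the $T^l$-coefficient of $p_i^{(j)}$ is $\frac{1}{l!}\beta_{i,l+j}$. Since the derivation of $C(\boldsymbol{\beta})[T]$ extends to $K$, this is the same as the asserted differential algebraic independence over $C(\boldsymbol{\beta})[T]$. The natural tool is the Jacobian criterion for algebraic independence in characteristic zero: elements $x_0,\dots,x_m$ of an extension $\Omega/K$ are algebraically independent over $K$ as soon as there exist $K$-derivations $D_0,\dots,D_m$ of $\Omega$ with $\det\big(D_a(x_b)\big)\neq 0$. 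The obvious candidates are the partials $\partial/\partial\beta_{i,s}$, for which $\partial p_i^{(j)}/\partial\beta_{i,s}=T^{s-j}/(s-j)!$ when $s\ge j$. The main obstacle, and the reason the argument is not completely routine, is that $K$ already contains all the coefficients $\beta_{i,s}$ of $p_i$; hence $\partial/\partial\beta_{i,s}$ does not annihilate $K$ and is not a $K$-derivation, so the criterion cannot be applied directly.

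I would remove this obstacle by a specialization argument reducing to the classical situation where the coefficients of the series lie outside the base field. Suppose, for contradiction, that there is a nonzero $P\in C(\boldsymbol{\beta})[T][Y_0,\dots,Y_m]$ with $P(T,p_i,p_i',\dots,p_i^{(m)})=0$. Only finitely many indeterminates occur in the coefficients of $P$, so I may fix $M$ with no $\beta_{i,k}$, $k>M$, occurring, and collect the inert constants into $\mathcal{G}:=C\big(\{\beta_{jk}:j\neq i\}\cup\{\beta_{i,k}:k\le M\}\big)$; then the coefficients of $P$ lie in $\mathcal{G}[T]$, while the $\beta_{i,k}$ with $k>M$ are algebraically independent over $\mathcal{G}$. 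Each $T^n$-coefficient of $P(T,p_i,\dots,p_i^{(m)})$ is a polynomial in finitely many of these $\beta_{i,k}$ $(k>M)$ with coefficients in $\mathcal{G}$; being zero, it is the zero polynomial, so it stays zero under any substitution of values for the $\beta_{i,k}$ $(k>M)$. Substituting fresh algebraically independent indeterminates $\gamma_k$ $(k>M)$ over $\mathcal{G}$ thus yields a power series $q:=\sum_{k\le M}\frac{1}{k!}\beta_{i,k}T^k+\sum_{k>M}\frac{1}{k!}\gamma_k T^k$ satisfying $P(T,q,q',\dots,q^{(m)})=0$, with $P\neq 0$ over $\mathcal{G}[T]$.

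Now the coefficients of $q$ that matter, the $\gamma_k$ with $k>M$, genuinely lie outside the base field $\mathcal{G}(T)$, so the Jacobian criterion applies. For a fixed $r$ and $K_0>\max(M,r)$ the partials $D_a:=\partial/\partial\gamma_{K_0+a}$ $(0\le a\le r)$ are honest $\mathcal{G}(T)$-derivations of $\mathcal{G}(\gamma_\bullet)((T))$, and $D_a(q^{(b)})=T^{K_0+a-b}/(K_0+a-b)!$. In the permutation expansion of $\det\big(D_a(q^{(b)})\big)$ every term carries the same power $T^{(r+1)K_0}$, so the determinant equals $T^{(r+1)K_0}\det\big(1/(K_0+a-b)!\big)_{0\le a,b\le r}$; multiplying the $a$-th row by $(K_0+a)!$ turns the remaining matrix into the falling-factorial matrix $\big((K_0+a)^{\underline{b}}\big)$, whose determinant is, after the triangular change to ordinary powers, the Vandermonde determinant $\prod_{a<a'}(a'-a)\neq 0$. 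Hence $\det\big(D_a(q^{(b)})\big)\neq 0$, so $q,q',\dots,q^{(r)}$ are algebraically independent over $\mathcal{G}(T)$ for every $r$; that is, $q$ is differentially transcendental over $\mathcal{G}(T)$, contradicting the nontrivial relation $P(T,q,\dots,q^{(m)})=0$. This contradiction proves that $p_i$ is differentially transcendental over $K$. The delicate point throughout is the reduction that moves the relevant coefficients out of the base field; once that is done, the Jacobian computation is a short explicit determinant evaluation.
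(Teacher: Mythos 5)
Your proof is correct, but it follows a genuinely different route from the paper's. The paper argues by reduction to the constant case: assuming $p_i$ differentially algebraic over $C(\boldsymbol{\beta})[T]$, it uses transcendence-degree and finitely-generated-subfield arguments to produce a nontrivial relation for $p_i$ coming from $C\{y\}$, and then specializes the generic coefficients $\beta_{ik}$ to the coefficients of a concrete power series such as $\sum_k T^{2^k}$, which is known to be differentially transcendental over $\mathbb{Q}$ by the Lipshitz--Rubel gap theorem; the contradiction follows. Your argument shares the first reduction in spirit (isolating the finitely many $\beta$'s that occur in the hypothetical relation into the base field $\mathcal{G}$, and using that the Taylor coefficients of the relation are identically vanishing polynomials in the remaining indeterminates), but then, instead of importing an external differentially transcendental series, you exploit the infinitely many remaining generic coefficients as free directions for the Jacobian criterion, with the determinant $\det\bigl(T^{K_0+a-b}/(K_0+a-b)!\bigr)$ evaluated explicitly via falling factorials and Vandermonde. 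What your approach buys is self-containedness and an explicit, quantitative reason for independence (in effect you prove directly the folklore fact that a power series whose coefficients are algebraically independent over the base is differentially transcendental); what the paper's approach buys is brevity, at the cost of citing the Lipshitz--Rubel theorem. One small remark: since the $\beta_{i,k}$ with $k>M$ are already algebraically independent over $\mathcal{G}$, the substitution by fresh indeterminates $\gamma_k$ is only a relabeling and could be omitted, with the derivations $\partial/\partial\beta_{i,K_0+a}$ used directly as $\mathcal{G}(T)$-derivations; this does not affect correctness.
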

\begin{proof} 
Assume $p_i$ is differentially algebraic. This means that the transcendence degree of 
$C(\boldsymbol{\beta})(T,p_i, \partial(p_i), \dots)$ over $C(\boldsymbol{\beta})(T)$ is finite.
Then the transcendence degree of 
$C(\boldsymbol{\beta})(T,p_i, \partial(p_i), \dots)$
is finite over $C(\boldsymbol{\beta})$ and so the same holds for the field $C(\boldsymbol{\beta})(p_i, \partial(p_i), \dots)$.
Thus $p_i$ satisfies a differential equation coming from $C(\boldsymbol{\beta})\{ y\}$. Since the coefficients of this 
equation are contained in a finitely generated subfield of $C(\boldsymbol{\beta})$, 
we obtain by the same argument that $p_i$ satisfies a relation from $C \{y \}$. 
By \cite[Theorem 4.1]{Lipshitz&Rubel} there exist power series in
$C[[T]]$, for example $\sum_k T^{2^k}$, which are not differentially algebraic over $C$. 
Thus if we specialize the coefficients of $p_i$ to the coefficients of such a power series, we get a contradiction.
\end{proof}
 
\begin{corollary}\label{lemma_diff_transcendental}
 For every $1 \leq i \leq l$ the power series $p_i$ is differentially 
 algebraically independent over the differential ring 
 $C(\boldsymbol{\beta})\{p_1,\dots ,\check{p}_i,\dots,p_l \}$.
\end{corollary}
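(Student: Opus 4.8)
The plan is to deduce the statement from the single–series result of Lemma~\ref{lemmadifftranspowerseries} by exploiting that the Taylor coefficients of the various $p_j$ involve pairwise disjoint families of the indeterminates $\boldsymbol{\beta}$. First I would pass to the symmetric reformulation: it suffices to prove that $p_1,\dots,p_l$ are \emph{jointly} differentially algebraically independent over $C(\boldsymbol{\beta})$, i.e.\ that no nonzero polynomial relation holds among the family $\{\partial^m p_r \mid 1\le r\le l,\ m\ge 0\}$ with coefficients in $C(\boldsymbol{\beta})$. Indeed, if $p_i$ were differentially algebraic over $C(\boldsymbol{\beta})\{p_1,\dots,\check{p}_i,\dots,p_l\}$, then a defining relation $\sum_e c_e\,\theta_e(p_i)=0$ (the $c_e\in C(\boldsymbol{\beta})\{p_j\mid j\neq i\}$ not all zero, the $\theta_e$ distinct differential monomials) becomes, after substituting each $\partial^m p_r$ by a variable $x_{rm}$, a single nonzero polynomial $G$ with $G(\{\partial^m p_r\})=0$, so that joint independence fails.

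So assume such a nonzero relation $G(\{\partial^m p_r\})=0$; clearing denominators we may take $G\in C[\boldsymbol{\beta}][\{x_{rm}\}]$. Only finitely many $\beta_{rk}$ occur in its coefficients, say all with $k\le M$, and the finitely many scalars occurring generate a \emph{countable} subfield $C_0\subseteq C$, so in fact $G\in C_0[\boldsymbol{\beta}^{\le M}][\{x_{rm}\}]$, where $\boldsymbol{\beta}^{\le M}=\{\beta_{rk}\mid k\le M\}$. The key idea is to freeze these finitely many indeterminates and specialize only the tails: writing $p_r=h_r+g_r$ with $h_r=\sum_{k\le M}\tfrac1{k!}\beta_{rk}T^k$ and $g_r=\sum_{k>M}\tfrac1{k!}\beta_{rk}T^k$, I would replace each $\beta_{rk}$ with $k>M$ by a scalar $b_{rk}\in\overline{C_0}$, keeping the $\beta_{rk}$ with $k\le M$ as indeterminates. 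This is a differential ring homomorphism $C_0[\boldsymbol{\beta}][[T]]\to\overline{C_0}[\boldsymbol{\beta}^{\le M}][[T]]$ fixing $T$ and the frozen indeterminates; hence $G$ is unchanged, in particular still nonzero, while the relation becomes $G(\{\partial^m\bar p_r\})=0$ for the specialized series $\bar p_r=h_r+\bar g_r$. It therefore suffices to choose the tails so that $\bar p_1,\dots,\bar p_l$ are jointly differentially transcendental over $K_0:=C_0(\boldsymbol{\beta}^{\le M})$, which gives the contradiction. Since each $h_r\in K_0[T]$ and $T'=1$, a shift $G\mapsto G(\{y_{rm}+h_r^{(m)}\})$ shows that any joint differential relation for the $\bar p_r$ over $K_0$ would produce one for the tails $\bar g_r$ over $K_0(T)$; so it is enough to make $\bar g_1,\dots,\bar g_l$ jointly differentially transcendental over $K_0(T)$, a field of \emph{finite} transcendence degree over $C_0$.

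The hard part, exactly as in Lemma~\ref{lemmadifftranspowerseries}, is to guarantee the existence of such differentially transcendental specializations; this is the several–series abundance statement that must replace the single Lipshitz--Rubel series. I would secure it by the countability of $C_0$: choose the $\bar g_r\in\overline{C_0}[[T]]$ successively, each differentially transcendental over the previously constructed countable differential field $K_0(T)\langle\bar g_1,\dots,\bar g_{r-1}\rangle$. Such a choice exists because the set of power series in $\overline{C_0}[[T]]$ that are differentially algebraic over a \emph{countable} differential field is itself countable---each nonzero algebraic differential equation has only countably many power–series solutions over $\overline{C_0}$, being determined by finitely many initial coefficients---whereas $\overline{C_0}[[T]]$ is uncountable; the series $\sum_k T^{2^k}$ of \cite{Lip Ru} makes the first step nonempty, and the same cardinality count lets one additionally require each $\bar g_r$ to have order $>M$, as the tail form demands. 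The main obstacle is thus precisely this joint, several–series differential–transcendence input, which the frozen–coefficient specialization together with the descent to a countable field reduces to the one–variable phenomenon already used for the lemma.
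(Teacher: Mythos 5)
Your argument is correct in outline but takes a genuinely different route from the paper's. Both proofs specialize part of the family $\boldsymbol{\beta}$ and keep the rest indeterminate, but you slice along the coefficient index while the paper slices along the series index: the paper fixes $i=l$, specializes all $\beta_{rj}$ with $r\neq l$ to constants of $C$ (infinitely many of them to $0$, so that $p_1,\dots,p_{l-1}$ become polynomials in $T$ and the hypothetical relation $h(p_l)=0$ descends to a nonzero relation over $C(\boldsymbol{\beta}_{l})[T]$), and then quotes Lemma~\ref{lemmadifftranspowerseries} verbatim. You instead freeze the finitely many $\beta_{rk}$ with $k\le M$ occurring in the coefficients of the relation, specialize all the tails to scalars, and are therefore forced to prove a \emph{joint} differential-transcendence statement for $l$ scalar power series over a countable field, which you obtain by a cardinality count. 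What the paper's slicing buys is that the single-series Lemma~\ref{lemmadifftranspowerseries} suffices as a black box; what yours buys is independence from the Lipshitz--Rubel example (the cardinality count already shows differentially transcendental series exist in abundance) and a reusable several-series statement. Your preliminary reductions --- passing to joint independence of $p_1,\dots,p_l$, the coefficientwise specialization of the tails, the shift by the frozen polynomial parts $h_r$, and the successive choice of the $\bar g_r$ --- are all sound.

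The one step you should not leave at one line is the assertion that a nonzero algebraic differential equation over a countable differential field has only countably many solutions in $\overline{C_0}[[T]]$ ``because solutions are determined by finitely many initial coefficients.'' As stated this is not immediate: for an arbitrary solution $y$ of $P=0$ the recursion for the Taylor coefficients degenerates wherever the separant of $P$ vanishes at $y$, and even where it does not, the coefficient of $a_N$ in the relevant Taylor coefficient of $P(y)$ is an indicial expression in $N$ that can vanish for finitely many $N$. One repairs this by inducting on the rank of $P$ (splitting off the zero set of the separant, which is a nonzero differential polynomial of lower rank) and, on the locus where the separant does not vanish at $y$, running the formal uniqueness argument from some index $N_0$ depending on $P$ and on $\mathrm{ord}_T\,S_P(y)$; the resulting injection $y\mapsto(P,\mathrm{ord}_T\,S_P(y),a_0,\dots,a_{N_0})$ into a countable set gives the claim. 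This is true, but it is the real content of what you call the hard part and needs to be written out or cited; with that supplied, your proof is complete.
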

\begin{proof}  
Without loss of generality assume that the power series $p_l(T)$ is differentially algebraically dependent over 
$C(\boldsymbol{\beta})\{ p_1(T),\dots,p_{l-1}(T) \}$, i.e.~there exists a non-trivial differential polynomial
\begin{equation*}
  h(y) \in C(\boldsymbol{\beta})\{ p_1(T),\dots,p_{l-1}(T) \} \{ y\} 
\end{equation*}
such that $h(p_l(T)) =0$. Then the coefficients of 
$h(y)$ are power series with infinitely many coefficients in 
$C[\beta_{1j},\dots , \beta_{l-1,j} \mid j \in \mathbb{N}_0 ]$ and finitely many coefficients in $C(\boldsymbol{\beta})$. Denote by 
$C(\boldsymbol{\beta}_{l}) $ the subfield of $C(\boldsymbol{\beta})$ which is generated 
by $\boldsymbol{\beta}_{l}=\{ \beta_{lj} \mid j \in \mathbb{N}_0 \}$ over $C$. We define
a specialization $\sigma : C(\boldsymbol{\beta}) \rightarrow C(\boldsymbol{\beta}_{l}) $ 
by $\beta_{lj} \mapsto \beta_{lj}$ for $j \in \mathbb{N}_0$ and $\beta_{ij} \mapsto c_{ij}$ 
for $i \in \{1, \dots , l-1 \}$ and $j \in \mathbb{N}_0$, where we choose finitely many 
$c_{ij} \in C^{\times}$ and infinitely many $c_{ij} = 0$ such that 
$\sigma(h)(y) \in C(\boldsymbol{\beta}_{l})[T]$ has no pole and does not vanish. 
Thus we obtain a non-trivial differential algebraic relation with 
\begin{equation*}
\sigma(h(p_l(T)))= \sigma(h)(p_l(T))=0.
\end{equation*}
 But $p_l(T)$ is differentially transcendental 
 over the differential field $C(\boldsymbol{\beta}_{l})[T]$ by Lemma~\ref{lemmadifftranspowerseries}.
\end{proof}
\begin{corollary}\label{corollary_diff_transcendental}
 The differential field extension 
 $C(\boldsymbol{\beta})\langle p_1,\dots,p_l \rangle / C(\boldsymbol{\beta})$ 
 is a purely differentially transcendental extension.
\end{corollary}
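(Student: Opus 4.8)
The plan is to deduce the corollary from Corollary~\ref{lemma_diff_transcendental} by invoking the matroid structure of differential algebraic dependence. Recall that a differential field extension $C(\boldsymbol{\beta})\langle p_1,\dots,p_l\rangle/C(\boldsymbol{\beta})$ is purely differentially transcendental precisely when the elements $p_1,\dots,p_l$ are differentially algebraically independent over $C(\boldsymbol{\beta})$, i.e.\ when the whole family of derivatives $\{\partial^k(p_i) \mid 1\leq i\leq l,\ k\in\mathbb{N}_0\}$ is algebraically independent over $C(\boldsymbol{\beta})$; equivalently $C(\boldsymbol{\beta})\{p_1,\dots,p_l\}$ is a differential polynomial ring in $l$ differential indeterminates. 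So it suffices to pass from the one-at-a-time transcendence statement of Corollary~\ref{lemma_diff_transcendental} to joint differential algebraic independence.

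First I would recall the standard fact, due to Kolchin, that over a differential field of characteristic zero differential algebraic dependence defines a pregeometry: the differential algebraic closure operator $\mathrm{cl}$, sending a set $S$ to the collection of elements differentially algebraic over $C(\boldsymbol{\beta})\langle S\rangle$, is finitary, monotone, idempotent and satisfies the Steinitz exchange property. In particular the usual matroid characterisation of independence applies: a finite family is differentially algebraically independent over $C(\boldsymbol{\beta})$ if and only if no member lies in the closure of the others, that is, if and only if each $p_i$ is differentially transcendental over $C(\boldsymbol{\beta})\langle p_1,\dots,\check{p}_i,\dots,p_l\rangle$.

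Now I would apply this characterisation. By Corollary~\ref{lemma_diff_transcendental}, for each $i$ the series $p_i$ is differentially transcendental over the differential ring $C(\boldsymbol{\beta})\{p_1,\dots,\check{p}_i,\dots,p_l\}$, hence also over its field of fractions $C(\boldsymbol{\beta})\langle p_1,\dots,\check{p}_i,\dots,p_l\rangle$; thus $p_i\notin \mathrm{cl}(\{p_j \mid j\neq i\})$ for every $i$. By the matroid characterisation the family $p_1,\dots,p_l$ is therefore differentially algebraically independent over $C(\boldsymbol{\beta})$, which is exactly the assertion that $C(\boldsymbol{\beta})\langle p_1,\dots,p_l\rangle/C(\boldsymbol{\beta})$ is purely differentially transcendental.

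If one prefers to avoid quoting the pregeometry formalism, I would argue directly by contradiction: were $p_1,\dots,p_l$ differentially dependent, choose a minimal differentially dependent subfamily indexed by $J$. By minimality every proper subfamily is independent, so adjoining any $p_{j_0}$, $j_0\in J$, to the independent family $\{p_j \mid j\in J,\ j\neq j_0\}$ produces a dependence; hence $p_{j_0}$ is differentially algebraic over $C(\boldsymbol{\beta})\langle p_j \mid j\in J,\ j\neq j_0\rangle$, and a fortiori over $C(\boldsymbol{\beta})\langle p_1,\dots,\check{p}_{j_0},\dots,p_l\rangle$, contradicting Corollary~\ref{lemma_diff_transcendental}. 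The only step requiring genuine care is this passage from ``each $p_i$ transcendental over the rest'' to joint independence, which rests on the exchange (minimal-circuit) property of differential transcendence degree; the substantive work having already been done in Lemma~\ref{lemmadifftranspowerseries} and Corollary~\ref{lemma_diff_transcendental}, I expect no real obstacle here beyond correctly invoking this standard differential-algebra fact.
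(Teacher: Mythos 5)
Your proof is correct and takes essentially the same route as the paper, which states this corollary without proof as an immediate consequence of Corollary~\ref{lemma_diff_transcendental}; your argument merely makes explicit the standard passage, via the exchange (minimal dependent subfamily) property of differential algebraic dependence, from ``each $p_i$ is differentially transcendental over the remaining ones'' to joint differential algebraic independence of $p_1,\dots,p_l$ over $C(\boldsymbol{\beta})$.
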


In Lemma~\ref{lemma_taylor_emb_transcendental} we constructed a Taylor map 
$\tau$ for $R_{ \boldsymbol{\beta}}[\mathrm{GL}_n]$ 
such that its kernel is a relatively maximal differential ideal of 
$R_{ \boldsymbol{\beta}}[\mathrm{GL}_n]$. 
In order to prove that the kernel of the restriction of $\tau$ to $R_1[\mathrm{GL}_n]$ 
is also a relatively maximal differential ideal,
we construct an auxiliary Taylor map. The difference to the previous Taylor map is
that we choose finitely many coefficients in the Taylor series representing
the differential indeterminates of $R_{ \boldsymbol{\beta}}$ to be elements of $C$. 
More precisely, for $r \in \mathbb{N}_0$ and $c_{ij} \in C$ with $0 \leq j < r$ we 
consider the ideal 
\begin{equation*}
  \tilde{P}_1 := \langle \partial^j (t_{i}) - c_{ij}, \ \partial^k (t_{i}) - \beta_{ik} 
  \mid 1 \leq i \leq l, \ 0 \leq j < r , \ k \in \mathbb{N} , \ k \geq r \rangle
\end{equation*}
of $R_{ \boldsymbol{\beta}}$.
It is clearly a maximal ideal of $R_{ \boldsymbol{\beta}}$ and so the image under 
the quotient map $R_{ \boldsymbol{\beta}} \rightarrow R_{ \boldsymbol{\beta}}/\tilde{P}_1$
of an element $r \in R_{ \boldsymbol{\beta}}$ is the evaluation of $r$ at
$c_{ij}$ and $ \beta_{i,k}$. We denote its image by $r(\tilde{P}_1)$.
 
 \begin{lemma}\label{lemma_taylor_emb_semi_transcendental} 
 For $A \in R_{1}^{n \times n}$ let a derivation on $R_{ \boldsymbol{\beta}}[\mathrm{GL}_n]$ be defined by 
 the differential equation $\partial(X)=AX$ and let $\tilde{P}_{1}$ be the maximal 
 ideal of $R_{ \boldsymbol{\beta}}$ as above.  Let $X(\tilde{P}_{1}) \in \mathrm{GL}_n(C(\boldsymbol{\beta}))$
 be arbitrary initial values.
 Then the kernel of the differential ring homomorphism 
 \begin{gather*}
  \tilde{\tau} : R_{\boldsymbol{\beta}}[\mathrm{GL}_n] \rightarrow C(\boldsymbol{\beta})[[T]], \\ 
  t_i \mapsto \sum_{k \in \mathbb{N}_{0}} 1/k! \ \partial^{k}(t_i)(\tilde{P}_{1}), \ 
  X_{ij} \mapsto \sum_{k \in \mathbb{N}_{0}} 1/k! \ \partial^{k}(X_{ij})(\tilde{P}_{1}) 
 \end{gather*}
 is a relatively maximal differential ideal of $R_{\boldsymbol{\beta}}[\mathrm{GL}_n]$. 
\end{lemma}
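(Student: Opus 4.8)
The plan is to follow the proof of Lemma~\ref{lemma_taylor_emb_transcendental} almost verbatim. The only genuinely new input is to check that replacing finitely many of the coefficients $\beta_{ik}$ by the constants $c_{ij}$ does not destroy the differential transcendence that was the crucial ingredient there. Concretely, under $\tilde{\tau}$ the differential indeterminate $t_i$ is sent to
\begin{equation*}
\tilde{p}_i = \sum_{j=0}^{r-1} \tfrac{1}{j!}\, c_{ij}\, T^j + \sum_{k \geq r} \tfrac{1}{k!}\, \beta_{ik}\, T^k,
\end{equation*}
which differs from a series of the type treated in Lemma~\ref{lemmadifftranspowerseries} only by the polynomial $q_i := \sum_{j=0}^{r-1} \tfrac{1}{j!} c_{ij} T^j \in C[T]$.

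First I would establish the analogues of Lemma~\ref{lemmadifftranspowerseries} and Corollaries~\ref{lemma_diff_transcendental},~\ref{corollary_diff_transcendental} with $p_i$ replaced by $\tilde{p}_i$. The key observation is that $\tilde{p}_i$ and its tail $\tilde{p}_i - q_i = \sum_{k\geq r}\tfrac{1}{k!}\beta_{ik}T^k$ differ by the polynomial $q_i \in C[T]$, which is differentially algebraic over $C$; since the power series that are differentially algebraic over a given differential field form a differential subfield, $\tilde{p}_i$ is differentially (in)dependent exactly when its tail is. For the tail the argument of Lemma~\ref{lemmadifftranspowerseries} applies with no change: a differential algebraic relation would descend to one over $C\{y\}$, and specializing the infinitely many free coefficients $\beta_{ik}$ ($k\geq r$) to the coefficients of a power series such as $\sum_k T^{2^k}$, which is not differentially algebraic over $\mathbb{Q}$ by \cite{Lip Ru}, Theorem 4.1, yields a contradiction. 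Running the specialization device of the two corollaries only on the free coefficients then shows that $C(\boldsymbol{\beta})\langle \tilde{p}_1,\dots,\tilde{p}_l\rangle / C(\boldsymbol{\beta})$ is again purely differentially transcendental.

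With this in hand the remainder is identical to Lemma~\ref{lemma_taylor_emb_transcendental}. The restriction of $\tilde{\tau}$ to $R_{\boldsymbol{\beta}}$ is a differential homomorphism whose image lies in the purely differentially transcendental extension just described, hence a differential monomorphism; extending along the recursion $A_1=A$, $A_k=\partial(A_{k-1})+A_{k-1}A$ with the invertible initial value $X(\tilde{P}_1)$ produces $\tilde{\tau}$ on all of $R_{\boldsymbol{\beta}}[\mathrm{GL}_n]$, and $\tilde{\tau}(X)$ is a fundamental solution matrix of $\partial(\textit{\textbf{y}})=\tilde{\tau}(A)\textit{\textbf{y}}$. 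Setting $S := \mathrm{Quot}(\tilde{\tau}(R_{\boldsymbol{\beta}}))[\tilde{\tau}(X),\mathrm{det}(\tilde{\tau}(X))^{-1}]$, which is an integral domain inside $C(\boldsymbol{\beta})((T))$ whose field of constants is $C(\boldsymbol{\beta})$, I would invoke \cite{Dyck}, Corollary 2.7 to conclude that $S$ is $\partial_T$-simple. Hence $\tilde{\tau}(R_{\boldsymbol{\beta}}[\mathrm{GL}_n])$ is $\tilde{\tau}(R_{\boldsymbol{\beta}})$-simple and $\mathrm{kern}(\tilde{\tau})$ is a relatively maximal differential ideal of $R_{\boldsymbol{\beta}}[\mathrm{GL}_n]$.

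The main obstacle, and really the only step requiring thought, is the first one: confirming that fixing finitely many leading Taylor coefficients preserves differential transcendence. Everything else is a transcription of the previous proof, because the simplicity argument via \cite{Dyck} depends only on having a fundamental solution matrix over an integral domain with the correct field of constants and not on the particular shape of the specialized coefficients.
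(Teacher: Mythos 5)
Your proposal is correct and follows essentially the same route as the paper: the paper likewise reduces everything to the proof of Lemma~\ref{lemma_taylor_emb_transcendental} and isolates the differential transcendence of $\tilde{\tau}(t_1),\dots,\tilde{\tau}(t_l)$ as the only new ingredient (its Lemma~\ref{semi_transcendental_power series}). The only cosmetic difference is how the fixed head is disposed of: you subtract the polynomial $q_i$ and use that differentially algebraic elements form a differential subfield, while the paper differentiates $r$ times so that $\partial^r(\tilde{\tau}(t_1))=\partial^r(p_1)$ and compares transcendence degrees — two equivalent one-line observations.
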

\begin{proof}
The proof just works as the proof of Lemma~\ref{lemma_taylor_emb_transcendental} where we use this time  
the ideal $\tilde{P}_1$ instead of $P_1$. That the elements $\tilde{\tau}(t_1), \dots , \tilde{\tau}(t_l)$ generate 
a purely differential transcendental extension of $C(\boldsymbol{\beta})$ is shown in
Lemma~\ref{semi_transcendental_power series} below. 
\end{proof}
\begin{lemma}\label{semi_transcendental_power series}
Let $\tilde{\tau}$ be as in Lemma~\ref{lemma_taylor_emb_semi_transcendental}. 
Then the differential field extension 
$C(\boldsymbol{\beta}) \langle \tilde{\tau}(t_1), \dots , \tilde{\tau}(t_l) \rangle/ C(\boldsymbol{\beta}) $
is purely differential transcendental.
\end{lemma}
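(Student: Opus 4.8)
\emph{The plan is to reduce the assertion to the already-proven Corollary~\ref{corollary_diff_transcendental} by exploiting that $\tilde\tau(t_i)$ differs from the power series $p_i$ of Lemma~\ref{lemma_taylor_emb_transcendental} only by a polynomial in $T$ whose coefficients lie in the base field.} First I would record the explicit shape of the images. Unwinding the definition of $\tilde P_1$ (the low-order derivatives are evaluated at the constants $c_{ij}\in C$, the remaining ones at the indeterminates $\beta_{ik}$), one obtains, with $\tilde p_i:=\tilde\tau(t_i)$,
\[
\tilde p_i = p_i + q_i, \qquad q_i := \sum_{j=0}^{r-1}\frac{c_{ij}-\beta_{ij}}{j!}\,T^{j} \in C(\boldsymbol{\beta})[T],
\]
where $p_i=\sum_{k\ge 0}\tfrac1{k!}\beta_{ik}T^k$ is the Taylor image of Lemma~\ref{lemma_taylor_emb_transcendental}. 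The whole content of the passage from $p_i$ to $\tilde p_i$ is that finitely many coefficients change from indeterminates to constants, hence $q_i$ is an element of the differential ring $C(\boldsymbol{\beta})[T]$.

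The key structural point I would use is that $C(\boldsymbol{\beta})(T)=C(\boldsymbol{\beta})\langle T\rangle$ is a \emph{differentially algebraic} extension of $C(\boldsymbol{\beta})$, since $\partial(T)=1$ and all higher derivatives of $T$ lie in $C(\boldsymbol{\beta})$. Consequently $\operatorname{difftrdeg}\bigl(C(\boldsymbol{\beta})(T)/C(\boldsymbol{\beta})\bigr)=0$, and by additivity of the differential transcendence degree in a tower of differential fields (a standard Kolchin-type fact), a finite family is differentially algebraically independent over $C(\boldsymbol{\beta})$ if and only if it is so over $C(\boldsymbol{\beta})(T)$. With this in hand the proof becomes a short chain of equivalences: $\tilde p_1,\dots,\tilde p_l$ are differentially algebraically independent over $C(\boldsymbol{\beta})$ iff they are over $C(\boldsymbol{\beta})(T)$; since each $q_i$ lies in $C(\boldsymbol{\beta})(T)$, the tuples $(\tilde p_i)$ and $(p_i)$ generate the same differential field over $C(\boldsymbol{\beta})(T)$, so this holds iff $p_1,\dots,p_l$ are differentially algebraically independent over $C(\boldsymbol{\beta})(T)$; applying the base-change equivalence once more, this is iff $p_1,\dots,p_l$ are differentially algebraically independent over $C(\boldsymbol{\beta})$, which is exactly Corollary~\ref{corollary_diff_transcendental}. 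This establishes that $C(\boldsymbol{\beta})\langle\tilde p_1,\dots,\tilde p_l\rangle/C(\boldsymbol{\beta})$ is purely differentially transcendental.

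The main obstacle is simply the base-change step: one cannot argue directly over $C(\boldsymbol{\beta})$ because $T\notin C(\boldsymbol{\beta})$, and the innocent-looking substitution $\tilde p_i=p_i+q_i$ only preserves the generated differential field once $T$ has been adjoined to the base. If one prefers to stay entirely within the elementary specialization framework of this section rather than invoke additivity of the differential transcendence degree, the alternative is to replay the chain Lemma~\ref{lemmadifftranspowerseries} $\to$ Corollary~\ref{lemma_diff_transcendental} $\to$ Corollary~\ref{corollary_diff_transcendental} verbatim for $\tilde p_i$; the only point requiring attention is that the Lipshitz--Rubel specialization still yields a contradiction, which it does because fixing the first $r$ coefficients leaves infinitely many free indeterminates $\beta_{ik}$ with $k\ge r$, and adding a fixed polynomial in $T$ to a differentially transcendental series (e.g.\ $\sum_k T^{2^k}$) does not affect its differential transcendence. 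Either route reaches the same conclusion; I would present the reduction to Corollary~\ref{corollary_diff_transcendental} as the primary argument since it reuses the already-established result rather than re-running the specialization.
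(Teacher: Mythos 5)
Your argument is correct, but it closes the proof by a different mechanism than the paper does. Both rest on the same structural observation, namely that $\tilde{\tau}(t_i)$ and $p_i$ differ by a polynomial $q_i\in C(\boldsymbol{\beta})[T]$ of degree $<r$; the paper exploits this by differentiating rather than by base change. Concretely, it notes that $\partial^m(\tilde{\tau}(t_1))=\partial^m(p_1)$ for $m\geq r$, so if $\tilde{\tau}(t_1)$ were differentially algebraic over $C(\boldsymbol{\beta})[T]$ the transcendence degree of $C(\partial^m(\tilde{\tau}(t_1)),\partial^{m+1}(\tilde{\tau}(t_1)),\dots)=C(\partial^m(p_1),\dots)$ would be finite, contradicting Lemma~\ref{lemmadifftranspowerseries} directly; joint independence of all $l$ series is then obtained by repeating the specialization argument of Corollary~\ref{lemma_diff_transcendental}. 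You instead transfer the full joint statement in one step along the tower $C(\boldsymbol{\beta})\subset C(\boldsymbol{\beta})(T)$, using that this extension is differentially algebraic together with additivity of the differential transcendence degree. What your route buys is a clean reduction to Corollary~\ref{corollary_diff_transcendental} that treats all $l$ series at once and makes transparent that only the tail of the coefficient sequence matters; what it costs is an appeal to the Kolchin additivity theorem, a piece of general machinery the paper never invokes (it works throughout with ordinary transcendence degrees of the fields $C(\boldsymbol{\beta})(T,p_i,\partial(p_i),\dots)$), so if you use this route you should cite that theorem explicitly. Your fallback of re-running the Lipshitz--Rubel specialization for the $\tilde{\tau}(t_i)$ is also sound, though note that the paper does not do this either: it routes the contradiction back through Lemma~\ref{lemmadifftranspowerseries} via the derivative tail rather than re-specializing. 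In integrated form your identity $\tilde{\tau}(t_i)=p_i+q_i$ is exactly the paper's $\partial^m(\tilde{\tau}(t_1))=\partial^m(p_1)$, so no idea is missing from your proposal.
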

\begin{proof}
We need to show that $\tilde{\tau}(t_1)$ is 
differentially algebraically independent over $C(\boldsymbol{\beta})[T]$. 
It then follows as in Corollary~\ref{lemma_diff_transcendental} that 
$C(\boldsymbol{\beta})\langle \tilde{\tau}(t_1), \dots , \tilde{\tau}(t_l) \rangle$ is a purely 
differential transcendental extension of $C(\boldsymbol{\beta})$. We assume that $\tilde{\tau}(t_1)$ is
differentially algebraic over $C(\boldsymbol{\beta})[T]$. Then the same arguments as in 
Lemma~\ref{lemmadifftranspowerseries} yield that the transcendence degree of 
$C(\tilde{\tau}(t_1), \partial(\tilde{\tau}(t_1)), \dots)$ is finite over $C$. We conclude that the transcendence degree
of $C( \partial^m(\tilde{\tau}(t_1)), \partial^{m+1}(\tilde{\tau}(t_1)), \dots)$ over $C$ 
must also be finite for every $m \in \mathbb{N}_0$. 
But since only finitely many coefficients of $\tilde{\tau}(t_1)$ are elements of $C$ there is a $m$
such that $\partial^m(\tilde{\tau}(t_1))$ is equal to $\partial^m(p_1)$. 
We obtain a contradiction to Lemma~\ref{lemmadifftranspowerseries}.
\end{proof}
We show now that if we choose the same initial values the images of 
$R_{ \boldsymbol{\beta}}[\mathrm{GL}_n]$ under the two Taylor maps $\tau$ and $\tilde{\tau}$ 
become differentially isomorphic by sending the fundamental matrix $\tau(X)$ of $\mathrm{im}(\tau)$ 
to the fundamental matrix $\tilde{\tau}(X)$ of $\mathrm{im}(\tilde{\tau})$. 
It will then turn out in Corollary~\ref{corollary Kernels are equal} that the two maps have the same kernels.
\begin{lemma}\label{Lemma_isomorphic}
 For a matrix differential equation 
 $\partial(\textit{\textbf{y}})=A\textit{\textbf{y}}$ 
 with $A \in R_{1}^{n \times n}$ let $\tau$ and $\tilde{\tau}$ 
 be the Taylor maps of Lemma~\ref{lemma_taylor_emb_transcendental} 
 and~\ref{lemma_taylor_emb_semi_transcendental} 
 with initial values $X(P_{1})=X(\tilde{P}_{1})= 1_n$.  
 Then the map  
 \begin{equation*}
  \Psi : \mathrm{im}(\tau) \rightarrow \mathrm{im}(\tilde{\tau}) , \
  \tau(\boldsymbol{t}) \mapsto \tilde{\tau}(\boldsymbol{t}) \  and \ 
  \tau(X)  \mapsto  \tilde{\tau}(X) 
 \end{equation*}
 is a differential isomorphism.  
 \end{lemma}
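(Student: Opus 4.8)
The plan is to build $\Psi$ in two stages: first as an isomorphism of the base differential rings, and then to extend it across the adjoined fundamental solution. Throughout I regard $\mathrm{im}(\tau)$ and $\mathrm{im}(\tilde\tau)$ as Picard--Vessiot rings, with constant field $C(\boldsymbol\beta)$, for the matrix equations $\partial(X)=\tau(A)X$ and $\partial(X)=\tilde\tau(A)X$ over $\tau(R_{\boldsymbol\beta})$ and $\tilde\tau(R_{\boldsymbol\beta})$ respectively; this is legitimate because Lemma~\ref{lemma_taylor_emb_transcendental} and Lemma~\ref{lemma_taylor_emb_semi_transcendental} identify their kernels as relatively maximal differential ideals, and both $\tau(X)$, $\tilde\tau(X)$ are fundamental matrices normalized to $1_n$ modulo $T$.

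First I would produce the base isomorphism. By Corollary~\ref{corollary_diff_transcendental} the family $p_1,\dots,p_l$ is a differential transcendence basis of the purely differentially transcendental extension $\tau(R_{\boldsymbol\beta})$ of $C(\boldsymbol\beta)$, and by Lemma~\ref{semi_transcendental_power series} the same holds for $\tilde p_1,\dots,\tilde p_l$ and $\tilde\tau(R_{\boldsymbol\beta})$. Hence the assignment $\tau(t_i)=p_i\mapsto\tilde p_i=\tilde\tau(t_i)$ extends uniquely to a differential $C(\boldsymbol\beta)$-isomorphism $\psi\colon \tau(R_{\boldsymbol\beta})\to\tilde\tau(R_{\boldsymbol\beta})$. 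Since $A\in R_1^{n\times n}$ has entries that are differential rational expressions in the $t_i$ with coefficients in $C$, applying $\psi$ entrywise sends $\tau(A)$ to $\tilde\tau(A)$; thus $\psi$ carries the equation $\partial(X)=\tau(A)X$ to $\partial(X)=\tilde\tau(A)X$.

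Next I would extend $\psi$ over the fundamental solution. Transporting the base structure of $\mathrm{im}(\tilde\tau)$ along $\psi$, both $\mathrm{im}(\tau)$ and $\mathrm{im}(\tilde\tau)$ become Picard--Vessiot rings for the single equation $\partial(X)=\tau(A)X$ over $\tau(R_{\boldsymbol\beta})$, sharing the constant field $C(\boldsymbol\beta)$ and each generated by a fundamental matrix congruent to $1_n$ modulo $T$. The assignment $\tau(X)\mapsto\tilde\tau(X)$ is compatible, under $\psi$, with the relations $\partial(\tau X)=\tau(A)\tau X$ and $\partial(\tilde\tau X)=\tilde\tau(A)\tilde\tau X$, so it determines a candidate extension $\Psi$; the analogous construction with the roles of $\tau$ and $\tilde\tau$ interchanged provides, once both are well defined, a two-sided inverse, whence $\Psi$ is a differential isomorphism. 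Concretely, well-definedness means that the ideal of differential-algebraic relations satisfied by $\tau(X)$ over $\tau(R_{\boldsymbol\beta})$ is carried by $\psi$ into the corresponding ideal for $\tilde\tau(X)$, equivalently that the relatively maximal differential ideals $\ker(\tau)$ and $\ker(\tilde\tau)$ in $R_{\boldsymbol\beta}[\mathrm{GL}_n]$ coincide --- which is exactly the statement isolated afterwards in Corollary~\ref{corollary Kernels are equal}.

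The main obstacle is precisely this extension step, and the delicate point is that the constant field $C(\boldsymbol\beta)$ is not algebraically closed, so the usual uniqueness theorem for Picard--Vessiot rings cannot be invoked verbatim: a priori the two fundamental solutions could generate inequivalent torsors and the matching of relations could fail. I expect to resolve it using the concrete realization inside the common differential ring $C(\boldsymbol\beta)[[T]]$, whose field of constants is again $C(\boldsymbol\beta)$, so that the differential ring generated by both images acquires no new constants; combined with the shared normalization $\tau(X)\equiv\tilde\tau(X)\equiv 1_n\ (\mathrm{mod}\ T)$, this should trivialize the relevant torsor and render the matching of relations canonical. Finally, by relative maximality of $\ker(\tau)$ it suffices to secure a single inclusion $\ker(\tau)\subseteq\ker(\tilde\tau)$, the reverse inclusion then being automatic from maximality --- the economical way to close the argument.
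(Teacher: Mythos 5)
Your first stage is fine and matches the paper: since $\tau(R_{\boldsymbol\beta})$ and $\tilde\tau(R_{\boldsymbol\beta})$ are both purely differentially transcendental over $C(\boldsymbol\beta)$ (Corollary~\ref{corollary_diff_transcendental}, Lemma~\ref{semi_transcendental_power series}), the assignment $\tau(\boldsymbol{t})\mapsto\tilde\tau(\boldsymbol{t})$ does give a differential isomorphism $\psi$ of the base rings carrying $\tau(A)$ to $\tilde\tau(A)$. You also correctly locate the difficulty in the extension step and correctly note that, by relative maximality, a single inclusion of kernels would suffice. But the mechanism you propose for that step does not work as stated, and this is where the real content of the lemma lies. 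You argue that because $\mathrm{im}(\tau)$ and $\mathrm{im}(\tilde\tau)$ both sit inside $C(\boldsymbol\beta)[[T]]$, whose constants are $C(\boldsymbol\beta)$, the compositum acquires no new constants and this ``should trivialize the relevant torsor.'' The torsor you need to trivialize, however, is not attached to the compositum: inside $C(\boldsymbol\beta)[[T]]$ the matrix $\tau(X)^{-1}\tilde\tau(X)$ is \emph{not} constant, because $\tau(X)$ and $\tilde\tau(X)$ solve different equations ($\partial(X)=\tau(A)X$ versus $\partial(X)=\tilde\tau(A)X$) and the base isomorphism $\psi$ is not induced by any automorphism of $C(\boldsymbol\beta)[[T]]$. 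So the absence of new constants in the compositum gives you no constant connecting matrix and no identification of relations.

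The object on which the connecting matrix genuinely is constant is the tensor product $\mathrm{im}(\tau)\otimes_{R_{\boldsymbol\beta}}\mathrm{im}(\tilde\tau)$ formed \emph{over} $\psi$, and this is the device the paper uses. There $Z:=\tau(X)^{-1}\otimes\tilde\tau(X)$ has constant entries, the constant algebra is $U=C(\boldsymbol\beta)[Z_{ij},\det(Z_{ij})^{-1}]$, and by \cite{Maurischat}, Lemma 10.7, the relatively maximal differential ideals correspond to maximal ideals of $U$ (this substitutes for the uniqueness theorem over a non-algebraically-closed constant field, and uses the torsion-freeness from Lemma~\ref{lemma_about_torsion}). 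The concrete realization in $C(\boldsymbol\beta)[[T]]$ and the normalization $\tau(X)\equiv\tilde\tau(X)\equiv 1_n \pmod T$ enter only at one precise point: the multiplication map $\mathrm{im}(\tau)\otimes_{C(\boldsymbol\beta)}\mathrm{im}(\tilde\tau)\to C(\boldsymbol\beta)[[T]]$ sends the $Z_{ij}-\delta_{ij}\otimes\delta_{ij}$ into the ideal of power series with vanishing constant term, which shows that $Q=\langle Z_{ij}-\delta_{ij}\otimes\delta_{ij}\rangle$ is a \emph{proper} (hence, as $U/Q\cong C(\boldsymbol\beta)$, maximal) ideal of $U$. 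Quotienting by the corresponding relatively maximal differential ideal then embeds both $\mathrm{im}(\tau)$ and $\mathrm{im}(\tilde\tau)$ into one ring with $\tau(X)$ and $\tilde\tau(X)$ identified, which is the isomorphism $\Psi$. Without this tensor-product step your argument has no way to produce the inclusion $\ker(\tau)\subseteq\ker(\tilde\tau)$, so the proposal as written has a genuine gap rather than a complete alternative proof.
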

\begin{proof} 
Note that the constants of the Picard-Vessiot rings $\mathrm{im}(\tau)$ and 
$\mathrm{im}(\tilde{\tau})$ are in both cases $C(\boldsymbol{\beta})$. 
The map  
\begin{gather*}
\psi :  \tau(R_{ \boldsymbol{\beta}}) \rightarrow \tilde{\tau}(R_{ \boldsymbol{\beta}}), \ 
\tau(\boldsymbol{t}) \mapsto \tilde{\tau}(\boldsymbol{t})  
\end{gather*}
is obviously a differential isomorphism, since $\tau(R_{ \boldsymbol{\beta}}) $ and 
$\tilde{\tau}(R_{ \boldsymbol{\beta}})$ are subrings of purely differentially 
transcendental extensions of $C(\boldsymbol{\beta})$ of the same degree by 
Corollary~\ref{corollary_diff_transcendental} and Lemma~\ref{semi_transcendental_power series}.  
Let  $\mathrm{im}(\tau) \otimes_{R_{\boldsymbol{\beta}}} \mathrm{im}(\tilde{\tau})$ be the tensor product of 
$\mathrm{im}(\tau)$ and $\mathrm{im}(\tilde{\tau})$ defined via $\psi$, that is, 
for two elements $p \in \tau(R_{ \boldsymbol{\beta}})$ and 
$\tilde{p} \in \tilde{\tau}(R_{ \boldsymbol{\beta}})$ we have the rule 
\begin{equation*}
p \otimes \tilde{p} = p \psi^{-1}(\tilde{p}) \otimes 1 = 1 \otimes \psi(p)\tilde{p}.
\end{equation*} 
The ring $\mathrm{im}(\tau) \otimes_{R_{\boldsymbol{\beta}}} \mathrm{im}(\tilde{\tau})$ becomes a differential ring 
in the obvious way, that is, for $s \otimes \tilde{s} \in 
\mathrm{im}(\tau) \otimes_{R_{\boldsymbol{\beta}}} \mathrm{im}(\tilde{\tau})$ the derivation is defined by  
$\partial(s \otimes \tilde{s}) = \partial(s) \otimes \tilde{s} + s \otimes \partial(\tilde{s})$ and it 
has no $R_{\boldsymbol{\beta}}$-torsion by 
Lemma~\ref{lemma_about_torsion_point1}.
Let $Z_{ij}$ be the entries of the matrix
\begin{equation*}
Z:=\tau(X)^{-1} \otimes \tilde{\tau}(X) .
\end{equation*}
One can show now as in \cite[Lemma 2.4]{DyckerhoffInverseProb} that  
$U:= C(\beta)[Z_{ij},\mathrm{det}(Z_{ij})^{-1}]$ is the algebra of constants of the differential ring 
$\mathrm{im}(\tau) \otimes_{R_{\boldsymbol{\beta}}} \mathrm{im}(\tilde{\tau})$ and that the map
\begin{equation*}
\Phi: \mathrm{im}(\tau) \otimes_{C(\boldsymbol{\beta})}  U \rightarrow 
\mathrm{im}(\tau) \otimes_{R_{\boldsymbol{\beta}}} \mathrm{im}(\tilde{\tau}),  \
(s \otimes u) \mapsto (s \otimes 1) u
\end{equation*}
is a differential isomorphism. In the last step of the proof one uses that 
$\mathrm{im}(\tau) \otimes_{R_{\boldsymbol{\beta}}} \mathrm{im}(\tilde{\tau})$ has no 
$\mathrm{im}(\tau)$-torsion by 
Lemma~\ref{lemma_about_torsion_point2}. 
The map $\Phi$ defines a bijection between the maximal relatively 
differential ideals of $\mathrm{im}(\tau) \otimes_{C(\boldsymbol{\beta})}  U$ and 
$\mathrm{im}(\tau) \otimes_{R_{\boldsymbol{\beta}}} \mathrm{im}(\tilde{\tau})$.
By \cite[Lemma 10.7]{Maurischat} the relatively maximal differential ideals of 
$\mathrm{im}(\tau) \otimes_{C(\boldsymbol{\beta})} U$ correspond to the maximal ideals of $U$, 
since $\mathrm{im}(\tau)$ is relatively $\partial$-simple. 
We show that 
\begin{equation*}
Q = \langle Z_{ij} - \delta_{ij} \otimes \delta_{ij}  \rangle 
\end{equation*}
is a maximal ideal of $U$. 
For this purpose let us consider the ring homomorphism 
\begin{equation*}
 \phi : \mathrm{im}(\tau) \otimes_{C(\boldsymbol{\beta})} \mathrm{im}(\tilde{\tau}) 
 \rightarrow \mathrm{im}(\tau) \cdot \mathrm{im}(\tilde{\tau}) 
 \subset C(\boldsymbol{\beta})[[T]] , \ s \otimes \tilde{s} \mapsto s \tilde{s}.
\end{equation*}
The subset of all power series with constant term zero of 
$\mathrm{im}(\tau) \cdot \mathrm{im}(\tilde{\tau})$ is a proper ideal.
Therefore the preimage of this ideal under $\phi$ is a proper ideal of the ring 
$ \mathrm{im}(\tau) \otimes_{C(\boldsymbol{\beta})} \mathrm{im}(\tilde{\tau})$ 
and due to the choice of the initial values it contains the elements
$Z_{ij} - \delta_{ij} \otimes \delta_{ij}$. The inclusion 
$C(\boldsymbol{\beta}) \hookrightarrow R_{\boldsymbol{\beta}}$ 
yields a surjective ring homomorphism 
$ \mathrm{im}(\tau) \otimes_{C(\boldsymbol{\beta})} \mathrm{im}(\tilde{\tau}) \rightarrow
 \mathrm{im}(\tau) \otimes_{R_{\boldsymbol{\beta}}} \mathrm{im}(\tilde{\tau})$
and so the image of the above ideal under this map is a proper ideal of 
$\mathrm{im}(\tau) \otimes_{R_{\boldsymbol{\beta}}} \mathrm{im}(\tilde{\tau})$
and it contains $Q$. We conclude that $Q$ is a proper ideal of $U$ and 
since $U/Q \cong C(\boldsymbol{\beta})$, it is a maximal ideal. 
Thus $J:=\Phi(\mathrm{im}(\tau) \otimes_{C(\boldsymbol{\beta})} Q)$ 
is a relatively maximal differential ideal of 
$\mathrm{im}(\tau) \otimes_{R_{\boldsymbol{\beta}}} \mathrm{im}(\tilde{\tau})$. 
It is easy to see that the image of $Z$ in 
$\mathrm{im}(\tau) \otimes_{R_{\boldsymbol{\beta}}} \mathrm{im}(\tilde{\tau}) /J $ 
under the quotient map is $1_n\otimes 1_n$. So the differential isomorphisms 
$\varphi: \mathrm{im}(\tau) \rightarrow \mathrm{im}(\tau) \otimes_{R_{\boldsymbol{\beta}}}
 \mathrm{im}(\tilde{\tau}) / J$ 
and $\tilde{\varphi}: \mathrm{im}(\tilde{\tau}) \rightarrow \mathrm{im}(\tau) \otimes_{R_{\boldsymbol{\beta}}}
 \mathrm{im}(\tilde{\tau}) / J$ 
satisfy $\varphi (\tau(X))= \tilde{\varphi}(\tilde{\tau}(X))$ and consequently 
$\varphi(\mathrm{im}(\tau))=\tilde{\varphi}(\mathrm{im}(\tilde{\tau}))$.
We conclude that 
\begin{equation*}
\Psi: \mathrm{im}(\tau) \rightarrow   \mathrm{im}(\tilde{\tau}), \ \tau(X) \mapsto  \tilde{\tau}(X)
\end{equation*}
is a differential isomorphism.
\end{proof}
\begin{corollary}\label{corollary Kernels are equal}
 For $A \in R_{1}^{n \times n}$ let a derivation on $R_{\boldsymbol{\beta}}[\mathrm{GL}_n]$ be defined by 
 $\partial(X)=A X$. Then for the Taylor maps $\tau$ and $\tilde{\tau}$  
 of Lemma~\ref{lemma_taylor_emb_transcendental} and~\ref{lemma_taylor_emb_semi_transcendental} 
 with initial values $X(P_{1})=X(\tilde{P}_{1})= 1_n$, we have 
 \begin{equation*}
  \mathrm{kern}(\tau) = \mathrm{kern}(\tilde{\tau}).
 \end{equation*}
\end{corollary}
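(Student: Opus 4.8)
The plan is to read off the equality of kernels directly from the differential isomorphism $\Psi$ produced in Lemma~\ref{Lemma_isomorphic}. The key point I would isolate is that $\Psi$ is not just an abstract isomorphism between $\mathrm{im}(\tau)$ and $\mathrm{im}(\tilde{\tau})$, but is \emph{compatible} with the two Taylor maps in the sense that it intertwines them: on all of $R_{\boldsymbol{\beta}}[\mathrm{GL}_n]$ one should have $\Psi \circ \tau = \tilde{\tau}$. Granting this, the equality of kernels is a formal consequence of $\Psi$ being injective, and nothing further is needed.

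First I would verify the intertwining identity $\Psi \circ \tau = \tilde{\tau}$. Both $\Psi \circ \tau$ and $\tilde{\tau}$ are differential $C(\boldsymbol{\beta})$-algebra homomorphisms from $R_{\boldsymbol{\beta}}[\mathrm{GL}_n]$ into $\mathrm{im}(\tilde{\tau}) \subset C(\boldsymbol{\beta})[[T]]$. By the defining prescription of $\Psi$ in Lemma~\ref{Lemma_isomorphic}, we have $\Psi(\tau(t_i)) = \tilde{\tau}(t_i)$ for each $i$ and $\Psi(\tau(X_{ij})) = \tilde{\tau}(X_{ij})$ for all $i,j$, so the two composites agree on the differential generators $t_1, \dots, t_l, X_{11}, \dots, X_{nn}$ and on the constant field $C(\boldsymbol{\beta})$ (the element $\det(X)^{-1}$ causes no trouble, since ring homomorphisms respect inverses). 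As $R_{\boldsymbol{\beta}}[\mathrm{GL}_n]$ is generated as a differential ring by these elements over $C(\boldsymbol{\beta})$, and a differential homomorphism is determined by its values on a differential generating set, it follows that $\Psi \circ \tau = \tilde{\tau}$.

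With this identity in hand the conclusion is immediate. For $g \in R_{\boldsymbol{\beta}}[\mathrm{GL}_n]$, if $\tau(g) = 0$ then $\tilde{\tau}(g) = \Psi(\tau(g)) = \Psi(0) = 0$; conversely, if $\tilde{\tau}(g) = 0$ then $\Psi(\tau(g)) = 0$, and since $\Psi$ is an isomorphism, hence injective, we get $\tau(g) = 0$. Therefore $\mathrm{kern}(\tau) = \mathrm{kern}(\tilde{\tau})$.

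I do not expect a genuine obstacle at this stage: all the substantive work — constructing $\Psi$ through the torsion-free tensor product $\mathrm{im}(\tau) \otimes_{R_{\boldsymbol{\beta}}} \mathrm{im}(\tilde{\tau})$, computing its algebra of constants $U$, and exhibiting the maximal ideal $Q$ with $U/Q \cong C(\boldsymbol{\beta})$ — was already carried out in Lemma~\ref{Lemma_isomorphic}. The single point worth a moment's care is confirming that $\Psi$ fixes the common constant field $C(\boldsymbol{\beta})$, so that the intertwining identity holds on the whole ring and not merely on the matrix generators; this is built into the construction of $\Psi$ as a $C(\boldsymbol{\beta})$-algebra isomorphism, since it is assembled from the differential $C(\boldsymbol{\beta})$-algebra maps $\varphi$ and $\tilde{\varphi}$.
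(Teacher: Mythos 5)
Your proposal is correct and follows exactly the paper's own argument: the paper likewise invokes the isomorphism $\Psi$ from Lemma~\ref{Lemma_isomorphic}, notes that $\Psi \circ \tau = \tilde{\tau}$ because $\Psi(\tau(X)) = \tilde{\tau}(X)$ (and $\Psi(\tau(\boldsymbol{t})) = \tilde{\tau}(\boldsymbol{t})$), and concludes both inclusions of kernels from the injectivity of $\Psi$. Your additional care in checking the intertwining identity on the differential generators and the constant field is a harmless elaboration of the same step.
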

\begin{proof}
 From Lemma~\ref{Lemma_isomorphic} we obtain a differential isomorphism 
 $\Psi: \mathrm{im}(\tau) \rightarrow \mathrm{im}(\tilde{\tau})$ which satisfies $\Psi \circ \tau = \tilde{\tau}$,
 since $\Psi(\tau(X))= \tilde{\tau}(X)$. From this it follows that 
 $\mathrm{kern}(\tau) \subset \mathrm{kern}(\tilde{\tau})$ and 
 $\mathrm{kern}(\tilde{\tau}) \subset \mathrm{kern}(\tau)$.
\end{proof}

\begin{lemma}\label{Lemma Taylor map with algebraic closure}
Let $\tau$ be the Taylor map of Lemma~\ref{lemma_taylor_emb_transcendental} 
for differential equation $\partial(\textit{\textbf{y}})=A\textit{\textbf{y}}$ 
with initial values $X(P_1)=1_n$ and let 
$\overline{C(\boldsymbol{\beta})}$ be an algebraic closure of $C(\boldsymbol{\beta})$.
Then there exists a matrix $M \in \mathrm{GL}_n(\overline{C(\boldsymbol{\beta})})$ such that
the kernel of the Taylor map
\begin{equation*}
\tau': R_1[\mathrm{GL}_n] 
 \rightarrow \overline{C(\boldsymbol{\beta})}[[T]], \ X \mapsto \tau(X)M
\end{equation*}
is a relatively maximal differential ideal of $R_1[\mathrm{GL}_n]$, that is, $\mathrm{im}(\tau')$
is a Picard Vessiot ring with constants $C$.
\end{lemma}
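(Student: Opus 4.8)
The plan is to show that, for a suitable constant matrix $M$, the image of $\tau'$ is differentially isomorphic to the Picard--Vessiot ring over $R_1$ obtained by descending the constants of $\mathrm{im}(\tau)$ from $C(\boldsymbol{\beta})$ to $C$. Throughout write $E:=\mathrm{Quot}(\tau(R_{\boldsymbol{\beta}}))$ and $E_0:=\mathrm{Quot}(\tau(R_1))$, both regarded as differential subfields of $C(\boldsymbol{\beta})((T))$. The first point I would establish is that $E_0$ has field of constants exactly $C$. Indeed, $\tau$ is a differential $C$-monomorphism whose restriction to $R_1$ lands in the purely differentially transcendental extension $C(\boldsymbol{\beta})\langle p_1,\dots,p_l\rangle$ of Corollary~\ref{corollary_diff_transcendental}; hence $\tau|_{R_1}$ induces a differential $C$-isomorphism $F_1=C\langle\boldsymbol{t}\rangle\cong E_0$, and since $C\langle\boldsymbol{t}\rangle$ is differentially generated over $C$ by indeterminates its constants are $C$. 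The same independence shows that $E$ is the constant field extension of $E_0$ from $C$ to $C(\boldsymbol{\beta})$, i.e.\ $E\cong\mathrm{Quot}(E_0\otimes_C C(\boldsymbol{\beta}))$.

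Next, because $C$ is algebraically closed, \cite{P/S} provides a Picard--Vessiot ring $S$ for $\partial(\boldsymbol{y})=A\boldsymbol{y}$ over the field $E_0$ with constants $C$, unique up to differential $E_0$-isomorphism; fix a fundamental matrix $Z\in\mathrm{GL}_n(S)$. By the correspondence between Picard--Vessiot rings over a field and over a subring established in Section~2, the $\tau(R_1)$-subalgebra $S^{\circ}:=\tau(R_1)[Z,\det(Z)^{-1}]$ is then a Picard--Vessiot ring over $R_1\cong\tau(R_1)$: it is $R_1$-simple and its constants are $C$. Since the conclusion of the lemma is exactly that $\mathrm{im}(\tau')$ is such a ring, it suffices to produce $M$ with $\mathrm{im}(\tau')\cong S^{\circ}$.

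The matrix $M$ is obtained by descent after passing to the algebraic closure. Set $\overline{E}:=\overline{C(\boldsymbol{\beta})}\langle p_1,\dots,p_l\rangle$, the common constant field extension of $E$ and $E_0$ to the algebraically closed field $\overline{C(\boldsymbol{\beta})}$. On the one hand, extending the constants of the Picard--Vessiot ring $\mathrm{im}(\tau)$ yields a Picard--Vessiot ring over $\overline{E}$ with fundamental matrix $\tau(X)$ (now with entries in $\overline{C(\boldsymbol{\beta})}[[T]]$) and constants $\overline{C(\boldsymbol{\beta})}$; on the other hand $S\otimes_C\overline{C(\boldsymbol{\beta})}$ is a Picard--Vessiot ring over $\overline{E}$ with fundamental matrix $Z$ and the same constants, again by the constant-field-extension behaviour in \cite{P/S}. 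As $\overline{C(\boldsymbol{\beta})}$ is algebraically closed, uniqueness of Picard--Vessiot rings over $\overline{E}$ furnishes a differential $\overline{E}$-isomorphism $\Theta$ between the two. Then $\Theta(\tau(X))$ and $Z$ are fundamental matrices of the same equation in a ring whose constants are $\overline{C(\boldsymbol{\beta})}$, so $\Theta(\tau(X))=Z\,P$ for a unique $P\in\mathrm{GL}_n(\overline{C(\boldsymbol{\beta})})$. Putting $M:=P^{-1}$ gives $\Theta(\tau(X)M)=Z$, and since $\Theta$ fixes $\tau(R_1)$ elementwise it restricts to a differential isomorphism from $\tau'(R_1[\mathrm{GL}_n])=\tau(R_1)[\tau(X)M,\det(\tau(X)M)^{-1}]$ onto $S^{\circ}$. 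Hence $\mathrm{im}(\tau')\cong S^{\circ}$ is a Picard--Vessiot ring over $R_1$ with constants $C$, so $\ker(\tau')$ is a relatively maximal differential ideal of $R_1[\mathrm{GL}_n]$.

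I expect the main obstacle to be the descent of constants itself, and this is exactly where the passage to $\overline{C(\boldsymbol{\beta})}$---and hence the freedom to take $M\in\mathrm{GL}_n(\overline{C(\boldsymbol{\beta})})$ rather than $\mathrm{GL}_n(C(\boldsymbol{\beta}))$---becomes indispensable: over $E$ itself the constant field $C(\boldsymbol{\beta})$ is not algebraically closed, Picard--Vessiot rings need not be unique, and $\mathrm{im}(\tau)$ need not be isomorphic to $S\otimes_C C(\boldsymbol{\beta})$; the comparison isomorphism $\Theta$ only exists after extending the constants to $\overline{C(\boldsymbol{\beta})}$. The remaining points are routine: that $\tau'$ is a well-defined differential homomorphism is immediate because $M$ is a constant matrix, so $\tau(X)M$ again satisfies $\partial(X)=AX$; and the verification that $S^{\circ}$ is $R_1$-simple with constants $C$ uses only that $S^{\circ}\subseteq S$ and the equivalence, recorded in Section~2, between $R_1$-simplicity and the kernel being relatively maximal.
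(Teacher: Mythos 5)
Your proposal is correct and follows essentially the same route as the paper: both arguments take an abstract Picard--Vessiot ring with constants $C$ (your $S^{\circ}$, the paper's $R_1[\mathrm{GL}_n]/I$), extend constants on both sides to $\overline{C(\boldsymbol{\beta})}$, invoke uniqueness of Picard--Vessiot rings over algebraically closed constants (\cite{P/S}, Proposition 1.20.2, via \cite{Maurischat}, Lemma 10.7 for the constant extension) to obtain a comparison isomorphism, and read off $M$ from the resulting relation between the two fundamental matrices. The only difference is cosmetic: you phrase the comparison over the fraction field $\overline{E}$ while the paper works over the ring $R_1\otimes_C\overline{C(\boldsymbol{\beta})}$.
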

\begin{proof}
For a relatively maximal differential ideal $I$ of $R_1[\mathrm{GL}_n]$  
the ring $R_1[\mathrm{GL}_n]/I $ is a Picard-Vessiot ring with constants $C$.
The tensor product of $R_1[\mathrm{GL}_n]/I$ with an arbitrary $C$-algebra $\mathcal{A}$ 
produces a new differential ring $R_1[\mathrm{GL}_n]/I \otimes_C \mathcal{A}$
where the derivation on the second factor is trivial.
By \cite[Lemma 10.7]{Maurischat} this differential ring is a  
$R_1 \otimes_C \mathcal{A}$-simple differential ring, 
that is, it is a Picard-Vessiot ring over $R_1 \otimes_C \mathcal{A}$
for the differential equation $\partial(\textit{\textbf{y}})=A \textit{\textbf{y}}$ 
with constants $C \otimes_C \mathcal{A} \cong \mathcal{A}$. 
We conclude that 
\begin{equation*}
S_1 :=R_1[\mathrm{GL}_n]/I \otimes_C \overline{C(\boldsymbol{\beta})} 
\end{equation*}
is a Picard-Vessiot
ring with an algebraically closed field of constants $\overline{C(\boldsymbol{\beta})}$. 
By Lemma~\ref{lemma_taylor_emb_transcendental} there is a Taylor map 
\begin{equation*}
 \tau : R_{\boldsymbol{\beta}}[\mathrm{GL}_n] \rightarrow C(\boldsymbol{\beta})[[T]]
\end{equation*}
such that $\mathrm{im}(\tau)$ is a Picard-Vessiot ring with field of constants $C(\boldsymbol{\beta})$.
The same argumentation as above yields then a Picard-Vessiot ring 
\begin{equation*}
S_2:=\mathrm{im}(\tau) \otimes_{C(\boldsymbol{\beta})} \overline{C(\boldsymbol{\beta})}
\end{equation*}
with constants $\overline{C(\boldsymbol{\beta})}$. 
Now by \cite[Proposition 1.20.2]{P/S} the two differential rings $S_1$ and $S_2$
for the differential equation $\partial(\textit{\textbf{y}})=A \textit{\textbf{y}}$ are differentially 
isomorphic over the constants $\overline{C(\boldsymbol{\beta})}$. 
More precisely, if $Y \otimes 1_n$ is 
the fundamental matrix of $S_1$, then the isomorphism is given by 
\begin{equation*}
 \varphi: S_1 \rightarrow S_2 , \ Y \otimes 1_n \mapsto \tau(X) \otimes M
\end{equation*}
where $M \in \mathrm{GL}_n(\overline{C(\boldsymbol{\beta})})$.  
We conclude that for the initial values 
$M \in \mathrm{GL}_n(\overline{C(\boldsymbol{\beta})})$ we obtain a new Taylor map 
\begin{equation*}
\tau' : R_1 [\mathrm{GL}_n] \otimes_C \overline{C(\boldsymbol{\beta})} \rightarrow 
\overline{C(\boldsymbol{\beta})}[[T]], \ X \otimes 1_n \mapsto \tau(X)M 
\end{equation*}
with the property that
$\mathrm{kern}(\tau')= I \otimes_C \overline{C(\boldsymbol{\beta})}$. Since $I$ 
is a relatively maximal differential ideal of $R_1[\mathrm{GL}_n]$, 
the image of $R_1[\mathrm{GL}_n]\otimes 1$ under $\tau'$ is a Picard-Vessiot ring
with constants $C$.
\end{proof}

\begin{theorem}
\label{lemma_taylor_emb_transcendental_without beta}
Let $\tau$ be the Taylor map of Lemma~\ref{lemma_taylor_emb_transcendental} with initial values $X(P_{1})=1_n$ 
for a matrix differential equation
$\partial(\textit{\textbf{y}})=A(\textit{\textbf{t}})\textit{\textbf{y}}$ with $A \in R_1^{n \times n}$ 
and denote by $\tau_1$ the restriction
of $\tau$ to $R_1 [\mathrm{GL}_n]$.
Then the kernel of 
 \begin{equation*}
\tau_1 : R_1 [\mathrm{GL}_n] \rightarrow C(\boldsymbol{\beta})[[T]]
 \end{equation*}
 is a relatively maximal differential ideal.
\end{theorem}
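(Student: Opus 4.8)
The plan is to mirror the proof of Lemma~\ref{lemma_taylor_emb_transcendental}, now with $R_1$ in place of $R_{\boldsymbol\beta}$, and to isolate the one genuinely new point: controlling the field of constants over the smaller base $C$. Write $K_1 := \mathrm{Quot}(\tau_1(R_1)) \subset C(\boldsymbol\beta)((T))$ and $S_1 := K_1[\tau(X), \det(\tau(X))^{-1}]$. As in Lemma~\ref{lemma_taylor_emb_transcendental}, $\tau(X)$ is a fundamental solution matrix of $\partial(\textbf{y}) = \tau(A)\textbf{y}$ over $K_1$, and $S_1$ is generated by its entries and the inverse of its determinant; thus by \cite{Dyck}, Corollary 2.7, $S_1$ is $\partial_T$-simple as soon as the constants of $\mathrm{Quot}(S_1)$ agree with those of $K_1$. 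Granting this, $\tau_1(R_1[\mathrm{GL}_n])$ is $\tau_1(R_1)$-simple and $\mathrm{kern}(\tau_1)$ is relatively maximal, exactly as before. So the whole statement reduces to a single constant computation.

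First I would fix the constants of $K_1$. Since $\tau$ is a differential monomorphism on $R_{\boldsymbol\beta}$ (its image lies in a purely differentially transcendental extension of $C(\boldsymbol\beta)$ by Corollary~\ref{corollary_diff_transcendental}), its restriction $\tau_1$ is a differential monomorphism on $R_1$; hence $K_1 \cong \mathrm{Quot}(R_1) = C\langle\boldsymbol t\rangle$ as differential fields and the constants of $K_1$ are $C$. The ambient field $C(\boldsymbol\beta)((T))$ has constants $C(\boldsymbol\beta)$, so the constant field $\widetilde C$ of $\mathrm{Quot}(S_1)$ satisfies $C \subseteq \widetilde C \subseteq C(\boldsymbol\beta)$, and everything comes down to proving $\widetilde C = C$, i.e.\ that adjoining the fundamental matrix $\tau(X)$ to $K_1$ introduces no constant outside $C$.

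To force $\widetilde C = C$ I would combine the algebraic-closure construction of Lemma~\ref{Lemma Taylor map with algebraic closure} with the differential transcendence of the $p_i$. By that lemma there is $M \in \mathrm{GL}_n(\overline{C(\boldsymbol\beta)})$ for which $\tau'(X) = \tau(X)M$ yields a Picard--Vessiot ring $R_1[\mathrm{GL}_n]/I$ over $R_1$ whose constants are exactly $C$. Over $B := \overline{C(\boldsymbol\beta)}$ the maps $\tau_1$ and $\tau'$ differ only by the right translation $X \mapsto XM$, which is a differential automorphism of $(R_1 \otimes_C B)[\mathrm{GL}_n]$ fixing $R_1 \otimes_C B$; since $R_1[\mathrm{GL}_n]/I$ has constants $C$, the canonical map $(R_1[\mathrm{GL}_n]/I) \otimes_C B \to B[[T]]$ is injective, and its image is $B \cdot \mathrm{im}(\tau_1)$, an $(R_1 \otimes_C B)$-simple ring with constants $B$. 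The main obstacle is then purely one of constant fields: this base change does not by itself exclude constants in $C(\boldsymbol\beta) \setminus C$, because $C(\boldsymbol\beta) \subset B$. I would remove them using Corollary~\ref{corollary_diff_transcendental}: the derivatives $\partial^k(p_i)$ are algebraically independent over $C(\boldsymbol\beta)$, so a $C$-basis of $C[\partial^k(p_i)]$ stays $C(\boldsymbol\beta)$-linearly independent and $K_1$ is linearly disjoint from $C(\boldsymbol\beta)$ over $C$. Propagating this linear disjointness from $K_1$ through the adjunction of $\tau(X)$ to $S_1$ --- equivalently, showing that $\mathrm{kern}(\tau)$ is generated by its contraction $\mathrm{kern}(\tau_1)$ to $R_1[\mathrm{GL}_n]$ --- is the delicate step, and is exactly where the freeness of the Taylor coefficients $\boldsymbol\beta$ beyond the single realization furnished by Lemma~\ref{Lemma Taylor map with algebraic closure} must be used; once it is in place, $\widetilde C = \mathrm{Quot}(S_1) \cap C(\boldsymbol\beta) = C$ and the reduction of the first paragraph finishes the proof.
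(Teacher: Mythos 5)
Your reduction of the statement to a single constant computation is sound, and you have correctly located the difficulty: everything hinges on showing that adjoining the fundamental matrix $\tau(X)$ to $K_1=\mathrm{Quot}(\tau_1(R_1))$ produces no constants in $C(\boldsymbol\beta)\setminus C$, equivalently that $\mathrm{kern}(\tau)$ is generated by its contraction $\mathrm{kern}(\tau_1)$. But that is exactly the content of the theorem, and your proposal does not prove it --- it ends by naming it ``the delicate step'' and asserting that ``the freeness of the Taylor coefficients $\boldsymbol\beta$ \dots\ must be used,'' without saying how. The obstruction you run into is real: Lemma~\ref{Lemma Taylor map with algebraic closure} only certifies the map $X\mapsto\tau(X)M$ with $M\in\mathrm{GL}_n(\overline{C(\boldsymbol\beta)})$, and right translation by such an $M$ does not preserve $R_1[\mathrm{GL}_n]$, so the relative maximality of $\mathrm{kern}(\tau')$ over $R_1$ cannot be transported to $\mathrm{kern}(\tau_1)$ by any automorphism argument over $B=\overline{C(\boldsymbol\beta)}$. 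Linear disjointness of $K_1$ from $C(\boldsymbol\beta)$ over $C$ (which does follow from Corollary~\ref{corollary_diff_transcendental}) gets you the constants of $K_1$, but it gives no handle on the ring generated by the entries of $\tau(X)$, whose Taylor coefficients are rational functions in the $\beta_{ik}$ determined by the recursion $A_k=\partial(A_{k-1})+A_{k-1}A$ and are in no sense free.

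The paper closes this gap by a mechanism absent from your proposal. Since $M$ involves only finitely many $\beta_{ij}$, one specializes those finitely many coordinates to elements $c_{ij}\in C$ so that $\varphi(M)\in\mathrm{GL}_n(C)$; the specialized Taylor map is then the auxiliary ``semi-transcendental'' map $\tilde\tau'$ of Lemma~\ref{lemma_taylor_emb_semi_transcendental} with initial values $\varphi(M)$, and one checks $\mathrm{kern}(\tilde\tau')=\mathrm{kern}(\tau')\otimes_C C(\boldsymbol\beta)$. Because $\varphi(M)$ now lies in $\mathrm{GL}_n(C)$, the right translation back to initial values $1_n$ \emph{does} preserve $R_1[\mathrm{GL}_n]$, so $\mathrm{kern}(\tilde\tau)=I\otimes_C C(\boldsymbol\beta)$ with $I$ relatively maximal in $R_1[\mathrm{GL}_n]$; finally Corollary~\ref{corollary Kernels are equal} (resting on the isomorphism of Lemma~\ref{Lemma_isomorphic}) identifies $\mathrm{kern}(\tilde\tau)$ with $\mathrm{kern}(\tau)$. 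Your argument never invokes Lemma~\ref{lemma_taylor_emb_semi_transcendental} or Corollary~\ref{corollary Kernels are equal}, and without some substitute for this specialize-and-translate-back step the proof is incomplete.
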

\begin{proof} 
By Lemma~\ref{Lemma Taylor map with algebraic closure} there is a matrix 
$M \in \mathrm{GL}_n(\overline{C(\boldsymbol{\beta})})$ such that the kernel of the Taylor map 
\begin{equation*}
\tau': R_1[\mathrm{GL}_n] 
 \rightarrow \overline{C(\boldsymbol{\beta})}[[T]], \ X \mapsto \tau(X)M
\end{equation*}
is a relatively maximal differential ideal of $R_1[\mathrm{GL}_n]$.
The matrix $M \in \mathrm{GL}_n(\overline{C(\boldsymbol{\beta})})$ depends only on finitely many $\beta_{ij}$'s. 
This means that for some $r \in \mathbb{N}$ there is a subring 
\begin{equation*}
C[\beta_{ij}\mid 1\leq i \leq l, \ 0 \leq j \leq r]
\end{equation*}
of $C[\boldsymbol{\beta}]$ and a finite algebraic extension $\mathcal{A}$ of a 
finitely generated localization of this ring such that 
$M \in \mathrm{GL}_n(\mathcal{A})$. For $1 \leq i \leq l$ and $0 \leq j \leq r$ it is possible to choose 
elements $c_{ij}$ of $C$ such that the specialization
\begin{equation*}
\varphi : C[\boldsymbol{\beta}] \rightarrow C[\beta_{ij}\mid j > r]
\end{equation*}
defined by $\beta_{ij} \mapsto  c_{ij}$  for $0 \leq j \leq r$ and  
$\beta_{ij} \mapsto \beta_{ij}$ for $j > r$ 
induces a well defined specialization 
\begin{equation*}
\varphi : \mathcal{A} \rightarrow C[\beta_{ij}\mid j > r] 
\end{equation*}
with the property that $\varphi(M) \in \mathrm{GL}_n(C)$. 
Let $\tilde{\tau}'$ be the Taylor map from $R_{\boldsymbol{\beta}}[\mathrm{GL}_n]$ to 
$C(\boldsymbol{\beta})[[T]]$ of Lemma~\ref{lemma_taylor_emb_semi_transcendental} where 
we choose in this situation the ideal $\tilde{P}_1$ of $R_{\boldsymbol{\beta}}$ as   
\begin{equation*}
 \tilde{P}_1 = \langle \partial^j(t_i) -c_{ij}, \ \partial^k(t_i) - \beta_{ik} \mid 
  \ 0 \leq j \leq  r , \ k \in \mathbb{N}, \ k > r \rangle 
\end{equation*}
and the initial values $X(\tilde{P}_1)= \varphi(M)$. Obviously we have by construction  
\begin{equation*}
\tilde{\tau}' (X)= \varphi(\tau(X))\varphi(M).
\end{equation*}
Every multivariate polynomial $f(X,\boldsymbol{t}) \in \mathrm{kern}(\tau')$ satisfies
\begin{equation*}
f(\tilde{\tau}'(X) , \tilde{\tau}'(\boldsymbol{t})) = 
f(\varphi(\tau(X))\varphi(M), \varphi(\tau(\boldsymbol{t})))= 
\varphi (f(\tau(X)M, \tau(\boldsymbol{t})))=0
\end{equation*}
and so $\mathrm{kern}(\tau') \subset \mathrm{kern}(\tilde{\tau}')$.
Since $\mathrm{kern}(\tau') \otimes_C C(\boldsymbol{\beta})$ 
is a relatively maximal differential ideal, 
we conclude that $\mathrm{kern}(\tilde{\tau}') =\mathrm{kern}(\tau') \otimes_C C(\boldsymbol{\beta})$.\\
Lemma~\ref{lemma_taylor_emb_semi_transcendental} provides us with a Taylor map
\begin{equation*}
\tilde{\tau}: R_{\boldsymbol{\beta}}[\mathrm{GL}_n] \rightarrow C(\boldsymbol{\beta})[[T]]  
\end{equation*}
where we choose the ideal $\tilde{P}_1$ of $R_{\boldsymbol{\beta}}[\mathrm{GL}_n]$ as above, but this time 
the initial values $X(\tilde{P}_1)= 1_n$.
The fundamental matrices of $\mathrm{im}(\tilde{\tau}')$ and $\mathrm{im}(\tilde{\tau})$ 
satisfy obviously the relation
\begin{equation*}
\tilde{\tau}'(X)= \tilde{\tau}(X) \varphi(M)
\end{equation*} 
and so, since $\mathrm{kern}(\tau')$ is an ideal of $R_1[\mathrm{GL}_n]$ and $\varphi(M) \in \mathrm{GL}_n(C)$, 
it follows from $\mathrm{kern}(\tilde{\tau}') =\mathrm{kern}(\tau') \otimes_C C(\boldsymbol{\beta})$ 
that $\mathrm{kern}(\tilde{\tau})= I \otimes_C C(\boldsymbol{\beta})$ 
where $I $ is a relatively maximal differential ideal of $ R_1 [\mathrm{GL}_n]$. 
Finally, we apply Corollary~\ref{corollary Kernels are equal} to $\tilde{\tau}$ and to the 
Taylor map $\tau$ of Lemma~\ref{lemma_taylor_emb_transcendental} with initial values $X(P_1)=1_n$.
The restriction of $\tau$ to $R_1[\mathrm{GL}_n]$ is then the desired Taylor map.
\end{proof}

 {\bf(b)} We construct now in a second part a Taylor map for $R_2[\mathrm{GL}_n]$. 
 Since $R_2$ is a localization 
 of the polynomial ring $C[z]$ by a finitely generated multiplicative subset,
 there are  
 infinitely many $c \in C$ such that 
 \begin{equation*}
  P_2 := \langle z-c \rangle
 \end{equation*}
 is a maximal ideal in $R_2$, that is, $c$ is not a zero 
 of any generator of the multiplicative subset.  
 We choose now such a $P_2$.
 Similar as in the previous constructions of Taylor maps, for an element $r \in R_2$,
 we denote by $r(P_2)$ the image of $r$ under the quotient map from $R_2$ to $R_2/P_2$.
\begin{lemma}\label{lemma_taylor_emb_ratfunc}
 Let a derivation on $R_2[\mathrm{GL}_n]$ be defined 
 by $\partial(\textbf{y})=A \textbf{y}$ with $A \in R_2^{n \times n}$ and let  
 $P_2$ be a maximal ideal of $R_2$ as above.
 Then for arbitrary initial values $X(P_2) \in \mathrm{GL}_n(C)$ the kernel of the 
 differential ring homomorphism 
 \begin{equation*}
  \tau_2 : R_2[\mathrm{GL}_n] \rightarrow C[[T]], \
  r \mapsto \sum_{k \in \mathbb{N}_0 } 1/k! \ \partial^{k}(r)(P_2) T^k, \
  X_{ij} \mapsto \sum_{k \in \mathbb{N}_0 } 1/k! \ \partial^{k}(X_{ij})(P_2) T^k
 \end{equation*}
 is a relatively maximal differential ideal of $R_2[\mathrm{GL}_n]$. 
\end{lemma}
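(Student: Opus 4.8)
The plan is to follow the proof of Lemma~\ref{lemma_taylor_emb_transcendental} almost verbatim; the argument is in fact simpler here, because $R_2$ already embeds into a power series ring over the constant field $C$ itself, so none of the auxiliary extension by the $\boldsymbol{\beta}$'s is needed.

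First I would check that the restriction $\tau_2|_{R_2}: R_2 \rightarrow C[[T]]$ is a differential monomorphism. Since the derivation on $R_2$ is the standard $\frac{d}{dz}$, we have $\partial(z)=1$ and $\partial^k(z)=0$ for $k\geq 2$, so that $z \mapsto c+T$. Because $c$ was chosen so that it is not a zero of any generator of the multiplicative subset defining the localization $R_2$, each denominator is sent to a power series with nonzero constant term, hence to a unit of $C[[T]]$; thus $\tau_2(R_2)\subseteq C[[T]]$. The map is injective, being the restriction of the Taylor expansion field embedding $C(z)\hookrightarrow C((T))$ at the regular point $c$, and it is differential since $z\mapsto c+T$ identifies $\frac{d}{dz}$ with $\frac{d}{dT}$.

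Next I would extend $\tau_2$ to all of $R_2[\mathrm{GL}_n]$ exactly as in Lemma~\ref{lemma_taylor_emb_transcendental}: the recursion $A_1:=A$ and $A_k:=\partial(A_{k-1})+A_{k-1}A$ supplies the higher derivatives through $\partial^k(X)=A_k X$, and the chosen initial values $X(P_2)\in\mathrm{GL}_n(C)$ produce matrices $\partial^k(X)(P_2)\in C^{n\times n}$. This makes $\tau_2$ a well-defined differential ring homomorphism, and since $\tau_2(X)\equiv X(P_2)\pmod{T}$ the matrix $\tau_2(X)$ is invertible in $C[[T]]$ and is therefore a fundamental solution matrix of $\partial(\textbf{y})=\tau_2(A)\textbf{y}$.

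Finally I would establish that $\mathrm{kern}(\tau_2)$ is relatively maximal by the same $\partial_T$-simplicity argument used before. Set $S:=\mathrm{Quot}(\tau_2(R_2))[\tau_2(X),\mathrm{det}(\tau_2(X))^{-1}]\subset C((T))$. It is an integral domain generated over the differential field $\mathrm{Quot}(\tau_2(R_2))$ by the entries of the fundamental matrix $\tau_2(X)$ and the inverse of its determinant, and because $S$ sits inside $C((T))$ the constants of $\mathrm{Quot}(S)$ are $C$, which agrees with the field of constants of $\mathrm{Quot}(\tau_2(R_2))$. Applying \cite{Dyck}, Corollary 2.7 then yields that $S$ is $\partial_T$-simple, whence $\tau_2(R_2[\mathrm{GL}_n])\cong R_2[\mathrm{GL}_n]/\mathrm{kern}(\tau_2)$ is $\tau_2(R_2)$-simple and $\mathrm{kern}(\tau_2)$ is a relatively maximal differential ideal. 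There is no serious obstacle here beyond the earlier lemma; the only points needing care are the verification that the expansion at the regular point $c$ is a genuine differential embedding of $R_2$ and that passing to $C((T))$ does not enlarge the constants, which are precisely the hypotheses that make \cite{Dyck}, Corollary 2.7 applicable.
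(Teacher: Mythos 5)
Your proposal is correct and follows the same route as the paper, whose proof of this lemma simply says to repeat the argument of Lemma~\ref{lemma_taylor_emb_transcendental} with $A\in R_2^{n\times n}$ and the ideal $P_2$; you have merely spelled out the details, correctly replacing the differential-transcendence input by the simpler observation that Taylor expansion at the regular point $c$ embeds $R_2$ differentially into $C[[T]]$.
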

\begin{proof}
The construction of the Taylor map works just as in Lemma~\ref{lemma_taylor_emb_transcendental} but now for the 
matrix $A \in R_2^{n \times n}$ and the ideal $P_2$.
One shows with same arguments as in Lemma~\ref{lemma_taylor_emb_transcendental} 
that $\mathrm{kern}(\tau_2)$ is a relatively maximal
differential ideal of $R_2[\mathrm{GL}_n]$.  
\end{proof}

\section{The specialization bound}\label{chap4}
In this section we prove the specialization bound.
We use the results of the previous section to prove that there exists a relatively maximal differential ideal of 
$R_1[\mathrm{GL}_n]$ such that its image under a $R_1$-specialization is contained in 
a relatively maximal differential ideal of $R_2[\mathrm{GL}_n]$. 
\begin{proposition}\label{Prop_specialization}
 Let $\partial(\textbf{y})=A\textbf{y}$ be a matrix differential equation 
 with $A \in R_{1}^{n \times n}$ and let $\sigma: R_1 \rightarrow R_2$ 
be a surjective $R_1$-specialization.  
\begin{enumerate}
 \item Then there exist Taylor maps
\begin{equation*}  
 \quad \quad \quad \tau_1: R_1[\mathrm{GL}_n]  \rightarrow \mathrm{im}(\tau_1) \subset C(\boldsymbol{\beta})[[T]] \ and \
 \tau_2: R_2[\mathrm{GL}_n]  \rightarrow \mathrm{im}(\tau_2) \subset C[[T]]
\end{equation*}
such that $\mathrm{im}(\tau_1)$ is a Picard-Vessiot ring for $\partial(\textbf{y})=\tau_1 (A)\textbf{y}$
 with constants $C$ 
and $\mathrm{im}(\tau_2)$ is a Picard-Vessiot ring for $\partial(\textbf{y})=\tau_2(\sigma(A))\textbf{y}$.
\item In the situation of (1). There exists a surjective differential
homomorphism $\hat{\sigma}$ such that the following diagram commutes:
 \[
 \begin{xy}
 \xymatrix{
 R_1 [\mathrm{GL}_n] \ar[d]^{\tau_1} \ar[r]^{\sigma} & R_2 [\mathrm{GL}_n] \ar[d]^{\tau_2} \\
 \mathrm{im}(\tau_1) \ar@{-->}[r]^{\hat{\sigma}}& \mathrm{im}(\tau_2)
 }
 \end{xy}
 \]
 \end{enumerate}
\end{proposition}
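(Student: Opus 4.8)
The plan is to obtain the two Taylor maps directly from the previous section and then to realize $\hat\sigma$ as a coefficientwise specialization of power series.

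For part~(1) I would take $\tau_1$ to be the Taylor map of Theorem~\ref{lemma_taylor_emb_transcendental_without beta}, i.e.~the restriction to $R_1[\mathrm{GL}_n]$ of the map of Lemma~\ref{lemma_taylor_emb_transcendental} with initial value $X(P_1)=1_n$. That theorem tells us that $\mathrm{kern}(\tau_1)$ is relatively maximal, so $\mathrm{im}(\tau_1)\cong R_1[\mathrm{GL}_n]/\mathrm{kern}(\tau_1)$ is a Picard--Vessiot ring for $\partial(\textbf{y})=\tau_1(A)\textbf{y}$ with constants $C$. For $\tau_2$ I would apply Lemma~\ref{lemma_taylor_emb_ratfunc} with the maximal ideal $P_2=\langle z-c\rangle$ and -- crucially for the compatibility below -- the \emph{same} initial value $X(P_2)=1_n$, yielding a Picard--Vessiot ring $\mathrm{im}(\tau_2)$ for $\partial(\textbf{y})=\tau_2(\sigma(A))\textbf{y}$.

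For part~(2) the guiding observation is that both fundamental matrices are built from the recursion $A_1=A$, $A_k=\partial(A_{k-1})+A_{k-1}A$ of Lemma~\ref{lemma_taylor_emb_transcendental}: since $X(P_1)=X(P_2)=1_n$, the coefficient of $T^k$ in $\tau_1(X)$ is $\tfrac{1}{k!}A_k(P_1)$ and the one in $\tau_2(X)$ is $\tfrac{1}{k!}A_k(P_2)$, where $A_k(P_1)$ arises by the substitution $\partial^m(t_i)\mapsto\beta_{im}$ and $A_k(P_2)$ by $\partial^m(t_i)\mapsto\partial^m(r_i)(P_2)$. This suggests defining the coefficient specialization $\rho\colon\beta_{im}\mapsto\partial^m(r_i)(P_2)$. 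Every coefficient of every element of $\mathrm{im}(\tau_1)$ lies in the localization $D:=C[\boldsymbol{\beta}][\,d(P_1)^{-1}\mid d\in\Sigma\,]$, where $\Sigma$ is the multiplicative set defining $R_1$ (the factor $\mathrm{det}(\tau_1(X))^{-1}$ contributes no new denominator because $\tau_1(X)\equiv 1_n \bmod T$), so it is enough to check that $\rho$ is well defined on $D$. This is exactly where the defining data of the $R_1$-specialization enter: $\rho(d(P_1))=\sigma(d)(P_2)$ is the constant term of $\tau_2(\sigma(d))$, which is nonzero because $\sigma(d)$ is a unit of $R_2$ and $c$ was chosen not to be a zero of any generator of the multiplicative set defining $R_2$.

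Once $\rho\colon D\to C$ is known to be a $C$-algebra homomorphism, it extends coefficientwise to a differential homomorphism $\rho_T\colon D[[T]]\to C[[T]]$ (differential since it commutes with $\tfrac{d}{dT}$), and I would set $\hat\sigma:=\rho_T|_{\mathrm{im}(\tau_1)}$. The identity $\hat\sigma\circ\tau_1=\tau_2\circ\sigma$ then only has to be verified on the generators $t_i$ and $X_{ij}$: on $t_i$ it is immediate from $\rho(\beta_{ik})=\partial^k(r_i)(P_2)$, giving $\rho_T(p_i)=\tau_2(r_i)$; on $X_{ij}$ it follows because $\sigma$ is a differential homomorphism, so that $\sigma(A_k)=(A(\sigma(\boldsymbol{t})))_k$ and hence $\rho(A_k(P_1))=A_k(P_2)$, while the factor $\mathrm{det}(X)^{-1}$ is then automatic. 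In particular $\hat\sigma$ maps $\mathrm{im}(\tau_1)$ into $\mathrm{im}(\tau_2)$, and since $\sigma$ and $\tau_2$ are surjective so is $\tau_2\circ\sigma=\hat\sigma\circ\tau_1$, whence $\hat\sigma$ is surjective. The main obstacle is the well-definedness of $\rho$ on $D$; once the non-vanishing of the localizing denominators is secured by the choices of $\boldsymbol{r}$ and $c$, the remaining steps are routine bookkeeping with the recursion for $A_k$, and the containment $\sigma(\mathrm{kern}(\tau_1))\subseteq\mathrm{kern}(\tau_2)$ anticipated in the Introduction is then a formal consequence.
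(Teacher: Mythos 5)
Your proposal is correct and follows essentially the same route as the paper: both parts use the Taylor maps of Theorem~\ref{lemma_taylor_emb_transcendental_without beta} and Lemma~\ref{lemma_taylor_emb_ratfunc} with initial values $1_n$, and $\hat\sigma$ is realized as the coefficient specialization $\beta_{ij}\mapsto \partial^j(r_i)(c)$ on a finitely generated localization $\mathcal{A}$ of $C[\boldsymbol{\beta}]$, with the denominators surviving because $c$ avoids the zeros of the generators of the multiplicative set defining $R_2$. The only (immaterial) difference is organizational: you define the coefficientwise map on all of $\mathcal{A}[[T]]$ and check commutativity on generators, whereas the paper defines $\hat\sigma$ on representatives and derives well-definedness from $\sigma(P_1)=P_2$ via the fundamental theorem of homomorphisms.
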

\begin{proof} 
For the ideal $P_1 \subset R_{ \boldsymbol{\beta}}$ of Section~\ref{chap3} and initial values $X(P_{1}) = 1_n$ 
we obtain from Theorem~\ref{lemma_taylor_emb_transcendental_without beta} a Taylor map
\begin{equation*}
\tau_1: R_1[\mathrm{GL}_n] \rightarrow C(\boldsymbol{\beta})[[T]]
\end{equation*}
such that $\mathrm{im}(\tau_1)$ is a Picard-Vessiot ring for $\partial(\boldsymbol{y})=\tau_1
(A)\boldsymbol{y}$ with constants $C$.
Further if we choose a maximal ideal $P_2=\langle z-c \rangle $ of $R_2$ as in 
Section~\ref{chap3}, that is, 
$c$ is not a zero of any element in the multiplicative subset generating the localization $R_2$, and apply 
Lemma~\ref{lemma_taylor_emb_ratfunc} to 
$\partial(\textit{\textbf{y}})=\sigma(A)\textit{\textbf{y}}$
with initial values $X(P_{2}) = 1_n$, we get a second Taylor map
\begin{equation*}
 \tau_2: R_2[\mathrm{GL}_n] \rightarrow C[[T]] 
\end{equation*}
such that $\mathrm{im}(\tau_2)$ is a Picard-Vessiot ring for 
$\partial(\boldsymbol{y})=\tau_2(\sigma(A))\boldsymbol{y}$. This proves the first part. \\ 
Let the $R_1$-specialization be given by  
\begin{equation*}
\sigma(\boldsymbol{t})= \boldsymbol{r}=(r_1(z),\dots, r_l(z)) \in C[z]^l 
\end{equation*}
and let $c_{ij}:=(\partial^j (r_i))(c)$ be the evaluation of the polynomial 
$(\partial^j (r_i))(z)$ at $c$.  Since $R_1$ is a localization of $C \{ \boldsymbol{t} \}$
by a finitely generated multiplicative subset, the coefficients of the power series 
in $\mathrm{im}(\tau_1)$ are elements of a localization $\mathcal{A}$ of $C[\boldsymbol{\beta}]$
by a finitely generated multiplicative subset of $C[\boldsymbol{\beta}]$.
We extend $\sigma$ to a specialization
\begin{equation*}
 \sigma: R_1 \otimes_C \mathcal{A} \rightarrow R_2 
\end{equation*}
by $\sigma: 1 \otimes \beta_{ij} \mapsto c_{ij}$. Note that the elements in the multiplicative 
subset for the localization 
$\mathcal{A}$ do not vanish under $\sigma$, because $c$ is not a zero of the elements in the multiplicative subset
generating the localization $R_2$. We denote in the following the smaller ideal 
$P_1 \cap (R_1 \otimes_C \mathcal{A} )$ also by $P_1$. 
Since for $r(z) \in R_2$ the polynomial $r(z)-r(c)$ has obviously a zero at
$c$ and since $r(z)-\tilde{c}$ is the trivial polynomial, if $r(z)= \tilde{c} \in C$ is a constant,  we obtain that 
\begin{equation*}
\sigma(t_{ij} \otimes 1 - 1 \otimes \beta_{ij})= \partial^j(r_i)- c_{ij} \in P_2 = \langle z-c \rangle 
\end{equation*}
and so $\sigma(P_{1}) \subset P_2$. From the surjectivity of $\sigma$ which means that there is at least one 
$r_i(z)$ of degree grater than one, it follows that $\sigma(P_{1}) = P_2$. 
For the initial values it trivially holds $\sigma(X)(P_{1})=X(P_2)$.\\
We define now the map $\hat{\sigma}$ by
\begin{equation*}
 \hat{\sigma}: \mathrm{im}(\tau_1) \rightarrow \mathrm{im}(\tau_2), \ 
 \sum_{k \in \mathbb{N}_0} 1/k! \ \partial^{k}(s)(P_{1})T^k \mapsto 
 \sum_{k \in \mathbb{N}_0} 1/k! \ \partial^{k}(\sigma(s))
 (\sigma(P_{1}))T^k
\end{equation*}
where $s \in R_1[\mathrm{GL}_n]$.
We have to show that $\hat{\sigma}$ is well defined, i.e., for $s_1, s_2 \in R_1[\mathrm{GL}_n]$ with 
$\partial^{k}(s_1)(P_{1})=\partial^{k}(s_2)(P_{1})$ for all $k \in \mathbb{N}_0$ it has to follow that 
$\partial^{k}(\sigma(s_1))(P_{2})=\partial^{k}(\sigma(s_2))(P_{2})$ for all $k \in \mathbb{N}_0$. Since 
$\sigma(P_{1})=P_2$, the fundamental theorem of homomorphisms yields that 
there exists a homomorphism $\varphi$ such that
the following diagram commutes
 \[
\begin{xy}
\xymatrix{
R_1[\mathrm{GL}_n] \otimes \mathcal{A}  \ar[d]^{\pi_1} \ar[r]^{\sigma} & R_2[\mathrm{GL}_n] \ar[d]^{\pi_2} \\
(R_1[\mathrm{GL}_n] \otimes \mathcal{A})/P_{1} \ar@{-->}[r]^{\varphi}& R_2[\mathrm{GL}_n]/P_2
}
\end{xy}
\]
We conclude that $\hat{\sigma}$ is well defined. Since $\hat{\sigma}$ is induced by $\tau_2 \circ \sigma$, it is 
a differential homomorphism. From the definition of $\hat{\sigma}$ it follows that the diagram commutes.
\end{proof}
\begin{corollary}\label{Cor_spec_ideals}
 Let $\partial(\textbf{y})=A\textbf{y}$ be a matrix differential equation over $F_1$ 
 with $A \in R_{1}^{n \times n}$ and  let $\sigma: R_1 \rightarrow R_2$ be a surjective $R_1$-specialization.  
One extends $\sigma$ to a surjective differential homomorphism from $R_1[\mathrm{GL}_n]$ to $R_2[\mathrm{GL}_n]$ 
where the derivation on $R_2[\mathrm{GL}_n]$ is defined by $\partial(X)=\sigma(A )(X)$. 
Then there exist relatively maximal differential ideals 
$I_1 \subset R_1[\mathrm{GL}_n]$ and $I_2 \subset R_2[\mathrm{GL}_n]$ with the property that 
\begin{equation*}
 \sigma(I_1) \subseteq I_2
\end{equation*}
and $\sigma$ can be extended to a surjective differential homomorphism
\begin{equation*}
\sigma : R_1[\mathrm{GL}_n]/I_1 \rightarrow R_2[\mathrm{GL}_n]/ I_2.
\end{equation*}
\end{corollary}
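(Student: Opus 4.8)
The plan is to take $I_1$ and $I_2$ to be the kernels of the two Taylor maps furnished by Proposition~\ref{Prop_specialization} and to read off both the required inclusion and the induced quotient map directly from the commuting diagram. The heart of the matter has in fact already been settled in the previous section, so this corollary is essentially a repackaging: the only genuine content is to recognize that the kernels of the relevant Taylor maps are relatively maximal differential ideals and that the inclusion $\sigma(I_1)\subseteq I_2$ is forced by commutativity.

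First I would invoke Proposition~\ref{Prop_specialization} for the given equation $\partial(\textbf{y})=A\textbf{y}$ and the surjective $R_1$-specialization $\sigma$. This supplies Taylor maps $\tau_1: R_1[\mathrm{GL}_n] \rightarrow \mathrm{im}(\tau_1)$ and $\tau_2: R_2[\mathrm{GL}_n] \rightarrow \mathrm{im}(\tau_2)$ whose images are Picard-Vessiot rings (with constants $C$), together with a surjective differential homomorphism $\hat{\sigma}: \mathrm{im}(\tau_1) \rightarrow \mathrm{im}(\tau_2)$ satisfying $\hat{\sigma}\circ\tau_1 = \tau_2\circ\sigma$. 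I then set $I_1 := \mathrm{kern}(\tau_1)$ and $I_2 := \mathrm{kern}(\tau_2)$. Since a Picard-Vessiot ring over $R_i$ is by definition $R_i$-simple, the kernel of each Taylor map is a relatively maximal differential ideal of $R_i[\mathrm{GL}_n]$; this is exactly the conclusion of Theorem~\ref{lemma_taylor_emb_transcendental_without beta} for $\tau_1$ and of Lemma~\ref{lemma_taylor_emb_ratfunc} for $\tau_2$.

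Next I would verify $\sigma(I_1)\subseteq I_2$ using the commutativity $\hat{\sigma}\circ\tau_1 = \tau_2\circ\sigma$. For any $x \in I_1 = \mathrm{kern}(\tau_1)$ one computes $\tau_2(\sigma(x)) = \hat{\sigma}(\tau_1(x)) = \hat{\sigma}(0) = 0$, so that $\sigma(x) \in \mathrm{kern}(\tau_2) = I_2$. Hence $\sigma(I_1)\subseteq I_2$, as claimed.

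Finally, because $\sigma(I_1)\subseteq I_2$, the universal property of the quotient yields a well-defined differential ring homomorphism $\bar{\sigma}: R_1[\mathrm{GL}_n]/I_1 \rightarrow R_2[\mathrm{GL}_n]/I_2$ with $\bar{\sigma}(x + I_1) = \sigma(x) + I_2$; it is a differential homomorphism because $\sigma$ is, and it is surjective because the extended map $\sigma: R_1[\mathrm{GL}_n] \rightarrow R_2[\mathrm{GL}_n]$ is surjective and the projection $R_2[\mathrm{GL}_n] \rightarrow R_2[\mathrm{GL}_n]/I_2$ is surjective. I do not anticipate any real obstacle in this corollary: every difficult step (the differential transcendence of the power series $p_i$, the relative maximality of the Taylor-map kernels, and the construction of $\hat{\sigma}$ making the diagram commute) has already been carried out, and the argument here amounts to no more than passing to kernels and quotients. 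The single point deserving a word of care is the identification of $\mathrm{kern}(\tau_i)$ as a relatively maximal differential ideal, which follows from the $R_i$-simplicity of the Picard-Vessiot images $\mathrm{im}(\tau_i)$.
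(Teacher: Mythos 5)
Your proposal is correct and follows essentially the same route as the paper: take $I_1=\mathrm{kern}(\tau_1)$ and $I_2=\mathrm{kern}(\tau_2)$ from Proposition~\ref{Prop_specialization}, cite Theorem~\ref{lemma_taylor_emb_transcendental_without beta} and Lemma~\ref{lemma_taylor_emb_ratfunc} for relative maximality, deduce $\sigma(I_1)\subseteq I_2$ from the commuting diagram, and finish with the fundamental theorem of homomorphisms. No gaps.
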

\begin{proof}
Using the notation of Proposition~\ref{Prop_specialization}, 
take $I_1 := \mathrm{kern}(\tau_1)$ and $I_2 := \mathrm{kern}(\tau_2)$.
Then $I_1$ is a relatively maximal differential ideal of $R_1[\mathrm{GL}_n]$ by 
Theorem~\ref{lemma_taylor_emb_transcendental_without beta} and 
$I_2$ is a relatively maximal differential ideal of $R_2[\mathrm{GL}_n]$ by 
Lemma~\ref{lemma_taylor_emb_ratfunc}. Since the diagram in  
Proposition~\ref{Prop_specialization} commutes, it follows for 
$f \in \mathrm{kern}(\tau_1)$ that $\sigma(f) \in \mathrm{kern}(\tau_2)$ and so $\sigma(I_1) \subseteq I_2$. 
The second statement follows from the fundamental theorem of homomorphisms.
\end{proof}

\begin{theorem}\label{specialization_bound}
 Let $\partial(\textbf{y})=A\textbf{y}$ be a differential equation over $F_1$ with 
 $A \in R_{1}^{n \times n}$ and let $\sigma: R_1 \rightarrow R_2$ 
 be a surjective $R_1$-specialization.  
 Then the differential Galois group $G_2(C)$ of 
 the specialized equation $\partial(\textbf{y})=\sigma(A)\textbf{y}$ over $F_2$ 
 is a subgroup of the differential Galois group 
 $G_1(C)$ of the original equation $\partial(\textbf{y})=A\textbf{y}$ over $F_1$.
\end{theorem}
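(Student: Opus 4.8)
The plan is to realize the coordinate ring of each differential Galois group $G_i(C)$ as the ring of constants of the tensor square $S_i \otimes_{R_i} S_i$ of the Picard-Vessiot ring $S_i := R_i[\mathrm{GL}_n]/I_i$ with itself, and then to transport the surjection produced by Corollary~\ref{Cor_spec_ideals} into an inclusion of these coordinate rings. By Corollary~\ref{Cor_spec_ideals} I may fix relatively maximal differential ideals $I_1 \subset R_1[\mathrm{GL}_n]$ and $I_2 \subset R_2[\mathrm{GL}_n]$ with $\sigma(I_1) \subseteq I_2$, together with a \emph{surjective} differential homomorphism $\bar{\sigma}: S_1 \to S_2$. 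Writing $Z_i \in \mathrm{GL}_n(S_i)$ for the image of the coordinate matrix $X$, the fundamental matrices are compatible, $\bar{\sigma}(Z_1) = Z_2$, because $\sigma$ acts as the identity on the $\mathrm{GL}_n$-variables.

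First I would make the Tannakian description of $G_i$ precise over the ring $R_i$. As in the proof of Lemma~\ref{Lemma_isomorphic} (and by \cite{Dyck}, Lemma~2.4), the matrix $W_i := (Z_i \otimes 1)^{-1}(1 \otimes Z_i)$ has constant entries in $S_i \otimes_{R_i} S_i$, and the $C$-algebra $C[G_i] := C[(W_i)_{pq}, \det(W_i)^{-1}]$ is precisely the ring of constants of $S_i \otimes_{R_i} S_i$; here the torsion-freeness from Lemma~\ref{lemma_about_torsion} guarantees that this tensor product behaves well as a differential ring. This $C[G_i]$ is the coordinate ring of $G_i(C)$ viewed as a closed subgroup of $\mathrm{GL}_n$ over $C$, so that $C[G_i] \cong C[\mathrm{GL}_n]/J_i$ via $X \mapsto W_i$ for the defining ideal $J_i$ of $G_i$.

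Next I would form the tensor square of $\bar{\sigma}$. Since $\bar{\sigma}$ is $R_1$-linear through $\sigma: R_1 \to R_2$, it induces a surjective differential homomorphism $\bar{\sigma} \otimes \bar{\sigma}: S_1 \otimes_{R_1} S_1 \to S_2 \otimes_{R_2} S_2$ sending $Z_1 \otimes 1 \mapsto Z_2 \otimes 1$ and $1 \otimes Z_1 \mapsto 1 \otimes Z_2$, hence $W_1 \mapsto W_2$. As a differential homomorphism carries constants to constants, $\bar{\sigma} \otimes \bar{\sigma}$ restricts to a homomorphism $C[G_1] \to C[G_2]$, which is surjective because its image contains the generators $(W_2)_{pq}$ and $\det(W_2)^{-1}$. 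Reading this map through the presentations $C[G_i] \cong C[\mathrm{GL}_n]/J_i$, it is induced by the identity $X \mapsto X$ on $C[\mathrm{GL}_n]$; thus every $f \in J_1$ satisfies $f(W_2) = (\bar{\sigma} \otimes \bar{\sigma})(f(W_1)) = 0$ and therefore lies in $J_2$, giving $J_1 \subseteq J_2$. Consequently $G_2(C) = V(J_2) \subseteq V(J_1) = G_1(C)$, and since both are subgroups of $\mathrm{GL}_n(C)$ this is an inclusion of groups, as desired.

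The main obstacle I expect is the first step: rigorously identifying the ring of constants of $S_i \otimes_{R_i} S_i$ with the coordinate ring of the differential Galois group over the base \emph{ring} $R_i$ rather than over a field. One cannot simply pass to the function fields $F_1 = C\langle \boldsymbol{t} \rangle$ and $F_2 = C(z)$, since the specialization $\sigma$ does not extend to a map $F_1 \to F_2$; this is exactly why the comparison must be run over the rings $R_i$ and their tensor squares. The required input is the constants computation already carried out in the proof of Lemma~\ref{Lemma_isomorphic}, resting on Lemma~\ref{lemma_about_torsion} and \cite{Dyck}. I would emphasize that the mere inclusion $\sigma(I_1) \subseteq I_2$ is \emph{not} sufficient if one instead tries the stabilizer description $G_i(C) = \{ g \in \mathrm{GL}_n(C) : (X \mapsto Xg)(I_i) = I_i \}$, because the preimage $\sigma^{-1}(I_2)$ fails Condition 2.1~(a); it is the surjection $\bar{\sigma}$ of Picard-Vessiot rings, not the ideal inclusion alone, that yields the embedding of Galois groups.
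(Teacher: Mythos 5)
Your proposal is correct and follows essentially the same route as the paper: both rest on Corollary~\ref{Cor_spec_ideals} and then convert the inclusion $\sigma(I_1)\subseteq I_2$ into an inclusion of the defining ideals of the two Galois groups inside $C[\mathrm{GL}_n]$, using that the specialization acts as the identity on the constant matrix variables. The paper packages the second step via the rings $S_k[Y,\mathrm{det}(Y)^{-1}]$ with $X=Z_k\cdot Y$ and the ideal bijections of Lemmas~\ref{lemma_bijection1} and~\ref{lemma_bijection2}, which under the isomorphism $S_k[X,\mathrm{det}(X)^{-1}]/(\tilde I_k)\cong S_k\otimes_{R_k}S_k$ is exactly your identification of the constants $C[W_k,\mathrm{det}(W_k)^{-1}]$ of the tensor square.
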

\begin{proof}
We define a differential structure on the rings
$R_1[\mathrm{GL}_n]$ and $R_2 [\mathrm{GL}_n]$ by 
the matrix differential equations $\partial(\boldsymbol{y})=A\boldsymbol{y}$ and 
$\partial(\boldsymbol{y})=\sigma (A) \boldsymbol{y}$.  
Corollary~\ref{Cor_spec_ideals} then yields that there exist a relatively maximal 
differential ideal $I_1$ in $R_1[\mathrm{GL}_n]$ 
and a relatively maximal differential ideal $I_2$ in $R_2[\mathrm{GL}_n]$ such that 
$\sigma(I_1) \subseteq I_2$.  
The rings $S_k = R_k [\mathrm{GL}_n]/I_k$ are then Picard-Vessiot rings 
and the specialization $\sigma$ extends to a surjective specialization
\begin{equation*}
\sigma : S_1 \rightarrow S_2.
\end{equation*}
Let $Z:=(Z_{ij}) \in \mathrm{GL}_n(S_1)$  be the matrix whose entries $Z_{ij}$ are the images 
of the variables $X_{ij}$  in $S_1$ under the quotient map 
$R_1 [\mathrm{GL}_n] \rightarrow R_1 [\mathrm{GL}_n]/I_1$.
The specialization $\sigma$ then maps the fundamental matrix $Z$ to the 
image of $X$ under the quotient map $R_2 [\mathrm{GL}_n] \rightarrow R_2 [\mathrm{GL}_n]/I_2$ 
which is a fundamental solution matrix of $S_2$. We write $\sigma(Z)$ for it. 
For $k=1,2$ we consider the rings
\begin{equation*}
S_k[Y, \mathrm{det}(Y)^{-1}] = S_k[X, \mathrm{det}(X)^{-1}] \supset
R_k[X, \mathrm{det}(X)^{-1}]
\end{equation*}
and 
\begin{equation*}
 C[Y, \mathrm{det}(Y)^{-1}] \subset  S_k[Y, \mathrm{det}(Y)^{-1}].  
\end{equation*}
The relations between the new variables $X:=(X_{ij})$ and $Y:=(Y_{ij})$ in these rings are defined by  
\begin{equation*} 
 X= Z \cdot Y   \ \mathrm{and} \   X= \sigma(Z) \cdot Y 
\end{equation*}
respectively.
We specify the derivations and Galois actions on these rings for $k=1$. 
The derivations and Galois actions on the rings with $k=2$ are defined analogously 
where one uses the matrix $\sigma (A)$ and the fundamental matrix $\sigma(Z)$ instead.
The derivation on $R_1[X, \mathrm{det}(X)^{-1}]$ and $S_1[X, \mathrm{det}(X)^{-1}]$ is 
defined by $\partial(X)=A X$ and by the derivation on $S_1$. 
Computing $\partial(Z Y)$ shows that the derivation on $Y$ is trivial. 
Therefore, $\partial$ acts also trivial on $C[Y, \mathrm{det}(Y)^{-1}]$ and the derivation 
on $S_1[Y, \mathrm{det}(Y)^{-1}]$ is defined by the derivation on $S_1$. The action of the Galois group 
$\mathrm{Gal}(S_1/R_1)$ on the above rings is induced by the action on $S_1$. More precisely, 
$\mathrm{Gal}(S_1/R_1)$ acts trivially on $R_1[X, \mathrm{det}(X)^{-1}]$ and, since for 
$\gamma_1 \in \mathrm{Gal}(S_1 /R_1)$ the Galois action is given by $\gamma_1(Z)= Z M_1$ with 
$M_1 \in \mathrm{GL}_n(C)$, 
it acts via  $\gamma_1(Y)=M_1^{-1} \cdot Y$ on the rings $S_1[Y, \mathrm{det}(Y)^{-1}]$ 
and $C[Y, \mathrm{det}(Y)^{-1}]$. \\
Lemma~\ref{lemma_bijection2} yields a bijection between the set of differential ideals of 
$R_k [X, \mathrm{det}(X)^{-1}]$ which satisfy Condition~\ref{condition} and the set of 
$\mathrm{Gal}(S_k/R_k)$-invariant differential ideals of
\begin{equation*}
S_k [X, \mathrm{det}(X)^{-1}]=S_k [Y, \mathrm{det}(Y)^{-1}]
\end{equation*}
which also satisfy Condition~\ref{condition}. 
Thus, since $I_1$ is a relatively maximal differential ideal, the ideal
\begin{equation*}
 \tilde{I}_1= \{ f(Z  \cdot Y ) \mid f \in (I_1) \} \trianglelefteq S_1 [Y, \mathrm{det}(Y)^{-1}]
\end{equation*}
is a maximal $\mathrm{Gal}(S_1/R_1)$-invariant differential ideal which satisfies 
Condition~\ref{condition}. Analogously we obtain that 
\begin{equation*}
 \tilde{I}_2= \{ f(\sigma(Z)  \cdot Y ) \mid f \in (I_2) \} \trianglelefteq S_2 [Y, \mathrm{det}(Y)^{-1}].
\end{equation*}
is a maximal $\mathrm{Gal}(S_2/R_2)$-invariant differential ideal satisfying 
Condition~\ref{condition}.
By Lemma~\ref{lemma_bijection1} there is a bijection 
between the differential ideals of $S_k [Y, \mathrm{det}(Y)^{-1}]$ which satisfy 
Condition~\ref{condition} and the ideals of $C [Y, \mathrm{det}(Y)^{-1}]$. 
Hence, the ideal 
\begin{equation*}
Q_k = \tilde{I}_{k } \cap C [Y, \mathrm{det}(Y)^{-1}]
\end{equation*}
is a maximal $\mathrm{Gal}(S_k/R_k)$-invariant ideal of $C [Y, \mathrm{det}(Y)^{-1}]$. 
By its maximality $Q_k$ defines the 
differential Galois group $G_k$ of $S_k$. For a detailed explanation we refer to the proof of Theorem 1.28 in \cite{P/S}.
The specialization $\sigma$ extends trivially to a surjective specialization 
\begin{equation*}
\sigma: S_1 [Y, \mathrm{det}(Y)^{-1}] \rightarrow S_2 [Y, \mathrm{det}(Y)^{-1}], \ Y_{ij} \mapsto Y_{ij}.
\end{equation*}
With $\sigma(I_1) \subseteq I_2$ we conclude that 
$\sigma(\tilde{I}_1) \subseteq \tilde{I}_2$ and  
since $\sigma$ acts trivially on the ring $C [Y, \mathrm{det}(Y)^{-1}]$, we obtain that 
$Q_1 \subseteq Q_2$ and so $G_1 \supseteq G_2$.
\end{proof}
 \section{Parameter equations for connected linear algebraic groups}\label{chap5}
 Let $G$ be a connected linear algebraic group defined over $C$.
 In a first part of this section we use the specialization bound to prove that there exists 
a linear parameter differential equation over $C\langle t_1 \rangle$ such that its differential
Galois group is $G$. \\
For a successful application of the specialization bound we need a differential equation over $F_2$ with group $G$ 
to which we can specialize a suitable equation over $C\langle t_1 \rangle$.
In \cite{MitschiSinger} C.~Mitschi and M.~Singer proved that every connected linear algebraic group 
can be realized as a differential 
Galois group over a differential field which has $C$ as its field of constants and is  
of finite, non-zero transcendence degree over $C$.   
For the differential field $F_2$ the following proposition is a special case of 
\cite[Theorem 1.1]{MitschiSinger}. 
 \begin{proposition} \label{M/S_connected_groups}
  Let $G$ be a connected linear algebraic group defined over $C$. Then 
  there exists a Picard-Vessiot extension of $F_2$ with differential Galois group $G$.
 \end{proposition}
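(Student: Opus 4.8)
The plan is to deduce the statement directly from \cite{M/S}, Theorem 1.1, which realizes every connected linear algebraic group $G$ over an algebraically closed field $C$ of characteristic zero as the differential Galois group of a Picard-Vessiot extension of the rational function field $C(x)$ equipped with the derivation $\frac{d}{dx}$. Since $F_2 = C(z)$ carries exactly this structure with the standard derivation $\frac{d}{dz}$, field of constants $C$, and transcendence degree one over $C$, identifying the indeterminate $z$ with $x$ shows that the present proposition is precisely the instance of their theorem for this differential field. First I would therefore recall their statement in the form needed here and check that the hypotheses — $C$ algebraically closed of characteristic zero and $G$ connected — agree with ours; no further restriction on $G$ is imposed.

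To make the reduction transparent, I would recall the mechanism underlying the Mitschi-Singer construction, since it is exactly the bound machinery already assembled in Section~1. One chooses a defining matrix $A$ in the Lie algebra $\mathfrak{g}(C(z))$ of $G(C(z))$; by Proposition~\ref{Prop_upper} the differential Galois group of $\partial(\textbf{y}) = A\textbf{y}$ is then contained in $G(C)$ up to conjugation. For the reverse inclusion one uses that $C(z)$ has cohomological dimension at most one, so that the associated torsor is trivial and Proposition~\ref{thm_lower} applies as a genuine lower bound. The remaining task — carried out in \cite{M/S} by treating the reductive Levi factor and the unipotent radical of $G$ separately — is to exhibit an explicit $A$ for which this lower bound forces the Galois group to be all of $G$.

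The genuine difficulty is not located in our reduction but is absorbed into the cited theorem: producing a single matrix $A \in \mathfrak{g}(C(z))$ whose differential Galois group is the \emph{full} group $G$ rather than some proper connected subgroup, in particular arranging the unipotent radical so that no part of $G$ is lost. For the present purpose this is used as a black box, and the only point requiring verification on our side is that $F_2$ together with $\frac{d}{dz}$ is literally the differential field to which \cite{M/S}, Theorem 1.1 applies. Once that identification is made, the existence of the desired Picard-Vessiot extension of $F_2$ with differential Galois group $G$ is immediate.
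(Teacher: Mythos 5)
Your proposal is correct and coincides with the paper's treatment: the paper likewise presents this proposition as a direct special case of \cite{M/S}, Theorem 1.1, applied to the differential field $F_2 = C(z)$ with the standard derivation, whose constants are $C$ and whose transcendence degree over $C$ is one. The additional exposition you give of the upper/lower bound mechanism is accurate but not needed for the deduction, which is just the observation that $F_2$ satisfies the hypotheses of the cited theorem.
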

We can prove now that every connected linear algebraic group occurs as a differential Galois group over $C\langle t_1 \rangle$.  
\begin{theorem}\label{Seiss_connected_groups}
 Let $G$ be a connected linear algebraic group defined over $C$. Then 
  there exists a linear parameter differential equation
  \begin{equation*}
   L(y,t_1)=0 
  \end{equation*}
over $C \langle t_1 \rangle$ with differential Galois group $G$.
\end{theorem}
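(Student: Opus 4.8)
The plan is to realize $G$ over $C\langle t_1\rangle$ by sandwiching its Galois group between the upper bound of Proposition~\ref{Prop_upper} and the lower bound furnished by the specialization bound (Theorem~\ref{specialization_bound}), using as the lower-bound model an equation with group $G$ over $F_2=C(z)$ coming from Mitschi--Singer (Proposition~\ref{M/S_connected_groups}). The one-parameter family will be nothing more than that $C(z)$-model reread as a $C(t_1)$-equation through the substitution $z\mapsto t_1$; the substitution $t_1\mapsto z$ will then be the specialization that recovers it.

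First I would produce a good lower-bound model over $C(z)$ that already lies in the Lie algebra. By Proposition~\ref{M/S_connected_groups} there is an equation $\partial(\textbf{y})=\tilde A\textbf{y}$ with $\tilde A\in\mathfrak{gl}_n(C(z))$ whose differential Galois group over $C(z)$ is $G$. Since $C(z)$ has cohomological dimension at most one the associated torsor is trivial, so taking $H=\mathrm{GL}_n$ in Proposition~\ref{thm_lower} and using that $G$ is connected yields $B\in\mathrm{GL}_n(C(z))$ with
\[
A_0:=\partial(B)B^{-1}+B\tilde A B^{-1}\in\mathfrak{g}(C(z)).
\]
Gauge transformation preserves the Galois group, so $\partial(\textbf{y})=A_0\textbf{y}$ still has group $G$ over $C(z)$ and now has its defining matrix in $\mathfrak{g}(C(z))$.

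Next I would lift $A_0$ to the parameter field. Fixing a $C$-basis $M_1,\dots,M_d$ of $\mathfrak{g}(C)$, write $A_0=\sum_i f_i(z)M_i$ with $f_i\in C(z)$ and set $A(t_1):=\sum_i f_i(t_1)M_i$, which lies in $\mathfrak{g}(C(t_1))\subset\mathfrak{g}(C\langle t_1\rangle)$ via the $C$-algebra isomorphism $C(z)\cong C(t_1)$. Let $R_1$ be the localization of $C\{t_1\}$ inverting the finitely many denominators of the $f_i$, so that $A(t_1)\in R_1^{n\times n}$; because $A(t_1)\in\mathfrak{g}(C\langle t_1\rangle)$ and $G$ is connected, Proposition~\ref{Prop_upper} gives the upper bound $G_1(C)\subseteq G$ for the Galois group $G_1(C)$ of $\partial(\textbf{y})=A(t_1)\textbf{y}$ over $C\langle t_1\rangle$. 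For the lower bound I would take the $R_1$-specialization $\sigma\colon t_1\mapsto z$: the inverted denominators are nonzero polynomials in $z$, so $\sigma$ is a well-defined and surjective $R_1$-specialization, and by construction $\sigma(A(t_1))=A_0(z)$. Theorem~\ref{specialization_bound} then gives $G=G_2(C)\subseteq G_1(C)$, where $G_2(C)$ is the group of $\partial(\textbf{y})=A_0(z)\textbf{y}$ over $C(z)$, equal to $G$ by the previous step. Hence $G_1(C)=G$, and converting the matrix equation into an equivalent scalar equation by a cyclic vector (see \cite{Kovcyc}) produces the desired $L(y,t_1)=0$ over $C\langle t_1\rangle$ with differential Galois group $G$.

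The interconversion of matrix and scalar equations is routine and the conceptual weight is already borne by Theorem~\ref{specialization_bound}; the one point demanding care is that the lift $z\mapsto t_1$ and the specialization $\sigma\colon t_1\mapsto z$ act as mutually inverse substitutions on the coefficients, so that the specialized equation is \emph{exactly} the model $A_0(z)$ and therefore has group precisely $G$. Verifying this, together with checking that $\sigma$ is a surjective $R_1$-specialization in the sense of the Notation and Definition, is what turns the specialization bound into a sharp lower estimate rather than a mere inequality. I expect this matching of the two bounds, and not any single computation, to be the crux of the argument.
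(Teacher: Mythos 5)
Your proposal is correct and follows essentially the same route as the paper: take a Mitschi--Singer model over $C(z)$, use the triviality of the torsor over $C(z)$ to move its defining matrix into $\mathfrak{g}(C(z))$, lift via $z\mapsto t_1$, bound above by Proposition~\ref{Prop_upper}, bound below by specializing back via $t_1\mapsto z$ and Theorem~\ref{specialization_bound}, and finish with a cyclic vector. The only difference is cosmetic: you spell out the gauge transformation via Proposition~\ref{thm_lower} with $H=\mathrm{GL}_n$, where the paper simply says one may assume $A(z)\in\mathfrak{g}(F_2)$ without loss of generality.
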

\begin{proof}
Proposition~\ref{M/S_connected_groups} yields that there exists a matrix 
$A(z) \in F_2^{n \times n}$ such that the differential Galois group of 
\begin{equation*}
 \partial(\boldsymbol{y}) =  A(z)  \boldsymbol{y}
\end{equation*} 
is $G(C)$. Since the cohomological dimension of $F_2$ is 
at most one, we may assume without loss of generality that $A(z)$ is an element of the 
Lie algebra $\mathfrak{g} ( F_2 )$ of $G$. 
If we now substitute the indeterminate $z$ in the entries of $A(z)$ by the differential indeterminate $t_1$, 
we get a matrix $A(t_1) $ 
whose entries lie in $C \langle t_1 \rangle$ and a new matrix differential equation 
\begin{equation*}
\partial(\boldsymbol{y}) =  A(t_1)  \boldsymbol{y}
\end{equation*}
over $C \langle t_1 \rangle$. We determine the differential Galois group $H(C)$ of 
$\partial(\boldsymbol{y}) =  A(t_1)  \boldsymbol{y}$. 
By construction we have that $A(t_1) \in \mathfrak{g} ( C \langle t_1 \rangle )$ and so 
$H(C) \subseteq G(C)$ by Proposition~\ref{Prop_upper}. We choose now a finitely generated multiplicative subset
of $C \{ t_1 \}$ which contains all denominators 
appearing in the entries of $A(t_1)$. Then the localization $R_1 $ of $C \{ t_1 \}$ by this set satisfies
$A(t_1) \in \mathfrak{g} ( R_1 )$ and by construction the map 
\begin{equation*}
\sigma: R_1 \rightarrow R_2 , \ t_1 \mapsto z
\end{equation*}
is surjective $R_1$-specialization with $\sigma(A(t_1))=A(z) \in R_2^{n \times n}$. 
We apply now Theorem~\ref{specialization_bound} to $\partial(\boldsymbol{y}) =  A(t_1)  \boldsymbol{y}$, $R_1$ 
and $\sigma$. 
By construction the differential Galois group of $\partial(\boldsymbol{y}) =  A(\sigma(t_1))  \boldsymbol{y} $ over $F_2$ is $G(C)$. 
Hence, we get from Theorem~\ref{specialization_bound} that $G(C) \subseteq H(C)$ and so we deduce with $H(C) \subseteq G(C)$ that $H(C)=G(C)$. 
Finally we apply the Cyclic Vector Theorem (see for instance \cite{KovacicCyclicVector}) and obtain a linear parameter equation $L(y,t_1)=0$ with 
differential Galois group $G$.
\end{proof}
In a second part of this section 
we show that for every connected 
semisimple linear algebraic group $G$ of Lie rank $l$ there exists a parameter differential equation in 
$l$ parameters with the property 
that we can specialize the parameters such that we get 
all differential equations with group $G$ of a specific type over $F_2$.\\ 
We begin to recall some structure theory of a semisimple linear algebraic group $G$. 
Let $\Phi$ be the root system of $G$ and denote by $T$ a maximal torus
of $G$. Then from the adjoint action of $T$ on the Lie algebra $\mathfrak{g}$, we 
obtain a root space decomposition 
\begin{equation*}
  \mathfrak{g}(C) = \mathfrak{h}(C) \oplus \bigoplus_{\alpha \in \Phi} \mathfrak{g}_{\alpha}(C)
\end{equation*}
where $\mathfrak{h}(C) = \mathrm{Lie}(T)$ is a Cartan subalgebra and for 
$\alpha \in \Phi$ we denote by $\mathfrak{g}_{\alpha}(C)$ the one dimensional root space of 
$\mathfrak{g}(C)$. Let $\Delta$ be a basis of the root system $\Phi$ with simple roots 
$\alpha_i \in \Delta$. According to the above root space decomposition we can choose a Chevalley basis 
\begin{equation*}
 \{ H_{\alpha_i} \mid \alpha_i \in \Delta\} \cup \{ X_{\alpha} \mid \alpha \in \Phi\}
\end{equation*}
of $\mathfrak{g}$ where $\mathfrak{h}(C)= \langle H_{\alpha_1}, \dots, H_{\alpha_l} \rangle$ and $\mathfrak{g}_{\alpha}(C)= \langle X_{\alpha} \rangle$.\\
Using the structure of a connected semisimple linear algebraic group $G$, C.~Mitschi and M.~Singer constructed in \cite{MitschiSinger} specific matrix differential
equations over $F_2$ with group $G$. The following proposition is a modification of \cite[Proposition 3.5]{MitschiSinger}.
\begin{proposition}\label{mod_Mitch_Singer}
 Let $G$ be a connected semisimple linear algebraic group and define $A_0 = \sum_{\alpha_i \in \Delta} 
 (X_{\alpha_i} + X_{- \alpha_i})$. Then there exists $A_1 \in \mathfrak{h}(C)$ such that the differential equation
 $\partial(\textbf{y})=(A_0 + A_1 z)\textbf{y}$ over $F_2$ has $G(C)$ as its differential Galois group.
\end{proposition}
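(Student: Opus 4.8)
The plan is to follow the template of Mitschi and Singer, combining the upper bound of Proposition~\ref{Prop_upper} with the lower bound of Proposition~\ref{thm_lower}; the latter is available here because $F_2 = C(z)$ has cohomological dimension at most one, so the associated torsor is trivial. Write $A(z) = A_0 + A_1 z$ and let $H(C)$ denote its differential Galois group over $F_2$. Since $A_0 \in \bigoplus_{\alpha_i \in \Delta}(\mathfrak g_{\alpha_i} \oplus \mathfrak g_{-\alpha_i})$ and $A_1 \in \mathfrak h(C)$, we have $A(z) \in \mathfrak g(F_2)$, so Proposition~\ref{Prop_upper} gives $H(C) \subseteq G(C)$ up to conjugation. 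It remains to prove the reverse inclusion, and the entire difficulty is concentrated there; the choice of $A_1$ is dictated by this step.

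For the lower bound I would argue on the level of Lie algebras. Proposition~\ref{thm_lower} provides $B \in G(F_2)$ with $\tilde A := \partial(B)B^{-1} + B A(z) B^{-1} \in \mathfrak h(F_2)$, where $\mathfrak h$ is the Lie algebra of $H$. Here $A(z)$ is a polynomial matrix, so the equation has a single singular point, an irregular one at $z = \infty$ of slope one whose leading coefficient is $A_1$. I would choose $A_1 \in \mathfrak h(C)$ \emph{regular}, and in fact separating the roots, i.e. such that $\alpha \mapsto \alpha(A_1)$ is injective on $\Phi \cup \{0\}$; this excludes only finitely many hyperplanes in $\mathfrak h(C)$. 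Since $\tilde A \in \mathfrak h(C) \otimes_C F_2$, every Laurent coefficient of $\tilde A$ at $\infty$ lies in $\mathfrak h(C)$, and the exponential torus attached to the regular leading term $A_1$ is (a conjugate of) the full maximal torus $T$. Being a conjugacy invariant of the formal type at $\infty$, this torus must sit inside $H$; after replacing $H$ by the appropriate conjugate we obtain $\mathfrak h \supseteq \mathfrak h(C)$. Consequently $\mathfrak h$ is stable under $\operatorname{ad}(A_1)$, and since the eigenvalues of $\operatorname{ad}(A_1)$ on $\mathfrak g$ are the distinct numbers $\alpha(A_1)$, it decomposes as $\mathfrak h = \mathfrak h(C) \oplus \bigoplus_{\alpha \in \Psi} \mathfrak g_\alpha$ for some $\Psi \subseteq \Phi$.

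The crux is then to show that $\Psi$ contains every simple root $\pm\alpha_i$, and this is exactly where the symmetric shape of $A_0 = \sum_{\alpha_i \in \Delta}(X_{\alpha_i} + X_{-\alpha_i})$ enters, each simple root space occurring in $A_0$ with a nonzero coefficient. I would extract this from the irregular singularity at $\infty$ through Ramis's density theorem, by which $H$ is generated by the exponential torus, the formal monodromy, and the Stokes matrices. The Stokes matrices attached to a slope-one singularity with leading term $A_1$ lie in the root subgroups $U_\alpha$ corresponding to the roots actually present in the next term $A_0$, so the nonvanishing of the $\mathfrak g_{\pm\alpha_i}$-components of $A_0$ forces $\pm\alpha_i \in \Psi$. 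Once $\mathfrak g_{\pm\alpha_i} \subseteq \mathfrak h$ for all simple roots, the relations $[\mathfrak g_{\alpha_i}, \mathfrak g_{-\alpha_i}] = \langle H_{\alpha_i}\rangle$ and $[\mathfrak g_{\alpha}, \mathfrak g_{\beta}] = \mathfrak g_{\alpha+\beta}$ (whenever $\alpha+\beta \in \Phi$) show that $\mathfrak h$, being a Lie subalgebra, equals $\mathfrak g$. Finally $\dim H = \dim \mathfrak g = \dim G$ together with the connectedness of $G$ gives $H^0 = G$ and hence $H = G$.

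I expect the essential obstacle to be precisely this recovery of the simple root spaces. A naive order-by-order reduction at $\infty$ does not suffice: a \emph{formal} gauge transformation can absorb the $z^0$-term $A_0$ into the Cartan, so no comparison of Laurent coefficients alone can detect the $\mathfrak g_{\pm\alpha_i}$. What rescues the argument is that the gauge matrix $B$ of Proposition~\ref{thm_lower} is \emph{rational}, so the genuinely transcendental Stokes data survive and must be realized inside $H$. Making the passage from ``the root space occurs in $A_0$'' to ``the corresponding Stokes generator is nontrivial'' precise, that is, controlling the Stokes matrices of $A_0 + A_1 z$, is the technical heart, and is where I would rely most heavily on the Mitschi--Singer analysis that this proposition modifies.
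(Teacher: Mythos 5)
The paper does not prove this proposition itself: it defers to \cite{Seiss}, Proposition 3.1, which adapts the algebraic argument of \cite{M/S}, Proposition 3.5. Your frame (upper bound from Proposition~\ref{Prop_upper}, lower bound from Proposition~\ref{thm_lower} via triviality of the torsor over $C(z)$, reduction to $\mathrm{Lie}(H)=\mathfrak{g}$) matches that argument, but the step you yourself flag as the heart of the matter is not merely deferred --- the specific claims you lean on are false. Take $G=\mathrm{SL}_2$, $A_0=X_{\alpha}+X_{-\alpha}=\left(\begin{smallmatrix}0&1\\1&0\end{smallmatrix}\right)$ and $A_1=\mathrm{diag}(\tfrac12,-\tfrac12)$, which is regular and separates $\Phi\cup\{0\}$. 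Eliminating $y_2$ from $\partial(\textbf{y})=(A_0+A_1z)\textbf{y}$ gives $y_1''=(\tfrac14 z^2+\tfrac32)\,y_1$, which has the exponential solution $y_1=z\,e^{z^2/4}$; hence the Galois group lies in a Borel subgroup and is not $\mathrm{SL}_2$. So ``regular and separating'' is not the right condition on $A_1$: the existence claim requires $A_1$ to avoid in addition a countable family of resonance conditions coupling $A_1$ to $A_0$, and identifying and excluding these is precisely the content of the proof being cited. The same example refutes your key assertion that a nonzero $\mathfrak{g}_{\pm\alpha_i}$-component of $A_0$ forces a nontrivial Stokes multiplier in the corresponding root direction: here both components of $A_0$ are nonzero, yet the relevant Stokes datum vanishes for this resonant $A_1$. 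As you correctly observe, the formal type at $\infty$ does not see $A_0$ at all; but the conclusion to draw is that nontriviality of the Stokes data cannot be read off from the coefficients of $A$ and must be proved, and that proof is exactly what is missing from the proposal.

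Two further remarks. First, the route you sketch (exponential torus, Ramis density, Stokes matrices) is analytic and only makes sense for $C=\mathbb{C}$, whereas the proposition is stated over an arbitrary algebraically closed field of characteristic zero; the proof in \cite{M/S}/\cite{Seiss} is purely algebraic --- roughly, one shows by a Laurent-expansion and exponent analysis at $\infty$ that for suitably non-resonant $A_1$ no rational gauge transformation $B\in G(F_2)$, as supplied by Proposition~\ref{thm_lower}, can move $A_0+A_1z$ into the Lie algebra of a proper connected subgroup. Second, once you commit to the local analytic route, Proposition~\ref{thm_lower} plays no role in your argument (you never use the rational matrix $B$), while in the algebraic route it is indispensable; and in either route you should also justify that the Galois group is connected before invoking Proposition~\ref{thm_lower}, which here follows from the equation having $\infty$ as its only singular point. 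In short, the passage from the shape of $A_0$ and a correctly constrained $A_1$ to the inclusion $G(C)\subseteq H(C)$ is the entire difficulty, and the proposal does not contain it.
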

A proof can be found in \cite[Proposition 3.1]{Seiss}.

\begin{theorem}\label{general_result}
 Let $G$ be a connected semisimple linear algebraic group of Lie rank $l$ over $C$. 
 Then there exists a parameter differential equation 
 \begin{equation*}
 L(y,\textbf{t})=0  
 \end{equation*}
 over $F_1$ with differential Galois group $G(C)$ such that we obtain from a suitable specialization 
 of the parameters
 every Picard-Vessiot extension which is defined by an equation of Proposition~\ref{mod_Mitch_Singer}.
\end{theorem}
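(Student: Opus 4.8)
The plan is to exhibit an explicit $l$-parameter matrix equation over $F_1$ built from the Chevalley basis, bound its Galois group from above by Proposition~\ref{Prop_upper}, and bound it from below by combining Proposition~\ref{mod_Mitch_Singer} with the specialization bound of Theorem~\ref{specialization_bound}. The same family of specializations will simultaneously deliver the covering statement.

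First I would take $A_0 = \sum_{\alpha_i \in \Delta}(X_{\alpha_i} + X_{-\alpha_i})$ as in Proposition~\ref{mod_Mitch_Singer} and define the parameter matrix
\[
A(\textbf{t}) = A_0 + \sum_{i=1}^{l} t_i H_{\alpha_i},
\]
where $\{H_{\alpha_1}, \dots, H_{\alpha_l}\}$ is the Chevalley basis of the Cartan subalgebra $\mathfrak{h}(C)$, so that $\dim_C \mathfrak{h}(C) = l$ is exactly the Lie rank. The entries of $A(\textbf{t})$ are linear polynomials in the $t_i$ with constant coefficients, hence $A(\textbf{t}) \in C\{\textbf{t}\}^{n \times n}$ and we may take $R_1 = C\{\textbf{t}\}$ with no localization. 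Since $A_0 \in \mathfrak{g}(C)$ and $\sum_i t_i H_{\alpha_i} \in \mathfrak{h}(F_1) \subseteq \mathfrak{g}(F_1)$, we have $A(\textbf{t}) \in \mathfrak{g}(F_1)$, so Proposition~\ref{Prop_upper} shows that the differential Galois group $H(C)$ of $\partial(\textbf{y}) = A(\textbf{t})\textbf{y}$ over $F_1$ satisfies $H(C) \subseteq G(C)$ up to conjugation.

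For the covering statement and the lower bound, I would note that an arbitrary element of $\mathfrak{h}(C)$ is $A_1 = \sum_{i=1}^{l} a_i H_{\alpha_i}$ with $a_i \in C$, and that the $R_1$-specialization $\sigma_{\textbf{a}} : t_i \mapsto a_i z$ sends $A(\textbf{t})$ to $A_0 + \sum_i a_i z\, H_{\alpha_i} = A_0 + A_1 z$, i.e.\ precisely to an equation of Proposition~\ref{mod_Mitch_Singer}. As $A_1$ ranges over $\mathfrak{h}(C)$ this recovers every such equation, which is the covering claim. To pin down $H(C)$, I would fix the specific $A_1 \in \mathfrak{h}(C)$ furnished by Proposition~\ref{mod_Mitch_Singer} for which $\partial(\textbf{y}) = (A_0 + A_1 z)\textbf{y}$ has Galois group $G(C)$; writing $A_1 = \sum_i a_i H_{\alpha_i}$ and applying Theorem~\ref{specialization_bound} to the surjective specialization $\sigma_{\textbf{a}}$ yields $G(C) \subseteq H(C)$, whence the two inclusions force $H(C) = G(C)$. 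A final application of the Cyclic Vector Theorem turns the matrix equation into a scalar linear parameter equation $L(y, \textbf{t}) = 0$ with the same group.

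The only step needing genuine checking is that $\sigma_{\textbf{a}}$ is a \emph{surjective} $R_1$-specialization, as Theorem~\ref{specialization_bound} demands: its image is all of $R_2 = C[z]$ exactly when some $a_i \neq 0$, that is, when $A_1 \neq 0$. I expect this to be the main (and essentially the only) obstacle, and it is resolved by observing that $A_1 = 0$ would make the defining matrix $A_0 + A_1 z = A_0$ constant, so that $\partial(\textbf{y}) = A_0\textbf{y}$ would have an abelian Galois group; this contradicts that the group equals the non-abelian semisimple group $G$ of positive Lie rank. Hence $A_1 \neq 0$, the distinguished specialization is genuinely surjective, and the argument closes.
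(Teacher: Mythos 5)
Your proof is correct and follows essentially the same route as the paper: the same matrix $A(\textbf{t}) = A_0 + \sum_i t_i H_{\alpha_i}$, the upper bound via Proposition~\ref{Prop_upper}, the lower bound via Proposition~\ref{mod_Mitch_Singer} together with the specialization $t_i \mapsto a_i z$ and Theorem~\ref{specialization_bound}, and the Cyclic Vector Theorem at the end. Your explicit verification that the distinguished specialization is surjective (because $A_1 = 0$ would force an abelian Galois group) is a point the paper leaves implicit, and it is a worthwhile addition.
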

\begin{proof}
 We define a parameter differential equation $\partial(\textit{\textbf{y}})=A(\textit{\textbf{t}})\textit{\textbf{y}}$ by
 \begin{equation*}
  A(\textit{\textbf{t}})=A_0 + \sum_ {i=1}^l t_i H_{\alpha_i} \in \mathfrak{g}(C\{\textit{\textbf{t}}\}).
 \end{equation*}
Proposition~\ref{Prop_upper} yields that the Galois group $H(C)$ of $A(\textit{\textbf{t}})$ is a subgroup of $G(C)$.
On the other hand there exists $A_1 \in \mathfrak{h}(C)$ by Proposition~\ref{mod_Mitch_Singer} such that
the differential Galois group of 
$\partial(\textit{\textbf{y}})=(A_0 + z A_1)\textit{\textbf{y}}$ is $G(C)$. Let $a_1, \dots, a_l \in C$ such that $A_1 = \sum_{i=1}^l a_i H_{\alpha_i}$.
Then we obtain for the specialization 
\begin{equation*}
 \sigma:C\{t_1, \dots,t_l\} \rightarrow C[z], \ (t_1, \dots,t_l) \mapsto (a_1 z, \dots , a_l z)
\end{equation*}
that the differential Galois group of $\sigma (A(\textit{\textbf{t}}))$ is $G(C)$. By 
Theorem~\ref{specialization_bound} we have that $G(C) \subseteq H(C) \subseteq G(C)$, that is 
$H(C) = G(C)$. 
 We apply now the Cyclic Vector Theorem (see for instance \cite{KovacicCyclicVector}) and obtain a linear parameter differential equation 
 $L(y,\textit{\textbf{t}})$. By construction we can specialize the parameters of $A(\textit{\textbf{t}})$ 
 such that we obtain all equations of Proposition~\ref{mod_Mitch_Singer} and 
 therefore a suitable specialization of $L(y,\textit{\textbf{t}})$ yields every Picard-Vessiot extension defined by such an equation. 
\end{proof}
\section*{PART II\\Parameter differential equations for the classical groups}
 \section{The Transformation Lemma}\label{chap6}
In this and in the following chapters we will prove Theorem~\ref{main_thm_intro} from the introduction. To this purpose 
let $G$ be one of the groups occurring in Theorem~\ref{main_thm_intro} and keep the notations of 
the preceding section.  
With respect to a Cartan decomposition of $\mathfrak{g}$ we consider in the following the 
maximal nilpotent subalgebra $\mathfrak{u}^+= \sum_{\alpha \in \Phi^+} \mathfrak{g}_{\alpha}$ 
(respectively $\mathfrak{u}^-$) defined by all positive (respectively negative) roots of $\Phi$. 
Further, we denote by $\mathfrak{b}^+= \mathfrak{h} + \mathfrak{u}^+$ (respectively $\mathfrak{b}^-$) the maximal solvable 
subalgebra of $\mathfrak{g}$ which contains the maximal nilpotent subalgebra $\mathfrak{u}^+$ 
(respectively $\mathfrak{u}^-$) and the Cartan subalgebra $\mathfrak{h}$. Let $X \in \mathfrak{g}$
and denote by $\mathfrak{s}$ a subspace of $\mathfrak{g}$. Then we call the subset 
$X+\mathfrak{s}$ a plane of $\mathfrak{g}$. We denote by $A_0^+ = \sum_{i=1}^l 
X_{\alpha_i}$ (respective $A_0^- = \sum_{i=1}^l X_{- \alpha_i}$) the sum of all basis 
elements belonging to the positive (respective negative) simple roots. For a root 
$\alpha = \sum_{\alpha_i \in \Delta} n_{\alpha_i}(\alpha) \alpha_i \in \Phi$, where 
$n_{\alpha_i}(\alpha) \in \mathbb{Z}$ are all negative or positive, we denote by 
$\mathrm{ht}(\alpha) = \sum_{\alpha_i \in \Delta} n_{\alpha_i}(\alpha) \in \mathbb{Z}$ 
the height of $\alpha$.

The proof of Theorem~\ref{main_thm_intro} is organized in the following way: In this chapter we show  
that for every group $G$ in Theorem~\ref{main_thm_intro} there are $l$ negative roots $\gamma_i \in \Phi^-$ of specific heights 
such that every matrix of the plane $ A_0^+ + \mathfrak{b}^-$ is gauge equivalent to a matrix of the plane 
\begin{equation*}
A_0^+  + \sum_{i=1}^l \mathfrak{g}_{\gamma_i}.
\end{equation*}
 Afterwards, i.e.~in the Chapters~\ref{chap7}-\ref{chap11}, we prove 
 Theorem~\ref{main_thm_intro} for each group separately.
 We will determine the roots $\gamma_i$ and the explicit shape of the matrix 
 $A_{G}(\textit{\textbf{t}})$, where $A_{G}(\textit{\textbf{t}})$ is the 
 parametrization of the above plane. Finally, we will show that 
  $\partial(\textit{\textbf{y}})= A_{G}(\textit{\textbf{t}})\textit{\textbf{y}}$ 
  is equivalent to the corresponding linear parameter differential equation in 
  Theorem~\ref{main_thm_intro} and has $G(C)$ as differential Galois group over 
  $C \langle \boldsymbol{t} \rangle$. 

Let $X, \ Y \in \mathfrak{g}$. Then we write $[X,Y]$ for the usual bracket product 
and $\mathrm{ad}(X)$ for the endomorphism $\mathrm{ad}(X):Y \mapsto [X,Y]$.
The adjoint action for an element $g \in G \subset \mathrm{GL}_n$ on $\mathfrak{g}$ 
is denoted by $\mathrm{Ad}(g): \mathfrak{g} \rightarrow  \mathfrak{g}, 
X \mapsto gXg^{-1}$. 
For $X \in \mathfrak{g}$ nilpotent, let 
\begin{equation*}
\mathrm{exp}(\mathrm{ad}(X))= \sum_{j\geq 0} \frac{1}{j!} \mathrm{ad}^j (X)
\end{equation*}
be the exponential of $\mathrm{ad}(X)$, which is an automorphism of $\mathfrak{g}$, and for a root $\beta \in \Phi$ 
the exponential map from $\mathfrak{g}_{\beta}$ to the root group $U_{\beta}$ is defined by 
\begin{equation*}
 \mathrm{exp}: \mathfrak{g}_{\beta} \rightarrow U_{\beta}, \ X_{\beta} \mapsto \sum_{j\geq 0} \frac{1}{j!} X_{\beta}^j.
\end{equation*}
For a parameter $\rho$ we denote by $u_{\beta}(\rho)$ the root group element 
$\mathrm{exp}(\rho X_{\beta})$. We have the well known identity 
$\mathrm{exp}(\mathrm{ad}(X_{\beta}))(Y) =\mathrm{Ad}(\mathrm{exp}(X_{\beta}))(Y)$. 
Finally, we write $\mathfrak{g}^{X}$ for the centralizer of $X$ in $\mathfrak{g}$.

By \cite[\S 5.2]{Konst_Betti} there exists a unique element $H_0$ in $\mathfrak{h}$ 
such that $\alpha_i(H_0)=1$ for all $\alpha_i \in \Delta$. 
Therefore, for $\alpha \in \Phi$ we have $\alpha(H_0)=\mathrm{ht}(\alpha)$ 
and $\mathrm{ad} (H_0)(X_{\alpha})= \mathrm{ht}(\alpha)X_{\alpha}$. 
This yields a decomposition of $\mathfrak{g}$ into a direct sum of eigenspaces 
$\mathfrak{g}^{(j)}$ of 
$\mathrm{ad} (H_0)(X_{\alpha})$ with eigenvalues $j \in \mathbb{Z}$.
Clearly for $j\neq 0$ the eigenspace $\mathfrak{g}^{(j)}$ is  
 the direct sum of root spaces  of the same height $j$, i.e. we have 
 \begin{equation*}
 \mathfrak{g}^{(j)} = \sum_{\alpha \in \Phi , \ \mathrm{ht}(\alpha)=j} \mathfrak{g}_{\alpha}.
 \end{equation*}
 A proof of Lemma~\ref{lemma_eigenspace_decomp} below can be found in 
 \cite[page 369, Lemma 9]{KostantLieGrpRep}.
 \begin{lemma}\label{lemma_eigenspace_decomp}
 The maximal nilpotent subalgebras $\mathfrak{u}^+$, $\mathfrak{u}^-$
 and the Cartan algebra $\mathfrak{h}$ are the following direct sums of eigenspaces of $\mathrm{ad} (H_0)(X_{\alpha})$:
 \begin{equation*}
  \mathfrak{u}^+ = \sum_{j>0} \mathfrak{g}^{(j)}, \ \mathfrak{u}^- =\sum_{j<0} \mathfrak{g}^{(j)}, \  \mathfrak{h}= \mathfrak{g}^{(0)}.
 \end{equation*}
 Additionally, for two eigenvalues $i,j \in \mathbb{Z}$ we have the following relation:
  \begin{equation*}
 [\mathfrak{g}^{(i)}, \mathfrak{g}^{(j)} ]\subset \mathfrak{g}^{(i+j)}.
  \end{equation*}
 \end{lemma}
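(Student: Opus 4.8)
The plan is to realize the lemma as a direct consequence of the diagonalizability of $\mathrm{ad}(H_0)$ on the root space decomposition $\mathfrak{g} = \mathfrak{h} \oplus \bigoplus_{\alpha \in \Phi} \mathfrak{g}_{\alpha}$. First I would record how $\mathrm{ad}(H_0)$ acts on each summand. On the Cartan algebra $[H_0, H]=0$ for all $H \in \mathfrak{h}$, since $\mathfrak{h}$ is abelian, so $\mathfrak{h}$ sits inside the eigenspace for eigenvalue $0$. On each root space the identity $\mathrm{ad}(H_0)(X_{\alpha}) = \mathrm{ht}(\alpha) X_{\alpha}$, already established in the paragraph preceding the lemma, shows that $\mathfrak{g}_{\alpha}$ sits inside the eigenspace for eigenvalue $\mathrm{ht}(\alpha)$. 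Hence $\mathrm{ad}(H_0)$ is diagonalizable, the sum $\sum_j \mathfrak{g}^{(j)}$ is automatically direct (distinct eigenspaces of a single operator are independent), and each $\mathfrak{g}^{(j)}$ is the sum of $\mathfrak{h}$ (only when $j=0$) together with all $\mathfrak{g}_{\alpha}$ of height $j$.

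The one structural input that makes the three decompositions precise is the sign-coherence of root coordinates with respect to the base $\Delta$: every root $\alpha = \sum_{\alpha_i \in \Delta} n_{\alpha_i}(\alpha)\alpha_i$ has coefficients that are either all non-negative, namely $\alpha \in \Phi^+$, or all non-positive, namely $\alpha \in \Phi^-$. Since $\alpha \neq 0$, at least one coefficient is nonzero, and therefore $\mathrm{ht}(\alpha) > 0$ for positive roots and $\mathrm{ht}(\alpha) < 0$ for negative roots; in particular no root has height zero. From this I would conclude that the eigenvalue $0$ receives no contribution from any root space, so $\mathfrak{g}^{(0)} = \mathfrak{h}$, which is the third assertion. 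Grouping the positive root spaces by their (necessarily positive) height then gives $\mathfrak{u}^+ = \bigoplus_{\alpha \in \Phi^+}\mathfrak{g}_{\alpha} = \sum_{j>0}\mathfrak{g}^{(j)}$, and symmetrically $\mathfrak{u}^- = \sum_{j<0}\mathfrak{g}^{(j)}$.

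For the grading relation I would simply apply the Jacobi identity to homogeneous elements: for $X \in \mathfrak{g}^{(i)}$ and $Y \in \mathfrak{g}^{(j)}$,
\[
\mathrm{ad}(H_0)([X,Y]) = [[H_0,X],Y] + [X,[H_0,Y]] = (i+j)[X,Y],
\]
so $[X,Y]$ is an eigenvector of $\mathrm{ad}(H_0)$ for the eigenvalue $i+j$, i.e. $[\mathfrak{g}^{(i)},\mathfrak{g}^{(j)}] \subset \mathfrak{g}^{(i+j)}$.

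I do not expect a genuine obstacle: the statement is essentially a repackaging of the root space decomposition through the function $\alpha \mapsto \alpha(H_0) = \mathrm{ht}(\alpha)$. The only point deserving care is the sign-coherence of root coordinates relative to $\Delta$, since this is precisely what forbids roots of height zero and thereby pins down $\mathfrak{g}^{(0)}$ as exactly $\mathfrak{h}$ rather than $\mathfrak{h}$ enlarged by spurious root spaces.
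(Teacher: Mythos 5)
Your proof is correct and complete: the action of $\mathrm{ad}(H_0)$ on the root space decomposition, the sign-coherence of root coefficients with respect to $\Delta$ (which rules out roots of height zero and pins down $\mathfrak{g}^{(0)}=\mathfrak{h}$), and the Jacobi identity for the grading relation are exactly the standard ingredients. The paper itself does not prove this lemma but defers to Kostant (\cite{Konst_Rep}, p.~369); your argument is the expected one behind that citation, so there is nothing substantive to compare.
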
 
 Let us consider the ring of polynomials on $\mathfrak{g}$ as a $G$-module in the 
 obvious way. Then by a theorem of Chevalley its ring of invariants is generated by $l$ homogeneous polynomials $u_i$ of degree $\mathrm{deg}(u_i)=m_i +1$. 
 The integers $m_i$ are such that 
 \begin{equation*}
  p_{G}(x)= \prod_{i=1}^l (1+ x^{2m_i +1})
 \end{equation*}
is the Poincare polynomial of $G$ and are called the exponents of $\mathfrak{g}$ (see \cite{Konst_Betti}). The exponents can also be recovered as the 
eigenvalues of a particular element in the Weyl group which is known as a Coxeter-Killing transformation (see \cite{Coxeter}).

Recall that $A_0^- = \sum_{i=1}^l X_{- \alpha_i}$ is the sum of basis elements for all negative simple roots and $A_0^+ = \sum_{i=1}^l X_{ \alpha_i}$
for all positive roots respectively. Then Theorem \ref{thm_basis_Z_i} below characterizes a basis of the centralizer $\mathfrak{g}^{A_0^+}$ in terms 
of exponents and the above eigenspace decomposition. For a proof see 
\cite[Theorem 5]{KostantLieGrpRep}.  
 \begin{theorem}\label{thm_basis_Z_i}
There exists a basis $\{ Z_i \mid i=1, \dots , l \}$ of $\mathfrak{g}^{A_0^+}$ 
such that $Z_i \in \mathfrak{g}^{(m_i)}$ where the integers $m_i$ are the 
exponents of $\mathfrak{g}$.
In particular $\mathfrak{g}^{A_0^+} \subset \mathfrak{b}^+$.
 \end{theorem}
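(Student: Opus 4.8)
The plan is to exhibit $A_0^+$ as the nilpositive element of a principal $\mathfrak{sl}_2$-triple and to read the centralizer off the resulting $\mathfrak{sl}_2$-module decomposition of $\mathfrak{g}$. First I would complete $A_0^+$ to such a triple. Set $e := A_0^+$ and $h := 2H_0$, where $H_0 \in \mathfrak{h}$ is the element of the preceding paragraph with $\alpha_i(H_0)=1$. Since every simple root has height one, Lemma~\ref{lemma_eigenspace_decomp} gives $\mathrm{ad}(h)(e)=2e$, so $e \in \mathfrak{g}^{(1)}$. I would then produce $f = \sum_{i=1}^l c_i X_{-\alpha_i} \in \mathfrak{u}^-$ with $[e,f]=h$ by solving, inside $\mathfrak{g}^{(-1)}=\sum_i \mathfrak{g}_{-\alpha_i}$, the linear system $\sum_i c_i H_{\alpha_i} = 2H_0$; the map $\mathrm{ad}(e)\colon \mathfrak{g}^{(-1)} \to \mathfrak{g}^{(0)}=\mathfrak{h}$ is injective, and the fact that $A_0^+$ is a regular (principal) nilpotent guarantees that $2H_0$ lies in its image, so a unique such $f$ exists, and $[h,f]=-2f$ holds because $f \in \mathfrak{g}^{(-1)}$. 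Thus $(e,h,f)$ spans a copy of $\mathfrak{sl}_2$ acting on $\mathfrak{g}$ via $\mathrm{ad}$, and the $\mathrm{ad}(H_0)$-grading of Lemma~\ref{lemma_eigenspace_decomp} is exactly half the $\mathrm{ad}(h)$-weight grading.

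Next, because the characteristic of $C$ is zero and $\mathfrak{sl}_2$ is semisimple, complete reducibility decomposes $\mathfrak{g} = \bigoplus_{i=1}^r V_i$ into irreducible $\mathfrak{sl}_2$-modules. In each $V_i$ the raising operator $\mathrm{ad}(e)$ annihilates precisely the one-dimensional highest-weight line, so $\mathfrak{g}^{A_0^+} = \ker \mathrm{ad}(e)$ is spanned by the $r$ highest-weight vectors $Z_1,\dots,Z_r$ and $\dim \mathfrak{g}^{A_0^+}=r$. If $V_i$ carries $\mathfrak{sl}_2$-highest weight $2m_i$ (even, since the $\mathrm{ad}(h)=2\,\mathrm{ad}(H_0)$-eigenvalues are even integers), then $Z_i$ has $\mathrm{ad}(H_0)$-eigenvalue $m_i$, that is $Z_i \in \mathfrak{g}^{(m_i)}$. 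No summand is trivial, since a trivial summand would lie in the centralizer of the entire principal $\mathfrak{sl}_2$, which vanishes in a simple Lie algebra; hence every $m_i \geq 1$ and $Z_i \in \mathfrak{u}^+ \subset \mathfrak{b}^+$, which already yields $\mathfrak{g}^{A_0^+} \subset \mathfrak{b}^+$.

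It remains to identify $r=l$ and to match the $m_i$ with the exponents. That the centralizer of the regular nilpotent $A_0^+$ has dimension equal to the rank $l$ gives $r=l$. The genuinely hard point — and the reason the statement is attributed to Kostant — is that the integers $m_i$ produced by the $\mathfrak{sl}_2$-decomposition coincide with the exponents defined here through the Poincaré polynomial $p_G(x)=\prod_{i=1}^l (1+x^{2m_i+1})$. I would establish this by bringing in the Chevalley invariants: if $u_1,\dots,u_l$ are the homogeneous generators of the ring of $G$-invariant polynomials on $\mathfrak{g}$ with $\deg u_i = m_i+1$, then, by a theorem of Kostant, their gradients at $e$ (taken via an invariant form) form a basis of $\mathfrak{g}^{A_0^+}$, and a weight count places $\mathrm{grad}(u_i)(e)$ in $\mathfrak{g}^{(m_i)}$; comparing these degrees with the $\mathfrak{sl}_2$-highest weights above forces the two lists of integers to agree, while the passage between the degrees $m_i+1$ of the invariants and the factorization of $p_G$ is precisely Kostant's Betti-number theorem.

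This invariant-theoretic bridge between the $\mathfrak{sl}_2$-weights and the Poincaré polynomial is the step I expect to be the main obstacle; everything preceding it is formal $\mathfrak{sl}_2$-representation theory combined with the height grading of Lemma~\ref{lemma_eigenspace_decomp}. Since the paper only needs the existence of a basis $Z_i \in \mathfrak{g}^{(m_i)}$ with the $m_i$ being the exponents, in practice I would simply invoke \cite{Konst_Rep}, Theorem~5, for this last identification and record the $\mathfrak{sl}_2$-construction above as the conceptual content.
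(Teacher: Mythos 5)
Your proposal is correct and in substance coincides with what the paper does: the paper offers no proof of Theorem~\ref{thm_basis_Z_i} but refers to \cite{Konst_Rep}, Theorem~5, and your principal $\mathfrak{sl}_2$-triple argument is a faithful outline of Kostant's proof of exactly that theorem, with the genuinely hard step (matching the $\mathfrak{sl}_2$-highest weights to the exponents defined via the invariant degrees and the Poincar\'e polynomial) correctly isolated and deferred to the same reference. The preliminary steps all check out --- note only that the existence of $f$ needs no appeal to regularity, since $\mathrm{ad}(A_0^+)$ sends $X_{-\alpha_j}$ to $H_{\alpha_j}$ and hence maps $\mathfrak{g}^{(-1)}$ isomorphically onto $\mathfrak{h}$.
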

Later we need the exponents $m_i$ of the Lie algebras   
$\mathfrak{sl}_{l+1}$, $\mathfrak{so}_{2l+1}$, $\mathfrak{sp}_{2l}$, $\mathfrak{so}_{2l}$ and $\mathfrak{g}_2$, i.e.~the Lie algebras 
of type  $A_l$, $B_l$, $C_l$, $D_l$ and $G_2$, for the explicit computation 
of the equations in Theorem~\ref{main_thm_intro}. We want to note that it is also possible to \textit{read off} the exponents
from the root system. The following procedure was discovered by A.~Shapiro and R.~Steinberg (see \cite{Steinberg}): For 
$k=1, \dots, \mathrm{ht}(\beta)$, where $\beta \in \Phi^+$ is the maximal root, let 
\begin{equation*}
 c_k = \lvert \{ \alpha \in \Phi^+ \mid \mathrm{ht}(\alpha)=k \} \rvert ,
\end{equation*}
i.e.~$c_k$ is the number of roots $\alpha \in \Phi^+$ such that $\mathrm{ht}(\alpha)=k$. Then $k$ is $c_k -c_{k+1}$ times an exponent of 
$\mathfrak{g}$. A proof of correctness of this empirical method follows from 
\cite[Corollary 8.7]{Konst_Betti}. 
 \begin{lemma}\label{lemma_direct_sum1}
The subalgebra $\mathfrak{b}^+$ is the direct sum of the centralizer $\mathfrak{g}^{A_0^+}$ and the image of $\mathfrak{u}^+$ under 
$\mathrm{ad}(A_0^-)$, that is
 \begin{equation*}
\mathfrak{b}^+= \mathfrak{g}^{A_0^+} + \mathrm{ad}(A_0^-)(\mathfrak{u}^+).
\end{equation*}
 \end{lemma}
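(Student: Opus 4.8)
The plan is to prove the identity by checking the two containments $\mathfrak{g}^{A_0^+}\subseteq\mathfrak{b}^+$ and $\mathrm{ad}(A_0^-)(\mathfrak{u}^+)\subseteq\mathfrak{b}^+$, matching dimensions, and then showing the sum is \emph{direct} by a Killing form argument. First I would exploit the height grading $\mathfrak{g}=\bigoplus_j\mathfrak{g}^{(j)}$ of Lemma~\ref{lemma_eigenspace_decomp}. Here $A_0^+\in\mathfrak{g}^{(1)}$ and $A_0^-\in\mathfrak{g}^{(-1)}$, so $\mathrm{ad}(A_0^-)$ is homogeneous of degree $-1$ and maps $\mathfrak{u}^+=\bigoplus_{j\geq1}\mathfrak{g}^{(j)}$ into $\bigoplus_{j\geq0}\mathfrak{g}^{(j)}=\mathfrak{b}^+$. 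By Theorem~\ref{thm_basis_Z_i} the centralizer $\mathfrak{g}^{A_0^+}$ has a basis $Z_1,\dots,Z_l$ with $Z_i\in\mathfrak{g}^{(m_i)}$ and all exponents $m_i\geq1$; hence $\mathfrak{g}^{A_0^+}\subseteq\mathfrak{u}^+\subseteq\mathfrak{b}^+$ and $\dim\mathfrak{g}^{A_0^+}=l$. Applying the Chevalley involution $\omega$ (see below) to this statement gives the symmetric fact $\mathfrak{g}^{A_0^-}\subseteq\mathfrak{u}^-$. Since $\ker\mathrm{ad}(A_0^-)=\mathfrak{g}^{A_0^-}$ meets $\mathfrak{u}^+$ trivially, $\mathrm{ad}(A_0^-)$ is injective on $\mathfrak{u}^+$, so $\dim\mathrm{ad}(A_0^-)(\mathfrak{u}^+)=\dim\mathfrak{u}^+=|\Phi^+|$. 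As $\dim\mathfrak{b}^+=l+|\Phi^+|$, it then remains only to prove that the sum is direct.

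For directness I would take $Z\in\mathfrak{g}^{A_0^+}\cap\mathrm{ad}(A_0^-)(\mathfrak{u}^+)$, write $Z=[A_0^-,U]$ with $U\in\mathfrak{u}^+$, and use the invariance of the Killing form $\kappa$: for every $W\in\mathfrak{g}^{A_0^-}$ one gets $\kappa(Z,W)=\kappa([A_0^-,U],W)=-\kappa(U,[A_0^-,W])=0$. Thus $Z\in\mathfrak{g}^{A_0^+}$ is $\kappa$-orthogonal to all of $\mathfrak{g}^{A_0^-}$, and the whole lemma follows once I know that $\kappa$ restricts to a non-degenerate pairing $\mathfrak{g}^{A_0^+}\times\mathfrak{g}^{A_0^-}\to C$.

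This non-degeneracy is the main obstacle, and the plan is to establish it via the Chevalley involution $\omega$, characterised by $\omega(X_\beta)=-X_{-\beta}$ for all $\beta\in\Phi$ and $\omega|_{\mathfrak{h}}=-\mathrm{id}$. Then $\omega(A_0^+)=-A_0^-$, so $\omega$ is an automorphism carrying $\mathfrak{g}^{A_0^+}$ onto $\mathfrak{g}^{A_0^-}$, and the pairing above is non-degenerate precisely when the symmetric form $B(X,Y):=\kappa(X,\omega(Y))$ is non-degenerate on $\mathfrak{g}^{A_0^+}$. Since $A_0^+$ lies in the Chevalley $\mathbb{Q}$-form, $\mathfrak{g}^{A_0^+}$ admits a basis with rational coordinates in the $X_\beta$, and for a nonzero real $Z=\sum_\beta a_\beta X_\beta$ the identity
\[
 B(Z,Z)=\kappa(Z,\omega(Z))=-\sum_\beta a_\beta^2\,\kappa(X_\beta,X_{-\beta})
\]
holds, the cross terms vanishing because $\kappa(\mathfrak{g}_\beta,\mathfrak{g}_\gamma)=0$ unless $\gamma=-\beta$. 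As $\kappa(X_\beta,X_{-\beta})>0$ for each root, $B$ is negative-definite on the real span of this basis, so its (rational) Gram matrix is non-singular; non-singularity is preserved upon extension to $C$. This yields the required non-degeneracy and completes the proof.

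I want to emphasize why the naive route fails and why the involution is needed: although $A_0^+$ is a principal nilpotent, the elements $A_0^+$ and $A_0^-$ (both with all coefficients equal to $1$) do \emph{not} form an $\mathfrak{sl}_2$-triple in general, so one cannot simply quote the clean decomposition coming from the principal three-dimensional subalgebra, whose lowering operator $f=\sum_i c_iX_{-\alpha_i}$ has coefficients $c_i\neq1$. The Chevalley-involution-plus-reality argument sidesteps this mismatch entirely, and the same reasoning simultaneously supplies the auxiliary fact $\mathfrak{g}^{A_0^-}\subseteq\mathfrak{u}^-$ used for the injectivity and dimension count.
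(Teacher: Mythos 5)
Your proof is correct, but it follows a genuinely different route from the paper: the paper does not prove Lemma~\ref{lemma_direct_sum1} at all and instead refers to Kostant (\cite{Konst_Rep}, Lemma 12), where the analogous decomposition is obtained from the representation theory of the principal three-dimensional subalgebra. Your argument replaces that machinery with a dimension count ($\dim\mathfrak{g}^{A_0^+}=l$ from Theorem~\ref{thm_basis_Z_i}, $\dim\mathrm{ad}(A_0^-)(\mathfrak{u}^+)=|\Phi^+|$ from the injectivity of $\mathrm{ad}(A_0^-)$ on $\mathfrak{u}^+$, which in turn follows from $\mathfrak{g}^{A_0^-}\subseteq\mathfrak{u}^-$) together with a Killing-form orthogonality argument for directness, reducing everything to the non-degeneracy of $\kappa$ on $\mathfrak{g}^{A_0^+}\times\mathfrak{g}^{A_0^-}$, which you settle by a definiteness argument over $\mathbb{Q}$ via the Chevalley involution. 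This buys a self-contained, elementary proof that moreover sidesteps the genuine issue you correctly identify, namely that $(A_0^+,A_0^-)$ with all coefficients equal to $1$ is not part of an $\mathfrak{sl}_2$-triple (the paper's reliance on Kostant implicitly requires either this observation or a torus conjugation rescaling the coefficients of $A_0^-$, which the paper does not spell out). Two small points to make explicit: the positivity $\kappa(X_\beta,X_{-\beta})=2/(\beta,\beta)>0$, on which your definiteness argument hinges, uses the Chevalley normalization $[X_\beta,X_{-\beta}]=H_\beta$ (a uniform sign flip would only change definite to definite, but a root-dependent normalization would break the argument, so it is worth stating); and in the sum $Z=\sum_\beta a_\beta X_\beta$ you should say that $\beta$ ranges over $\Phi^+$ only, which is legitimate precisely because all exponents satisfy $m_i\geq 1$ and hence $\mathfrak{g}^{A_0^+}\subseteq\mathfrak{u}^+$.
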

 For a proof of Lemma~\ref{lemma_direct_sum1} above we refer to 
 \cite[Lemma 12]{KostantLieGrpRep}.

 We will now determine a basis of the subspace $\mathrm{ad}(A_0^-)(\mathfrak{u}^+)$ (see also \cite{KostantLieGrpRep}, the beginning of the 
 proof of Proposition 19). To this purpose, we number the basis elements 
 $\{ X_{\alpha} \mid \alpha \in \Phi^+ \}$
 of $\mathfrak{u}^+$ into $X_i$ with $i=1,\dots,r$ where $r= \lvert\Phi^+ \rvert$ 
 is the number of positive roots. 
 We denote by $r(i)$ the positive number such that 
 $X_i \in \mathfrak{g}^{(r(i))}$. We can now rearrange the numbering of the basis such that $r(i) \leq r(i+1)$ 
 for all $i=1,\dots ,r-1$. Further we define 
 \[
 W_i := [X_i, A_0^- ]
 \]
 and clearly we have $W_i \in \mathfrak{g}^{(r(i)-1)}$.
 When we interchange the role of the positive and negative roots in Theorem~\ref{thm_basis_Z_i} we obtain $\mathfrak{g}^{A_0^-} \subset \mathfrak{b}^-$ 
 and so $\mathfrak{g}^{A_0^-}\cap \mathfrak{u}^+ = (0)$. We conclude that
 the set $\{ W_i \mid i=1, \dots, r \}$ is a basis of $\mathrm{ad}(A_0^-)(\mathfrak{u}^+)$.
 \begin{lemma}\label{lemma_direct_sum2}
 There exist $l$ roots $\gamma_1,\dots, \gamma_l$ of $ \Phi^+$ such that
 $\mathfrak{b}^+$ is the direct sum 
 \begin{equation*}
 \mathfrak{b}^+= \mathrm{ad}(A_0^-)(\mathfrak{u}^+) + \sum_{i=1}^l \mathfrak{g}_{\gamma_i}   
 \end{equation*}
 and such that $\mathrm{ht}(\gamma_i)=m_i$ for $i=1,\dots,l$ where $m_i $ are the exponents of $\mathfrak{g}$.
\end{lemma}
\begin{proof}
 Theorem~\ref{thm_basis_Z_i} yields basis elements $Z_i$ of $\mathfrak{g}^{A_0^+}$ such that $Z_i \in \mathfrak{g}^{(m_i)}$ for $i=1,\dots, l$. 
 We choose now $i \in \{1, \dots,l\}$ and fix it. Let $m:=m_i$
 and denote by $i_1,\dots,i_s$ all elements of $\{1,\dots,l\}$ such that $m_{i_k} =m$. Then the set 
 \begin{equation*}
  \{Z_{i_1}, \dots,Z_{i_s} \} \cup \{ W_{h} \mid \ W_h \in \mathfrak{g}^{(m)} \} 
 \end{equation*}
is a basis of $\mathfrak{g}^{(m)}$ by Lemma~\ref{lemma_direct_sum1} where the first set is a basis of 
$\mathfrak{g}^{A_0^+} \cap \mathfrak{g}^{(m)}$ 
and the second one of $\mathrm{ad}(A_0^-)(\mathfrak{g}^{(m+1)})$. On the other hand
$\mathfrak{g}^{(m)}$ is the direct sum of all one-dimensional root spaces
$\mathfrak{g}_{\alpha}$ with $\mathrm{ht}(\alpha)=m$.
We denote the set of these roots by $\Lambda^{(m)}$. Then by basis extension there
exist $s$ roots 
$\gamma_{1}, \dots, \gamma_{s} \in \Lambda^{(m)}$ such that the set
\begin{equation*}
 \{ W_{h} \mid \ W_{h} \in \mathfrak{g}^{(m)}   \} \cup \{ X_{\gamma_k} \mid k=1, \dots, s \}
\end{equation*}
is a basis of $\mathfrak{g}^{(m)}$.
If we repeat this procedure for all distinct $m_i$, we obtain a basis for all eigenspaces $\mathfrak{g}^{(m_i)}$.  
A basis for all remaining eigenspaces $\mathfrak{g}^{(j)}$ in the direct sum 
\[
\mathfrak{b}^+ = \sum_{j \geq 0}  \mathfrak{g}^{(j)},
\]
that is for all $\mathfrak{g}^{(j)}$ such that $j\geq 0$ and $j\neq m_i$ for all $ i=1, \dots , l$, 
is simply given by all $W_{h} \in \mathfrak{g}^{(j)} $.
\end{proof}
\begin{definition}
 We call $l$ roots $\gamma_1, \dots, \gamma_l$ of $\Phi^+$, which satisfy the conditions in Lemma~\ref{lemma_direct_sum2}, 
 complementary roots of $\Phi^+$ and the subspace 
 \[
 \mathfrak{r}:=\sum_{i=1}^l \mathfrak{g}_{\gamma_i}
 \]
 a root space complement to $\mathrm{ad}(A_0^-)(\mathfrak{u}^+)$.
\end{definition}
Let $F$ be a differential field with constants $C$ and $\partial(\textit{\textbf{y}})=A \textit{\textbf{y}}$ be a matrix differential equation 
with $A \in F^{n \times n}$. 
\begin{definition}
 The map 
 \begin{equation*}
  \ell \delta: \mathrm{GL}_n(F) \rightarrow F^{n \times n}, g \mapsto \partial(g)g^{-1}
 \end{equation*}
is called the logarithmic derivative.
\end{definition}

\begin{proposition}\label{log_derivate}
 Let $G \subset \mathrm{GL}_n$ be a linear algebraic group. Then the restriction of
 $\ell \delta$ to $G$ maps $G$ to its Lie algebra $\mathfrak{g}$, that is
 \begin{equation*}
  \ell \delta \mid_{G}: G \rightarrow \mathfrak{g}
 \end{equation*} 
\end{proposition}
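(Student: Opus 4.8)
The plan is to work directly with the defining ideal of $G$ and with the description of its Lie algebra as the tangent space at the identity. Let $I \subset C[\mathrm{GL}_n]$ be the vanishing ideal of $G$, so that $B \in \mathrm{GL}_n(F)$ lies in $G(F)$ precisely when $f(B)=0$ for every $f \in I$, and recall that the Lie algebra is cut out by the differentials at the identity,
\begin{equation*}
\mathfrak{g}(F) = \Big\{ M \in F^{n\times n} \ \Big| \ \sum\nolimits_{i,j} \tfrac{\partial f}{\partial X_{ij}}(1_n)\, M_{ij} = 0 \ \text{for all } f \in I \Big\}.
\end{equation*}
Since $\partial$ acts trivially on the coefficients of $f$ (they lie in the constants $C$), differentiating the relation $f(B)=0$ gives, by the chain rule, $\sum_{i,j} \tfrac{\partial f}{\partial X_{ij}}(B)\,\partial(B_{ij}) = 0$ for every $f\in I$. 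This is the only input coming from the differential structure; the remaining work is to transport this condition from the point $B$ back to $1_n$.

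To carry out that transport I would use right multiplication by $B$. Because $G$ is a group and $B$ is an $F$-point, the map $R_B: X \mapsto XB$ is an automorphism of the $F$-variety $G_F := G \times_C F$, so its comorphism preserves the extended ideal $I_F := I \cdot F[\mathrm{GL}_n]$; consequently the translated polynomials $g(X):=f(XB)$, as $f$ runs through a $C$-generating set of $I$, form an $F$-generating set of $I_F$. A short computation with $(XB)_{ij}=\sum_m X_{im}B_{mj}$ shows that the differential of $g$ at $1_n$ is obtained from that of $f$ at $B$ by a right twist with $B$, namely $\tfrac{\partial g}{\partial X_{kl}}(1_n) = \sum_j \tfrac{\partial f}{\partial X_{kj}}(B)\,B_{lj}$.

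Combining the two ingredients finishes the proof. Writing $M := l\delta(B) = \partial(B)B^{-1}$, so that $MB = \partial(B)$, and testing the Lie algebra condition on the generator $g$ while inserting the twist formula gives
\begin{equation*}
\sum\nolimits_{k,l} \tfrac{\partial g}{\partial X_{kl}}(1_n)\, M_{kl} = \sum\nolimits_{k,j} \tfrac{\partial f}{\partial X_{kj}}(B) \sum\nolimits_{l} M_{kl} B_{lj} = \sum\nolimits_{k,j} \tfrac{\partial f}{\partial X_{kj}}(B)\, \partial(B_{kj}),
\end{equation*}
which vanishes by the differentiated relation. Thus $\sum_{k,l}\tfrac{\partial g}{\partial X_{kl}}(1_n)M_{kl}=0$ for every $f \in I$, and since the $g$ generate $I_F$, the membership $M \in \mathfrak{g}(F)$ follows.

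The step I expect to require the most care is the base change $C \to F$. One must check that $\mathrm{Lie}(G_F)=\mathfrak{g}\otimes_C F$ is still cut out by the differentials at $1_n$ of the elements of $I_F$, that the tangency functionals are determined already on ideal generators (any product $h g$ with $g$ vanishing at $1_n$ has $\tfrac{\partial}{\partial X}(hg)(1_n)=h(1_n)\,\tfrac{\partial}{\partial X}g(1_n)$ by Leibniz, so it contributes only an $F$-multiple of the differential of $g$), and that the translated family $\{f(XB)\}$ genuinely generates $I_F$ over $F$. Once these points are secured, verifying the condition on the family $\{f(XB)\}$ is equivalent to membership in $\mathfrak{g}(F)$, and the computation above completes the argument; note that no hypothesis of connectedness on $G$ is needed.
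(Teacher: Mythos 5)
The paper does not actually prove this proposition; it only cites Kovacic (page 585 of \emph{The inverse problem in the Galois theory of differential fields}), so there is no in-paper argument to compare against. Your proof is correct and is, in effect, the standard argument unwound in coordinates: the identity $\partial(f(B))=\sum_{i,j}\frac{\partial f}{\partial X_{ij}}(B)\,\partial(B_{ij})=0$ says that $\partial(B)$ is a tangent vector to $G_F$ at the point $B$ (equivalently, that $B+\epsilon\,\partial(B)$ is an $F[\epsilon]$-point of $G$ lifting $B$), and right translation by $B^{-1}$ carries it to the tangent space at $1_n$, which is $\mathfrak{g}(F)$; your twist formula $\frac{\partial g}{\partial X_{kl}}(1_n)=\sum_j\frac{\partial f}{\partial X_{kj}}(B)B_{lj}$ is exactly the comorphism of that translation. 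The three points you flag at the end are the right ones to worry about, and all go through: in characteristic zero $I_F=I\otimes_C F$ remains radical, so it is the vanishing ideal of $G_F$ and is preserved by $R_B^*$; the Leibniz computation shows the differentials at $1_n$ of $I_F$ are $F$-spanned by those of any generating set (using that every element of $I_F$ vanishes at $1_n$); and $\mathfrak{g}(F)=\mathfrak{g}(C)\otimes_C F$ because the defining linear system has coefficients in $C$. Note also that $I\subset I_F=R_B^*(I_F)$, so annihilating the differentials of the translated generators does imply annihilating those of $I$ itself, which is what your definition of $\mathfrak{g}(F)$ requires. No connectedness is needed, as you say. The argument is complete.
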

A proof for Proposition~\ref{log_derivate} can be found in \cite[page 585]{Kov}.

 \begin{lemma}[Transformation Lemma] \label{Prop_tans_lemma}
 Let $A \in A_0^- +  \mathfrak{b}^+(F)$. Then there exists $u \in U^+$ such that  
 \begin{equation*}
  \mathrm{Ad}(u)(A) + \ell \delta(u) \in A_0^- + \mathfrak{r}(F),
 \end{equation*}
 where $U^+$ is the maximal unipotent subgroup of $G$ with Lie algebra $\mathfrak{u}^+$
 and $\mathfrak{r}(F)$ is a root space complement to $\mathrm{ad}(A_0^-)(\mathfrak{u}^+)$. 
\end{lemma}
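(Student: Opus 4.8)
The plan is to remove the unwanted part of $A$ height by height, using the eigenspace grading of Lemma~\ref{lemma_eigenspace_decomp} together with the graded refinement of the decomposition $\mathfrak{b}^+=\mathfrak{r}+\mathrm{ad}(A_0^-)(\mathfrak{u}^+)$ of Lemma~\ref{lemma_direct_sum2}. Write $A=A_0^-+B$ with $B\in\mathfrak{b}^+(F)$ and decompose $B=\sum_{j\geq 0}B_j$ according to $\mathfrak{b}^+=\sum_{j\geq 0}\mathfrak{g}^{(j)}$, while $A_0^-\in\mathfrak{g}^{(-1)}$ is the distinguished height $-1$ summand. I would construct $u\in U^+$ as an ordered product of exponentials $\mathrm{exp}(Y_k)$ with $Y_k\in\mathfrak{g}^{(k)}(F)$ and $k\geq 1$, each one chosen so that the gauge action by $\mathrm{exp}(Y_k)$ pushes the $\mathrm{ad}(A_0^-)(\mathfrak{u}^+)$-part of a single height into $\mathfrak{r}$ without disturbing the lower heights.

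First I would record the effect of one such exponential. Fix $k\geq 1$ and $Y_k\in\mathfrak{g}^{(k)}(F)$; since $Y_k$ is nilpotent, $\mathrm{exp}(Y_k)\in U^+(F)$ and $\mathrm{Ad}(\mathrm{exp}(Y_k))=\mathrm{exp}(\mathrm{ad}(Y_k))$. Using the bracket relation $[\mathfrak{g}^{(i)},\mathfrak{g}^{(j)}]\subset\mathfrak{g}^{(i+j)}$, the only summand of $\mathrm{exp}(\mathrm{ad}(Y_k))(A)-A$ that lands below height $k$ is the single term $[Y_k,A_0^-]\in\mathfrak{g}^{(k-1)}$, every other contribution involving either $B$ (heights $\geq 0$) or a repeated bracket and hence lying in height $\geq k$. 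Since $\partial$ preserves the grading, the logarithmic derivative formula $l\delta(\mathrm{exp}(Y_k))=\partial(Y_k)+(\text{iterated brackets of }Y_k\text{ and }\partial(Y_k))$ lies in $\sum_{j\geq k}\mathfrak{g}^{(j)}(F)$. Altogether
\[
\mathrm{Ad}(\mathrm{exp}(Y_k))(A)+l\delta(\mathrm{exp}(Y_k))=A+[Y_k,A_0^-]+D_k,
\]
where $D_k\in\sum_{j\geq k}\mathfrak{g}^{(j)}(F)$. Thus below height $k$ the transformation alters only the height $k-1$ component, and there exactly by $[Y_k,A_0^-]=-\mathrm{ad}(A_0^-)(Y_k)$.

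Next I would run an induction on $m=0,1,\dots,N$, where $N$ is the maximal height, with the hypothesis that after finitely many such steps $A$ has the form $A_0^-+\sum_j B_j$ with $B_j\in\mathfrak{r}$ for all $j<m$. The graded form of Lemma~\ref{lemma_direct_sum2} gives $\mathfrak{g}^{(m)}=(\mathfrak{r}\cap\mathfrak{g}^{(m)})\oplus\mathrm{ad}(A_0^-)(\mathfrak{g}^{(m+1)})$, so the current height $m$ part splits as $B_m=R_m+C_m$ with $R_m\in\mathfrak{r}$ and $C_m=\mathrm{ad}(A_0^-)(Y_{m+1})$ for some $Y_{m+1}\in\mathfrak{g}^{(m+1)}(F)$; such a $Y_{m+1}$ exists over $F$ because $C_m$ lies in the $F$-span of the image of $\mathrm{ad}(A_0^-)$. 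Applying $\mathrm{exp}(Y_{m+1})$ and invoking the step above with $k=m+1\geq 1$, the height $m$ component becomes $B_m+[Y_{m+1},A_0^-]=R_m\in\mathfrak{r}$, the heights $<m$ are untouched, and only heights $\geq m$ are changed; this advances the induction. Because gauge transformations compose as group multiplication (applying $u$ then $v$ equals applying $vu$), the accumulated transformation is a product of the $\mathrm{exp}(Y_{m+1})\in U^+$ and so lies in $U^+$. At $m=N$ one has $\mathfrak{g}^{(N+1)}=(0)$, forcing $C_N=0$, so the top component is automatically in $\mathfrak{r}$; and since no step ever touches the height $-1$ summand, the final matrix is $A_0^-+\sum_j B_j$ with every $B_j\in\mathfrak{r}(F)$, i.e.\ an element of $A_0^-+\mathfrak{r}(F)$.

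The heart of the argument, and the point requiring care, is the height bookkeeping: I must guarantee that normalizing height $m$ through $\mathrm{exp}(Y_{m+1})$ disturbs neither the already-normalized heights $<m$ nor the distinguished summand $A_0^-$ in height $-1$. This is precisely secured by the grading relation $[\mathfrak{g}^{(i)},\mathfrak{g}^{(j)}]\subset\mathfrak{g}^{(i+j)}$ together with the constraint $k=m+1\geq 1$, which forces the lowest height produced by $\mathrm{exp}(Y_{m+1})$ to be $m\geq 0>-1$. The only input beyond the grading is the solvability of $\mathrm{ad}(A_0^-)(Y_{m+1})=C_m$ in $\mathfrak{g}^{(m+1)}(F)$, which is exactly the graded content of Lemma~\ref{lemma_direct_sum2}; the injectivity of $\mathrm{ad}(A_0^-)$ on $\mathfrak{u}^+$, coming from $\mathfrak{g}^{A_0^-}\cap\mathfrak{u}^+=(0)$, even renders $Y_{m+1}$ unique. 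Everything else reduces to a finite, routine computation with the exponential and the logarithmic derivative.
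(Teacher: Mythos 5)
Your proof is correct and follows essentially the same strategy as the paper's: both exploit the height grading of Lemma~\ref{lemma_eigenspace_decomp} and the graded complement decomposition of Lemma~\ref{lemma_direct_sum2} to clear the $\mathrm{ad}(A_0^-)(\mathfrak{u}^+)$-part of $A$ from the lowest height upward by gauging with exponentials of positive-height elements, which disturb only heights $\geq k-1$ and act on height $k-1$ exactly by $-\mathrm{ad}(A_0^-)$. The only difference is granularity: the paper removes one basis vector $W_k$ at a time via a single root group element $u_{\beta}(-w_k)$, whereas you clear an entire graded component $\mathfrak{g}^{(m)}$ in one step with $\exp(Y_{m+1})$ for $Y_{m+1}\in\mathfrak{g}^{(m+1)}(F)$ solving $\mathrm{ad}(A_0^-)(Y_{m+1})=C_m$ --- a harmless repackaging of the same induction.
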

\begin{proof}
 Note that by Lemma~\ref{lemma_direct_sum2} we can express every matrix in $\mathfrak{b}^+(F)$ in terms of 
 the basis elements $\{ W_h , \ X_{\gamma_i} \mid 1 \leq h\leq r, \ 1 \leq i \leq l\}$.
This allows us to make the following inductive assumption on $k= 1, \dots, r$: For
 \begin{equation*}
   A_0:= A_0^- + \sum_{i=1}^{r} w_i W_i + \sum_{j=1}^l a_j X_{\gamma_j} 
 \end{equation*}
 there exists $u \in U^+$ such that 
\begin{equation*}
 A_k := \mathrm{Ad}(u)(A)+ \ell\delta(u)= A_0^- + \sum_{i=k+1}^{r} \bar{w}_i W_i +
 \sum_{j=1}^l \bar{a}_j X_{\gamma_j}. 
\end{equation*}
Let $k>1$ and assume the assumption holds for $k-1$, that is $A_0$ is gauge equivalent by an element of $U^+$ to a matrix of shape 
\begin{equation}\label{eq1_transformationLemma}
A_{k-1} = A_0^- + \sum_{i=k}^{r} \bar{w}_i W_i +
 \sum_{j=1}^l \bar{a}_j X_{\gamma_j} .
\end{equation}
We prove that there is a further element of $U^+$ such that $A_{k-1}$ is 
gauge equivalent to a matrix of shape $A_k$. 
We only have to consider the case when $\bar{w}_k \neq 0$ in $A_{k-1}$, otherwise 
there is nothing to show.  The element $X_k$ forms a basis of a one-dimensional 
root space. We denote the root which belongs to this root space 
by $\beta$. Now let $u_{\beta}(\rho)$ be a parametrized root group element 
 \begin{equation*}
 u_{\beta}(\rho)= \mathrm{exp} ( \rho X_k)
  = \sum_{j=0}^{\infty} \frac{1}{j!} \rho^j (X_k)^j.
 \end{equation*}
 We show that we can choose $\rho \in F$ such that the coefficient of $W_k$ in the expression 
 \[
 \mathrm{Ad}(u_{\beta}(\rho))(A_{k-1})+ \ell \delta(u_{\beta}(\rho)) 
 \]
 vanishes. Note that 
 \begin{equation*}
 \mathrm{Ad}(u_{\beta}(\rho))(A_{k-1}) =  \mathrm{exp} ( \rho \mathrm{ad}(X_k)) (A_{k-1})= \sum_{j\geq0}  \frac{1}
 {j!} \rho^j \mathrm{ad}^j (X_k) (A_{k-1})
 \end{equation*}
 and that this expression is linear in $A_{k-1}$. We compute the image of $A_{k-1}$ under 
 $\mathrm{Ad}(u_{\beta}(\rho))$ separately for the three main summands in \eqref{eq1_transformationLemma}.
 
We determine first the image of $A_0^-$ under $\mathrm{Ad}(u_{\beta}(\rho))$. 
For $j=1$ we have 
\[
\rho \mathrm{ad}(X_k)(A_0^-) =  \rho W_k \in \mathfrak{g}^{(r(k)-1)}
\]
by definition of the elements $W_i$ and for $j \geq 2$ the image $\mathrm{ad}^j (X_k)(A_0^-)$  lies in the eigenspace space $\mathfrak{g}^{(j r(k)-1)}$ by Lemma~\ref{lemma_eigenspace_decomp}, that is in a eigenspace with eigenvalue 
greater equal than $r(k)$. We conclude that
\begin{eqnarray*}
 \mathrm{Ad}(u_{\beta}(\rho))(A_0^-)&=&  \sum_{j \geq 0}^{} \frac{1}{j!} \rho^j \mathrm{ad}^j (X_k)(A_0^-)
 \in A_0^- + \rho W_k + \sum_{j\geq 0} \mathfrak{g}^{(r(k)+j)}.
\end{eqnarray*}

Next we consider the image of $\sum_{i=k}^{r} \bar{w}_i W_i $ under $\mathrm{Ad}(u_{\beta}(\rho))$. 
Let $p$ be the maximum of all indices $i$ with $r(i)=r(k)$. Clearly $p \geq k$ and for 
$i \geq p+1$ we have that $W_i$ lies in an eigenspace with eigenvalue greater equal than $r(k)$. 
This yields the representation  
\[
 \mathrm{ad}^0(X_k)( \sum_{i=k}^{r} \bar{w}_i W_i ) = \sum_{i=k}^{r} \bar{w}_i W_i 
\in \sum_{i=k}^{p} \bar{w}_i W_i + \sum_{j\geq 0} \mathfrak{g}^{(r(k)+j)} .
\]
For $j\geq 1$ and $W_i \in \mathfrak{g}^{(r(i)-1)}$ 
with $k \leq i \leq r$ the relation in Lemma~\ref{lemma_eigenspace_decomp} implies  
\[
\mathrm{ad}^j(X_k)(W_i) \in \mathfrak{g}^{(j r(k)+r(i)-1)}  ,
\]
that is the image lies in an eigenspace with eigenvalue greater equal than $r(k)$.
We conclude that  
\begin{eqnarray*}
 \mathrm{Ad}(u_{\beta}(\rho))(\sum_{i=k}^{r} \bar{w}_i W_i )
  \in  \sum_{i=k}^{p} \bar{w}_i W_i + \sum_{j\geq 0} \mathfrak{g}^{(r(k)+j)}.
\end{eqnarray*}
 
Now we determine the image of $\sum_{j=1}^l \bar{a}_j X_{\gamma_j}$ under $\mathrm{Ad}(u_{\beta}(\rho))$. 
Let $q$ be the maximum of all indices $j$ with $m_j \leq r(k)-1$. We can then write 
\[
 \mathrm{ad}^0(X_k)( \sum_{j=1}^l \bar{a}_j X_{\gamma_j} ) =  \sum_{j=1}^l \bar{a}_j X_{\gamma_j} \in  \sum_{j=1}^q \bar{a}_j X_{\gamma_j} + \sum_{j\geq 0} \mathfrak{g}^{(r(k)+j)}.
\]
Further the relation in Lemma~\ref{lemma_eigenspace_decomp} implies for $j\geq 1$ 
that $\mathrm{ad}^j(X_k)( X_{\gamma_i})$ lies in the eigenspace 
$\mathfrak{g}^{(jr(k)+m_i)}$, that is in eigenspaces with eigenvalue greater equal 
than $r(k)$. Thus we obtain 
\begin{eqnarray*}
 \mathrm{Ad}(u_{\beta}(\rho))( \sum_{j=1}^l \bar{a}_j X_{\gamma_j} ) \in  \sum_{j=1}^q \bar{a}_j X_{\gamma_j} + \sum_{j\geq 0} \mathfrak{g}^{(r(k)+j)}.
\end{eqnarray*}
Finally, we compute the image of $u_{\beta}(\rho)$ under the logarithmic derivative. 
By construction the Lie algebra of the root subgroup $U_{\beta}$ is 
$\mathfrak{g}_{\beta}$ and so by Proposition~\ref{log_derivate} we have 
$\ell\delta (u_{\beta}(\rho)) \in \mathfrak{g}_{\beta}$ which is contained in 
$\mathfrak{g}^{(r(k))}$. Summing up we obtain 
\begin{equation*}
 \mathrm{Ad}(u_{\beta}(\rho))(A_{k-1})+\ell \delta(u_{\beta}(\rho)) 
 \in  A_0^- + \rho W_k + \sum_{i=k}^{p} \bar{w}_i W_i + \sum_{j=1}^{q} \bar{a}_j X_{\gamma_j} + \sum_{j\geq 0} 
 \mathfrak{g}^{(r(k)+j)}. 
\end{equation*}
Hence, for $\rho=-\bar{w}_k$ the induction assumption follows for $k>1$. 
The same argumentation shows that the induction assumption also holds 
for $k=1$. The assertion of the lemma follows then for $k=r$.   
\end{proof}
\section{The equation for $\mathrm{SL}_{l+1}(C)$}\label{chap7}
Let $\epsilon_1, \dots , \epsilon_{l+1}$ be the standard orthonormal basis of 
$\mathbb{R}^{l+1}$ with respect to the usual inner product 
$(\cdot, \cdot)$ of $\mathbb{R}^{l+1}$. 
From \cite[VI, Planche I]{BourGroupesetAlgebresIV-VI} we obtain that the root system 
$\Phi$ of type $A_l$ consists of 
the vectors $ \epsilon_i - \epsilon_j $ with $1 \leq i,j \leq l+1$ and $ i \neq j$. 
Further the elements $\alpha_i := \epsilon_{i} - \epsilon_{i+1}$ with 
$1 \leq i \leq l$ form a basis $\Delta$ of $\Phi$ and with respect to $\Delta$ 
the root system of type $A_l$ is
 \begin{equation*}
 \Phi= \{ \pm \alpha_i \mid i =1, \dots,l\} \cup \{ \pm (\alpha_i + \alpha_{i+1} + \dots +  \alpha_{j})  \mid 1 \leq i < j  \leq l \} .
 \end{equation*}
A basis of a Cartan decomposition of $\mathfrak{sl}_{l+1}$ can be taken from \cite{BourGroupesetAlgebresVII-VIII}. 
More precisely, the discussion in \cite[VIII, \S 13.1]{BourGroupesetAlgebresVII-VIII}  yields that the set of matrices
 \begin{equation*}
  \{ E_{ij} \mid 1 \leq i,j \leq l+1, i \neq j \} \cup \{ E_{ii}- E_{i+1,i+1} \mid 1 \leq i \leq l \}
 \end{equation*}
is a basis of $\mathfrak{sl}_{l+1}$ and that the matrices $H_i := E_{ii}- E_{i+1,i+1}$ with $1 \leq i \leq l$ form a basis of a Cartan algebra 
$\mathfrak{h}$. With respect to $\mathfrak{h}$ the elements $E_{ij}$ generate the one-dimensional root spaces $(\mathfrak{sl}_{l+1})_{\alpha}$ 
of the corresponding Cartan decomposition where $\alpha = \epsilon_i - \epsilon_j$ for $1 \leq i,j\leq l+1$ and $i \neq j$.
\begin{lemma}\label{lemma_sl_roots}
The $l$ roots 
\begin{equation*}
\gamma_1:= \alpha_l, \ \gamma_{2}:=\alpha_{l-1} + \alpha_l, \ \dots \ , \ \gamma_l:= \alpha_1 + \dots + \alpha_l   
\end{equation*}
are complementary roots of $\Phi^+$.
\end{lemma}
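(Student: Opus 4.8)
The plan is to verify the two defining properties of complementary roots contained in Lemma~\ref{lemma_direct_sum2}: that the heights $\mathrm{ht}(\gamma_i)$ are exactly the exponents of $\mathfrak{sl}_{l+1}$, and that $\mathfrak{r}=\sum_{i=1}^{l}\langle X_{\gamma_i}\rangle$ is a complement of $\mathrm{ad}(A_0^-)(\mathfrak{u}^+)$ in $\mathfrak{b}^+$.

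First I would collect the combinatorial data. Expressing each $\gamma_i$ in the simple roots gives $\gamma_i=\alpha_{l-i+1}+\dots+\alpha_l=\epsilon_{l-i+1}-\epsilon_{l+1}$, so $\mathrm{ht}(\gamma_i)=i$ for $i=1,\dots,l$. On the other hand the positive roots of $A_l$ are the $\epsilon_p-\epsilon_q$ with $p<q$, and there are exactly $l+1-k$ of height $k$; hence in the Shapiro--Steinberg count $c_k-c_{k+1}=1$ for every $1\le k\le l$, so the exponents of $\mathfrak{sl}_{l+1}$ are precisely $1,2,\dots,l$, each occurring once. Thus the heights $\mathrm{ht}(\gamma_i)=i$ exhaust the exponents, settling the first condition.

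Next I would pass to the matrix model, writing $X_{\epsilon_p-\epsilon_q}=E_{pq}$, so that $A_0^-=\sum_{k=1}^{l}E_{k+1,k}$ and $X_{\gamma_i}=E_{l+1-i,\,l+1}$; thus $\mathfrak{r}$ is the span of the above-diagonal entries of the last column. Since $A_0^-\in\mathfrak{g}^{(-1)}$, the operator $\mathrm{ad}(A_0^-)$ lowers height by one and $\mathrm{ad}(A_0^-)(\mathfrak{u}^+)$ is graded, so the complement condition can be checked inside each eigenspace $\mathfrak{g}^{(m)}=\langle E_{p,p+m}\mid 1\le p\le l+1-m\rangle$ separately. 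A direct bracket computation yields $[A_0^-,E_{p,p+m+1}]=E_{p+1,p+m+1}-E_{p,p+m}$; abbreviating $v_p:=E_{p,p+m}$, these are the vectors $v_{p+1}-v_p$, which are linearly independent and span the ``coordinate-sum-zero'' hyperplane $\ker\phi_m$ of $\mathfrak{g}^{(m)}$, where $\phi_m\colon\sum_p c_p v_p\mapsto\sum_p c_p$. As $\mathrm{ad}(A_0^-)$ is injective on $\mathfrak{u}^+$ (recorded before Lemma~\ref{lemma_direct_sum2}), this hyperplane equals $\mathrm{ad}(A_0^-)(\mathfrak{u}^+)\cap\mathfrak{g}^{(m)}$ for $m\ge1$. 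Since $X_{\gamma_m}=v_{l+1-m}$ has $\phi_m(X_{\gamma_m})=1\ne0$, it lies off this hyperplane, giving $\mathfrak{g}^{(m)}=\langle X_{\gamma_m}\rangle\oplus\bigl(\mathrm{ad}(A_0^-)(\mathfrak{u}^+)\cap\mathfrak{g}^{(m)}\bigr)$.

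Finally I would assemble the graded pieces. In height zero one has $\mathfrak{g}^{(0)}=\mathfrak{h}$ and $[A_0^-,E_{p,p+1}]=-H_p$, so already $\mathrm{ad}(A_0^-)(\mathfrak{g}^{(1)})=\mathfrak{h}$ and no complementary root is needed there, consistently with $0$ not being an exponent. Summing the decompositions over all $m\ge0$ gives $\mathfrak{b}^+=\mathfrak{r}\oplus\mathrm{ad}(A_0^-)(\mathfrak{u}^+)$, which is the second condition of Lemma~\ref{lemma_direct_sum2}, and hence $\gamma_1,\dots,\gamma_l$ are complementary roots of $\Phi^+$. I expect the only genuine point to be the bracket identity together with the observation that $\mathrm{ad}(A_0^-)(\mathfrak{g}^{(m+1)})$ is exactly the sum-zero hyperplane of $\mathfrak{g}^{(m)}$ while the chosen last-column vector $X_{\gamma_m}$ escapes it; the rest is bookkeeping with the height grading.
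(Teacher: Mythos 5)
Your proof is correct, but it takes a genuinely different route from the paper's. The paper argues by induction on $l$: it embeds the $A_{l-1}$-subsystem $\Phi'=\{\alpha\in\Phi\mid n_{\alpha_1}(\alpha)=0\}$ into $\Phi$, identifies the new positive roots $\beta_k=\alpha_1+\dots+\alpha_k$, and observes that in each graded piece $\mathfrak{sl}_{l+1}^{(m_i)}$ the single new vector $W=[A_0^-,X_{\beta_{m_i+1}}]$ is the only element of the $W$-basis with a non-zero component in the root space of $\beta_{m_i}$, so that linear independence of $\{W_h\}\cup\{X_{\gamma_i}\}$ is inherited from the rank-$(l-1)$ case. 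You instead work directly in the matrix model: writing $X_{\epsilon_p-\epsilon_q}=E_{pq}$, you compute $[A_0^-,E_{p,p+m+1}]=E_{p+1,p+m+1}-E_{p,p+m}$ and identify $\mathrm{ad}(A_0^-)(\mathfrak{u}^+)\cap\mathfrak{g}^{(m)}$ explicitly as the coordinate-sum-zero hyperplane of $\mathfrak{g}^{(m)}$, which the last-column vector $X_{\gamma_m}=E_{l+1-m,l+1}$ visibly misses; together with the height-zero computation $[A_0^-,E_{p,p+1}]=-H_p$ this assembles into the direct sum decomposition of $\mathfrak{b}^+$ required by Lemma~\ref{lemma_direct_sum2}. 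Your argument is shorter and avoids induction entirely, at the cost of being tied to the normalization $X_{\epsilon_p-\epsilon_q}=E_{pq}$ (with general non-zero structure constants $A_0^-=\sum c_kE_{k+1,k}$ the image in each degree is still a hyperplane missing $E_{l+1-m,l+1}$, by the same triangular elimination, though no longer literally the sum-zero one); the paper's inductive scheme is normalization-free and is the template it reuses for the $B_l$, $C_l$ and $D_l$ cases. The exponent computation via the Shapiro--Steinberg count and the verification that the heights $\mathrm{ht}(\gamma_i)=i$ exhaust the exponents agree with the paper.
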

\begin{proof}
From \cite[VI, Planche I]{BourGroupesetAlgebresIV-VI} it follows that the maximal root in $\Phi^+$ has height $l$ and that for 
$k=1, \dots,l$ the number of roots of height $k$ is $l+1-k$. We conclude with Shapiro's method that the exponents
of $\mathfrak{sl}_{l+1}$ are $m_k=k$ for $k=1 ,\dots, l$ (alternatively see 
\cite[VI, Planche I]{BourGroupesetAlgebresIV-VI}).  
Then  the roots $\gamma_k$ satisfy $\mathrm{ht}(\gamma_k)=m_k$.  
Following the proof of Lemma~\ref{lemma_direct_sum2}, we need to show that we can extend the set 
 $\{ W_h \mid W_h \in \mathfrak{sl}_{l+1}^{(k)} \}$ to a basis of $\mathfrak{sl}_{l+1}^{(k)}$ by adjoining 
 $X_{\gamma_k}$. We proceed by induction on $l$.\\ 
 In case $l=2$, the root system $\Phi$ consists of the roots $\pm \alpha_1$, $\pm \alpha_2$ and 
 $\pm(\alpha_1 + \alpha_2)$. Then the single element $W_3= [A_0^-, X_{\alpha_1 + \alpha_2}]$
forms a basis of 
\[
\mathrm{ad}(A_0^-)(\mathfrak{u}^+) \cap \mathfrak{sl}_3^{(1)} 
\]
and has a non-zero component 
in $(\mathfrak{sl}_3)_{\alpha_1}$ since the Lie bracket of $X_{-\alpha_2}$ and 
$X_{\alpha_1 + \alpha_2}$ lies in $(\mathfrak{sl}_{l+1})_{\alpha_1}$ and is nonzero.
We conclude that $\{W_3,  X_{\alpha_2}\}$ is a basis of 
$\mathfrak{sl}_3^{(1)}$. Finally, since $\alpha_1 + \alpha_2$ is the unique root 
of height $2$, the 
vector $X_{\alpha_1 + \alpha_2}$ forms a basis for $\mathfrak{sl}_{l+1}^{(2)}$.\\
Now assume the assertion is true for $l-1$.  
The Dynkin diagram of type $A_l$ shows that the subset 
$\Phi' = \{ \alpha \in \Phi \mid n_{\alpha_1}(\alpha) =0 \}$ of $\Phi $
is a root system of type $A_{l-1}$ which is generated by the simple roots 
$\alpha_2, \dots ,\alpha_l$. We identify $\mathfrak{sl}_l$ inside $\mathfrak{sl}_{l+1}$ with the 
Lie algebra generated by the matrices $X_{\pm \alpha_i} \in \mathfrak{sl}_{l+1}$ for $i=2,\dots,l$.
By the induction assumption $X_{\gamma_k}$ and all vectors of 
\[
 \{ W_h \mid W_h \in \mathfrak{sl}_{l }^{(k)} \} \subset \{ W_h \mid W_h \in \mathfrak{sl}_{l+1}^{(k)} \} 
\]
form a basis of $\mathfrak{sl}_l^{(k)}$ for $k=1,\dots, l-1$. 
The elements of $\Phi^+ \setminus \Phi'^+$ are the roots $\alpha_1 + \dots + \alpha_k$ for $k=1,\dots,l$
and for each $k$ there is a unique root in $\Phi^+ \setminus \Phi'^+$ of height $k$ which we denote by $\beta_k$.
Since $n_{\alpha_1}(\alpha)=0$ for all $\alpha \in \Phi'$ we conclude for $k=1,\dots,l-1$ that $\beta_{k+1}$
is the unique root of $\Phi$ which satisfies $\beta_{k+1}-\alpha_i=\beta_k$ for a simple root $\alpha_i$.
Hence the vector $W= [A_0^-, X_{\beta_{k +1}}]$ is unique
among the vectors $ W_h \in \mathfrak{sl}_{l+1}^{(k)}$ with a non-zero component in $(\mathfrak{sl}_{l+1})_{\beta_{k}}$. We conclude that $X_{\gamma_k}$ and  
\[
\{ W_h \mid W_h \in \mathfrak{sl}_{l+1}^{(k)} \} = \{ W \} \cup \{ W_h \mid W_h \in \mathfrak{sl}_{l }^{(k)} \} 
\]
form a basis of $\mathfrak{sl}_{l+1}^{(k)}$ for $k=1,\dots, l-1$.
Since $\gamma_l$ is the unique root of height $l$, it is clear that $  X_{\gamma_l}  $ forms a basis of $\mathfrak{sl}_{l+1}^{(l)}$. 
\end{proof}

 \begin{theorem}\label{theorem for SL_n}
  The linear parameter differential equation 
  \begin{equation*}
   L(y,\textbf{t})= y^{(l+1)} - \sum\nolimits_{i=1}^{l} t_i y^{(i-1)}=0
  \end{equation*}
has differential Galois group $\mathrm{SL}_{l+1}(C)$ over $F_1$.
 \end{theorem}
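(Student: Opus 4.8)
The plan is to realize the scalar equation $L(y,\textbf{t})=0$ through the matrix equation attached to the plane $A_0^- + \mathfrak{r}$, determine the latter's Galois group by squeezing it between an upper and a lower bound, and finally translate back to scalar form. Using the complementary roots of Lemma~\ref{lemma_sl_roots}, where $X_{\gamma_i}=E_{l+1-i,\,l+1}$ and $A_0^-=\sum_{j=1}^{l}E_{j+1,j}$, I set
\begin{equation*}
A_G(\textbf{t}) = A_0^- + \sum_{i=1}^{l} t_i X_{\gamma_i} \in \mathfrak{sl}_{l+1}(F_1),
\end{equation*}
the companion-type matrix carrying $1$'s on the subdiagonal and the entries $t_l,\dots,t_1$ read down the last column. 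The target is to show that $\partial(\textbf{y})=A_G(\textbf{t})\textbf{y}$ has differential Galois group $\mathrm{SL}_{l+1}(C)$ over $F_1$ and is equivalent to $L(y,\textbf{t})=0$.

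For the \emph{upper bound}, since $A_G(\textbf{t})\in\mathfrak{sl}_{l+1}(F_1)$ and $\mathrm{SL}_{l+1}$ is connected, Proposition~\ref{Prop_upper} immediately gives that the Galois group $H(C)$ of $\partial(\textbf{y})=A_G(\textbf{t})\textbf{y}$ satisfies $H(C)\subseteq\mathrm{SL}_{l+1}(C)$. For the \emph{lower bound} I would exhibit a surjective $R_1$-specialization landing on a known $\mathrm{SL}_{l+1}$-equation over $F_2$. By Proposition~\ref{mod_Mitch_Singer} there is $A_1\in\mathfrak{h}(C)$ so that $\partial(\textbf{y})=(A_0+zA_1)\textbf{y}$ has Galois group $\mathrm{SL}_{l+1}(C)$ over $F_2$. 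Writing $A_0=A_0^-+A_0^+$ shows $A_0+zA_1\in A_0^-+\mathfrak{b}^+(F_2)$, so the Transformation Lemma~\ref{Prop_tans_lemma} produces $u\in U^+(F_2)$ with $\mathrm{Ad}(u)(A_0+zA_1)+l\delta(u)=A_0^-+\sum_{i=1}^{l}\tilde a_i(z)X_{\gamma_i}$. Gauge equivalence preserves the Galois group, so this equation still has group $\mathrm{SL}_{l+1}(C)$, and it equals $\sigma(A_G(\textbf{t}))$ for the map $\sigma:t_i\mapsto\tilde a_i(z)$. Choosing $R_1,R_2$ compatibly so that $\sigma$ is a surjective $R_1$-specialization, Theorem~\ref{specialization_bound} yields $\mathrm{SL}_{l+1}(C)\subseteq H(C)$, and with the upper bound this forces $H(C)=\mathrm{SL}_{l+1}(C)$.

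It then remains to pass to the stated scalar equation. Applying a cyclic vector (for instance the last coordinate $y_{l+1}$) to $\partial(\textbf{y})=A_G(\textbf{t})\textbf{y}$ yields a scalar operator of order $l+1$ whose subleading coefficient vanishes, reflecting $\mathrm{tr}(A_G)=0$. A differential change of the parameter coordinates, i.e.\ an automorphism of $F_1=C\langle\textbf{t}\rangle$ fixing $C$, identifies this operator with $L(y,\textbf{t})=y^{(l+1)}-\sum_{i=1}^{l}t_iy^{(i-1)}$; since such an automorphism carries the Picard--Vessiot extension to an isomorphic one, the Galois group is unchanged, and $L(y,\textbf{t})=0$ acquires group $\mathrm{SL}_{l+1}(C)$.

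The two delicate points both sit in the assembly, not in the machinery, which is already proven. First, in the lower bound one must check that the coefficients $\tilde a_i(z)$ output by the Transformation Lemma are genuine polynomials (so $\sigma$ maps into $C[z]$) and that at least one is non-constant (so $\sigma$ is surjective): the former holds because $l\delta(u_\beta(\rho))=\rho'X_\beta$ together with $\mathrm{Ad}(u_\beta(\rho))$ keeps every intermediate entry polynomial through the recursion $\rho=-w_k$, and the latter follows from $A_1\neq 0$. Second, and this I expect to be the main computational obstacle, one must pin down the cyclic-vector reduction of $A_G$ precisely enough to match the exact shape of $L(y,\textbf{t})$; here a careful induction on $l$, tracking how the subdiagonal shift interacts with the last column, produces the needed reparametrizing automorphism of $F_1$.
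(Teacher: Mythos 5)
Your proposal follows the paper's proof essentially verbatim: the upper bound comes from Proposition~\ref{Prop_upper}, the lower bound from specializing onto a Mitschi--Singer equation of Proposition~\ref{mod_Mitch_Singer} brought into the plane $A_0^{\pm}+\mathfrak{r}$ by the Transformation Lemma, and then Theorem~\ref{specialization_bound}, exactly as in the paper. The only real difference is that the paper takes $A_{\mathrm{SL}_{l+1}}(\textbf{t})=A_0^{+}+\sum_i t_i X_{-\gamma_{l+1-i}}=\sum_i E_{i,i+1}+\sum_i t_i E_{l+1,i}$, which is literally the trace-zero companion matrix of $L(y,\textbf{t})$, so the scalar conversion is immediate; your transposed choice $A_0^{-}+\sum_i t_i X_{\gamma_i}$ instead yields $y^{(l+1)}=\sum_i (t_i y)^{(l-i)}$ and therefore genuinely needs the extra (correct, but avoidable) triangular reparametrizing automorphism of $F_1$ that you describe.
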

 \begin{proof}
We consider the differential equation 
$\partial(\boldsymbol{y})=A_{\mathrm{SL}_{l+1}}(\boldsymbol{t}) \boldsymbol{y}$ 
over $F_1$, where $A_{\mathrm{SL}_{l+1}}(\boldsymbol{t})$ 
is defined by 
\begin{equation*}
A_{\mathrm{SL}_{l+1}}(\textit{\textbf{t}}) := A_0^+ + \sum_{i=1}^l t_i X_{-\gamma_{l+1- i}} 
\end{equation*}
and we denote its differential Galois group by $H(C)$. 
We prove that $H(C)= \mathrm{SL}_{l+1}(C)$.\\
By construction $A_{\mathrm{SL}_{l+1}}(\boldsymbol{t}) \in \mathfrak{sl}_{l+1}(F_1)$ 
and so Proposition~\ref{Prop_upper} yields that $H(C)$ is a subgroup 
of $\mathrm{SL}_{l+1}(C)$.

Conversely, Proposition~\ref{mod_Mitch_Singer} implies that there exists 
a matrix differential equation $\partial(\boldsymbol{y})=A \boldsymbol{y}$ 
over $F_2$ such that $A \in  A_0^+  + \mathfrak{b}^-(C[z])$ and such that its 
differential Galois group is $\mathrm{SL}_{l+1}(C)$. 
We are going to apply Lemma~\ref{Prop_tans_lemma}. If we interchange the role of 
the positive and negative roots, Lemma~\ref{Prop_tans_lemma} yields
that $A$ is a gauge equivalent to a matrix in the plane $A_0^+ + \mathfrak{r}(C[z])$,
where by Lemma~\ref{lemma_sl_roots} the space 
$\mathfrak{r}=\sum_{i=1}^l \mathfrak{g}_{-\gamma_i}$ is a root space 
complement of $\mathrm{ad}(A_0^+)(\mathfrak{u}^-)$. Hence, there exists a surjective 
$C \{\boldsymbol{t} \}$-specialization 
$\sigma: C \{ \boldsymbol{t} \} \rightarrow C[z]$ such that the differential Galois group of
\[
\partial(\boldsymbol{y})= A_{\mathrm{SL}_{l+1}}(\sigma(\boldsymbol{t})) \boldsymbol{y} \]
over $F_2$ is $\mathrm{SL}_{l+1}(C)$.
Theorem~\ref{specialization_bound} then asserts that $\mathrm{SL}_{l+1}(C)$ is 
contained in $H(C)$. Combining this with the relation from above we obtain that $H(C) = \mathrm{SL}_{l+1}(C)$.

Finally, we show that the matrix differential equation defined by 
$A_{\mathrm{SL}_{l+1}}(\boldsymbol{t})$ is equivalent 
to the linear parameter differential equation in the statement of the theorem. 
But this is clearly satisfied (see for instance \cite[Chapter 1.2]{P/S}), 
since the defining matrix
\begin{equation*}
 A_{\mathrm{SL}_{l+1}}(\boldsymbol{t})= \sum_{i=1}^l E_{i,i+1} + \sum_{i=1}^l t_i E_{l+1,i}
\end{equation*}
has the shape of a companion matrix with trace zero.  
\end{proof}
\section{The equation for $\mathrm{SP}_{2l}(C)$}\label{chap8}
Let $\epsilon_1, \dots, \epsilon_l$ be the standard orthonormal basis of $\mathbb{R}^l$ with respect to the usual inner product $(\cdot,\cdot)$ of $\mathbb{R}^l$.  
Then from \cite[VI, Planche III]{BourGroupesetAlgebresIV-VI} we obtain that the root system $\Phi$ of type $C_l$ consists
of the vectors $\pm 2 \epsilon_i$ with $1 \leq i \leq l$ and $\pm \epsilon_i \pm \epsilon_j$ with $1 \leq i < j \leq l$. A basis $\Delta$ for $\Phi$ is given by the vectors
\begin{equation*}
 \alpha_1 = \epsilon_1 - \epsilon_2, \ \alpha_2 = \epsilon_2 - \epsilon_3, \ \dots , \ \alpha_{l-1}=\epsilon_{l-1} - \epsilon_l, \ \alpha_l = 2 \epsilon_l 
\end{equation*}
and with respect to $\Delta$ the positive roots are
\begin{align*}
 & \epsilon_i - \epsilon_j = \sum_{i \leq k < j} \alpha_k &&  ( 1 \leq i < j \leq l), \\
 & \epsilon_i + \epsilon_j =  \sum_{i \leq k < j} \alpha_k +  2 \sum_{j \leq k <l} \alpha_k + \alpha_l &&  (1 \leq i < j \leq l),   \\
 & 2 \epsilon_i \quad \; \ = 2 \sum_{i \leq k <l} \alpha_k + \alpha_l   && (1 \leq i \leq l) .
\end{align*}
The negative roots of $ \Phi$ are obtained by interchanging all signs in the expressions of the positive roots.

We take a Cartan decomposition of the Lie algebra $\mathfrak{sp}_{2l}$ from \cite[VIII, \S 13.3]{BourGroupesetAlgebresVII-VIII}. To this purpose we renumber the rows and columns of the matrices 
$E_{ij} \in C^{2l \times 2l}$ into $(1,2, \dots, l ,-l, \dots,-2,-1)$. Then by 
\cite[VIII, \S 13.3]{BourGroupesetAlgebresVII-VIII} a Cartan subalgebra 
$\mathfrak{h}$ is generated by the basis elements $H_i = E_{i,i} - E_{-i,-i}$ 
with $1\leq i \leq l$ and for $\alpha \in \Phi$ the root spaces 
$(\mathfrak{sp}_{2l})_{\alpha}$ with respect to $\mathfrak{h}$ are generated 
by the following matrices correspondingly:
\begin{align*}
 & X_{2\epsilon_i} \ \; = E_{i,-i} && X_{\epsilon_i - \epsilon_j} \ \; = E_{i,j}- E_{-j,-i} && X_{\epsilon_i + \epsilon_j} \ \; = E_{i,-j} + E_{j,-i} \\
 & X_{-2\epsilon_i} = -E_{-i,i} && X_{- \epsilon_i + \epsilon_j} = - E_{j,i} + E_{-i,-j} &&  X_{-\epsilon_i - \epsilon_j} = - E_{-i,j} - E_{-j,i} .
\end{align*}
\begin{lemma}\label{lemma_Sp2l_roots}
 The $l$ roots 
 \begin{equation*}
 \gamma_1 := \alpha_l, \gamma_2 := 2\alpha_{l-1} + \alpha_l , \gamma_3 := 2 \alpha_{l-2} + 2 \alpha_{l-1}+ \alpha_l, \dots , \gamma_l:= 2 \alpha_1 + \dots + 2\alpha_{l-1} + \alpha_l
\end{equation*}
are complementary roots of $\Phi$.
\end{lemma}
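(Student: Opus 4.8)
The plan is to follow the template of the type $A_l$ argument (Lemma~\ref{lemma_sl_roots}), adapted to the root system of type $C_l$. First I would pin down the exponents. Using the description of $\Phi^+$ above, the maximal root is $2\epsilon_1$ of height $2l-1$, and a short count of the numbers $c_k = \lvert \{\alpha \in \Phi^+ \mid \mathrm{ht}(\alpha) = k\}\rvert$ shows that $c_k - c_{k+1} = 1$ when $k$ is odd and $0$ when $k$ is even; by the Shapiro--Steinberg method (see \cite{Steinberg}) the exponents of $\mathfrak{sp}_{2l}$ are therefore $m_i = 2i-1$ for $1 \le i \le l$ (alternatively they can be read off from \cite{Bourb}). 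Since $2\epsilon_a = 2\sum_{a \le k < l}\alpha_k + \alpha_l$, the roots in the statement are exactly the long roots $\gamma_i = 2\epsilon_{l-i+1}$, and $\mathrm{ht}(\gamma_i) = 2(i-1)+1 = m_i$. Because the exponents of $C_l$ are pairwise distinct, for each $i$ the space $\mathfrak{g}^{A_0^+}\cap \mathfrak{g}^{(m_i)}$ is one-dimensional, spanned by the element $Z_i$ of Theorem~\ref{thm_basis_Z_i}. Hence, by the reasoning in the proof of Lemma~\ref{lemma_direct_sum2}, it suffices to show that for each $i$ the set $\{W_h \mid W_h \in \mathfrak{g}^{(m_i)}\} \cup \{X_{\gamma_i}\}$ is a basis of the eigenspace $\mathfrak{g}^{(m_i)}$, equivalently that $X_{\gamma_i} = X_{2\epsilon_{l-i+1}} \notin \mathrm{ad}(A_0^-)(\mathfrak{u}^+)$.

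I would prove this last statement by induction on $l$. For the base case $C_2$ one checks directly that $\{[X_{\alpha_1+\alpha_2}, A_0^-],\, X_{\alpha_2}\}$ is a basis of $\mathfrak{g}^{(1)}$ (the structure constant of $[X_{\alpha_1+\alpha_2},X_{-\alpha_2}]$ along $\mathfrak{g}_{\alpha_1}$ being nonzero) and that $X_{2\epsilon_1}$ alone spans the top eigenspace $\mathfrak{g}^{(3)}$. For the inductive step I would use that deleting $\alpha_1$ from the Dynkin diagram yields the subsystem $\Phi' = \{\alpha \in \Phi \mid n_{\alpha_1}(\alpha) = 0\}$ of type $C_{l-1}$ on $\alpha_2, \dots, \alpha_l$, realized on $\epsilon_2, \dots, \epsilon_l$. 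The roots $\gamma_1, \dots, \gamma_{l-1}$ equal $2\epsilon_l, \dots, 2\epsilon_2$ and so lie in $\Phi'$, whence they are complementary roots for $C_{l-1}$ by induction; moreover $\gamma_l = 2\epsilon_1$ is the maximal root of $\Phi$ and $\{X_{\gamma_l}\}$ is trivially a basis of the one-dimensional top eigenspace $\mathfrak{g}^{(2l-1)}$.

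The heart of the argument is the passage from $C_{l-1}$ to $C_l$ for the exponents $m_i$ with $1 \le i \le l-1$. Here I would first note that for a positive root $\nu$ not involving $\alpha_1$ the bracket $[X_\nu, X_{-\alpha_1}]$ vanishes, since $\nu - \alpha_1$ has coefficient $-1$ on $\alpha_1$ and a positive coefficient elsewhere, hence is of mixed sign and not a root; consequently the vectors $W_h$ coming from $\Phi'$ are unchanged when $A_0^-$ is enlarged by $X_{-\alpha_1}$. The genuinely new root spaces in $\mathfrak{g}^{(m_i)}$ belong to $\epsilon_1 - \epsilon_{2i}$ (when $2i \le l$) and $\epsilon_1 + \epsilon_{2(l-i)+1}$ (when it is a root), while the new vectors $W_h$ arise from $\nu = \epsilon_1 - \epsilon_{2i+1}$ and $\nu = \epsilon_1 + \epsilon_{2(l-i)}$ of height $m_i+1$. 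I would then run the same marker technique as in Lemma~\ref{lemma_sl_roots}: one checks that $\nu - \alpha_j$ equals $\epsilon_1 - \epsilon_{2i}$, respectively $\epsilon_1 + \epsilon_{2(l-i)+1}$, for a unique simple $\alpha_j = \alpha_{2i}$, respectively $\alpha_{2(l-i)}$, with nonzero structure constant, so that the new $W_h$ have nonzero components along the new $\alpha_1$-root spaces, whereas $X_{\gamma_i} = X_{2\epsilon_{l-i+1}}$ involves only $\alpha_2, \dots, \alpha_l$ and thus has no component along any $\alpha_1$-root space. Combining this triangular behaviour with the induction hypothesis inside $C_{l-1}$ yields that $\{W_h \mid W_h \in \mathfrak{g}^{(m_i)}\} \cup \{X_{\gamma_i}\}$ is a basis of $\mathfrak{g}^{(m_i)}$, which is the desired conclusion; then Lemma~\ref{Prop_tans_lemma} applies with $\mathfrak{r} = \sum_{i=1}^l (\mathfrak{sp}_{2l})_{\gamma_i}$.

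The main obstacle, and the place where $C_l$ is genuinely harder than $A_l$, is precisely this bookkeeping in the inductive step: unlike in type $A_l$ there are two families of new roots through $\alpha_1$ (the short roots $\epsilon_1 - \epsilon_b$ and the roots $\epsilon_1 + \epsilon_b$), so the marker argument must simultaneously control two new root spaces and two new vectors $W_h$ at the relevant height, and one has to verify that the corresponding structure constants do not vanish so that the projection onto the new root spaces is invertible. As a cross-check, and as a possible alternative to the combinatorics, I would note that $\mathrm{ad}(A_0^-)(\mathfrak{u}^+)$ is orthogonal to the centralizer $\mathfrak{g}^{A_0^-}$ with respect to the Killing form (since $\mathrm{ad}(A_0^-)$ is skew and kills $\mathfrak{g}^{A_0^-}$); it would then suffice to exhibit an element of $\mathfrak{g}^{A_0^-}\cap \mathfrak{g}^{(-m_i)}$ pairing nontrivially with $X_{2\epsilon_{l-i+1}}$, that is, having a nonzero component along $\mathfrak{g}_{-2\epsilon_{l-i+1}}$.
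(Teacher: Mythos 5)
Your proposal follows the same route as the paper's proof: the exponents $m_i=2i-1$, induction on $l$ via the subsystem $\Phi'=\{\alpha\in\Phi\mid n_{\alpha_1}(\alpha)=0\}$ of type $C_{l-1}$, a direct check for $C_2$, and the marker/triangularity argument showing that the new $W=[A_0^-,X_\beta]$ is the only basis candidate with a nonzero component in the new root space at height $m_i$ while $X_{\gamma_i}$, being supported on $\alpha_2,\dots,\alpha_l$, has none. One correction: the ``main obstacle'' you describe is illusory --- the roots $\epsilon_1-\epsilon_j$ ($2\le j\le l$) have heights $1,\dots,l-1$ and the roots $\epsilon_1+\epsilon_j$ ($2\le j\le l$) have heights $l,\dots,2l-2$, so together with $2\epsilon_1$ of height $2l-1$ exactly \emph{one} root of $\Phi^+\setminus\Phi'^+$ occurs at each height; your two conditions $2i\le l$ and $2i\ge l+1$ are mutually exclusive, there is never more than one new root space and one new $W_h$ to control at a given height, and this uniqueness (rather than a two-by-two invertibility check) is precisely the observation on which the paper's inductive step rests.
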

\begin{proof}
 By \cite[VI, Planche III]{BourGroupesetAlgebresIV-VI} 
 the exponents of a Lie algebra of type $C_l$ are
 \begin{equation*}
  m_1 = 1 , \ m_2 =3 , \ m_3 = 5, \dots, m_l = 2l-1.
 \end{equation*}
Hence, the roots $\gamma_i$ satisfy $\mathrm{ht}(\gamma_i)=m_i$ for all 
$1 \leq i \leq l$. In order to show that the roots $\gamma_i$ satisfy the second 
condition of Lemma~\ref{lemma_direct_sum2}, we need to prove that the set 
\[
\{X_{\gamma_i} \} \cup \{ W_h \mid W_h  \in \mathfrak{sp}_{2l}^{(m_i)} \} 
\]
is a basis of $\mathfrak{sp}_{2l}^{(m_i)}$ for $1 \leq i \leq l$. 
The proof is done by induction on $l$.

For $l=2$ the root system $\Phi$ consists of the roots 
\[
\pm \alpha_1, \ \pm \alpha_2, \ \pm (\alpha_1 + \alpha_2), \  \pm (2\alpha_1 + \alpha_2) 
\]
and the exponents are $m_1=1$ and $m_2=3$. 
Since the Lie bracket of $X_{-\alpha_2}$ with $X_{\alpha_1 + \alpha_2}$ is 
non-zero and lies in $(\mathfrak{sp}_{2l})_{\alpha_1}$,   
the vector $ W_3=[A_0^-, X_{\alpha_1 +\alpha_2}]$
has a non-zero component in $(\mathfrak{sp}_{4})_{\alpha_1}$. We conclude that  
$ W_3$ and $X_{\alpha_2}$ form a basis of $\mathfrak{sp}_{4}^{(1)}$. 
Since $2\alpha_1 + \alpha_2$ is the unique root of height three in $\Phi^+$, 
the vector $ X_{2\alpha_1 + \alpha_2}$ forms clearly a basis of 
$\mathfrak{sp}_{4}^{(3)}$.

Let $l>2$ and assume the assertion is true for $l-1$. We consider the subset  
\[
\Phi' :=\{ \alpha \mid n_{\alpha_1}(\alpha) =0\} 
\]
of the root system $\Phi$. Then the Dynkin diagram implies 
that $\Phi'$ is a root system of type $C_{l-1}$ and it is generated by the simple roots $\alpha_2,\dots, \alpha_l$. 
We identify in the following $\mathfrak{sp}_{2l-2}$ inside $\mathfrak{sp}_{2l}$ 
with the Lie subalgebra generated by $X_{\pm \alpha_i} \in \mathfrak{sp}_{2l}$ 
with $i=2,\dots,l$. By the induction assumption $X_{\gamma_i}$
and the vectors of the set
\[
\{ W_h \mid W_h \in \mathfrak{sp}_{2l-2}^{(m_i)} \} \subset \{ W_h \mid W_h \in \mathfrak{sp}_{2l}^{(m_i)} \} 
\]
form a basis of $\mathfrak{sp}_{2l-2}^{(m_i)}$ with $i=1,\dots,l-1$.
From the shapes of the roots in $\Phi^+$ 
we conclude that the set $\Phi^+\setminus \Phi'^+$ consists of the roots 
$\alpha_1 + \dots + \alpha_k$ for $k=1,\dots,l$ and 
\[
\alpha_1 + \dots + 2 \alpha_k+ \dots + 2 \alpha_{l-1} + \alpha_l
\]
for $k=l-1,\dots,1$.
Then for $k =1, \dots ,  2l-1$, where $1, \dots ,  2l-1$ are all possible heights 
of roots in $\Phi^+$, there exists a unique root in $\Phi^+\setminus \Phi'^+$ of 
height $k$ which we denote by $\beta_k$.
For $k=1,\dots,2l-2$ the root $\beta_{k+1}$ is the unique root of $\Phi$ which satisfies  
$\beta_{k+1} - \alpha_i = \beta_k$ for a simple root $\alpha_i$, since for all 
$\alpha \in \Phi'$ we have $n_{\alpha_1}(\alpha) = 0$.
We conclude that for $m_i \leq 2l-2$, i.e.~for $i=1,\dots,l-1$, the vector 
\[
W:=[A_0^-,X_{\beta_{m_i+1}}] 
\]
is unique among the vectors $W_h \in \mathfrak{sp}_{2l}^{(m_i)}$ with a non-zero 
component in the root space $(\mathfrak{sp}_{2l})_{\beta_{m_i}}$. Thus $X_{\gamma_i}$ and the vectors 
\[
\{ W_h \mid W_h \in \mathfrak{sp}_{2l}^{(m_i)} \} = \{ W \} \cup  \{ W_h \mid W_h \in \mathfrak{sp}_{2l-2}^{(m_i)} \}
\]
form a basis of $\mathfrak{sp}_{2l}^{(m_i)}$ for $i=1,\dots,l-1$.
Since $\gamma_l$ is the unique root of maximal height $m_l=2l-1$ in $\Phi^+$, the vector
$ X_{\gamma_l}$ forms clearly a basis of $(\mathfrak{sp}_{2l})_{\gamma_l} = \mathfrak{sp}_{2l}^{(m_l)}$. This completes the induction.
\end{proof}
\begin{lemma}\label{Lemma_sp_2l_equation}
 The matrix parameter differential equation $\partial(\textit{\textbf{y}})= A_{\mathrm{SP}_{2l}}(\textit{\textbf{t}}) \textit{\textbf{y}}$ 
 over $F_1$, where  
 \begin{equation*}
  A_{\mathrm{SP}_{2l}}(\textit{\textbf{t}}):= A_0^+ + \sum_{i=1}^l (-t_i)X_{- \gamma_i},
\end{equation*}
is equivalent to the linear parameter differential equation
\begin{equation*}
L(y,\textbf{t})= y^{(2l)} - \sum\nolimits_{i=1}^{l} (-1)^{i-1} (t_i y^{(l-i)})^{(l-i)}=0.
\end{equation*}
\end{lemma}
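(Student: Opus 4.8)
The plan is to make the defining matrix $A_{\mathrm{SP}_{2l}}(\textbf{t})$ completely explicit, read off the associated first order system, and exhibit its first component as a cyclic vector whose scalar minimal equation is exactly $L(y,\textbf{t})$. First I would identify the complementary roots in the $\epsilon$-coordinates: a direct check gives $\gamma_i = 2\epsilon_{l+1-i}$, so that $X_{-\gamma_i} = -E_{-(l+1-i),\,l+1-i}$ and hence $(-t_i)X_{-\gamma_i} = t_i E_{-(l+1-i),\,l+1-i}$. With the row and column labelling $(1,\dots,l,-l,\dots,-1)$ the element $A_0^+ = \sum_{i=1}^{l-1}(E_{i,i+1}-E_{-(i+1),-i}) + E_{l,-l}$ is superdiagonal, while the parameter part contributes the single lower-left entry $t_{l+1-j}$ in row $-j$, column $j$. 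Thus the matrix has a pure ``chain plus closing row'' shape.

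From this I would read off the system for the solution vector $\textbf{y}$ with components $y_1,\dots,y_l,y_{-l},\dots,y_{-1}$: namely $\partial y_i = y_{i+1}$ for $1\le i\le l-1$, $\partial y_l = y_{-l}$, $\partial y_{-j} = -y_{-(j-1)} + t_{l+1-j}\,y_j$ for $2\le j\le l$, and the closing relation $\partial y_{-1} = t_l\,y_1$. Setting $u:=y_1$ gives $y_i = u^{(i-1)}$ for $1\le i\le l$ and $y_{-l}=u^{(l)}$, while the middle equations solve recursively for the negative components via $y_{-(j-1)} = -\partial y_{-j} + t_{l+1-j}\,u^{(j-1)}$. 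Writing $y_{-j}=P_j(u)$ for differential operators $P_j$ and reindexing $P^{(m)}:=P_{l-m}$, this becomes the recursion $P^{(0)}=\partial^l$ and $P^{(m)} = -\partial\circ P^{(m-1)} + t_m\,\partial^{\,l-m}$ for $1\le m\le l-1$.

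The core computation is then a routine induction on $m$ proving the closed form
\[
P^{(m)} = (-1)^m\partial^{\,l+m} + \sum_{i=1}^{m}(-1)^{m-i}\,\partial^{\,m-i}\circ t_i\circ\partial^{\,l-i},
\]
where $t_i$ denotes multiplication by $t_i$. Taking $m=l-1$, substituting $P_1=P^{(l-1)}$ into the closing relation $(\partial\circ P_1 - t_l)(u)=0$, absorbing $-t_l$ as the $i=l$ summand, and multiplying by the nonzero constant $(-1)^{l-1}$, the identity $(-1)^{2l-2-i}=(-1)^i$ yields
\[
(-1)^{l-1}(\partial\circ P_1 - t_l)(y) = y^{(2l)} - \sum_{i=1}^{l}(-1)^{i-1}(t_i\,y^{(l-i)})^{(l-i)} = L(y,\textbf{t}),
\]
using $\partial^{\,l-i}\circ t_i\circ\partial^{\,l-i}(y)=(t_i y^{(l-i)})^{(l-i)}$. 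Hence $u=y_1$ satisfies $L(u,\textbf{t})=0$.

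Finally, because the change of coordinates from $(y_1,\dots,y_{-1})$ to $(u,u',\dots,u^{(2l-1)})$ is triangular in the highest derivative appearing (the leading term of $y_{-j}$ is $(-1)^{l-j}u^{(2l-j)}$, so the orders $0,\dots,2l-1$ occur exactly once), the vector $y_1$ is cyclic and the passage is invertible; conversely any solution of $L(y,\textbf{t})=0$ reconstructs a solution of the system through the same formulas, the only nontrivial check being the closing relation, which is $L(u,\textbf{t})=0$ itself. Hence the matrix equation and $L(y,\textbf{t})=0$ are equivalent in the sense of \cite{P/S}, Chapter 1.2. I expect the main obstacle to be purely bookkeeping: fixing the correct signs in the explicit root vectors coming from the $\mathfrak{sp}_{2l}$ realization of \cite{Bourb} and carrying the alternating signs cleanly through the induction. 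The conceptual point -- that the recursion folds into the self-adjoint operators $\partial^{\,l-i}\circ t_i\circ\partial^{\,l-i}$ -- becomes transparent once the pattern for $P^{(m)}$ is guessed.
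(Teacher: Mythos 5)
Your proposal is correct and follows essentially the same route as the paper: write out the explicit first-order system coming from the Bourbaki realization of $\mathfrak{sp}_{2l}$, eliminate the components recursively starting from the middle of the chain, and establish the closed form by induction before applying the closing equation $\partial y_{-1}=t_l y_1$. Your packaging of the induction as an operator identity for $P^{(m)}$ (and your explicit remark that the coordinate change is triangular in the highest derivative, so the two equations are genuinely equivalent) is a slightly cleaner presentation of the same computation that the paper carries out on the scalar identities $y_{l+1-k}^{(2k)}=\sum_{i=1}^k(-1)^{i-1}(t_i y_{l-k+1}^{(k-i)})^{(k-i)}+(-1)^k y_{l+1+k}$.
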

\begin{proof}
Let $\textit{\textbf{y}}=(y_1, \dots, y_l,y_{l+1}, \dots, y_{2l})^{tr}$. 
Then the matrix differential equation 
$\partial(\textit{\textbf{y}})= A_{\mathrm{SP}_{2l}}(\textit{\textbf{t}}) \textit{\textbf{y}}$, where 
\begin{equation*}
 A_{\mathrm{SP}_{2l}}(\textit{\textbf{t}})= (\sum_{i=1}^{l-1} E_{i,i+1}- E_{-i-1,-i}) + E_{l,-l} - \sum_{i=1}^l t_i E_{-l-1+i, l+1-i},
\end{equation*}
expands into the following system of differential equations:
\begin{align*}
 &y_1' = y_2 && y_{l+1}'=t_1 y_l - y_{l+2}\\
 &\quad \vdots && \quad \vdots\\
 &y_{l-1}' = y_{l} && y_{2l-1}' = t_{l-1} y_2 - y_{2l}\\
 &y_l' = y_{l+1} && y_{2l}' = t_l y_1
\end{align*}
For $1\leq k \leq l-1$ we consider the subsystem
\begin{gather*}
 y_{l-k+1}'= y_{l-k+2}, \dots, \ y_l' = y_{l+1}, \\  y_{l+1}'=t_1 y_l - y_{l+2}, \dots, \ y_{l+k}'=t_k y_{l+1-k} - y_{l+1+k}
\end{gather*}
and prove by induction on $k$ that this system yields the following linear 
differential equation:
\begin{equation*}
y_{l+1-k}^{(2k)} = \sum_{i=1}^k (-1)^{i-1} (t_i y_{l-k+1}^{(k-i)})^{(k-i)} + (-1)^k y_{l+1+k}. 
\end{equation*}
Letting $k=1$, the subsystem consists of the two equations $y_l' = y_{l+1}$ and  $y_{l+1}'=t_1 y_l - y_{l+2}$. 
We differentiate the first equation and we substitute in this expression the right hand side of the second 
equation for $y_{l+1}'$, i.e. we obtain $y_{l}''=t_1 y_l - y_{l+2}$.\\
Now let $k>1$. Then for $k-1$ the induction assumption, applied to the subsystem 
\begin{gather*}
 y_{l-k+2}'= y_{l-k+3}, \dots, \ y_l' = y_{l+1}, \\  y_{l+1}'=t_1 y_l - y_{l+2}, \dots, \ y_{l+k-1}'=t_k y_{l+2-k} - y_{l+k},
\end{gather*}
yields the linear differential equation
\begin{equation}
 y_{l-k+2}^{(2k-2)} = \sum_{i=1}^{k-1} (-1)^{i-1} (t_i y_{l-k+2}^{(k-1-i)})^{(k-1-i)} 
 + (-1)^{k-1} y_{l+k}. \label{C_l_eqn_1}
\end{equation}
The equation $y_{l-k+1}'= y_{l-k+2}$ implies that we can replace $y_{l-k+2}$ by $y_{l-k+1}'$ in Equation~(\ref{C_l_eqn_1}) and 
if we differentiate this expression, we obtain
\begin{equation}
y_{l-k+1}^{(2k)} = \sum_{i=1}^{k-1} (-1)^{i-1} (t_i y_{l-k+1}^{(k-i)})^{(k-i)} + (-1)^{k-1} y_{l+k}'. \label{C_l_eqn_2}
\end{equation}
Differentiating Equation~(\ref{C_l_eqn_2}) and making the substitution $y_{l+k}'=t_k y_{l+1-k} - y_{l+1+k}$ completes the induction.\\
Now we consider the full system of equations. Ignoring the first and last equation, we get from the induction for $k=l-1$ the following equation:
\begin{equation*}
y_{2}^{(2l-2)} = \sum_{i=1}^{l-1} (-1)^{i-1} (t_i y_{2}^{(l-1-i)})^{(l-1-i)} + (-1)^ {l-1} y_{2l}. 
\end{equation*}
As above, the first equation of the full system implies that we can replace $y_2$ by $y_1'$ which yields an expression in $y_{2l}$ and derivatives of $y_1$. 
Thus, if we differentiate this expression and make the substitution $y_{2l}' = t_l y_1$, which is given by the last equation of the full system, 
we obtain the linear differential equation in the assertion of the lemma.
\end{proof}

\begin{theorem}
The linear parameter differential equation 
\begin{equation*}
L(y,\textbf{t})= y^{(2l)} - \sum\nolimits_{i=1}^{l} (-1)^{i-1} (t_i y^{(l-i)})^{(l-i)}=0
\end{equation*}
has differential Galois group $\mathrm{SP}_{2l}(C)$ over $F_1$.
\end{theorem}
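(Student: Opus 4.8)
The plan is to mirror the proof of Theorem~\ref{theorem for SL_n} almost verbatim, replacing $\mathrm{SL}_{l+1}$ by $\mathrm{SP}_{2l}$ and using the complementary roots supplied by Lemma~\ref{lemma_Sp2l_roots}. I would start from the matrix parameter differential equation $\partial(\textit{\textbf{y}})=A_{\mathrm{SP}_{2l}}(\textit{\textbf{t}})\textit{\textbf{y}}$ over $F_1$ with $A_{\mathrm{SP}_{2l}}(\textit{\textbf{t}})=A_0^+ + \sum_{i=1}^l(-t_i)X_{-\gamma_i}$ from Lemma~\ref{Lemma_sp_2l_equation}, and denote its differential Galois group by $H(C)$. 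Since each $\gamma_i$ is a positive root, the defining matrix lies in $\mathfrak{sp}_{2l}(F_1)$, so Proposition~\ref{Prop_upper} immediately gives the upper bound $H(C)\subseteq \mathrm{SP}_{2l}(C)$.

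For the reverse inclusion I would produce, via the Specialization Bound, a single specialization whose Galois group is already all of $\mathrm{SP}_{2l}(C)$. By Proposition~\ref{mod_Mitch_Singer} there is a matrix differential equation $\partial(\textit{\textbf{y}})=A\textit{\textbf{y}}$ over $F_2$ with $A=A_0+zA_1\in A_0^+ + \mathfrak{b}^-$ and differential Galois group $\mathrm{SP}_{2l}(C)$ (here $A_0=A_0^+ + A_0^-$ and $A_1\in\mathfrak{h}$, whence $A\in A_0^+ + \mathfrak{b}^-$). Since Lemma~\ref{lemma_Sp2l_roots} shows that $\gamma_1,\dots,\gamma_l$ are complementary roots of $\Phi$, the space $\mathfrak{r}=\sum_{i=1}^l \mathfrak{g}_{-\gamma_i}$ is a root space complement to $\mathrm{ad}(A_0^+)(\mathfrak{u}^-)$. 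Applying the Transformation Lemma~\ref{Prop_tans_lemma} with the roles of the positive and negative roots interchanged, $A$ is gauge equivalent to a matrix of the plane $A_0^+ + \mathfrak{r}(F_2)$, that is, to $A_{\mathrm{SP}_{2l}}(\sigma(\textit{\textbf{t}}))$ for a suitable $R_1$-specialization $\sigma$. As gauge equivalence preserves the differential Galois group, the specialized equation $\partial(\textit{\textbf{y}})=A_{\mathrm{SP}_{2l}}(\sigma(\textit{\textbf{t}}))\textit{\textbf{y}}$ over $F_2$ has group $\mathrm{SP}_{2l}(C)$. Theorem~\ref{specialization_bound}, applied to $A_{\mathrm{SP}_{2l}}(\textit{\textbf{t}})$, the localization $R_1$ and $\sigma$, then yields $\mathrm{SP}_{2l}(C)\subseteq H(C)$; combining the two inclusions gives $H(C)=\mathrm{SP}_{2l}(C)$.

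Finally, Lemma~\ref{Lemma_sp_2l_equation} already establishes that $\partial(\textit{\textbf{y}})=A_{\mathrm{SP}_{2l}}(\textit{\textbf{t}})\textit{\textbf{y}}$ is equivalent to the linear parameter differential equation $L(y,\textit{\textbf{t}})=0$ of the statement, so the latter has differential Galois group $\mathrm{SP}_{2l}(C)$ over $F_1$, completing the proof.

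The step I expect to be most delicate is the production of the specialization $\sigma$ in the second paragraph. One must first fix the localization $R_1$ of $C\{\textit{\textbf{t}}\}$ so that $A_{\mathrm{SP}_{2l}}(\textit{\textbf{t}})\in R_1^{n\times n}$, and then verify that the entries obtained by gauge-transforming $A_0+zA_1$ into the plane $A_0^+ + \mathfrak{r}$ are genuinely polynomial in $z$, so that the read-off values $\sigma(t_i)$ land in $R_2$ and $\sigma$ is surjective (which requires at least one $\sigma(t_i)$ of positive degree), as demanded by Theorem~\ref{specialization_bound}. Since the Transformation Lemma only guarantees gauge equivalence over $F_2$, one has to track that the coefficients $\sigma(t_i)$ determined by the resulting matrix indeed define an $R_1$-specialization in the sense of the Notation and Definition preceding Theorem~\ref{infornmal_spez_bound}.
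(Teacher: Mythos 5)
Your proposal is correct and follows essentially the same route as the paper: the paper's proof simply says it "works as the proof of Theorem~\ref{theorem for SL_n}," invoking Proposition~\ref{Prop_upper} for the upper bound, and Proposition~\ref{mod_Mitch_Singer}, Lemma~\ref{lemma_Sp2l_roots}, the Transformation Lemma~\ref{Prop_tans_lemma} (with positive and negative roots interchanged) and Theorem~\ref{specialization_bound} for the lower bound, then concludes with Lemma~\ref{Lemma_sp_2l_equation}. Your closing remark about verifying that the gauge-transformed coefficients genuinely define a surjective $R_1$-specialization is a fair observation about a point the paper also passes over quickly, but it does not change the argument.
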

\begin{proof}
The proof just works as the proof of Theorem~\ref{theorem for SL_n}. 
For $ A_{\mathrm{SP}_{2l}}(\boldsymbol{t})$ as in Lemma~\ref{Lemma_sp_2l_equation} one shows 
with Proposition~\ref{mod_Mitch_Singer}, Lemma~\ref{lemma_Sp2l_roots} and 
Lemma~\ref{Prop_tans_lemma} (interchange 
the role of the positive and negative roots) that 
$\partial(\boldsymbol{y})= A_{\mathrm{SP}_{2l}}(\boldsymbol{t}) \boldsymbol{y}$ 
specializes to a differential equation over $F_2$ with group $\mathrm{SP}_{2l}(C)$. 
If one applies then Theorem~\ref{specialization_bound} and Proposition~\ref{Prop_upper} 
one obtains that the differential Galois group of the equation defined by 
$A_{\mathrm{SP}_{2l}}(\boldsymbol{t})$ 
is $\mathrm{SP}_{2l}(C)$. Finally one uses Lemma~\ref{Lemma_sp_2l_equation} to complete 
the proof.
\end{proof}
 \section{The equation for $\mathrm{SO}_{2l+1}(C)$} \label{chap9}
 Let $\epsilon_1, \dots , \epsilon_l$ be the standard orthonormal basis of $\mathbb{R}^{l}$ with respect to the standard inner product 
 on $\mathbb{R}^{l}$and let
 \begin{equation*}
\Phi = \{ \pm \epsilon_i \mid 1\leq i \leq l \} \cup \{ \pm \epsilon_i \pm \epsilon_j \mid 1 \leq i < j \leq l \}.
 \end{equation*}
 Then by \cite[VI, Planche II]{BourGroupesetAlgebresIV-VI} the set $\Phi$ is a root
 system of type $B_l$ and a basis $\Delta$ is 
 given by the vectors  $\alpha_1 := \epsilon_1 - \epsilon_2$, $\alpha_2 := \epsilon_2 - \epsilon_3,\dots ,  
 \alpha_{l-1} := \epsilon_{l-1} - \epsilon_l$ and $\alpha_l := \epsilon_l$.
 With respect to $\Delta$ the positive roots of $\Phi$ are
 \begin{equation*}
 \sum_{h \leq k \leq l} \alpha_k = \epsilon_h , \ \sum_{i \leq k < j} \alpha_k = \epsilon_i - \epsilon_j \
 \mathrm{and} \  \sum_{i \leq k < j} \alpha_k + 2 \sum_{j \leq k \leq l} \alpha_k = \epsilon_i + \epsilon_j,
 \end{equation*}
 where  $1 \leq h \leq l$ and $1 \leq i < j \leq l$. Note that we obtain all 
 negative roots of $\Phi$ in terms of the $\alpha_i$ 
 by interchanging all signs in the expressions for the positive roots.
 
Let us now renumber the rows and columns of the matrices 
$E_{ij} \in C^{2l+1 \times 2l+1}$ into $(1, \dots,l,0,-l, \dots,-1)$. 
 Then a basis of a Cartan decomposition for $\mathfrak{so}_{2l+1}$ is given by 
 the following matrices (see \cite[VIII, \S 13.2]{BourGroupesetAlgebresVII-VIII}):
The diagonal matrices $H_i := E_{i,i} - E_{-i,-i}$ with $1 \leq i \leq l$ 
generate a Cartan subalgebra, which we denote by $\mathfrak{h}$,  
and for a root $\alpha \in \Phi$ the root space $(\mathfrak{so}_{2l+1})_{\alpha}$ 
with respect to $\mathfrak{h}$ is generated correspondingly
by one of the following matrices:
\begin{align*}
X_{\epsilon_h} \quad \, &=  2 E_{h,0} + E_{0,-h}\quad&  X_{-\epsilon_h} \quad \, & =  -2 E_{-h,0} - E_{0,h}  \\
X_{\epsilon_i - \epsilon_j}& =  E_{i,j} - E_{-j,-i}&  X_{\epsilon_j-\epsilon_i} \ \ & =  - E_{j,i} + E_{-i,-j}\\
X_{\epsilon_i + \epsilon_j} &= E_{i,-j} - E_{j,-i}&  X_{-\epsilon_i-\epsilon_j}  &  =  - E_{-j,i} + E_{-i,j} 
\end{align*}
where $1 \leq h \leq l$ and $1 \leq i < j \leq l$.
\begin{lemma}\label{lemma_so2l+1_roots}
 Let $\gamma_1 = \alpha_l$ and $\gamma_{i+1}=\alpha_{l-i} + 2 \alpha_{l+1-i} + \dots +2 \alpha_l$ 
for $1 \leq i \leq l-1$. The $l$ roots $\gamma_1, \dots, \gamma_l$ are complementary roots of $\Phi$.
\end{lemma}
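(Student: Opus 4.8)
The plan is to follow exactly the scheme used in the proofs of Lemma~\ref{lemma_sl_roots} and Lemma~\ref{lemma_Sp2l_roots}. First I would record that, by \cite{Bourb}, IV, Planche II (or by Shapiro's method applied to the height statistics of $\Phi^+$), the exponents of a Lie algebra of type $B_l$ are $m_i = 2i-1$ for $1 \leq i \leq l$. A direct count of coefficients then gives $\mathrm{ht}(\gamma_1) = 1$ and $\mathrm{ht}(\gamma_{i+1}) = 1 + 2i = 2i+1$, so that $\mathrm{ht}(\gamma_i) = m_i$ for all $i$; this is the first condition of Lemma~\ref{lemma_direct_sum2}. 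Rewriting the $\gamma_i$ in the $\epsilon$-coordinates, one finds $\gamma_1 = \epsilon_l$ and $\gamma_{i+1} = \epsilon_{l-i} + \epsilon_{l+1-i}$, so in particular $\gamma_l = \epsilon_1 + \epsilon_2$ is the highest root. It then remains to verify the second condition, namely that for each exponent $m_i$ the set $\{ X_{\gamma_i}\} \cup \{ W_h \mid W_h \in \mathfrak{so}_{2l+1}^{(m_i)}\}$ is a basis of $\mathfrak{so}_{2l+1}^{(m_i)}$, which I would prove by induction on $l$.

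For the structural input I would analyze $\Phi^+ \setminus \Phi'^+$, where $\Phi' = \{ \alpha \in \Phi \mid n_{\alpha_1}(\alpha) = 0\}$ is the subsystem of type $B_{l-1}$ generated by $\alpha_2, \dots, \alpha_l$. Listing the roots with $n_{\alpha_1}(\alpha) \neq 0$, these are precisely $\epsilon_1 - \epsilon_j$ ($2 \leq j \leq l$, heights $1, \dots, l-1$), $\epsilon_1$ (height $l$), and $\epsilon_1 + \epsilon_j$ ($2 \leq j \leq l$, heights $l+1, \dots, 2l-1$). Hence every height $1, 2, \dots, 2l-1$ occurs exactly once in $\Phi^+ \setminus \Phi'^+$, and consecutive members of this list differ by a single simple root, giving a ladder
\[
\epsilon_1 - \epsilon_2 \to \dots \to \epsilon_1 - \epsilon_l \to \epsilon_1 \to \epsilon_1 + \epsilon_l \to \dots \to \epsilon_1 + \epsilon_2
\]
in which each arrow adds some $\alpha_j \in \Delta$. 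This is the exact analogue of the corresponding step for $C_l$. The base case $l=2$ is then checked by hand: there $W_3 = [A_0^-, X_{\alpha_1 + \alpha_2}]$ has a non-zero $X_{\alpha_1}$-component, so $\{ W_3, X_{\alpha_2}\}$ is a basis of $\mathfrak{so}_5^{(1)}$, while $\{ X_{\gamma_2}\}$ with $\gamma_2 = \alpha_1 + 2\alpha_2$ spans the one-dimensional $\mathfrak{so}_5^{(3)}$.

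For the induction step I would fix an exponent $m = m_i$ with $i \leq l-1$ and decompose $\mathfrak{so}_{2l+1}^{(m)} = \mathfrak{so}_{2l-1}^{(m)} \oplus (\mathfrak{so}_{2l+1})_\alpha$, where $\alpha$ is the unique root of height $m$ in $\Phi^+ \setminus \Phi'^+$ and $\mathfrak{so}_{2l-1}$ is the $B_{l-1}$ subalgebra. The key observation is that for $\beta \in \Phi'^+$ one has $[X_{-\alpha_1}, X_\beta] = 0$, since $\beta - \alpha_1$ would be a root with $n_{\alpha_1} = -1$ and positive height, which is impossible; consequently $[A_0^-, X_\beta] = [A_0^- - X_{-\alpha_1}, X_\beta]$ is computed entirely inside $\mathfrak{so}_{2l-1}$, so the vectors $W_h$ coming from $\beta \in \Phi'^+$ coincide with the $B_{l-1}$ vectors. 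By the induction hypothesis $\{ X_{\gamma_i}\} \cup \{ W_h \in \mathfrak{so}_{2l-1}^{(m)}\}$ is a basis of $\mathfrak{so}_{2l-1}^{(m)}$ (note $\gamma_i \in \Phi'$ for $i \leq l-1$, and the complementary roots of $B_{l-1}$ restrict to exactly $\gamma_1, \dots, \gamma_{l-1}$). The only remaining $W$-vector in $\mathfrak{so}_{2l+1}^{(m)}$ is $W^\ast = [A_0^-, X_\beta]$ with $\beta$ the height-$(m+1)$ root of $\Phi^+ \setminus \Phi'^+$; by the ladder $\beta = \alpha + \alpha_j$, so $[X_{-\alpha_j}, X_\beta]$ is a non-zero multiple of $X_\alpha$ and $W^\ast$ has a non-zero $(\mathfrak{so}_{2l+1})_\alpha$-component. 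Adjoining $W^\ast$ thus extends the basis to all of $\mathfrak{so}_{2l+1}^{(m)}$, proving the claim for $m_i$; finally $\gamma_l = \epsilon_1 + \epsilon_2$ is the highest root, so $\{ X_{\gamma_l}\}$ trivially spans the one-dimensional $\mathfrak{so}_{2l+1}^{(2l-1)}$.

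The main obstacle I expect is the bookkeeping that makes the induction close: one must confirm that replacing $A_0^-$ by its $B_{l-1}$ analogue $A_0^- - X_{-\alpha_1}$ does not disturb the $W_h$ attached to $\beta \in \Phi'^+$ (this is the content of $[X_{-\alpha_1}, X_\beta] = 0$ above) and that the $\gamma_i$ for $B_l$ restrict correctly to the complementary roots of $B_{l-1}$. Once the ladder structure and this compatibility are established, the decomposition from Lemma~\ref{lemma_direct_sum1} forces $\{ X_{\gamma_i}\} \cup \{ W_h \in \mathfrak{so}_{2l+1}^{(m_i)}\}$ to be a basis for every $i$, so the $\gamma_i$ are complementary roots and Lemma~\ref{Prop_tans_lemma} applies to the root space complement $\mathfrak{r} = \sum_{i=1}^l (\mathfrak{so}_{2l+1})_{\gamma_i}$.
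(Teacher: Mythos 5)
Your proposal follows essentially the same route as the paper: identify the exponents $m_i=2i-1$, check $\mathrm{ht}(\gamma_i)=m_i$, and then induct on $l$ using the type-$B_{l-1}$ subsystem $\Phi'=\{\alpha\mid n_{\alpha_1}(\alpha)=0\}$, the fact that each height $1,\dots,2l-1$ occurs exactly once in $\Phi^+\setminus\Phi'^+$, and the observation that the unique new vector $W=[A_0^-,X_{\beta_{m_i+1}}]$ is the only one with a non-zero component in the new root space of height $m_i$. The extra details you supply (the $\epsilon$-coordinate forms of the $\gamma_i$ and the explicit check that $[X_{-\alpha_1},X_\beta]=0$ for $\beta\in\Phi'^+$, so the $W_h$ attached to $\Phi'^+$ agree with their $B_{l-1}$ counterparts) are correct and only make explicit what the paper leaves implicit.
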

\begin{proof}
From \cite[VI, Planche II]{BourGroupesetAlgebresIV-VI} we obtain that the exponents of $\mathfrak{so}_{2l+1}$ are 
\begin{equation*}
m_1 = 1, \ m_2 = 3, \  m_3 = 5, \dots, \ m_l = 2l-1 
\end{equation*}
and it is easy to check that the roots $\gamma_i$ satisfy $\mathrm{ht}(\gamma_i)=m_i$. 
We make now the following inductive assumption on $l\geq 2$: For all $1 \leq i \leq l$ the set
\begin{equation*}
\{ W_h \mid W_h \in \mathfrak{so}_{2l+1}^{(m_i)} \} \cup \{ X_{\gamma_i} \}
\end{equation*}
is a basis of $\mathfrak{so}_{2l+1}^{(m_i)}$.

In case $l=2$, that is $\Phi$ is of type $B_2$, the positive roots are $\alpha_1$, 
$\alpha_2$, $\alpha_1+\alpha_2$ and $\alpha_1 +2 \alpha_2$ and the exponents are 
$m_1=1$ and $m_2=3$. The Lie bracket
of $X_{-\alpha_2}$ with $X_{\alpha_1+\alpha_2}$ is non-zero and lies in 
$(\mathfrak{so}_{5})_{\alpha_1}$ and so $W_3 = [A_0^-, X_{\alpha_1 + \alpha_2}]$ 
has a non-zero component in the root space $(\mathfrak{so}_{5})_{\alpha_1}$.
 Hence the vectors $W_3$ and $X_{\alpha_2}$ form a basis of $\mathfrak{so}_{5}^{(1)}$. Finally the vector
 $X_{\alpha_1 + 2\alpha_2}$ forms a basis of $\mathfrak{so}_{5}^{(3)}$, since $\alpha_1 + 2\alpha_2$ is the unique root of maximal height three.
 
Let $l>2$ and assume the claim is true for $l-1$. We consider the subset 
\[
\Phi'= \{ \alpha \in \Phi \mid n_{\alpha_1}(\alpha) = 0 \} \subset \Phi
\]
The Dynkin diagram shows that $\Phi'$ is a subsystem of type $B_{l-1}$ which is 
generated by the simple roots $\alpha_i$ with $i=2,\dots,l$. We identify in the 
following  $\mathfrak{so}_{2l-1}$ inside $\mathfrak{so}_{2l+1}$
with the Lie subalgebra generated by $X_{\pm \alpha_i} \in \mathfrak{so}_{2l+1}$ 
for $i=2,\dots,l$. From the 
induction assumption we obtain for $i=1,\dots,l-1$ that $X_{\gamma_i}$ and the vectors 
\[
\{ W_h \mid W_h \in \mathfrak{so}_{2l-1}^{(m_i)} \} \subset \{ W_h \mid W_h \in \mathfrak{so}_{2l+1}^{(m_i)} \} 
\]
form a basis of $ \mathfrak{so}_{2l+1}^{(m_i)}$.
Further it follows from the shape of the roots in $\Phi^+$ that $\Phi^+ \setminus \Phi'^+$ consists of the roots
 $\alpha_1+ \dots + \alpha_k$ with $k=1, \dots , l$ and of the roots 
\begin{equation*} 
 \alpha_1 + \dots +   2\alpha_k + \dots + 2 \alpha_l  
\end{equation*}
with $k=l,\dots,2$. For $k=1,\dots, 2l-1$, where $1,\dots,2l-1$ are all possible heights of roots in $\Phi^+$,
there exist a unique root in $\Phi^+ \setminus \Phi'^+$ of height $k$. We denote this root by $\beta_k$.
Then for $k=1,\dots,2l-2$ the root $\beta_{k+1}$ is the unique root in $\Phi$ which fulfills the equation
$\beta_{k+1}-\alpha_i=\beta_k$ for some simple root $\alpha_i$, since the coefficient of $\alpha_1$ in 
the representation of $\beta_k$ has to be non-zero.
This implies for $m_i \leq 2l-2$, that is for $i=1,\dots,l-1$, that 
\[
W:= [A_0^-, X_{\beta_{m_i+1}}] 
\]
is unique among the vectors $ W_h \in \mathfrak{so}_{2l+1}^{(m_i)}$ which have a non-zero component in the 
root space $(\mathfrak{so}_{2l+1})_{\beta_{m_i}}$. We conclude that $X_{\gamma_i}$ and 
the vectors of
\[
\{ W_h \mid W_h \in \mathfrak{so}_{2l+1}^{(m_i)} \} = \{ W \} \cup \{ W_h \mid W_h \in \mathfrak{so}_{2l-1}^{(m_i)} \}
\]
form a basis of $\mathfrak{so}_{2l+1}^{(m_i)}$ for $i=1,\dots,l-1$. 
For $m_l=2l-1$ the vector $X_{\gamma_l}$ forms a basis of $\mathfrak{so}_{2l+1}^{(m_l)}$, since 
$\gamma_l$ is the unique root of maximal height $2l-1$.  
 \end{proof}
\begin{lemma}\label{Lemma_SO_2l+1_equation}
 The matrix parameter differential equation $\partial(\textit{\textbf{y}})=A_{\mathrm{SO}_{2l+1}}(\textit{\textbf{t}})\textit{\textbf{y}}$ 
 over $F_1$ is equivalent to the linear parameter differential equation
\begin{equation*}
L(y,\textbf{t})= y^{(2l+1)} - \sum\nolimits_{i=1}^{l} (-1)^{i-1} ((t_i y^{(l+1-i)})^{(l-i)}+(t_i y^{(l-i)})^{(l+1-i)})= 0
\end{equation*}
where the matrix $A_{\mathrm{SO}_{2l+1}}(\textit{\textbf{t}})$ is defined by 
 \begin{equation*}
A_{\mathrm{SO}_{2l+1}}(\textit{\textbf{t}}):= A_0^+ + \frac{1}{2} \sum_{i=1}^l t_i X_{-\gamma_i}. 
\end{equation*}
\end{lemma}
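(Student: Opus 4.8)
The plan is to mirror the proof of Lemma~\ref{Lemma_sp_2l_equation}: write out $A_{\mathrm{SO}_{2l+1}}(\boldsymbol{t})$ explicitly in the renumbered basis $(1,\dots,l,0,-l,\dots,-1)$, expand the matrix equation into a first-order system, and then eliminate all but one component to reach the scalar equation. Using the root vectors listed above together with $\gamma_1 = \epsilon_l$ and $\gamma_{i+1} = \epsilon_{l-i}+\epsilon_{l-i+1}$ from Lemma~\ref{lemma_so2l+1_roots}, the defining matrix becomes
\begin{align*}
A_{\mathrm{SO}_{2l+1}}(\boldsymbol{t}) = {}& \sum_{i=1}^{l-1}\bigl(E_{i,i+1} - E_{-(i+1),-i}\bigr) + 2E_{l,0} + E_{0,-l} \\
& - t_1 E_{-l,0} - \tfrac12 t_1 E_{0,l} + \sum_{j=1}^{l-1}\tfrac12 t_{l-j+1}\bigl(E_{-j,j+1} - E_{-(j+1),j}\bigr).
\end{align*}
Writing $\boldsymbol{y} = (y_1,\dots,y_l,y_0,y_{-l},\dots,y_{-1})^{tr}$ this expands into the system
\begin{align*}
y_i' &= y_{i+1}\quad (1\le i\le l-1), \qquad y_l' = 2y_0, \qquad y_0' = y_{-l} - \tfrac12 t_1 y_l, \\
y_{-l}' &= -y_{-(l-1)} - t_1 y_0 - \tfrac12 t_2 y_{l-1}, \qquad y_{-1}' = \tfrac12 t_l y_2, \\
y_{-m}' &= -y_{-(m-1)} - \tfrac12 t_{l+2-m} y_{m-1} + \tfrac12 t_{l-m+1} y_{m+1}\quad (2\le m\le l-1).
\end{align*}

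Next I would eliminate variables starting from the top chain, which gives $y_j = y_1^{(j-1)}$ for $1\le j\le l$ and $y_0 = \tfrac12 y_1^{(l)}$, and hence $y_{-l} = \tfrac12 y_1^{(l+1)} + \tfrac12 t_1 y_1^{(l-1)}$ from the equation for $y_0'$. The remaining equations form a descending chain in which each $y_{-(m-1)}$ is expressed through $-y_{-m}'$ and two cross-terms $\pm\tfrac12 t_{\bullet}\,y_1^{(\bullet)}$ coupling back into the top block. As in the symplectic case I would prove, by induction descending the chain from $m=l$, a closed formula for $y_{-(m-1)}$ consisting of $-y_{-m}'$ together with a sum of terms of the form $(t_i y_1^{(a)})^{(b)}$ with $a+b$ constant and alternating signs; the final relation $y_{-1}' = \tfrac12 t_l y_2 = \tfrac12 t_l y_1'$ then raises the total order to $2l+1$ and, after clearing the common factor $\tfrac12$, yields $L(y,\boldsymbol{t})=0$ with $y=y_1$.

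The main obstacle is the bookkeeping around the middle index $0$. Each root $\gamma_{i+1} = \epsilon_{l-i}+\epsilon_{l-i+1}$ feeds both mirrored summands $(t_i y^{(l+1-i)})^{(l-i)}$ and $(t_i y^{(l-i)})^{(l+1-i)}$ of the stated operator, and keeping track of which cross-term enters a differentiated slot and which an undifferentiated one, with the correct alternating signs $(-1)^{i-1}$, is precisely where care is required. The asymmetry introduced by the coefficients $2$ and $\tfrac12$ along $y_l \to y_0 \to y_{-l}$ is exactly what makes the resulting operator anti-self-adjoint rather than self-adjoint, and I expect the sign of the $t_1$-terms --- equivalently, the normalization of $X_{\pm\epsilon_l}$ --- to be the most delicate point to reconcile with the stated equation. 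The formal-adjoint identity $L^\ast = -L$ provides a clean consistency check on the final operator.
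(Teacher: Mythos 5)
Your plan is the same as the paper's: expand $\partial(\boldsymbol{y})=A_{\mathrm{SO}_{2l+1}}(\boldsymbol{t})\boldsymbol{y}$ into a first-order system in the renumbered coordinates and eliminate along the chain by repeated differentiation and substitution, organized as an induction. Your explicit matrix and the resulting system are the faithful expansion of $A_0^+ + \tfrac12\sum_i t_i X_{-\gamma_i}$ with the listed root vectors, and your identification of $\gamma_1=\epsilon_l$, $\gamma_{i+1}=\epsilon_{l-i}+\epsilon_{l-i+1}$ is correct. Two points separate your write-up from a complete proof. First, the substance of the lemma is the closed inductive formula, which you only describe qualitatively; the paper states and verifies it precisely, namely that the subsystem up to level $k$ gives $y_{l-k}^{(2(k+1)+1)} = \sum_{i=1}^{k+1}(-1)^{i-1}\bigl((t_i y_{l-k}^{(k+1-i)})^{(k+2-i)}+(t_i y_{l-k}^{(k+2-i)})^{(k+1-i)}\bigr)+(-1)^{k+1}\bigl(t_{k+2}y_{l-k-1}+2y_{-l+k+1}\bigr)$, with the base case $k=1$ and the final step using $y_{-1}'=\tfrac12 t_l y_2$ done separately. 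You would need to write this down and check it; your "terms $(t_iy_1^{(a)})^{(b)}$ with $a+b$ constant and alternating signs" is the right shape but is not yet a proof.

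Second, the sign issue you flag is real and worth resolving explicitly: with $X_{-\epsilon_l}=-2E_{-l,0}-E_{0,l}$ as listed, your system has $y_0'=y_{-l}-\tfrac12 t_1y_l$ and $y_{-l}'=-y_{-l+1}-t_1y_0-\tfrac12 t_2 y_{l-1}$, and carrying the elimination through then produces the stated operator with $t_1$ replaced by $-t_1$ (check $l=2$: one gets $y^{(5)}+(t_1y')''+(t_1y'')'+t_2y'+(t_2y)'=0$ rather than the stated signs on the $t_1$-terms). The paper's own displayed system uses $y_0'=\tfrac12 t_1y_l+y_{-l}$ and $y_{-l}'=\cdots+t_1y_0-\cdots$, i.e.\ effectively the normalization $X_{-\epsilon_l}=2E_{-l,0}+E_{0,l}$, and it is that choice which reproduces the lemma verbatim. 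The discrepancy is a harmless rescaling of one root vector (equivalently the substitution $t_1\mapsto -t_1$, a differential automorphism of $F_1$), so it does not affect the Galois-theoretic conclusion, but as written your computation lands on a sign-variant of the stated equation and you should either adjust the normalization of $X_{\pm\epsilon_l}$ or record the substitution.
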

\begin{proof} 
With respect to the explicit basis of $\mathfrak{so}_{2l+1}$ the matrix 
differential equation $\partial(\textit{\textbf{y}})=A_{\mathrm{SO}_{2l+1}}(\textit{\textbf{t}})\textit{\textbf{y}}$ is
equivalent to the following system of equations:
\begin{align*}
& y_{k}'  =y_{k+1}& (1 \leq k \leq l-1)  \\
& y_l ' = 2 y_0 , \quad
 y_0 ' = \frac{1}{2} t_1 y_l + y_{-l}, \quad
 y_{-l}' = - \frac{1}{2} t_2 y_{l-1} + t_1 y_0 - y_{-l+1}&\\
& y_{-l+k}' = -\frac{1}{2} t_{k+2} y_{l-1-k} + \frac{1}{2} t_{k+1} y_{l+1-k} - y_{-l+1+k} &  (1 \leq k \leq l-2) \\
& y_{-1}'  = \frac{1}{2} t_l y_2 &
\end{align*}
We consider first the case $l \geq 3$ and prove by induction on $1 \leq k \leq l-2$ that the subsystem defined by the equations 
\begin{align}
& y_{l-j}'  =y_{l-j+1}& (1 \leq j \leq k)  \\
& y_l ' = 2 y_0 , \ 
 y_0 ' = \frac{1}{2} t_1 y_l + y_{-l}, \ 
 y_{-l}' = - \frac{1}{2} t_2 y_{l-1} + t_1 y_0 - y_{-l+1}& \label{B_l_line_1} \\
& y_{-l+j}' = -\frac{1}{2} t_{j+2} y_{l-1-j} + \frac{1}{2} t_{j+1} y_{l+1-j} - y_{-l+1+j} &  (1 \leq j \leq k)
\label{B_l_eqn_2} 
\end{align}
is equivalent to the single differential equation
\begin{eqnarray*}
 y_{l-k}^{(2(k+1)+1)} &=& \sum_{i=1}^{k+1} (-1)^{i-1} \left( (t_i y_{l-k}^{(k+1-i)})^{(k+2-i)} + (t_i y_{l-k}^{(k+2-i)})^{(k+1-i)}  \right)\\
 &&+ (-1)^{k+1} \left( t_{k+2} y_{l-k-1} + 2 y_{-l+k+1} \right).
\end{eqnarray*}
Letting $k=1$, the equations $y_{l-1}'=y_l$ and $y_l'=2 y_0$ imply $y_{l-1}''= 2y_0$. We differentiate
again and we plug in $\frac{1}{2} t_1 y_{l-1}' + y_{-l}$ for $y_0'$, where we made the substitution $y_l =y_{l-1}'$.  
We obtain $y_{l-1}^{(3)}=t_1 y_{l-1}' + 2y_{-l}$. Differentiating again and replacing $y_{-l}'$ by the right hand side of the last equation
in (\ref{B_l_line_1}) yields
 \begin{equation}
  y_{l-1}^{(4)} = (t_1 y_{l-1}')' - t_2 y_{l-1} + t_1 y_{l-1}'' - 2 y_{-l+1}, \label{B_l_eqn_3}
 \end{equation}
 where we substituted $y_0$ by $\frac{1}{2}y_{l-1}''$.
Differentiating Equation~(\ref{B_l_eqn_3}), replacing $ y_{-l+1}'$ by the right hand side of Equation~(\ref{B_l_eqn_2}) 
and plugging in $y_{l-1}'$ for $y_l$ yields
\begin{equation*}
  y_{l-1}^{(5)} = (t_1 y_{l-1}')'' + (t_1 y_{l-1}'')' - (t_2 y_{l-1}') -(t_2 y_{l-1})'  + t_3 y_{l-2} + 2 y_{-l+2}.
\end{equation*}
We note that in case $l=2$, we have to omit Equation~(\ref{B_l_eqn_2}) and consider instead $y_{-1}' = \frac{1}{2} t_2 y_2$.  
More precisely, differentiating Equation~(\ref{B_l_eqn_3}) and making the substitution implied by 
 $y_{-1}' = \frac{1}{2} t_2 y_2$ proves the lemma for $l=2$.\\
We come back to the case $l \geq 3$ and assume now $k>1$. Then the induction assumption yields
for the subsystem defined by the integer $k-1$ the differential equation
\begin{eqnarray*}
 y_{l-k+1}^{(2k+1)} &=& \sum_{i=1}^{k} (-1)^{i-1} \left( (t_i y_{l-k+1}^{(k-i)})^{(k+1-i)} + (t_i y_{l-k+1}^{(k+1-i)})^{(k-i)}  \right) \\
 &&+ (-1)^{k} \left( t_{k+1} y_{l-k} + 2 y_{-l+k} \right).
\end{eqnarray*}
The subsystem defined by the integer $k$ contains the additional differential equation $y_{l-k}'= y_{l-k+1}$. We replace now $ y_{l-k+1}$ by $y_{l-k}'$ and obtain
\begin{eqnarray*}
 y_{l-k}^{(2k+2)} &=& \sum_{i=1}^{k} (-1)^{i-1} \left( (t_i y_{l-k}^{(k+1-i)})^{(k+1-i)} + (t_i y_{l-k}^{(k+2-i)})^{(k-i)}  \right)\\
 &&+ (-1)^{k} \left( t_{k+1} y_{l-k} + 2 y_{-l+k} \right).
\end{eqnarray*}
Differentiating and replacing $ y_{-l+k}'$ by the right hand side of Equation~(\ref{B_l_eqn_2}) 
with $j=k$ proves the induction assumption.\\
Finally, if we consider the subsystem of the full system of equations defined by the integer $k=l-2$, we obtain the differential equation
\begin{eqnarray}\label{B_l_eqn_4}
 y_{2}^{(2l-1)} &=& \sum_{i=1}^{l-1} (-1)^{i-1} \left( (t_i y_{2}^{(l-1-i)})^{(l-i)} + (t_i y_{2}^{(l-i)})^{(l-1-i)}  \right)\\
 &&+ (-1)^{l-1} \left( t_{l} y_{1} + 2 y_{-1} \right). \nonumber
\end{eqnarray}
The first equation of the full system implies the substitution $y_2=y_1 '$ in Equation~(\ref{B_l_eqn_4}). We use this  
substitution also for the last equation of the full system and if we  differentiate and replace $y_{-1}'$ by the right hand side of this equation, 
it follows that the full system of equations is equivalent to the linear parameter differential equation of the lemma.
 \end{proof}
 \begin{theorem}
The linear parameter differential equation 
\begin{equation*}
L(y,\textbf{t})= y^{(2l+1)} - \sum\nolimits_{i=1}^{l} (-1)^{i-1} ((t_i y^{(l+1-i)})^{(l-i)}+(t_i y^{(l-i)})^{(l+1-i)})= 0
\end{equation*}
has differential Galois group $\mathrm{SO}_{2l+1}(C)$ over $F_1$.
\end{theorem}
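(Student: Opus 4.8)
The plan is to run the same three-step argument as in the proof of Theorem~\ref{theorem for SL_n}, now fed with the $B_l$-data provided by Lemmas~\ref{lemma_so2l+1_roots} and~\ref{Lemma_SO_2l+1_equation}. By Lemma~\ref{Lemma_SO_2l+1_equation} the displayed scalar equation is equivalent to the matrix parameter equation $\partial(\textbf{y})=A_{\mathrm{SO}_{2l+1}}(\textbf{t})\textbf{y}$ with
\[
A_{\mathrm{SO}_{2l+1}}(\textbf{t}) = A_0^+ + \tfrac{1}{2}\sum_{i=1}^l t_i X_{-\gamma_i},
\]
and equivalent equations have the same differential Galois group; so it suffices to compute the group $H(C)$ of this matrix equation over $F_1$. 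First I would settle the upper bound: since $A_0^+$ and each $X_{-\gamma_i}$ lie in $\mathfrak{so}_{2l+1}$, we have $A_{\mathrm{SO}_{2l+1}}(\textbf{t})\in\mathfrak{so}_{2l+1}(F_1)$, so Proposition~\ref{Prop_upper} gives $H(C)\subseteq \mathrm{SO}_{2l+1}(C)$.

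For the reverse inclusion I would exhibit a specialization realizing the full group over $F_2$ and then invoke the Specialization Bound. By Proposition~\ref{mod_Mitch_Singer} there is a matrix $A=A_0+zA_1$ with $A_1\in\mathfrak{h}(C)$ whose equation over $F_2$ has differential Galois group $\mathrm{SO}_{2l+1}(C)$; writing $A_0=A_0^++A_0^-$ and noting $A_0^-\in\mathfrak{u}^-$, $zA_1\in\mathfrak{h}(F_2)$ shows $A\in A_0^+ + \mathfrak{b}^-(F_2)$. Applying the Transformation Lemma~\ref{Prop_tans_lemma} with the roles of the positive and negative roots interchanged, and using that the $\gamma_i$ of Lemma~\ref{lemma_so2l+1_roots} are complementary roots so that $\mathfrak{r}=\sum_{i=1}^l\mathfrak{g}_{-\gamma_i}$ is a root space complement of $\mathrm{ad}(A_0^+)(\mathfrak{u}^-)$, there is $u\in U^-$ with $\mathrm{Ad}(u)(A)+l\delta(u)\in A_0^+ + \mathfrak{r}(F_2)$. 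Gauge equivalence preserves the differential Galois group, so the resulting matrix $A_0^+ + \sum_i c_i X_{-\gamma_i}$ again has group $\mathrm{SO}_{2l+1}(C)$, and it equals the value $A_{\mathrm{SO}_{2l+1}}(\sigma(\textbf{t}))$ of a surjective $R_1$-specialization $\sigma$ with $\sigma(t_i)=2c_i$ (the factor $\tfrac12$ merely rescaling the parameters). Theorem~\ref{specialization_bound} then yields $\mathrm{SO}_{2l+1}(C)\subseteq H(C)$, and combining with the upper bound gives $H(C)=\mathrm{SO}_{2l+1}(C)$; transporting this through the equivalence of Lemma~\ref{Lemma_SO_2l+1_equation} finishes the proof.

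Since the argument is structurally identical to the $A_l$ and $C_l$ cases, there is no genuinely new obstacle; the only point that deserves care is confirming that the gauge-transformed matrix really is a legitimate value of a \emph{surjective} $R_1$-specialization. This is where I would be most careful: the Transformation Lemma is applied to the explicitly affine-linear matrix $A_0+zA_1$, and the root-group parameters $\rho$ cancelling the unwanted coefficients are produced recursively (as $\rho=-w_k$) from polynomial data, with $l\delta$ contributing only the polynomial term $\partial(\rho)X_\beta$; hence the coefficients $c_i$ remain polynomials in $z$, so $\sigma(t_i)=2c_i\in C[z]$ is a valid specialization into a suitable finitely generated localization $R_2$ of $C[z]$, and the nonconstant term $zA_1$ guarantees surjectivity. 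The substantive content of the theorem thus lies entirely in the already-established Lemmas~\ref{lemma_so2l+1_roots} and~\ref{Lemma_SO_2l+1_equation} together with the Transformation Lemma and the Specialization Bound.
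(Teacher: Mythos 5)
Your proposal is correct and follows exactly the route the paper takes: upper bound via Proposition~\ref{Prop_upper}, lower bound by specializing to a Mitschi--Singer equation of Proposition~\ref{mod_Mitch_Singer} transformed into the plane $A_0^+ + \mathfrak{r}$ via Lemma~\ref{Prop_tans_lemma} (with positive and negative roots interchanged) and then applying Theorem~\ref{specialization_bound}, and finally transporting the result through the equivalence of Lemma~\ref{Lemma_SO_2l+1_equation}. Your added remarks on why the specialization is surjective and lands in $C[z]$ are a welcome elaboration of a point the paper leaves implicit.
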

 \begin{proof}
 The proof works as the proof of Theorem~\ref{theorem for SL_n}. Let 
 $A_{\mathrm{SO}_{2l+1}}(\boldsymbol{t})$ be as in Lemma~\ref{Lemma_SO_2l+1_equation}.
 To prove that the equation 
 $\partial(\boldsymbol{y})=A_{\mathrm{SO}_{2l+1}}(\boldsymbol{t})\boldsymbol{y}$ 
 specializes to a differential equation over $F_2$ with group $\mathrm{SO}_{2l+1}(C)$ 
 one uses Proposition~\ref{mod_Mitch_Singer}, Lemma~\ref{lemma_so2l+1_roots} 
 and Lemma~\ref{Prop_tans_lemma} (interchange the role of the positive and negative roots). 
 Then Theorem~\ref{specialization_bound} and Proposition~\ref{Prop_upper} imply that 
 the differential Galois group of 
 $\partial(\boldsymbol{y})=A_{\mathrm{SO}_{2l+1}}(\boldsymbol{t})\boldsymbol{y}$ is 
 $\mathrm{SO}_{2l+1}(C)$. Finally one applies Lemma~\ref{Lemma_SO_2l+1_equation}.
\end{proof}
\section{The equation for $\mathrm{SO}_{2l}(C)$}\label{chap10}
We denote the standard orthonormal basis of $\mathbb{R}^l$ with respect to the standard inner product 
$( \cdot , \cdot)$ by $\epsilon_1,\dots , \epsilon_l$. Let us assume for the rest of this section that $l \geq 3$.
Then by \cite[VI, Planche IV]{BourGroupesetAlgebresIV-VI} the 
vectors $\pm \epsilon_i \pm \epsilon_j$ with $1\leq i < j \leq l$ form a root system $\Phi$ of type $D_l$ and    
a basis $\Delta$ of $\Phi$ is given by the $l$ elements $\alpha_1= \epsilon_1 - \epsilon_2$, 
$\alpha_2 = \epsilon_2- \epsilon_3, \ \dots, \alpha_{l-1}=\epsilon_{l-1} -\epsilon_l$, $\alpha_l = \epsilon_{l-1} + \epsilon_l$. 
Then the positive roots can be expressed in terms of these basis elements in the following way:
\begin{align*}
 & \epsilon_i- \epsilon_j = \sum_{i \leq k < j}\alpha_k  &&  (1 \leq i < j \leq l), \\
 & \epsilon_i + \epsilon_j  = \sum_{i \leq k <j} \alpha_k + 2 \sum_{j \leq k < l-1} \alpha_k + \alpha_{l-1} + \alpha_l
 && ( 1 \leq i < j < l), \\ 
 & \epsilon_i + \epsilon_l = \sum_{i \leq k \leq l-2} \alpha_k + \alpha_l && (1\leq i < l).
\end{align*}
 The negative roots are obtained by simply interchanging all signs in the above expressions. 
 We take a Cartan decomposition of $\mathfrak{so}_{2l}$ from 
 \cite[VIII, \S 13.4]{BourGroupesetAlgebresVII-VIII}. Renumbering the rows 
 and columns of the matrices $E_{ij} \in C^{2l \times 2l}$ into 
 $1,\dots , l,-l, \dots ,-1$, 
 \cite[VIII, \S 13.4]{BourGroupesetAlgebresVII-VIII} yields that the matrices $H_i = E_{i,i} - E_{-i,-i}$ with $1\leq i \leq l$ generate 
 a Cartan subalgebra $\mathfrak{h}$  of $\mathfrak{so}_{2l}$ and, for the roots 
 $\alpha= \pm \epsilon_i \pm \epsilon_j$ with $1\leq i < j \leq l$ the root spaces $(\mathfrak{so}_{2l})_{\alpha}$ 
 with respect to $\mathfrak{h}$ are generated by the following matrices:
 \begin{align*}
&  X_{\epsilon_i - \epsilon_j} = E_{i,j} - E_{-j,-i}, && X_{-\epsilon_i + \epsilon_j} = - E_{j,i} + E_{-i,-j}, \ \\
&  X_{\epsilon_i + \epsilon_j} = E_{i,-j} - E_{j,-i}, && X_{-\epsilon_i - \epsilon_j} = - E_{-j,i} + E_{-i,j}.
 \end{align*}
Let now $\gamma_1 := \alpha_l$, $\gamma_2 := \alpha_{l-2} + \alpha_{l-1} + \alpha_l$ and for $3 \leq i \leq l-1$ let 
\begin{equation*}
\gamma_i := \alpha_{l-i} + 2 \sum_{j=l+1-i}^{l-2} \alpha_j + \alpha_{l-1} + \alpha_l.
\end{equation*}
 Finally, we define $\bar{\gamma}:=\alpha_1 + \dots + \alpha_{l-2} + \alpha_l$. 
 The following lemma shows that for these $l$ roots 
 Lemma~\ref{lemma_direct_sum2} and Lemma~\ref{Prop_tans_lemma} hold.
 \begin{lemma}\label{lemma_so2l_roots}
The $l$ roots $\gamma_1, \dots, \gamma_{l-1}$ and $\bar{\gamma}$ are complementary roots of $\Phi$. 
\end{lemma}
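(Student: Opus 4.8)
The plan is to follow the same strategy as in Lemmas~\ref{lemma_sl_roots}, \ref{lemma_Sp2l_roots} and~\ref{lemma_so2l+1_roots}: first check that the heights of the prescribed roots agree with the exponents of $\mathfrak{so}_{2l}$, and then verify by induction on $l$ that for every exponent $m$ the root vectors of height $m$ among $\{X_{\gamma_1},\dots,X_{\gamma_{l-1}},X_{\bar\gamma}\}$, together with the vectors $\{W_h \mid W_h\in\mathfrak{so}_{2l}^{(m)}\}$, form a basis of $\mathfrak{so}_{2l}^{(m)}$; this is exactly the condition of Lemma~\ref{lemma_direct_sum2}. From \cite{Bourb} (alternatively by the Shapiro--Steinberg rule applied to the root data above) the exponents of type $D_l$ are $1,3,5,\dots,2l-3$ together with $l-1$. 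A direct count of the coefficients gives $\mathrm{ht}(\gamma_i)=2i-1$ for $1\leq i\leq l-1$ and $\mathrm{ht}(\bar\gamma)=l-1$, so the prescribed heights are precisely the exponents; note that when $l$ is even the value $l-1$ is a repeated exponent, realised by both $\gamma_{l/2}$ and $\bar\gamma$, while for $l$ odd it occurs only through $\bar\gamma$. For the base case I would take $l=3$, where $D_3$ coincides with $A_3$ and the claim follows directly from the explicit root data, just as in the base cases of the earlier lemmas.

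For the inductive step I would use the subsystem $\Phi'=\{\alpha\in\Phi\mid n_{\alpha_1}(\alpha)=0\}$, which the Dynkin diagram of $D_l$ shows to be of type $D_{l-1}$ generated by $\alpha_2,\dots,\alpha_l$. Since $\alpha_1$ is an endpoint of the diagram, every root of $D_l$ has $n_{\alpha_1}\in\{0,1\}$; hence every $\alpha\in\Phi^+\setminus\Phi'^+$ satisfies $n_{\alpha_1}(\alpha)=1$. Listing these roots by height one finds $\epsilon_1-\epsilon_{k+1}$ of height $k$ for $1\leq k\leq l-1$, and $\epsilon_1+\epsilon_j$ of height $2l-1-j$ for $2\leq j\leq l$ (the case $j=l$ giving height $l-1$, i.e. the root $\bar\gamma=\epsilon_1+\epsilon_l$). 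Thus for each height $k\in\{1,\dots,2l-3\}$ there is a unique root in $\Phi^+\setminus\Phi'^+$ \emph{except} at $k=l-1$, where there are two, namely $\epsilon_1-\epsilon_l$ and $\bar\gamma$. For every non-maximal $\alpha\in\Phi^+\setminus\Phi'^+$ admitting a unique $\beta\in\Phi^+$ with $\beta-\alpha_j=\alpha$ for some $\alpha_j\in\Delta$, one argues exactly as in Lemma~\ref{lemma_so2l+1_roots}: such $\beta$ again lies in $\Phi^+\setminus\Phi'^+$ because $n_{\alpha_1}$ must stay positive, so $W:=[A_0^-,X_\beta]$ is the unique element of $\{W_h\mid W_h\in\mathfrak{so}_{2l}^{(\mathrm{ht}(\alpha))}\}$ with a non-zero component in $(\mathfrak{so}_{2l})_\alpha$. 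Combined with the inductive basis of $\mathfrak{so}_{2l-2}^{(m)}$ this extends the basis to $\mathfrak{so}_{2l}^{(m)}$, and the maximal root $\gamma_{l-1}=\epsilon_1+\epsilon_2$ of height $2l-3$ handles the top eigenspace as before.

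The main obstacle is the fork, i.e. the eigenspace $\mathfrak{so}_{2l}^{(l-1)}$, where the clean ``unique predecessor'' argument fails because $\Phi^+\setminus\Phi'^+$ contains the two roots $\epsilon_1\pm\epsilon_l$ of height $l-1$. The decisive structural observation is that the special complementary root $\bar\gamma'=\alpha_2+\dots+\alpha_{l-2}+\alpha_l=\epsilon_2+\epsilon_l$ of the subsystem $D_{l-1}$, which has height $l-2$, is related to the new root $\bar\gamma$ of $D_l$ by $\bar\gamma=\bar\gamma'+\alpha_1$. Hence $[A_0^-,X_{\bar\gamma}]$ has a non-zero component along $X_{\bar\gamma'}$, so that in $D_l$ the direction $X_{\bar\gamma'}$ is absorbed into the span of the $W_h$, while $\bar\gamma$ itself becomes the complementary root carrying the special exponent, now raised from $l-2$ to $l-1$. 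I would therefore treat $\mathfrak{so}_{2l}^{(l-1)}$ separately, using the explicit root vectors listed before the lemma to show that $X_{\bar\gamma}$ (and, when $l$ is even, also $X_{\gamma_{l/2}}$) together with the $W_h$ of height $l-1$ are linearly independent, equivalently that $X_{\bar\gamma}$ does not lie in $\mathrm{ad}(A_0^-)(\mathfrak{so}_{2l}^{(l)})$, which is the span of $\{W_h\mid W_h\in\mathfrak{so}_{2l}^{(l-1)}\}$. A dimension count via the Shapiro--Steinberg rule, with $c_{l-1}-c_l$ equal to the multiplicity of the exponent $l-1$, confirms that exactly the right number of complementary directions is supplied at this height, so only the independence has to be verified; this replacement of $\bar\gamma'$ by $\bar\gamma=\bar\gamma'+\alpha_1$ is precisely what distinguishes the $D_l$ argument from the $B_l$ and $C_l$ cases. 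Once the basis property holds for every exponent, Lemma~\ref{lemma_direct_sum2} shows that $\gamma_1,\dots,\gamma_{l-1},\bar\gamma$ are complementary roots, and Lemma~\ref{Prop_tans_lemma} then applies to the root space complement $\mathfrak{r}=\sum_{i=1}^{l-1}\mathfrak{g}_{\gamma_i}+\mathfrak{g}_{\bar\gamma}$.
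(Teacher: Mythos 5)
Your plan is the paper's proof in all essentials: the reduction to the basis--extension condition of Lemma~\ref{lemma_direct_sum2}, the exponents $1,3,\dots,2l-3$ together with $l-1$, the induction on $l$ through the subsystem $\Phi'=\{\alpha\mid n_{\alpha_1}(\alpha)=0\}$ of type $D_{l-1}$, the unique-successor argument away from the fork, and the decisive observation that $\bar\gamma=\bar\gamma'+\alpha_1$, so that $W_{\bar\gamma}=[A_0^-,X_{\bar\gamma}]$ absorbs the direction $X_{\bar\gamma'}$ while $\bar\gamma$ takes over the special exponent. The paper differs only in bookkeeping: it starts the induction at $l=4$ with an explicit matrix computation and disposes of $l=3$ separately, and it states the induction hypothesis in a parity-dependent form (one extra root vector at the doubled exponent when $l$ is odd, two when $l$ is even), which is what makes the step close.

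One spot in your plan, as written, would leave a hole. Your generic argument is explicitly restricted to roots $\alpha\in\Phi^+\setminus\Phi'^+$ admitting a \emph{unique} successor $\beta$ with $\beta-\alpha_j=\alpha$, and your declared special treatment covers only the eigenspace $\mathfrak{so}_{2l}^{(l-1)}$. But the root $\beta_{l-2}=\alpha_1+\dots+\alpha_{l-2}=\epsilon_1-\epsilon_{l-1}$ has \emph{two} successors, $\delta_1=\epsilon_1-\epsilon_l$ and $\delta_2=\bar\gamma=\epsilon_1+\epsilon_l$, so the eigenspace $\mathfrak{so}_{2l}^{(l-2)}$ falls through both cases; and when $l$ is odd, $l-2$ is an exponent (indeed the doubled exponent $\bar m'=m'_{(l-1)/2}$ of $D_{l-1}$), so this eigenspace genuinely matters. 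There the dimension grows by one while the $W$-span grows by two, and one must show that of the two complementary vectors $X_{\bar\gamma'},X_{\gamma'_{(l-1)/2}}$ supplied by the induction hypothesis it is exactly $X_{\bar\gamma'}$ that gets absorbed: concretely, that both $W_{\delta_1}$ and $W_{\delta_2}$ have non-zero components along $X_{\beta_{l-2}}$ but only $W_{\delta_2}$ has one along $X_{\bar\gamma'}$ (since $\delta_1-\bar\gamma'$ is not a simple root). Your ``absorption'' sentence contains this idea, but you should promote it to an explicit verification for the height-$(l-2)$ eigenspace, alongside your planned check at height $l-1$; with that, and with the induction hypothesis split by the parity of $l$, your argument coincides with the paper's.
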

\begin{proof}
 From \cite[VI, Planche IV]{BourGroupesetAlgebresIV-VI} we obtain that the 
 exponents of the root system of type $D_l$ are 
 \[
 m_1=1, \ m_2=3, \ m_3=5, \dots , m_{l-2}=2l-5, \ m_{l-1}=2l-3 
 \]
 and $\bar{m}=l-1$, that is for $i=1,\dots, l-1$ we have the formula $m_i = 2i-1$. 
 Note that in case $l$ is even, the value $l-1$ occurs 
 twice, that is we have $\bar{m}=l-1=m_{l/2}$.
For $i=1,\dots, l-1$ one easily checks that the roots $\gamma_i$ satisfy $\mathrm{ht}(\gamma_i)=m_i$ 
and the root $\bar{\gamma}$ fulfills $\mathrm{ht}(\bar{\gamma})= \bar{m}$. 

To shorten notation we define for $\mathfrak{so}_{2l}$ and for a positive height $m$ of the root system $\Phi$ of type $D_l$ the set of vectors  
\begin{equation*}
\mathcal{B}(l,m) := \{ W_h \mid W_h \in \mathfrak{so}_{2l}^{(m)} \}.
\end{equation*}
Recall that $\mathcal{B}(l,m) $ is a basis of 
$\mathrm{ad}(A_0^-)(\mathfrak{u}^+)^{(m)}$ and in case $m$ is not an exponent it is 
even a basis of $\mathfrak{so}_{2l}^{(m)}$. 
In order to prove the claim we show that we can extend the basis $\mathcal{B}(l,m)$ 
to a basis of $\mathfrak{so}_{2l}^{(m)}$ by adjoining the respective vectors  
$X_{\gamma_i}$ and $X_{\bar{\gamma}}$. We use induction on $l$ where  
we distinguish between an odd and even $l$. We make the following inductive 
assumptions on $l \geq 4$: 
     If $l$ is odd, then the vectors of $\mathcal{B}(l,m_i)$ and 
    $ X_{\gamma_i}$ form a basis of $\mathfrak{so}_{2l}^{(m_i)}$ for $i=1,\dots, l-1$. 
    Further the vectors of $\mathcal{B}(l,\bar{m})$ and $X_{\bar{\gamma}}$ form a 
    basis of $\mathfrak{so}_{2l}^{(\bar{m})}$.
    If $l$ is even, then the vectors of $\mathcal{B}(l,m_i)$ and $ X_{\gamma_i}$ 
    form a basis of $\mathfrak{so}_{2l}^{(m_i)}$ for $i=1, \dots,l-1$ and $i \neq l/2$. 
    For $\bar{m}=m_{l/2}$ the vectors of $\mathcal{B}(l,\bar{m}) $ together with 
    $ X_{\bar{\gamma}}$ and $X_{\gamma_{l/2}}$ form a basis of 
    $\mathfrak{so}_{2l}^{(\bar{m})}$.
 
Let $l=4$. The exponents are $m_1=1$, $m_2=\bar{m}=3$ and $m_3=5$. 
We start with $m_1$. The roots of height two are 
$\beta_5 = \alpha_1 +  \alpha_2$, $\beta_6 = \alpha_2 + \alpha_3$ and 
$\beta_7 = \alpha_2 + \alpha_4$ and so $\mathcal{B}(4,1)$ consists of three elements 
$W_h =[A_0^-, X_{\beta_h}] $ with $h=5,6,7$. 
We represent each $W_h$ as a linear combination of basis elements corresponding to the 
simple roots. The coefficient of $X_{\alpha_s}$ in the corresponding representation is
nonzero if and only if there is a simple root $\alpha_j$ such that 
$\beta_h - \alpha_j=\alpha_s$. It is easy to check that the matrix whose 
columns represent the coordinates of $W_h$ and $X_{\alpha_4}$ is  
regular. Hence the vectors of $\mathcal{B}(4,1)$ and $ X_{\alpha_4}$ form a basis of 
$\mathfrak{so}_8^{(1)}$.
We consider now the exponent $m_2 = \bar{m} =3 $. The roots of height three are 
\[
\beta_8 = \alpha_1 + \alpha_2 + \alpha_3, \ \beta_9 = \alpha_1 + \alpha_2 + \alpha_4, \ 
\beta_{10} = \alpha_2 + \alpha_3 + \alpha_4
\]
and the only root of height four is 
$\beta_{11} = \alpha_1 + \alpha_2 + \alpha_3 + \alpha_4$. Thus $\mathcal{B}(4,3)$ 
consists only of the vector $W_{11}=[A_0^-, X_{\beta_{11}}]$. 
As above we represent $W_{11}$ as a linear combination of the basis elements 
$X_{\beta_h}$ of $\mathfrak{so}_{8}^{(3)}$ with $h=8,9,10$. 
All three coordinates of $W_{11}$ are nonzero and therefore the vector of 
$\mathcal{B}(4,3)$ together with $X_{\beta_{9}}$ and $ X_{\beta_{10}}$ 
form a basis of $\mathfrak{so}_8^{(3)}$.
In case of the exponent $m_3 =5$ the root 
$\gamma_{3}$ is the unique root of height five and 
so the vector $ X_{\gamma_{3}}$ forms trivially a basis 
of $\mathfrak{so}_8^{(5)}$. 

Let $l>4$ and assume that the claim is true for $l-1$. The Dynkin diagram shows that 
\[
\Phi'= \{ \alpha \in \Phi \mid n_{\alpha_1}(\alpha) =0 \}
\]
is a root subsystem of type $D_{l-1}$ and so we can identify $\mathfrak{so}_{2l-2}$
 inside $\mathfrak{so}_{2l}$ with the sualgebra generated by the vectors 
 $X_{\pm \alpha_i} \in \mathfrak{so}_{2l}$ for $i=2,\dots,l$. 
 From the shape of the roots of $\Phi$ we obtain that the set 
 $\Phi^+ \setminus \Phi'^+$ consists of the roots
 $\alpha_1+\dots + \alpha_k $ with $k=1,\dots,l-2$, and
 \[
\delta= \alpha_1+\dots +\alpha_{l-1} \quad \mathrm{and} \quad 
\bar{\gamma}=\alpha_1+\dots + \alpha_{l-2} + \alpha_l
\]
 as well as
 \[
 \alpha_1 + \dots + 2 \alpha_k + \dots + \alpha_{l-1} + \alpha_l
 \]
with $k=2,\dots,l-2$. For $k=1,\dots,2l-3$ one easily checks that if 
$k\neq l-1$ there is a 
unique root in $\Phi^+\setminus \Phi'^+$ of height $k$ which we denote by $\beta_k$
and if $k=l-1$ there are two roots  
of height $k$ in $\Phi^+\setminus \Phi'^+$ namely $\delta$ and $\bar{\gamma}$.
We will apply the 
 induction assumption to the system $\Phi'$ of rank $l-1$. For $i=1,\dots,l-2$ 
 we denote the corresponding 
 exponents and complementary roots by $m_i'$ and $\bar{m}'$ as well as by 
 $\gamma_i'$ and $\bar{\gamma}'$. It is easy to check that for $i=1,\dots,l-2$ we have 
 the equalities $m_i'=m_i$ and $\gamma_i'=\gamma_i$.

Let $l$ be odd. First we prove the claim for the exponents $m_i$ for $i=1,\dots,l-2$ 
with $i\neq (l-1)/2$. Since $l-1$ is even we obtain from the second part of the 
induction assumption that $\mathcal{B}(l-1,m_i')$ and $X_{\gamma_i'}$ form a basis of 
$\mathfrak{so}_{2l-2}^{(m_i')}$. For $r=(l-1)/2$ the 
exponent is $m_r=m_r'=\bar{m}'=l-2$ and since the exponents $m_i$ differ by two 
we conclude that $m_i\leq l-4$ for $i=1,\dots,r-1$ and 
$m_i \geq l$ for $i=r+1,\dots,l-2$. Hence in both cases $m_i$ and $m_i+1$ are not 
equal to $l-1$. As a consequence $\beta_{m_i}$ and $\beta_{m_i+1}$ are unique in 
$\Phi^+ \setminus \Phi'^+$ with height $m_i$ and $m_i+1$ respectively. This implies 
that we have to adjoin a single linear independent vector to the basis of $\mathfrak{so}_{2l-2}^{(m_i)}$ to obtain a basis of $\mathfrak{so}_{2l}^{(m_i)}$ 
and that 
\[
W_{\beta_{m_i+1}}=[A_0^-,X_{\beta_{m_i+1}}]
\]
together with the vectors of $\mathcal{B}(l-1,m_i)$ form precisely the set $\mathcal{B}(l,m_i)$.
Since 
$n_{\alpha_1}(\beta_{m_i})$ is nonzero we conclude that $\beta_{m_i +1}$ is the 
unique root in $\Phi^+$ such that $\beta_{m_i +1} -\alpha_j=\beta_{m_i}$ for 
some simple root $\alpha_j$. Thus $W_{\beta_{m_i+1}}$
is the unique basis element in $\mathcal{B}(l,m_i)$ which has a nonzero component in 
$(\mathfrak{so}_{2l})_{\beta_{m_i}}$ and so $X_{\gamma_i}$ and the vectors 
of $\mathcal{B}(l,m_i)$ form a basis of $\mathfrak{so}_{2l}^{(m_i)}$. It is left 
to prove the claim for the three remaining exponents $m_r=l-2$, $\bar{m}=l-1$ and 
$m_{l-1}=2l-3$. 

We consider the exponent $m_r=m_r'=\bar{m}'=l-2$. The second part of the induction 
assumption yields that the vectors of $\mathcal{B}(l-1,\bar{m}')$ together 
with $X_{\bar{\gamma}'}$ and $X_{\gamma_r'}$ form a basis of 
$\mathfrak{so}_{2l-2}^{(\bar{m}')}$. The root $\beta_{l-2}$ is the unique root 
in $\Phi^+ \setminus \Phi'^+$ of height $l-2$ and $\bar{\gamma}$ and $\delta$ are 
the only roots of height $l-1$ in $\Phi^+\setminus \Phi'^+$. We conclude that
we have to adjoin a single linear independent 
vector to the above basis to get a basis of $\mathfrak{so}_{2l}^{(m_r)}$ and that 
the vectors of $\mathcal{B}(l-1,\bar{m}')$ together with 
\[
W_{\bar{\gamma}}=[A_0^-, X_{\bar{\gamma}}] \quad \mathrm{and} \quad
W_{\delta}=[A_0^-, X_{\delta}] 
\]
form the set $\mathcal{B}(l,m_r)$. Since $n_{\alpha_1}(\beta_{l-2})$ is nonzero 
the roots $\delta$ and $\bar{\gamma}$ are the only roots of $\Phi$ from which 
we can subtract a simple root to obtain $\beta_{l-2}$. It follows 
that $W_{\bar{\gamma}}$ and $W_{\delta}$
are the only elements in $\mathcal{B}(l,m_r)$ which have a nonzero component in 
$(\mathfrak{so}_{2l})_{\beta_{l-2}}$.
Further it is possible to subtract a simple root from $\bar{\gamma}$ to obtain 
$\bar{\gamma}'$ but not from $\delta$ and so from the two vectors 
$W_{\bar{\gamma}}$ and $W_{\delta}$ only $W_{\bar{\gamma}}$ has a 
nonzero component in $(\mathfrak{so}_{2l-2})_{\bar{\gamma}'}$. 
We conclude that adjoining $W_{\delta}$ to the above basis 
and then substituting $X_{\bar{\gamma}'}$ by $W_{\bar{\gamma}}$ yields 
a basis of $\mathfrak{so}_{2l}^{(m_r)}$, that is the basis is formed by the vectors 
of $\mathcal{B}(l,m_r)$ and $X_{\gamma_r}$. 

Next we consider $\bar{m}=l-1$. Since $l-1$ is not an exponent of $\Phi'$ we
have that $\mathcal{B}(l-1,\bar{m})$ is a basis of $\mathfrak{so}_{2l-2}^{(\bar{m})}$.
The roots $\delta$ and $\bar{\gamma}$ are the only roots in $\Phi^+ \setminus \Phi'^+$
of height $l-1$ and $\beta_l$ is the unique root of height $l$ in $\Phi^+ \setminus \Phi'^+$. We conclude that we have to adjoin two linear independent vectors to 
$\mathcal{B}(l-1,\bar{m})$ to obtain a basis of $\mathfrak{so}_{2l}^{(\bar{m})}$ 
and that the vectors of $\mathcal{B}(l-1,\bar{m})$ together with 
\[
W_{\beta_l} = [A_0^-,X_{\beta_l}]
\]
form the set $\mathcal{B}(l,\bar{m})$. Since the coefficient of $\alpha_1$ in the 
representation of $\delta$ and $\bar{\gamma}$ is nonzero $\beta_l$ is the 
unique root of $\Phi$ from which it is possible to subtract a simple root to obtain 
$\delta$ or $\bar{\gamma}$. Hence the vector $W_{\beta_l}$
is the unique element of $\mathcal{B}(l,\bar{m})$ with a nonzero component in the 
root spaces of $\delta$ and $\bar{\gamma}$. It follows that 
$\mathcal{B}(l,\bar{m})$ together 
with $X_{\bar{\gamma}}$ forms a basis of $\mathfrak{so}_{2l}^{(\bar{m})}$.

In case for the exponent $m_{l-1}=2l-3$ the root $\gamma_{l-1}$ is the unique 
root of maximal height $m_{l-1}$ and so the vector $X_{\gamma_{l-1}}$ clearly forms a 
basis of $\mathfrak{so}_{2l}^{(m_{l-1})}$.

Let $l$ be even. Similar as above we first prove the claim for the exponents 
$m_i$ for $i=1,\dots,l-2$ with $i\neq l/2$. Applying the induction assumption
to the root system $\Phi'$ of odd rank $l-1$ we obtain that the vectors of  $\mathcal{B}(l-1,m_i')$
and $X_{\gamma_i'}$ form a basis of $\mathfrak{so}_{2l-2}^{(m_i')}$. For $r=l/2$ we have $m_r=\bar{m}=l-1$ and since the exponents $m_i$ differ by two it follows that 
$m_i \leq l-3$ for $i=1,\dots, r-1$ and $m_i \geq l+1$ for $i=r+1,\dots,l-2$. 
We conclude that $m_i$ and $m_i+1$ are not equal to $l-1$ and so $\beta_{m_i}$ and 
$\beta_{m_i+1}$ are the 
unique roots in $\Phi^+\setminus \Phi'^+$ of their respective heights. 
Hence we need to adjoin a single linear independent vector to the above basis to 
obtain a basis of $\mathfrak{so}_{2l}^{(m_i)}$ and the set formed by
\[
W_{\beta_{m_i+1}}=[A_0^-,X_{\beta_{m_i+1}}]
\] 
and the vectors of $\mathcal{B}(l-1,m_i)$ is precisely the set $\mathcal{B}(l,m_i)$. Since $\beta_{m_i+1}$ is the unique root in $\Phi$ such that  $\beta_{m_i+1}-\alpha_j=\beta_{m_i}$ for some simple root $\alpha_j$ we obtain that 
 $W_{\beta_{m_i+1}}$ is the unique vector in
 $\mathcal{B}(l,m_i)$ which has a nonzero component in the root space of 
 $\mathfrak{so}_{2l}$ for $\beta_{m_i}$ and so the vectors of $\mathcal{B}(l,m_i)$ and 
$X_{\gamma_i}$ form a basis of $\mathfrak{so}_{2l}^{(m_i)}$.
It is left to prove the claim for the two remaining exponents $m_{l/2}=\bar{m}=l-1$ and 
$m_{l-1}=2l-3$. 

We consider first the exponent $m_{l/2}=\bar{m}=m_{l/2}'=l-1$. The first part of the induction assumption applied to the 
system $\Phi'$ of odd rank $l-1$ yields for the exponent $m'_{l/2}$
that the vectors of $\mathcal{B}(l-1,m'_{l/2})$ and $X_{\gamma_{l/2}'}$ form a 
basis of $\mathfrak{so}_{2l-2}^{(m'_{l/2})}$. Since $\delta$ and $\bar{\gamma}$ are
the only roots of height $l-1$ in $\Phi^+ \setminus \Phi'^+$ we have to adjoin 
two linear independent vectors to the above basis to obtain a basis of 
$\mathfrak{so}_{2l}^{(\bar{m})}$.
The root $\beta_l$ is the unique root in $\Phi^+ \setminus \Phi'^+$ of height $l$ 
and so the vectors of $\mathcal{B}(l-1,m'_{l/2})$ and 
\[
W_{\beta_l}=[A_0^-,X_{\beta_l}] 
\]
are precisely the elements of the set 
$\mathcal{B}(l,\bar{m})$. Since among the vectors of $\mathcal{B}(l,\bar{m})$ only 
$W_{\beta_l}$ has nonzero components in the root spaces of $\mathfrak{so}_{2l}$ 
for $\delta$ and $\bar{\gamma}$ the vectors of
$\mathcal{B}(l,\bar{m})$ as well as $X_{\gamma_{l/2}}$ and $X_{\bar{\gamma}}$
form a basis of $\mathfrak{so}_{2l}^{(\bar{m})}$.

In case of the exponent $m_{l-1}=2l-3$ the root $\gamma_{l-1}$ is the unique 
root of height $m_{l-1}$ and so the vector $X_{\gamma_{l-1}}$ trivially forms a 
basis of $\mathfrak{so}_{2l}^{(m_{l-1})}$.
\end{proof} 
\begin{lemma}
\label{Lemma_SO_2l_equation}
The matrix parameter differential equation 
$\partial(\textit{\textbf{y}})=A_{\mathrm{SO}_{2l}}(\textit{\textbf{t}})\textit{\textbf{y}}$ over $F_1$, where 
\begin{equation*}
 A_{\mathrm{SO}_{2l}}(\textit{\textbf{t}}): = A_0^+ - t_1 X_{-\bar{\gamma}} + \sum_{i=1}^{l-1} - t_{i+1} X_{-\gamma_i}
\end{equation*}
and the roots $\gamma_i$ and $\bar{\gamma}$ are as in Lemma~\ref{lemma_so2l_roots},
is equivalent to the linear parameter differential equation
\begin{eqnarray}
L(y,t_1,...,t_l)&=& y^{(2l)}- 2 \sum_{i=3}^{l} (-1)^{i} ( (t_i y^{(l-i)})^{(l+2-i)} + (t_i y^{(l+1-i)})^{(l+1-i)} ) \nonumber \\
&&- (t_2 y^{(l-2)}+ t_1 y)^{(l)} - ((-1)^l t_1 z_1 + z_2) - \sum_{i=0}^{l-2} (t_2^{(l-2-i)} z_1)^{(i)}.
\nonumber
\end{eqnarray}
The coefficients $z_1$ and $z_2$ are given by 
\begin{eqnarray}
z_1 &=& y^{(l)}- t_2 y^{(l-2)} - t_1 y \nonumber \\ 
z_2 &=& \frac{(t_2^{(l-2)}+(-1)^{l-2}t_1)^{(1)}}{t_2^{(l-2)}+(-1)^{l-2}t_1} \cdot \left(  y^{(2l-1)}- 2 \sum_{i=3}^{l} (-1)^{i} ( (t_i y^{(l-i)})^{(l+1-i)}  \right. \nonumber \\
&& + (t_i y^{(l+1-i)})^{(l-i)} ) - \left.(t_2 y^{(l-2)} + t_1 y)^{(l-1)}- \sum_{i=0}^{l-3} (t_2^{(l-3-i)} z_1)^{(i)} \right).
\nonumber
\end{eqnarray}
\end{lemma}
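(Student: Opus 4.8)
The plan is to proceed exactly as in the proofs of Lemma~\ref{Lemma_sp_2l_equation} and Lemma~\ref{Lemma_SO_2l+1_equation}: first expand the matrix equation $\partial(\textit{\textbf{y}}) = A_{\mathrm{SO}_{2l}}(\textit{\textbf{t}})\textit{\textbf{y}}$ into an explicit first-order system with respect to the chosen Chevalley basis, and then eliminate all but one of the coordinate functions by repeated differentiation and substitution in order to arrive at a single scalar equation of order $2l$, which should coincide with $L(y,t_1,\dots,t_l)$. First I would write $\textit{\textbf{y}} = (y_1,\dots,y_l,y_{-l},\dots,y_{-1})^{tr}$ and read off from the matrices $X_{\alpha_i}$, $X_{-\gamma_i}$ and $X_{-\bar{\gamma}}$ the precise system of equations. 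Because the Dynkin diagram of type $D_l$ is a chain $\alpha_1-\cdots-\alpha_{l-2}$ terminating in a fork carrying the two nodes $\alpha_{l-1}$ and $\alpha_l$, the system splits into a \emph{long arm}, which behaves like the chains already treated for $B_l$ and $C_l$, and a \emph{fork} coupling the coordinates around the indices $l-1,l,-l,-(l-1)$.

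For the long arm I would set up an induction on $1 \le k \le l-2$ completely analogous to the one in Lemma~\ref{Lemma_SO_2l+1_equation}: the equations $y_{l-j}' = y_{l-j+1}$ allow me to replace $y_{l-k+1}$ by $y_{l-k}'$, and each inductive step contributes a pair of terms of the form $(t_i y^{(\cdots)})^{(\cdots)}$ together with the sign $(-1)^{i}$, so that the sum $\sum_{i=3}^l$ occurring in $L$ is accumulated step by step. This part is routine bookkeeping and should reproduce the ``bulk'' of the claimed equation up to the boundary terms.

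The genuinely new difficulty, and what I expect to be the main obstacle, is the fork. Here the two simple roots $\alpha_{l-1}$ and $\alpha_l$ feed into the same predecessor, so the reduction does not close up into a companion form as it did in the $A_l$, $B_l$, $C_l$ cases; instead one is left with two coupled relations among the fork coordinates. I would introduce the auxiliary function $z_1 = y^{(l)} - t_2 y^{(l-2)} - t_1 y$, which captures precisely the coupling of the two branches and plays the role of the extra variable that cannot be written as a derivative of $y$. Eliminating the last remaining fork coordinate then forces me to solve a linear relation whose leading coefficient is $t_2^{(l-2)} + (-1)^{l-2} t_1$; dividing through by this coefficient and differentiating is what produces the logarithmic-derivative factor $\bigl(t_2^{(l-2)} + (-1)^{l-2}t_1\bigr)'/\bigl(t_2^{(l-2)} + (-1)^{l-2}t_1\bigr)$ appearing in $z_2$. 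I would carry out this elimination carefully, keeping track of the parity-dependent signs $(-1)^l$ and $(-1)^{l-2}$ and of the extra sum $\sum_{i=0}^{l-2}(t_2^{(l-2-i)} z_1)^{(i)}$ that arises from repeatedly differentiating the $t_2 z_1$ term back through the chain; assembling these pieces should then yield exactly $L(y,t_1,\dots,t_l)=0$. Finally, I would record the low-rank case $l=3$ separately, just as was done for the root computation in Lemma~\ref{lemma_so2l_roots}, in order to check that the fork formulas degenerate correctly there.
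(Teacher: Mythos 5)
Your plan follows the paper's proof essentially verbatim: expand the system in the given Chevalley basis, use the chain equations to show $y_1$ is cyclic, handle the fork by collapsing $y_l'=-y_{l+2}$ and $y_{l+1}'=t_1y_1+t_2y_{l-1}-y_{l+2}$ into the definition $z_1=-2y_{l+2}=y^{(l)}-t_2y^{(l-2)}-t_1y$, run the induction down the long arm accumulating the $\sum_{i=3}^{l}$ terms and the $\sum(t_2^{(\cdot)}z_1)^{(\cdot)}$ corrections, and finally eliminate $y_l$ by solving the order-$(2l-1)$ relation whose coefficient $t_2^{(l-2)}+(-1)^{l-2}t_1$ yields the logarithmic-derivative factor in $z_2$, treating $l=3$ separately. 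This is correct and is the same argument as in the paper, with only the routine bookkeeping left to carry out.
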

\begin{proof}
Using the representation of $\mathfrak{so}_{2l}$ given at the beginning of this section, we obtain for $\textit{\textbf{y}}=(y_1, \dots, y_{2l})^{tr}$
that the matrix differential equation $\partial(\textit{\textbf{y}})=A_{\mathrm{SO}_{2l}}(\textit{\textbf{t}})\textit{\textbf{y}}$ is 
equivalent to the following system of linear differential equations:
\begin{align}
y'_{k} &= y_{k+1}  & \; (1 \leq k \leq l-2) \tag{k} \label{eqn_diff_main_eq_SO_k} \nonumber \\
y'_{l-1} &=y_l  + y_{l+1}  &\tag{$\ell$-1} \label{eqn_diff_main_eq_SO_l-1} \nonumber \\
y'_{l} &= - y_{l+2} & \tag{$\ell$} \label{eqn_diff_main_eq_SO_l} \nonumber \\
y'_{l+1} &= t_1 y_1 + t_2 y_{l-1} - y_{l+2} &\tag{$\ell$+1} \label{eqn_diff_main_eq_SO_l+1} \nonumber \\
y'_{l+2} &= t_3 y_{l-2}-t_2 y_l - y_{l+3}  &\tag{$\ell$+2} \label{eqn_diff_main_eq_SO_l+2} \nonumber \\
y'_{l+k} &= t_{k+1} y_{l-k} - t_k y_{l-k+2}- y_{l+k+1}  & \; (3 \leq k \leq l-1) \tag{$\ell$+k} \label{eqn_diff_main_eq_SO_l+k} \nonumber \\
y'_{2l} &= -t_l y_{2}-t_1 y_{l}.  & \tag{2$\ell$} \label{eqn_diff_main_eq_SO_2l} \nonumber 
\end{align}
We show that $y_1$ is a cyclic vector. From the Equations~(1) - ($\ell$-2) we deduce that
\begin{equation}
y_1^{(i-1)}= y_i \quad \mathrm{for} \; 1 \leq i \leq l-1. \nonumber
\end{equation}
In particular, we have $y_1^{(l-2)}=y_{l-1}$. Differentiating $y_1^{(l-2)}=y_{l-1}$ and substituting $y'_{l-1}$ 
by the right hand side of Equation~(\ref{eqn_diff_main_eq_SO_l-1}) yields $y_1^{(l-1)}=y_l + y_{l+1}$. 
We differentiate the last expression.  
Using Equation~(\ref{eqn_diff_main_eq_SO_l}) and~(\ref{eqn_diff_main_eq_SO_l+1}) we obtain that  
\begin{equation}
y_1^{(l)}=t_1 y_1 + t_2 y_{l-1} - 2 y_{l+2}. \nonumber
\end{equation}
Thus, we have $y_1^{(l)}=t_1 y_1 + t_2 y_1^{(l-2)} - 2 y_{l+2}$ and if we solve for $- 2 y_{l+2}$, we get that  
\begin{equation*}
-2 y_{l+2}=y_1^{(l)} - t_1 y_1 - t_2 y_1^{(l-2)}=:z_1
\end{equation*} 
We differentiate $y_1^{(l)}=t_1 y_1 + t_2 y_1^{(l-2)} - 2 y_{l+2}$ and substitute $y'_{l+2}$ by the right hand side of Equation~(\ref{eqn_diff_main_eq_SO_l+2}). 
This yields
\begin{equation}
 y_1^{(l+1)} = (t_1 y_1 + t_2 y_1^{(l-2)})' - 2 t_3 y_1^{(l-3)} + 2 t_2 y_l + 2 y_{l+3}.
 \tag{*} \label{eqn_diff_main_eq_SO_*}
\end{equation}
In case $l\geq 4$, we prove the following claim: For $1\leq k \leq l-3$ the system
\begin{align}
y'_{l+3} &= t_4 y_{l-3}-t_3 y_{l-1} - y_{l+4}  \tag{1} \label{eqn_diff_eq_SO_1} \nonumber \\
&\; \; \vdots \nonumber \\
y'_{l+2+k} &= t_{k+3} y_{l-k-2} - t_{k+2} y_{l-k}- y_{l+k+3} \tag{k} \label{eqn_diff_eq_SO_k} \nonumber
\end{align}
together with the equations
\begin{align} 
y_1^{(l+1)} &= \bar{z}' - 2 t_3 y_1^{(l-3)} + 2 t_2 y_l + 2 y_{l+3} &\tag{a} \label{eqn_diff_eq_SO_A}\nonumber \\
y_1^{(i-1)}&=  y_i & \ (l-k-2 \leq i \leq l-1) \tag{b} \label{eqn_diff_eq_SO_B} \nonumber \\
y'_l &= - y_{l+2} &\tag{c} \label{eqn_diff_eq_SO_C} \nonumber \\
-2 y_{l+2} &= y_1^{(l)} - t_1 y_1 - t_2 y_1^{(l-2)}=:z_1 & \tag{d} \label{eqn_diff_eq_SO_D} \nonumber 
\end{align}
yields the differential equation
\begin{eqnarray}
y_1^{(l+k+1)} &=& \bar{z}^{(k+1)} + 2 \sum_{i=3}^{k+2} (-1)^i ( (t_i y_1^{(l-i)})^{(k+3-i)}+(t_i y_1^{(l-i+1)})^{(k+2-i)} ) \nonumber \\
&& + 2 ( (-1)^{k+1} t_{k+3}y_1^{(l-k-3)} + (-1)^k y_{l+k+3} + t_2^k y_l) + \sum_{i=0}^{k-1} (t_2^{(k-i-1)} z_1)^{(i)}, \nonumber
\end{eqnarray} 
where we denote in the following by $\bar{z}^{(m)}$ the term $(t_1 y_1 + t_2 y_1^{(l-2)})^{(m)}$ for $m \in \mathbb{N}$. 
The proof is done by induction on $1\leq k \leq l-3$.\\
Let $k=1$. We differentiate $y_1^{(l+1)} = \bar{z}' - 2 t_3 y_1^{(l-3)} + 2 t_2 y_l + 2 y_{l+3}$ and substitute $y'_{l+3}$ 
by the right hand side of Equation~(\ref{eqn_diff_eq_SO_1}). We obtain the equation 
\begin{eqnarray}
y_1^{(l+2)} &=& \bar{z}^{(2)} - 2 (t_3 y_1^{(l-3)})^{(1)}-2 t_3 y_{l-1} + 2 t_4 y_{l-3} + 2 t'_2 y_l + 2 t_2 y'_l - 2 y_{l+4} .\nonumber
\end{eqnarray}
Using the Equations~(\ref{eqn_diff_eq_SO_B}) and (\ref{eqn_diff_eq_SO_C}) for the substitution of $y_{l-1}$, $y_{l-3}$ and $y_l '$ and afterwards
Equation~(\ref{eqn_diff_eq_SO_D}) for the substitution of $y_{l+2}$, we get that 
\begin{eqnarray}
y_1^{(l+2)} 
&=& \bar{z}^{(2)} - 2 (t_3 y_1^{(l-3)})^{(1)} - 2 t_3 y_1^{(l-2)} + 2 t_4 y_1^{(l-4)} +  2 t'_2 y_l + t_2 z_1 -2y_{l+4} .\nonumber 
\end{eqnarray}
Now let $1< k \leq l-3$. For $k-1$ we obtain a subsystem of the above system formed by the corresponding equations (1)-(k-1) and (a), (b'), (c) and (d).
Then the induction assumption yields for this subsystem the differential equation
\begin{eqnarray*}
y_1^{(l+k)} &=& \bar{z}^{(k)} + 2 \sum_{i=3}^{k+1} (-1)^i ( (t_i y_1^{(l-i)})^{(k+2-i)} + (t_i y_1^{(l-i+1)})^{(k+1-i)} ) \\
&& + 2 ( (-1)^k t_{k+2} y_1^{(l-k-2)}  + (-1)^{k-1} y_{l+k+2} + t_2^{(k-1)} y_l) + \sum_{i=0}^{k-2} (t_2^{(k-2-i)} z_1)^{(i)} . 
\end{eqnarray*}
We differentiate this equation and substitute $y'_{l+k+2}$ by the right hand side of 
Equation~(k-1). It follows that 
\begin{eqnarray}
y_1^{(l+k+1)} &=& \bar{z}^{(k+1)} + 2 \sum_{i=3}^{k+1} (-1)^i ( (t_i y_1^{(l-i)})^{(k+3-i)} + (t_i y_1^{(l-i+1)})^{(k+2-i)} ) \nonumber \\
&& + 2 ( (-1)^k (t_{k+2} y_1^{(l-k-2)})'  + (-1)^{k-1} (t_{k+3} y_{l-k-2}- t_{k+2} y_{l-k}- y_{l+k+3}) \nonumber \\ 
&& + t_2^{(k)} y_l + t_2^{(k-1)} y'_l ) + \sum_{i=0}^{k-2} (t_2^{(k-2-i)} z_1)^{(i+1)}. \nonumber
\end{eqnarray}
The equations in~(\ref{eqn_diff_eq_SO_B}) show that we can replace $y_{l-k-2}$ by $y_1^{l-k-3}$ and
$y_{l-k}$ by $y_1^{l-k-1}$ in the above expression. 
The Equations~(\ref{eqn_diff_eq_SO_C}) and (\ref{eqn_diff_eq_SO_D}) imply 
that $2 y'_l = -2 y_{l+2}=z_1$.   
Using these substitutions, the above equation simplifies to
\begin{eqnarray*}
y_1^{(l+k+1)} 
&=& \bar{z}^{(k+1)} + 2 \sum_{i=3}^{k+2} (-1)^i ( (t_i y_1^{(l-i)})^{(k+3-i)} + (t_i y_1^{(l-i+1)})^{(k+2-i)} ) \\
&& + 2 ( (-1)^{k+1} t_{k+3} y_1^{l-k-3} + (-1)^{k} y_{l+k+3} + t_2^{(k)} y_l) + \sum_{i=0}^{k-1} (t_2^{(k-1-i)} z_1)^{(i)}    
\end{eqnarray*}
and the induction is complete.\\  
Now, in case $l \geq 4$ the claim yields for $k=l-3$ the  differential equation
\begin{eqnarray}
y_1^{(2l-2)} &=& \bar{z}^{(l-2)} + 2 \sum_{i=3}^{l-1} (-1)^i ( (t_i y_1^{(l-i)})^{(l-i)} + (t_i y_1^{(l-i+1)})^{(l-1-i)} )
\nonumber \\
&& + 2 ( (-1)^{l-2} t_{l} y_1 + (-1)^{l-3} y_{2l} + t_2^{(l-3)} y_l) + \sum_{i=0}^{l-4} (t_2^{(l-4-i)} z_1)^{(i)}. \nonumber 
\end{eqnarray}
Note that for $l=3$ this equation is equal to Equation~(\ref{eqn_diff_main_eq_SO_*}) and the proof in case $l=3$ continues here.
We differentiate the last equation and in the new expression we plug in 
 $-t_ly_1' - t_1y_l$ for $y'_{2l}$ (see Equation~(\ref{eqn_diff_main_eq_SO_2l})). 
Afterwards, we simplify the so obtained equation. Summing up, we get that  
\begin{gather*}
y_1^{(2l-1)} = \bar{z}^{(l-1)} + 
2 \sum_{i=3}^{l} (-1)^i ( (t_i y_1^{(l-i)})^{(l-i+1)} + (t_i y_1^{(l-i+1)})^{(l-i)} )\\
+ 2 ( (-1)^{l-2} t_1+ t_2^{(l-2)}) y_l + \sum_{i=0}^{l-3} (t_2^{(l-3-i)} z_1)^{(i)}.  
\end{gather*} 
In order to obtain a suitable expression of $y_l$ in terms of derivatives of $y_1$ for the last step, 
we solve the above equation for $2y_l$ and multiply with $((-1)^{l-2}t_1 + t_2^{(l-2)})'$, i.e.~we obtain the
following relation:
\begin{gather*}
2 ((-1)^{l-2}t_1 + t_2^{(l-2)})' y_l = \frac{((-1)^{l-2}t_1 + t_2^{(l-2)})'}{(-1)^{l-2} t_1+ t_2^{(l-2)}} \cdot \big( y_1^{(2l-1)} - \bar{z}^{(l-1)}  \\
- 2 \sum_{i=3}^{l} (-1)^i ( (t_i y_1^{(l-i)})^{(l-i+1)} + (t_i y_1^{(l-i+1)})^{(l-i)} \big) 
- \sum_{i=0}^{l-3} (t_2^{(l-3-i)} z_1)^{(i)} ) 
=: z_2 . 
\end{gather*}
Finally, we differentiate the penultimate equation and replace $2 ((-1)^{l-2}t_1 + t_2^{(l-2)})' y_l$ by $z_2$ and
$2 y'_l $ by $z_1$ (see above). This yields the equation in the assertion of the lemma.
\end{proof} 
\begin{theorem}
The linear parameter differential equation
  \begin{eqnarray*}
L(y,\textbf{t})&=&  y^{(2l)} -2 \sum\nolimits_{i=3}^{l} (-1)^{i} ((t_i y^{(l-i)})^{(l+2-i)}+(t_i y^{(l+1-i)})^{(l+1-i)})\\
&&  - (t_2 y^{(l-2)} + t_1 y)^{(l)} - ((-1)^l t_1 z_1 + z_2) -\sum\nolimits_{i=0}^{l-2} (t_2^{l-2-i} z_1)^{(i)} =0
\end{eqnarray*}
has $\mathrm{SO}_{2l}(C)$ as differential Galois group over $F_1$ where the functions $z_1$ and $z_2$ are as in 
Lemma~\ref{Lemma_SO_2l_equation}.
\end{theorem}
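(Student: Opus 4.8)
The plan is to follow verbatim the template established in the proof of Theorem~\ref{theorem for SL_n}, feeding in the $D_l$-specific data supplied by Lemma~\ref{lemma_so2l_roots} and Lemma~\ref{Lemma_SO_2l_equation}. First I would consider the matrix parameter equation $\partial(\boldsymbol{y})=A_{\mathrm{SO}_{2l}}(\boldsymbol{t})\boldsymbol{y}$ over $F_1$ with defining matrix $A_{\mathrm{SO}_{2l}}(\boldsymbol{t})$ from Lemma~\ref{Lemma_SO_2l_equation}, and call its differential Galois group $H(C)$. Since $A_{\mathrm{SO}_{2l}}(\boldsymbol{t})$ lies in $A_0^+ + \mathfrak{r}(F_1) \subset \mathfrak{so}_{2l}(F_1)$ by construction, Proposition~\ref{Prop_upper} immediately yields the upper bound $H(C)\subseteq \mathrm{SO}_{2l}(C)$.

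For the reverse inclusion I would produce a specialization landing on a known $\mathrm{SO}_{2l}(C)$-equation over $F_2$. Proposition~\ref{mod_Mitch_Singer} furnishes a matrix equation $\partial(\boldsymbol{y})=A\boldsymbol{y}$ over $F_2$ with $A\in A_0^+ + \mathfrak{b}^-$ whose differential Galois group is $\mathrm{SO}_{2l}(C)$. Interchanging the roles of positive and negative roots, the Transformation Lemma (Lemma~\ref{Prop_tans_lemma}) gauge-transforms $A$ into the plane $A_0^+ + \mathfrak{r}$, where by Lemma~\ref{lemma_so2l_roots} the complementary roots $\gamma_1,\dots,\gamma_{l-1},\bar{\gamma}$ make $\mathfrak{r}=\mathfrak{g}_{-\bar{\gamma}}\oplus\bigoplus_{i=1}^{l-1}\mathfrak{g}_{-\gamma_i}$ a root space complement to $\mathrm{ad}(A_0^+)(\mathfrak{u}^-)$. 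This exhibits a surjective $R_1$-specialization $\sigma:R_1\to R_2$ with $A_{\mathrm{SO}_{2l}}(\sigma(\boldsymbol{t}))$ conjugate to $A$, hence with differential Galois group $\mathrm{SO}_{2l}(C)$ over $F_2$. Theorem~\ref{specialization_bound} then gives $\mathrm{SO}_{2l}(C)\subseteq H(C)$, and combining the two inclusions yields $H(C)=\mathrm{SO}_{2l}(C)$.

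Finally, Lemma~\ref{Lemma_SO_2l_equation} identifies $\partial(\boldsymbol{y})=A_{\mathrm{SO}_{2l}}(\boldsymbol{t})\boldsymbol{y}$ with the scalar linear parameter equation of the statement (with the substitutions $z_1,z_2$ recorded there), so the latter has the same differential Galois group $\mathrm{SO}_{2l}(C)$ over $F_1$, completing the proof.

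I would expect the one genuinely $D_l$-specific difficulty to already be discharged inside the cited lemmas rather than in the theorem itself: namely that for even $l$ the exponent $l-1$ occurs with multiplicity two, so that $\bar{\gamma}$ and $\gamma_{l/2}$ both sit in height $l-1$ and the root space complement in that eigenspace must be chosen two-dimensional, a point handled by the case distinction in Lemma~\ref{lemma_so2l_roots}. Within the present proof the only point to verify carefully is that the gauge transformation of Lemma~\ref{Prop_tans_lemma} produces coefficients that constitute a \emph{surjective} $R_1$-specialization, so that Theorem~\ref{specialization_bound} applies; this follows exactly as in the preceding cases, since the specialized parameters are non-constant rational functions of $z$.
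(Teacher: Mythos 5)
Your proposal is correct and follows essentially the same route as the paper: the paper's proof is precisely the template of Theorem~\ref{theorem for SL_n} instantiated with Proposition~\ref{mod_Mitch_Singer}, Lemma~\ref{lemma_so2l_roots}, the Transformation Lemma with the roles of positive and negative roots interchanged, Proposition~\ref{Prop_upper}, Theorem~\ref{specialization_bound}, and finally Lemma~\ref{Lemma_SO_2l_equation}. Your additional remarks on the doubled exponent $l-1$ for even $l$ and on the surjectivity of the specialization are accurate observations about where the $D_l$-specific work is discharged, but they do not change the argument.
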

\begin{proof}
The proof is very similar to the proof of Theorem~\ref{theorem for SL_n}. 
Let $A_{\mathrm{SO}_{2l}}(\textit{\textbf{t}})$ be the matrix of Lemma~\ref{Lemma_SO_2l_equation}.
One uses Proposition~\ref{mod_Mitch_Singer}, Lemma~\ref{lemma_so2l_roots} 
and Lemma~\ref{Prop_tans_lemma} (interchange the role of the 
positive and negative roots) to show that 
$\partial(\boldsymbol{y})=A_{\mathrm{SO}_{2l}}(\boldsymbol{t}) \boldsymbol{y}$ 
specializes to an equation over $F_2$ with group $\mathrm{SO}_{2l}(C)$. 
It follows then from Proposition~\ref{Prop_upper} and 
Theorem~\ref{specialization_bound} that the differential Galois group of 
$\partial(\boldsymbol{y})=A_{\mathrm{SO}_{2l}}(\boldsymbol{t}) \boldsymbol{y}$ 
is $\mathrm{SO}_{2l}(C)$. To complete the proof one applies Lemma~\ref{Lemma_SO_2l_equation}.
\end{proof}
\section{The equation for $\mathrm{G}_2$}\label{chap11}
Let $\epsilon_1, \ \epsilon_2, \ \epsilon_3$ be the standard orthonormal basis of 
$\mathbb{R}^3$ with respect to the standard inner product $(\cdot, \cdot)$. Then
from \cite[VI, Planche IX]{BourGroupesetAlgebresIV-VI}
we obtain that the root system $\Phi$ of type $G_2$ consists 
of the twelve vectors 
$\pm (\epsilon_1 - \epsilon_2)$, $\pm (\epsilon_1 - \epsilon_3)$, $\pm (\epsilon_2 - \epsilon_3)$, 
$\pm (2 \epsilon_1 - \epsilon_2 -\epsilon_3)$, $\pm (2 \epsilon_2 - \epsilon_1 -\epsilon_3)$ and 
$\pm (2 \epsilon_3 - \epsilon_1 -\epsilon_2)$. Further, as a basis $\Delta$ of 
$\Phi$ we can take the two vectors
$\alpha_1 = \epsilon_1 - \epsilon_2$ and 
$\alpha_2 =-2 \epsilon_1 + \epsilon_2 + \epsilon_3$. 
Then, with respect to $\Delta$ the roots in $\Phi$ have the following shapes: 
\begin{align*}
&\pm \alpha_1 = \pm (\epsilon_1 - \epsilon_2), &
&\pm \alpha_2 =\pm (-2 \epsilon_1 + \epsilon_2 +\epsilon_3),\\
&\pm (\alpha_1 + \alpha_2) = \pm (- \epsilon_1 + \epsilon_3),&
&\pm (2 \alpha_1 + \alpha_2) = \pm (-\epsilon_2 + \epsilon_3),\\
&\pm(3 \alpha_1 + \alpha_2) = \pm (-2 \epsilon_2 + \epsilon_1 +\epsilon_3),&
& \pm (3 \alpha_1 + 2 \alpha_2) = \pm (2 \epsilon_3 - \epsilon_1 -\epsilon_2). 
\end{align*}  
In \cite[Chapter 19.3]{HumLie} J.~Humphreys presents an irreducible representation of the Lie algebra $\mathfrak{g}_2$ of type $G_2$ 
as a subalgebra of $\mathfrak{gl}_7$.
Following his argumentation, we obtain that a Cartan subalgebra $\mathfrak{h}$ of $\mathfrak{g}_2$ is generated by the two diagonal matrices   
\begin{equation*}
 H_1=- E_{2,2} + E_{5,5}+2 E_{3,3} -2 E_{6,6} - E_{4,4} + E_{7,7}, \ 
 H_2=  E_{3,3} -E_{6,6} - E_{4,4} + E_{7,7}
\end{equation*}
and that with respect to $\mathfrak{h}$ we can choose the root vectors corresponding to the positive roots as 
\begin{align*}
& X_{\alpha_1} \quad \ \ \, = \sqrt{2}(E_{1,3} - E_{6,1}) + (E_{2,7}- E_{4,5}), && 
X_{\alpha_2} \quad \quad \  = E_{3,4}- E_{7,6}, \\
& X_{\alpha_1+ \alpha_2} \ = \sqrt{2}(E_{1,4} - E_{7,1}) - (E_{2,6}- E_{3,5}),  && X_{3\alpha_1 + \alpha_2} \  = E_{2,3}- E_{6,5}, \\
& X_{2\alpha_1+ \alpha_2} = \sqrt{2}(E_{2,1} - E_{1,5}) - (E_{7,3}- E_{6,4}), &&X_{3\alpha_1+ 2\alpha_2} = E_{2,4}- E_{7,5}.
\end{align*}
 The root vectors corresponding to the negative roots are 
\begin{align*}
& X_{-\alpha_1}\quad \ \ \ = X_{\alpha_1}^{tr}, 
&& X_{-\alpha_2} \quad \ \ \ = X_{\alpha_2}^{tr} ,
&& X_{-\alpha_1-\alpha_2} \ \ = X_{\alpha_1 + \alpha_2}^{tr}, \\
& X_{-2\alpha_1-\alpha_2}= X_{2\alpha_1 + \alpha_2}^{tr}, 
&& X_{-3\alpha_1 - \alpha_2} = X_{3 \alpha_1 + \alpha_2}^{tr}, 
&& X_{-3\alpha_1-2\alpha_2}= X_{3\alpha_1 + 2\alpha_2}^{tr}.
\end{align*}
In \cite[Chapter 19.3]{HumLie} the corresponding matrices $g_k$, $g_{i,j}$ are not assigned to the roots $\alpha \in \Phi$. To obtain such an assignment
as above one can use the relations (1)-(5) given there.
Summing up, we have a Cartan decomposition 
\begin{equation*}
 \mathfrak{g}_2 = \mathfrak{h} \oplus \bigoplus_{\alpha \in \Phi} (\mathfrak{g}_2)_{\alpha} = \mathfrak{h} \oplus \bigoplus_{\alpha \in \Phi} \langle X_{\alpha} \rangle 
\end{equation*}
given by the explicit matrices $X_{\alpha}$ from above for $\alpha \in \Phi$.
\begin{lemma}\label{parameter_roots_g2}
The two roots $\gamma_1 = \alpha_2$ and $\gamma_2 = 3\alpha_1 + 2 \alpha_2$ of the root system $\Phi$ of 
type $G_2$ are complementary roots.
\end{lemma}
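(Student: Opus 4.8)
The plan is to verify the two conditions of Lemma~\ref{lemma_direct_sum2} directly, just as in the rank-$2$ base cases treated inside Lemma~\ref{lemma_sl_roots} and Lemma~\ref{lemma_Sp2l_roots}. Since $G_2$ has rank $l=2$ and only six positive roots, no induction is needed and the whole argument reduces to a finite computation inside the eigenspace decomposition of Lemma~\ref{lemma_eigenspace_decomp}. First I would record the exponents of $\mathfrak{g}_2$: either from the Poincar\'e polynomial $p_{G_2}(x)=(1+x^3)(1+x^{11})$, which gives $m_1=1$ and $m_2=5$, or by Shapiro's method applied to the height multiplicities of $\Phi^+$ (the six positive roots have heights $1,1,2,3,4,5$, so $c_1=2$ and $c_2=c_3=c_4=c_5=1$, whence $1$ and $5$ are the exponents). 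Since $\mathrm{ht}(\gamma_1)=\mathrm{ht}(\alpha_2)=1=m_1$ and $\mathrm{ht}(\gamma_2)=\mathrm{ht}(3\alpha_1+2\alpha_2)=5=m_2$, the height condition of Lemma~\ref{lemma_direct_sum2} is satisfied.

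For the second condition I would follow the proof of Lemma~\ref{lemma_direct_sum2} and check, for each exponent $m_i$, that adjoining $X_{\gamma_i}$ to the set $\{W_h\mid W_h\in\mathfrak{g}_2^{(m_i)}\}$ produces a basis of $\mathfrak{g}_2^{(m_i)}$. The case $m_2=5$ is immediate: $\gamma_2=3\alpha_1+2\alpha_2$ is the maximal root of $\Phi$, so $\mathfrak{g}_2^{(5)}$ is one-dimensional and there is no $W_h$ of height $5$ (such a $W_h$ would require a preimage in $\mathfrak{g}_2^{(6)}=(0)$); hence $\{X_{\gamma_2}\}$ is trivially a basis of $\mathfrak{g}_2^{(5)}$.

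The only substantive case is $m_1=1$, where $\mathfrak{g}_2^{(1)}=\langle X_{\alpha_1},X_{\alpha_2}\rangle$ is two-dimensional. Because $\alpha_1+\alpha_2$ is the unique positive root of height $2$, the single basis vector of $\mathrm{ad}(A_0^-)(\mathfrak{u}^+)$ lying in $\mathfrak{g}_2^{(1)}$ is $W=[X_{\alpha_1+\alpha_2},A_0^-]$. Expanding this with $A_0^-=X_{-\alpha_1}+X_{-\alpha_2}$, and noting that both $(\alpha_1+\alpha_2)-\alpha_1=\alpha_2$ and $(\alpha_1+\alpha_2)-\alpha_2=\alpha_1$ are roots, gives $W=c_1 X_{\alpha_1}+c_2 X_{\alpha_2}$ with $c_1,c_2\neq 0$. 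In particular the $X_{\alpha_1}$-component $c_1$ is nonzero, so $W$ and $X_{\gamma_1}=X_{\alpha_2}$ are linearly independent and $\{W,X_{\alpha_2}\}$ is a basis of $\mathfrak{g}_2^{(1)}$. For completeness I would also observe that the non-exponent eigenspaces $\mathfrak{g}_2^{(2)},\mathfrak{g}_2^{(3)},\mathfrak{g}_2^{(4)}$ are each one-dimensional and already spanned by the corresponding $W_h$, so no root vectors need be adjoined there. Combining these facts yields $\mathfrak{b}^+=\langle X_{\gamma_1}\rangle\oplus\langle X_{\gamma_2}\rangle\oplus\mathrm{ad}(A_0^-)(\mathfrak{u}^+)$, which is precisely the assertion of Lemma~\ref{lemma_direct_sum2} for $\gamma_1$ and $\gamma_2$; hence they are complementary roots.

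The computation is genuinely elementary, and the only step requiring any care is confirming that the $X_{\alpha_1}$-component of $W$ does not vanish, that is, that the relevant Chevalley structure constant is nonzero; this is guaranteed because $(\alpha_1+\alpha_2)-\alpha_2=\alpha_1\in\Phi$. If a concrete check is preferred, I would verify it directly from the explicit $7\times 7$ matrices $X_{\alpha_1+\alpha_2}$ and $X_{-\alpha_2}=X_{\alpha_2}^{tr}$ listed above.
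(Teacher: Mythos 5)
Your proof is correct and follows essentially the same route as the paper: verify $\mathrm{ht}(\gamma_i)=m_i$ for the exponents $m_1=1$, $m_2=5$, note that $\{X_{\gamma_2}\}$ trivially spans $\mathfrak{g}_2^{(5)}$ since $\gamma_2$ is the maximal root, and check that $W=[A_0^-,X_{\alpha_1+\alpha_2}]$ has a nonzero $X_{\alpha_1}$-component (because $(\alpha_1+\alpha_2)-\alpha_2=\alpha_1\in\Phi$), so that $\{W,X_{\alpha_2}\}$ is a basis of $\mathfrak{g}_2^{(1)}$. The extra details you supply (Shapiro's method for the exponents, the dimension count for the non-exponent eigenspaces) are fine but not needed beyond what the paper records.
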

\begin{proof}
The exponents of the root system of type $G_2$ are $m_1 = 1$ and $m_2 = 5$ 
(see \cite[VI, Planche IX]{BourGroupesetAlgebresIV-VI}). Thus the roots $\gamma_i$ 
 satisfy $\mathrm{ht}(\gamma_i)=m_i$ with $i=1,2$. We have that $\alpha_1 + \alpha_2$ 
 is the only root in $\Phi^+$ of height two and therefore 
 $W_3=[A_0^-,X_{\alpha_1 + \alpha_2}]$ forms a basis of 
 \[
 \mathrm{ad}(A_0^-)(\mathfrak{u}^+) \cap \mathfrak{g}_{2}^{(m_1)}.
 \]
 Since $(\alpha_1 + \alpha_2)- \alpha_2= \alpha_1$, $W_3$ has a non-zero component 
 in the root space $(\mathfrak{g}_{2})_{\alpha_1}$. 
 We conclude that the set $\{W_3, X_{\alpha_2} \}$ is a basis of 
 $\mathfrak{g}_{2}^{(m_1)}$. Finally, since the root $\gamma_2$
 is the maximal root, it follows that $X_{\gamma_2}$ forms a basis of 
 $\mathfrak{g}_{2}^{(m_2)}$.
\end{proof}
\begin{lemma}\label{linear_equation_G2}
The matrix parameter differential equation $\partial(\textit{\textbf{y}})=A_{G_2}(t_1,t_2) \textit{\textbf{y}}$ 
over $C\langle t_1,t_2 \rangle$, where  
\begin{equation*}
A_{G_2}(t_1,t_2):=A_0^+ +t_1 X_{-\gamma_1} + t_2 X_{-\gamma_2},
\end{equation*}
is equivalent to the linear parameter differential equation
 \begin{equation*}
y^{(7)} = 2 t_2 y' + 2 (t_2 y )' + ( t_1 y^{(4)})' + (t_1 y')^{(4)} - (t_1 (t_1 y')')'.
\end{equation*}
\end{lemma}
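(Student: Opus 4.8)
The plan is to repeat the cyclic-vector computation used in Lemma~\ref{Lemma_sp_2l_equation}, Lemma~\ref{Lemma_SO_2l+1_equation} and Lemma~\ref{Lemma_SO_2l_equation}, now for the explicit $7\times 7$ representation of $\mathfrak{g}_2$ fixed above. First I would write $A_{G_2}(t_1,t_2)$ in coordinates: since $\gamma_1=\alpha_2$ and $\gamma_2=3\alpha_1+2\alpha_2$, the root vectors are $X_{-\gamma_1}=X_{\alpha_2}^{tr}=E_{4,3}-E_{6,7}$ and $X_{-\gamma_2}=X_{3\alpha_1+2\alpha_2}^{tr}=E_{4,2}-E_{5,7}$, which together with $A_0^+=X_{\alpha_1}+X_{\alpha_2}$ give a matrix that, writing $\boldsymbol{y}=(y_1,\dots,y_7)^{tr}$, turns $\partial(\boldsymbol{y})=A_{G_2}(t_1,t_2)\boldsymbol{y}$ into the system
\begin{gather*}
y_1'=\sqrt{2}\, y_3,\quad y_2'=y_7,\quad y_3'=y_4,\quad y_4'=t_2 y_2+t_1 y_3-y_5,\\
y_5'=-t_2 y_7,\quad y_6'=-\sqrt{2}\, y_1-t_1 y_7,\quad y_7'=-y_6.
\end{gather*}

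Then I would show that $y:=y_1$ is a cyclic vector and express every coordinate as a differential polynomial in $y$ over $C\langle t_1,t_2\rangle$. The first chain relations give $y_3=\tfrac{1}{\sqrt{2}}y'$ and $y_4=\tfrac{1}{\sqrt{2}}y''$, and one more differentiation together with the fourth equation yields $y'''=t_1 y'+\sqrt{2}\,(t_2 y_2-y_5)$; thus at third order the two genuinely new coordinates $y_2$ and $y_5$ enter simultaneously. To control them I would exploit the second cycle of the system: from $y_2'=y_7$, $y_7'=-y_6$ and $y_6'=-\sqrt{2}y_1-t_1 y_7$ one obtains the auxiliary relation $y_2'''-t_1 y_2'=\sqrt{2}\,y$, while $y_5'=-t_2 y_7=-t_2 y_2'$ combined with the expression for $y'''$ determines $y_5$ through $y_2$ and derivatives of $y$. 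Differentiating $y'''$ repeatedly and substituting $y_7=y_2'$, $y_6=-y_2''$ and the equation for $y_5'$ produces expressions for $y^{(4)},y^{(5)},y^{(6)}$ that are linear in $y_2,y_2',y_2''$; the first of them reads $y^{(4)}-(t_1 y')'=\sqrt{2}\,(t_2' y_2+2t_2 y_2')$, and the auxiliary relation is used to reduce $y_2'''$ whenever it arises.

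Finally I would eliminate the block $(y_2,y_2',y_2'')$. The three relations for $y^{(4)},y^{(5)},y^{(6)}$ form a linear system in these quantities which is invertible over $C\langle t_1,t_2\rangle$ — this is exactly the assertion that $y$ is cyclic and the associated differential module is seven-dimensional — and substituting the resulting expressions into $y^{(7)}=(y^{(6)})'$, again reducing each $y_2'''$ by $y_2'''=t_1 y_2'+\sqrt{2}y$, gives the order-seven equation of the statement. I expect the bookkeeping to be the main obstacle: unlike the $A_l$, $B_l$, $C_l$ cases, the fourth equation feeds \emph{two} new coordinates into $y'''$ at once, so the reduction couples $y_2$ with $y_5$ and forces one to carry the three-term block $(y_2,y_7,y_6)$ through several differentiations. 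The delicate point is to verify that the $t_2$-denominators produced when solving for this block cancel, leaving the stated polynomial-coefficient equation; I anticipate that the combinations $(t_1 y')^{(4)}$, $(t_1 y^{(4)})'$ and $(t_1(t_1 y')')'$ occurring in the target are precisely those that render this cancellation transparent.
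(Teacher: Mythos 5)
Your setup of the system is correct and matches the paper's, but your choice of cyclic vector is wrong, and this is not a bookkeeping issue: the elimination you describe does not produce the stated equation. The paper takes $y_2$ as the cyclic vector, not $y_1$. With $y_2$ the derivatives traverse the whole chain in order, $y_2'=y_7$, $y_2''=-y_6$, $y_2'''=\sqrt{2}\,y_1+t_1y_2'$, $y_2^{(4)}=2y_3+(t_1y_2')'$, $y_2^{(5)}=2y_4+(t_1y_2')''$, $y_2^{(6)}=(t_1y_2')'''+2t_2y_2+t_1y_2^{(4)}-t_1(t_1y_2')'-2y_5$, and one last differentiation together with $y_5'=-t_2y_2'$ gives exactly the stated operator by pure forward substitution, with no linear system to invert and no denominators. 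Starting from $y_1$ you enter this chain in the middle, which is why $y_2$ and $y_5$ appear simultaneously at order three and why you are forced to solve a $3\times3$ system in $(y_2,y_2',y_2'')$ whose determinant is the nontrivial differential polynomial $15t_2'^3-18t_2t_2't_2''-4t_1t_2^2t_2'+4t_2^2t_2'''$; these denominators do not cancel.

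More decisively, the minimal operator annihilating $y_1$ is simply not the operator in the statement, so no amount of care in the elimination can rescue the plan. Specialize $t_1=0$, $t_2=z$ over $C(z)$: the stated equation becomes $y^{(7)}=4zy'+2y$, and a direct computation in the specialized system gives $y_1^{(4)}=\sqrt{2}(y_2+2zy_7)$, $y_1^{(5)}=\sqrt{2}(3y_7-2zy_6)$, $y_1^{(6)}=-5\sqrt{2}\,y_6+4zy_1$ and hence $y_1^{(7)}=4zy_1'+14y_1$, while $y_2^{(7)}=4zy_2'+2y_2$. Since this specialization involves no division, the identity you hope to derive over $C\langle t_1,t_2\rangle$ would have to survive it, and it does not. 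Your route would at best show that the matrix equation is equivalent to \emph{some} seventh-order scalar equation with rational coefficients in $t_1,t_2$ and their derivatives, which is weaker than the lemma; to get the explicit polynomial equation of the statement you must use $y_2$ as the cyclic vector.
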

\begin{proof}
With respect to the above representation of $\mathfrak{g}_2$ the matrix differential equation $\partial(\textit{\textbf{y}})=A_{G_2}(t_1,t_2) \textit{\textbf{y}}$ 
is equivalent to the following system of differential equations:
\begin{align*}
& y_1^{(1)}  =  \sqrt{2} y_3 ,&&
y_2^{(1)}  =  y_7, &&
y_3^{(1)}  =  y_4, &&
y_4^{(1)}  = t_2 y_2 + t_1 y_3 -y_5\\
& y_5^{(1)}  =  -t_2 y_7, &&
y_6^{(1)}  =  -\sqrt{2} y_1 - t_1 y_7, &&
y_7^{(1)}  =  - y_6.
\end{align*}
We take $y_2$ as a cyclic vector. 
More precisely, we differentiate successively the equation $y_2^{(1)}  =  y_7$ 
and using the above equations we substitute accordingly until we get 
an expression only in derivatives of $y_2$.
After the first step we obtain $y_2^{(2)}  =  -y_6$, where we substituted $y_7^{(1)}$ by $-y_6$. 
Differentiating this equation and using the relation
\begin{equation*}
 y_6^{(1)}  =  -\sqrt{2} y_1 - t_1 y_7   =  -\sqrt{2} y_1 - t_1 y_2^{(1)}
\end{equation*}
we get $y_2^{(3)}  =  \sqrt{2} y_1 +t_1 y_2^{(1)}$. Differentiating this expression and plugging in 
$ y_1^{(1)}  =  \sqrt{2} y_3$ shows that $y_2^{(4)}  =  (t_1 y_2^{(1)})^{(1)}+ 2 y_3$. 
If we differentiate again and substitute $y_3^{(1)} $ by $y_4$, we obtain 
$y_2^{(5)}  =  (t_1 y_2^{(1)})^{(2)} + 2 y_4$. Differentiating the last equation yields 
\begin{equation*}
 y_2^{(6)} =  (t_1 y_2^{(1)})^{(3)} + 2 t_2 y_2 +  t_1 y_2^{(4)}-  t_1 (t_1 y_2^{(1)})^{(1)} - 2 y_5,
\end{equation*}
where we used $y_4^{(1)}  = t_2 y_2 + t_1 y_3 -y_5$ and $2 y_3 = y_2^{(4)} - (t_1 y_2^{(1)})^{(1)}$. 
With the relation 
$y_5^{(1)}  =  -t_2 y_7 =-t_2 y_2^{(1)}$ we finally get that
\begin{equation*}
 y_2^{(7)}  =  (t_1 y_2^{(1)})^{(4)} + 2 (t_2 y_2)^{(1)} + (t_1 y_2^{(4)})^{(1)} - (t_1 (t_1 y_2^{(1)})^{(1)})^{(1)}
  + 2t_2 y_2^{(1)}.
\end{equation*}
\end{proof}
\begin{theorem}
The differential Galois group of the linear parameter differential equation 
\begin{equation*}
L(y,t_1,t_2)= y^{(7)} - 2 t_2 y' - 2 (t_2 y )' - ( t_1 y^{(4)})' - (t_1 y')^{(4)} + (t_1(t_1 y')')'=0
\end{equation*}
is $\mathrm{G}_2(C)$ over $C\langle t_1,   t_2 \rangle$.
\end{theorem}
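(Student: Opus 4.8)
The plan is to follow verbatim the strategy of the proof of Theorem~\ref{theorem for SL_n}, now with the matrix $A_{G_2}(t_1,t_2)$ of Lemma~\ref{linear_equation_G2} as the defining matrix. First I would consider the matrix parameter equation $\partial(\boldsymbol{y}) = A_{G_2}(t_1,t_2)\boldsymbol{y}$ over $F_1 = C\langle t_1, t_2\rangle$ and denote its differential Galois group by $H(C)$. Since $A_0^+$ and the root vectors $X_{-\gamma_1}, X_{-\gamma_2}$ all lie in $\mathfrak{g}_2$, the matrix $A_{G_2}(t_1,t_2)=A_0^+ + t_1 X_{-\gamma_1} + t_2 X_{-\gamma_2}$ is an element of $\mathfrak{g}_2(F_1)$, so Proposition~\ref{Prop_upper} immediately yields the upper bound $H(C) \subseteq \mathrm{G}_2(C)$.

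For the reverse inclusion the plan is to exhibit an equation over $F_2 = C(z)$ with group $\mathrm{G}_2(C)$ that arises as a specialization of $A_{G_2}$. Proposition~\ref{mod_Mitch_Singer} supplies a matrix $A_1 \in \mathfrak{h}(C)$ such that $\partial(\boldsymbol{y}) = (A_0 + z A_1)\boldsymbol{y}$ has group $\mathrm{G}_2(C)$; writing $A_0 = A_0^+ + A_0^-$ and noting $z A_1 \in \mathfrak{h}(F_2)$ shows this matrix lies in $A_0^+ + \mathfrak{b}^-(F_2)$. Lemma~\ref{parameter_roots_g2} identifies $\gamma_1 = \alpha_2$ and $\gamma_2 = 3\alpha_1 + 2\alpha_2$ as complementary roots, so that $\mathfrak{r} = \langle X_{-\gamma_1}, X_{-\gamma_2}\rangle$ is a root space complement to $\mathrm{ad}(A_0^+)(\mathfrak{u}^-)$. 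Applying Lemma~\ref{Prop_tans_lemma} with the roles of the positive and negative roots interchanged then gauge-transforms the Mitschi--Singer matrix into one of the plane $A_0^+ + \mathfrak{r}(F_2)$, that is, into $A_{G_2}(\sigma(t_1), \sigma(t_2))$ for a suitable $R_1$-specialization $\sigma: R_1 \rightarrow R_2$. Because gauge equivalence preserves the differential Galois group, the specialized equation still has group $\mathrm{G}_2(C)$, and Theorem~\ref{specialization_bound} then gives the lower bound $\mathrm{G}_2(C) \subseteq H(C)$. Combined with the upper bound this forces $H(C) = \mathrm{G}_2(C)$.

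Finally, I would invoke Lemma~\ref{linear_equation_G2}, which establishes that $\partial(\boldsymbol{y}) = A_{G_2}(t_1,t_2)\boldsymbol{y}$ is equivalent to the scalar equation $L(y,t_1,t_2)=0$ of the theorem by exhibiting $y_2$ as a cyclic vector. This transfers the Galois group computation from the matrix equation to the stated linear parameter differential equation, completing the argument.

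Since the supporting lemmas carry all the genuine content, I expect the assembly of these steps to be essentially mechanical. The one point deserving care is the passage from the gauge-transformed matrix in $A_0^+ + \mathfrak{r}(F_2)$ to a \emph{surjective} $R_1$-specialization $\sigma$ with values in a finitely generated localization $R_2$ of $C[z]$: one must verify that the two resulting coefficients lie in (or can be arranged to lie in) such a localization and that at least one of them is non-constant, so that the hypotheses of Theorem~\ref{specialization_bound} are satisfied. This is exactly the verification already performed in the analogous proofs for the classical groups, and the same reasoning applies here.
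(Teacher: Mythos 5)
Your proposal matches the paper's proof: the paper likewise combines Proposition~\ref{Prop_upper} for the upper bound with Proposition~\ref{mod_Mitch_Singer}, Lemma~\ref{parameter_roots_g2} and the Transformation Lemma~\ref{Prop_tans_lemma} (roles of positive and negative roots interchanged) to produce a specialization over $F_2$ with group $\mathrm{G}_2(C)$, applies Theorem~\ref{specialization_bound} for the lower bound, and concludes via Lemma~\ref{linear_equation_G2}. The argument is correct and essentially identical to the one in the paper.
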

\begin{proof}
The proof works as the proof of Theorem~\ref{theorem for SL_n}. 
Let $A_{\mathrm{G}_2}(t_1,t_2)$ be
the matrix of Lemma~\ref{linear_equation_G2}. Using Proposition~\ref{mod_Mitch_Singer}, 
Lemma~\ref{parameter_roots_g2} and Lemma~\ref{Prop_tans_lemma} (interchange the role 
of the positive and negative roots) one shows that there exists a 
$C\{ t_1,t_2 \}$-specialization 
of the differential equation 
$\partial(\boldsymbol{y})=A_{\mathrm{G}_2}(t_1,t_2) \boldsymbol{y}$ to an
equation over $F_2$ with differential Galois group $\mathrm{G}_2(C)$.
It follows then from Theorem~\ref{specialization_bound} and 
Proposition~\ref{Prop_upper} that the differential Galois group of 
$\partial(\boldsymbol{y})=A_{\mathrm{G}_2}(t_1,t_2) \boldsymbol{y}$ is 
$\mathrm{G}_2(C)$. Finally, one applies Lemma~\ref{linear_equation_G2}.
\end{proof}


\bibliographystyle{plain}
\bibliography{main}

\end{document}